\documentclass[11pt,twoside, leqno]{article}

\usepackage{mathrsfs}
\usepackage{amssymb}
\usepackage{amsmath}
\usepackage{amsthm}
\usepackage{amsfonts}
\usepackage{extarrows}
\usepackage{xy}
\xyoption{all}

\usepackage{color}
\usepackage[colorlinks, linkcolor=red, anchorcolor=green, citecolor=blue]{hyperref}
\usepackage[pagewise]{lineno}

\pagestyle{myheadings}\markboth{\footnotesize\rm\sc Qixiang  Yang,  Haibo Yang, Bin Zou and Jianxun He} {\footnotesize\rm\sc Peetre conjecture}

\allowdisplaybreaks

\textwidth=14cm
\textheight=21cm
\oddsidemargin 0.35cm
\evensidemargin 0.35cm

\parindent=13pt

\renewcommand\hat{\widehat}
\renewcommand\tilde{\widetilde}

\def\r{\right}
\def\lf{\left}

\def\bint{{\ifinner\rlap{\bf\kern.30em--}
\int\else\rlap{\bf\kern.35em--}\int\fi}\ignorespaces}

\def\sbint{{\ifinner\rlap{\bf\kern.32em--}
\hspace{0.078cm}\int\else\rlap{\bf\kern.45em--}\int\fi}\ignorespaces}

\newtheorem{theorem}{Theorem}[section]
\newtheorem{lemma}[theorem]{Lemma}
\newtheorem{corollary}[theorem]{Corollary}

\theoremstyle{definition}
\newtheorem{remark}[theorem]{Remark}
\newtheorem{definition}[theorem]{Definition}

\newtheorem {conjecture} [theorem] {Conjecture}

\newtheorem*{lpthm}{\noindent\bf Lions-Peetre Iterative Theorem}
\newtheorem*{hthm}{\noindent\bf Hunt Theorem}
\newtheorem*{holthm}{\noindent\bf Holmstedt Theorem}

\numberwithin{equation}{section}

\numberwithin{equation}{section}


\numberwithin{equation}{section}

\begin{document}
\UseRawInputEncoding
\arraycolsep=1pt
\title{\Large\bf Peetre conjecture on real interpolation spaces of Besov spaces and Grid K functional
\footnotetext{\hspace{-0.35cm} {\it 2010 Mathematics Subject
Classification}. {46B70, 46E35, 42B35.}
\endgraf{\it Key words and phrases.}
Real interpolation; Besov spaces; K functional;
Wavelet basis and grid; nonlinear functional and nonlinear topology.
\endgraf $^\ast$\,Corresponding author
}}
\author{ Qixiang Yang,  Haibo Yang, Bin Zou and  Jianxun He$^{\ast}$}
\date{  }
\maketitle

\vspace{-0.8cm}

\begin{center}
\begin{minipage}{13cm}\small
{\noindent{\bf Abstract}
In this paper, Peetre's conjecture about the real interpolation space of Besov space
{\bf is solved completely }
by using the classification of vertices of cuboids defined by {\bf wavelet coefficients and wavelet's grid structure}.
Littlewood-Paley analysis provides only a decomposition of the function on the ring.
We extend Lorentz's rearrangement function and Hunt's Marcinkiewicz interpolation theorem to more general cases.
We use the method of calculating the topological quantity of the grid
to replace the traditional methods of data classification
such as gradient descent method and distributed algorithm.

We developed a series of new techniques to solve this longstanding open problem.
These skills make up for the deficiency of Lions-Peetre iterative theorem in dealing with strong nonlinearity.
Using the properties of wavelet basis, a series of {\bf functional nonlinearities} are studied.
Using the lattice property of wavelet, we study the lattice topology.
By three kinds of {\bf topology nonlinearities}, we give the specific wavelet expression of K functional.

}

\end{minipage}
\end{center}


\section{Background and main theorem}\label{s1}
\hskip\parindent

As early as 1911, Schur \cite{Schur} considered the interpolation properties of operators.
In 1926, M. Riesz \cite{R} proved the first version of the Riesz-Thorin theorem.
In 1948, Salem-Zygmund \cite{SZ} and Thorin  \cite{Thorin}
appeared the study of interpolation on $H^{p}$ spaces.
The study of interpolation spaces greatly enriches function space and operator theory and
has important applications in differential equations.
The study of interpolation space has greatly aroused the attention of mathematicians.
J. Bergh -J. L\"ofstr\"om wrote on page 170 of their book \cite{BL} that
Notices A.M.S introduces 12 open problems of interpolation proposed by 12 mathematicians
under the heading ``Problems in interpolation of operations and applications I-II" in
Notices Amer. Math. Soc. See \cite{PCFGLM, SHSTBR}.
In order of name, these 12 mathematicians are
J. Peetre, W. Connett, J. Fourier, J. E. Gilbert, J. L. Lions, Benjamin Muckenhoupt,
E. M. Stein, Richard A. Hunt, Robert C. Sharpley, A. Torchinsky, Colin Bennett and N. M. Riviere.
These problems are aimed at {\bf the special phenomena in the process of interpolation}
and have received the attention they deserve and some have been resolved.
The theory of interpolation spaces effectively reveals the internal structure of functions and operators,
and has played a great role in promoting the development of harmonic analysis for a long time.
See \cite{AK, AKMNP5,BL, BC, BCT, B, CZ, C, CD, DP,DY, D, FRS, FS, HS, HP, Litt, LoP,
Peetre67, PS, SaZ, ST,  SZ, Triebel,TriebelB,YDC,YCP}.

For Banach spaces $A_0$ and $A_1$, in \cite{PCFGLM},
Peetre wrote it is well known that
$$(A_0,A_1)_{\theta,1}\subset [A_0,A_1]_{\theta} \subset (A_0,A_1)_{\theta,\infty}.$$ 
Hence real interpolation spaces $(A_0,A_1)_{\theta,r}$ have a more careful non-linear structure than complex interpolation spaces $[A_0,A_1]_{\theta}$.
In the 1950s, Lorentz \cite{Lorentz} and Lorentz-Zeller \cite{LZ}
considered distribution functions and
non-increasing rearrangement functions and introduced Lorentz spaces.
Hunt \cite{Hunt1, Hunt2} established the relation between Lorentz spaces and real interpolation for Lebesgue spaces
and generalized Marcinkiewicz theorem to Lorentz spaces.
The study of real interpolation spaces greatly expands the Lebesgue spaces
and improves the study of continuity of operators.
And this eventually forms the Lorentz space theory of real interpolation spaces.
In the learning algorithm, the cost function with regularization term is just the K-functional of real interpolation space,
so the theoretical study of K-functional provides a theoretical support for the learning algorithm.

The above describes the phenomena related to interpolation.
{\bf The concrete expression of interpolation space is the essence of interpolation}.
But even for the real interpolation space of Besov space,
because of the complex non-linear structure,
a very few index spaces have the concrete expression.
Besov spaces, developed in the 1970s, are very important
in harmonic analysis, operator theory, approximation theory and differential equations.
Therefore, Peetre proposed on page 110 of \cite{Peetre} to find out
the specific expression form of the real interpolation space
$(B^{s_0, q_0}_{p_0}, B^{s_1, q_1}_{p_1})_{\theta, r}$ when $r\neq q$.
See conjecture \ref{con:1} and Peetre's book \cite{Peetre67book} in 1967.
In 1974, Cwikel \cite{C} proved that
the Lions-Peetre formula
have no reasonable generalization for any $r\neq q$.
{\bf Peetre explained the difficulty of the problem,
such interpolation space will run out of the Besov type spaces,
and the existing techniques can not control different indices at the same time.}
In operator theory and nonlinear partial differential equations,
the most difficult problem to deal with is the nonlinear structure of the functions.
In the process of solving Peetre's conjecture,
we find that the problem has two types of nonlinearity:
{\bf functional nonlinearity and topological nonlinearity}.

When $p_0=p_1=p$ and $r=q$, $(\dot{B}_{p}^{s_{0},q_{0}}, \dot{B}_{p}^{s_{_{1}},q_{1}})_{\theta, r}$ are still Besov spaces.
D.C. Yang has considered such cases for metric spaces, see \cite{BL,TriebelB,YDC}.
Q.X. Yang has been paying attention to real interpolation spaces since 2000,
and several European mathematicians have inquired about Yang-Cheng-Peng's results  in various ways.
See \cite{BC, BCT, CD, HS, YCP}.
For decades, there have been many works focusing on Besov spaces and interpolation spaces and their applications,
see \cite{AK,AKMNP5, BaC, BL, BC, BCT, CL, CD,DP,DY, HS, HP, LiuYY, ST, Triebel,TriebelB,YDC,YCP, ZYY} and the references theirin.
But due to the lack of appropriate techniques and tools, Peetre conjecture has not made much progress for a long time.
Under the constraints $s_0=s_1, q_0=q_1$,
by using wavelets to process,
Lou-Yang-He-He \cite{LYHH} made progress for $p_0\neq p_1$ recently.
In addition, Besoy-Haroske-Triebel \cite{BHT}
are also concerned about this longstanding open problem.
They  have been able to describe
$(\dot{B}_{p_0}^{s-\alpha / p_0, p_{0}}, \dot{B}_{p_1}^{s- \alpha / p_1, p_{1}})_{\theta, r}$
where $0<p_0<p_1<\infty, 0<\theta<1, -\infty<\alpha<\infty, 0<r\leq \infty$ and $s\in \mathbb{R}$.
They did it via its wavelet representation.
More specifically, they used  trace theorem and \cite{BCT, YCP}
to consider $(\dot{B}_{p_0}^{s_{0},q_{0}}, \dot{B}_{p_1}^{s_{_{1}},q_{1}})_{\theta, r}$ for the cases
where $p_0=q_0$, $p_1=q_1$  and $s_0-s_1= \frac{\alpha}{p_0}-\frac{\alpha}{p_1}$.

{\bf After the first author's 20 years study and the cooperation of all the authors},
the nonlinear nature of interpolation has finally been fully discovered:
cuboid functional and vertex functional, nonlinear topology defined by power spaces
and topology compatibility for intermediate spaces.
Peetre's conjecture is completely solved by using both the basis property and grid property of wavelets.
After more than 20 years of efforts,
we finally find out all the expression of relative K functional
based on grid topology and the K functional on layer grid.
{\bf Roughly speaking}, for a function defined by some norm on a layered grid $\Gamma_j$,
we obtain some kind of function by some nonlinear composition.
And then we sum these functions over the main grid $\mathbb{Z}$.
And finally, the sum of this function, we take the nonlinear composition again,
and we end up with a K functional on full grid $\Lambda$.
That is to say,
we have the following wavelet characterization of the real interpolation space for all Besov spaces:

\begin{theorem}\label{th:main}
Let $s_0,s_1\in \mathbb{R}$,
$ 0<\theta<1$ and $0<p_0,p_1,q_0,q_1, r\leq \infty$.  We have
$$f\in (B^{s_0, q_0}_{p_0}, B^{s_1, q_1}_{p_1})_{\theta, r} \Leftrightarrow
\lf\{\int^{\infty}_{0} [ t^{-\theta} K(t,f, B^{s_0, q_0}_{p_0}, B^{s_1,q_1}_{p_1})]^{r} \frac{dt}{t}\r\}^{\frac{1}{r}}<\infty, $$
where the wavelet representation of K functional $K(t,f, B^{s_0, q_0}_{p_0}, B^{s_1,q_1}_{p_1})$
are nonlinearly given in four cases as follows:

{\rm (i)} For $0<p_0= p_1=p\leq \infty$ and $0<q_0, q_1\leq \infty$,
the concrete wavelet representation of $K(t,f, B^{s_0, q_0}_{p_0}, B^{s_1,q_1}_{p_1})$ is defined
in the following Corollaries \ref{cor:7.7} and \ref{cor:7.10}
and in the following equation \eqref{eq:973}
of Theorem \ref{th:7.11} by wavelet coefficients.

{\rm (ii)} For $0<p_0< p_1\leq \infty$ and $0<q_0=q_1\leq \infty$,
we obtain the wavelet characterization of the K functional
$K(t,f, B^{s_0, q_0}_{p_0}, B^{s_1,q_1}_{p_1})$ using two methods.
For the first method, the wavelet characterization of
$K(t,f, B^{s_0, q_0}_{p_0}, B^{s_1,q_1}_{p_1})$ is defined
in the following Theorems \ref{th:8.1}, \ref{th:8.2} and \ref{th:8.3} respectively
by cuboid functional and K functional on grid.
The second method makes use of the vertex K functional and K functional on layer grid,
and the wavelet characterization of the K functional $K(t,f, B^{s_0, q_0}_{p_0}, B^{s_1,q_1}_{p_1})$
is defined in the following Theorem \ref{th:10.1}.

{\rm (iii)} For $0<p_1< p_0\leq \infty$ and $0<q_0=q_1\leq \infty$,
by using vertex K functional and K functional on layer grid,
the concrete wavelet representation of $K(t,f, B^{s_0, q_0}_{p_0}, B^{s_1,q_1}_{p_1})$ is defined by wavelet coefficients
in the following Corollary \ref{cor:10.1}.

{\rm (iv)} For $0<p_0 \neq  p_1\leq \infty$ and $0<q_0\neq q_1< \infty$, by using vertex K functional and exponential spaces,
the concrete wavelet representation of $K(t,f, B^{s_0, q_0}_{p_0}, B^{s_1,q_1}_{p_1})$ is defined by wavelet coefficients
in the following Theorem \ref{th:11.4}.

\end{theorem}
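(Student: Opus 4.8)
The first displayed equivalence is simply the definition of the real interpolation space $(A_0,A_1)_{\tz,r}$ via Peetre's $K$-method, so the whole content of the theorem is the four explicit wavelet formulas for $K(t,f,B^{s_0,q_0}_{p_0},B^{s_1,q_1}_{p_1})$. The plan is to transport the problem to the coefficient side. Fix a sufficiently smooth (Daubechies or Meyer) wavelet system $\{\psi^{\ez}_{j,k}\}$. The wavelet characterization of Besov spaces gives a quasi-Banach isomorphism $B^{s,q}_p\cong b^{s,q}_p$ onto a sequence space which is a weighted $\ell^q$ over the layers $j\in\zz$ whose $j$-th term is an $\ell^p$ norm of the coefficients lying in layer $j$. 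Since this isomorphism is simultaneous for $(s_0,q_0,p_0)$ and $(s_1,q_1,p_1)$ — the wavelet system is a common unconditional basis of both spaces — the $K$-functional of the function couple equals, up to constants, the $K$-functional of the sequence-space couple $(b^{s_0,q_0}_{p_0},b^{s_1,q_1}_{p_1})$. From here on everything is a computation on the wavelet grid $\Lz$, its layers $\bgz_j$, and the cuboids they generate.

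The heart of the matter is to compute the $K$-functional of a discrete couple with \emph{three} indices varying. First I would treat the layered structure by a Holmstedt-type splitting: at scale $t$ an optimal decomposition $f=f_0+f_1$ must leave the low-energy layers in $A_1$ and the high-energy layers in $A_0$, so $K(t,f)$ is controlled by a sum over $j$ of the layer-wise optimum. In case (i), where $p_0=p_1=p$, the layer norm is the same $\ell^p$ quasi-norm and there is nothing to split inside a layer; the problem collapses to the $K$-functional of a couple of weighted $\ell^{q_0},\ell^{q_1}$ sequences over $\zz$, and the Lions--Peetre iterative theorem together with Holmstedt's formula yields the closed forms of Corollaries~\ref{cor:7.7}, \ref{cor:7.10} and equation~\eqref{eq:973} of Theorem~\ref{th:7.11}. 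In cases (ii) and (iii), where $q_0=q_1=q$ but $p_0\neq p_1$, each layer now carries a genuine couple $(\ell^{p_0},\ell^{p_1})$, whose $K$-functional is governed by Hunt's and Holmstedt's theorems through the non-increasing rearrangement of the coefficients; but the grid forces one to rearrange not free scalars but the cuboids attached to the dyadic addresses, which is exactly what the cuboid $K$-functional and the vertex $K$-functional of Theorems~\ref{th:8.1}--\ref{th:8.3}, \ref{th:10.1} and Corollary~\ref{cor:10.1} encode; one then reassembles over $\zz$ through the $\ell^q$-layer $K$-functional. Case (iv), $p_0\neq p_1$ and $q_0\neq q_1$, superposes both mechanisms, and their interaction creates new behaviour at the endpoints of the layer sum; this is absorbed by replacing $\ell^q$ with exponential (Lorentz--Zygmund type) spaces, giving Theorem~\ref{th:11.4}.

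The main obstacle — and the reason the conjecture resisted for so long — is precisely the regime in which a within-layer $(\ell^{p_0},\ell^{p_1})$ splitting and an across-layer $(\ell^{q_0},\ell^{q_1})$ splitting are simultaneously active: the two optimizations are coupled, one cannot iterate Holmstedt in the two variables independently, and the extremal configuration is a genuinely nonlinear object on the grid, not a function of the coefficients through any fixed norm. To control it I would bring in the grid topology: quantify how the optimal cut between $A_0$ and $A_1$ migrates through the layers as a combinatorial invariant of the cuboid arrangement, replacing the classical analytic optimization (gradient-type methods, distributional arguments) by a count on the grid. Proving that this invariant is stable under the wavelet isomorphism, is compatible with the intermediate-space structure, and is actually equal (up to equivalence) to the $K$-functional — the ``topology compatibility for intermediate spaces'' — is the technical core; once it is in place the four formulas follow by specialization, the auxiliary rearrangement and Marcinkiewicz-type lemmas being the announced extensions of Lorentz's rearrangement function and Hunt's theorem to the grid setting.
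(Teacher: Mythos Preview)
Your outline for cases (i)--(iii) is essentially the paper's strategy: pass to sequence spaces via the wavelet isomorphism; in case (i) freeze the common layer $\ell^p$-norm to obtain a weighted $(\ell^{q_0},\ell^{q_1})$ couple over the main grid $\mathbb{N}$ and finish with Lions--Peetre/Holmstedt; in cases (ii)--(iii) use the $K_q$-functional (or the vertex functional) so that the infimum commutes with the sum over $j$, reducing to layer-wise $(\ell^{p_0},\ell^{p_1})$ rearrangement on each $\Gamma_j$.

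The genuine gap is case (iv). The ``exponential spaces'' in the statement are \emph{not} Lorentz--Zygmund spaces and have nothing to do with endpoint corrections to an $\ell^q$ sum. They are \emph{power spaces}: one sets $\|f\|_{X_i}=\|f\|_{\tilde l^{s_i,q_i}_{p_i}(\Lambda)}^{\,q_i}$ on the full grid and $\|f\|_{Y_i^j}=\|f\|_{\ell^{p_i}(\Gamma_j)}^{\,q_i}$ on each layer, $i=0,1$. The whole purpose of this nonlinear change of quasi-norm is that the outer $\ell^{q_i}$ sum over $j$ becomes an $\ell^1$ sum in the power norm, so that the $K_\infty$-functional of the power couple decouples over layers:
\[
K_\infty(t,f,X_0,X_1)\sim\sum_{j\ge 0}K_\infty\bigl(t\,2^{j\tilde s},b_j,Y_0^j,Y_1^j\bigr)
\]
(Lemma~\ref{lem:ab}). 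This is precisely the commutation that fails for the original Besov couple when $q_0\neq q_1$; you identify the coupling of the within-layer and across-layer optimizations correctly, but the resolution is not a combinatorial grid invariant---it is this algebraic trick of passing to power norms, bracketed by two further nonlinear reparametrizations $s\leftrightarrow t$ (Lemmas~\ref{cor:11.2} and~\ref{cor:11.1}) which convert $K_\infty$ of the power couples back to $K_\infty$ of the honest $(\ell^{p_0},\ell^{p_1})$ and Besov couples. Without this three-step mechanism your sketch for (iv) has no way to separate the two optimizations, and the proposed ``migration of the optimal cut as a combinatorial invariant'' remains a description of the difficulty rather than a method for overcoming it.
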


{\bf For ease of reading, let's first explain our K functional expression}.
For $p_0=p_1=p$,
we first take the $L^{p}$ norm on the layer grid to get the sequence on the main grid,
and then take the K functional for this sequence on the main grid.
For $p_0\neq p_1$ and $q_0=q_1$,
first, we take K functional on the layer grid,
and then use the compatibility between the main grid and the full grid
to get K functional on the full grid.
For $p_0\neq p_1$ and $q_0\neq q_1$,
we first take K functional on the layer grid,
and then use power spaces to obtain K functional of $(Y_0^j, Y_1^j)$.
Then, the K functional of $(X_0,X_1)$ is obtained by using the compatibility of the main grid and the full grid to the intermediate space.
Finally, the K functional on the full grid is obtained again by using the power spaces.

\begin{remark}

{\bf For ease of reading, let's introduce our proof idea also.}

Wavelets have both the basis properties and the fully discrete lattice properties.
This allows us to discuss functional structures by using the properties of basis
and the topology of lattice points by using the properties of fully discrete lattice points.

{\bf Firstly}, we use wavelet to get wavelet K functional.
Then we obtain the cuboid functional by analyzing the wavelet coefficient characteristics that can approximate the wavelet functional.
{\bf Secondly},
we establish K functionals on the main grid and layer grid,
which provides support for the construction of K functions on the full grid
by using the lattice topological structure starting from the functions on the layer grid.
{\bf Thirdly}, by K functionals on the main grid and layer grid and by cuboid functional,
we can deal with the specific wavelet expressions of the K functional for $p_0=p_1$ or $q_0=q_1$.

{\bf Fourthly}, we consider vertex K functional.
This transforms the calculation problem of functional into the classification problem of cuboid vertices.
{\bf Fifthly}, using vertex functional,
we obtain again the expression of K functional when $q_0=q_1$
through the grid topology compatibility.
{\bf Sixthly and lastly}, we consider the cases where $p_0\neq p_1$ and $q_0\neq q_1$.
We first take K functional on the layer grid,
and then use power spaces to obtain K functional of $(Y_0^j, Y_1^j)$.
Then, the K functional of $(X_0,X_1)$ is obtained by using the compatibility of the main grid and the full grid to the intermediate space.
Finally, the K functional on the full grid is obtained again by using the power spaces.

\end{remark}

\begin{remark}
Peetre's conjecture is posed for the indices $1\leq p_0, p_1\leq \infty$.
By a series of complex new wavelet skills,
{\bf our main Theorem \ref{th:main} not only solves completely the Peetre's conjecture on page 110 of \cite{Peetre},
but also considers more general indices than Peetre's conjecture.}
\end{remark}

\begin{remark}
In 1950, Lorentz \cite{Lorentz} introduced rearrangement function and generalized Lebesgue space.
In 1964, Hunt \cite{Hunt1, Hunt2} established the relationship
between Lorentz spaces and real interpolation spaces about Lebesgue spaces.
The space here is defined by taking specific nonlinear operations on the functions on the layer grid,
using the compatibility of the lattice topology and the nonlinear relationship of the main grid.
Our main Theorem extend the results of Lorentz \cite{Lorentz} and Hunt \cite{Hunt1, Hunt2}
for Lebesgue spaces  to which for Besov spaces
via the basis properties of wavelet and the fully discrete lattice properties of wavelet.
Besov spaces and Triebel-Lizorkin spaces are systematically studied in
Peetre \cite{Peetre} and Triebel \cite{TriebelB}.
Our technique of defining function spaces from topologies on fully discrete grids
also provides a basis for systematically generalizing these two types of spaces.
\end{remark}

\begin{remark}

In this paper, the whole K functional of real interpolation spaces are clarified through wavelets and power spaces,
so as to obtain the specific wavelet expression of K functional of general index of real interpolation of Besov spaces.
The real interpolation spaces of Besov space of general index are obtained.
For $r\neq q$, our real interpolation spaces are all new except the spaces in Lou-Yang-He-He \cite{LYHH} and Besoy-Haroke-Triebel \cite{BHT}.

Based on our results,
the inclusion relationship between Lorentz space and the real interpolation spaces of Besov spaces
can be conveniently considered.
As a direct application, we can generalize the real interpolation inequalities of Bahouri-Cohen \cite{BaC} and  Chamorro-Lemari\'e \cite{CL}
to more general indices.
Ou-Wang \cite{OW} considered Stein's conjecture.
Further, we can try to extend the estimation of $Ef$ in Ou-Wang's theorem to Lorentz space by real interpolation idea.
\end{remark}

The rest of this article is structured as follows:

\vspace{0.2cm}
From Section \ref{sec:2} to Section \ref{sec:4},
we introduce some preparation knowledge.
In Section \ref{sec:2}, we present some preliminaries on interpolation spaces.
First, we recall the definition of K functional and some relative basic properties which will be used later.
Then we present some basic knowledge on Lorentz spaces.
In Section \ref{sec:3}, we introduce first the definition of Besov spaces and Besov-Lorentz spaces,
then we present what's Peetre's conjecture on the real interpolation of Besov spaces.
In Section \ref{sec:4}, we recall first some preliminaries on wavelets and function characterization.
Wavelets have both the basis properties and the fully discrete lattice properties.
This allows us to discuss functional structures by using the properties of basis
and the topology of lattice points by using the properties of fully discrete lattice points.
Then we introduce the progress of Peetre conjecture before our work.

\vspace{0.2cm}
In Section \ref{sec:6}, we consider the preliminary functional structure of K functional.
First we transform continuous K functionals into wavelet K functionals in Theorem \ref{th:6.1}.
Then we analyze the characteristics of the wavelet coefficients approximating the wavelet functional
and get the cuboid functional.
See Theorem \ref{th:5.3}.
These results give Theorem 6 in \cite{DP} in a more explicit way.
Hence wavelet basis are unconditional basis for real interpolation spaces.
See Theorem \ref{th:absolute}.
The content of Section 5 provides us with functional tools to get the K functional
for the cases $p_0=p_1$ or $q_0=q_1$,
and prepares us for further vertex functional.

\vspace{0.2cm}
In Section \ref{sec:5},
we build K functional on a simple grid.
In Sections \ref{sec:5.1}, we consider the case for main grid $\mathbb{N}$.
In Sections \ref{sec:5.2}, we consider the case for layer grid $\Gamma_{j}$.
The results in Sections \ref{sec:5.1} and \ref{sec:5.2} are corresponding to the generalization of the continuous case,
we only state theorems without giving proofs.
In Section \ref{sec:8.1} we study the K functionals when the function is restricted to
a single layer and the classification on the corresponding index set.
These provide necessary preparatory knowledge for the computation of the complex K functional given by wavelets later.

\vspace{0.2cm}
In Section \ref{sec:7}, we apply cuboid K functional to get K functional for the case $p_0=p_1$.
As far as we know, not only has no one systematically established the wavelet characterization of the K functional in this case,
but also no one has worked out what the interpolation space is for the special cases $s_0=s_1$ and $r\neq q$.
For $p_0=p_1$, we can see that
the real interpolation spaces are not Besov spaces for $s_0=s_1$ and $r\neq q$
and the rest real interpolation spaces are known as Besov spaces.
In Section \ref{sec:6.2},
we solidify the wavelet coefficients defined on the layer grid $\Gamma_j$
and we transform the K functional into the K functional of the weighted space on the main grid $\mathbb{N}$.
In Sections \ref{sec:7.1}, we study the cases where $q_0=q_1$ and $s_0\neq s_1$.
Firstly, in Lemma \ref{lem:7.4}, we prove the vertex K functional theorem for weighted $(l^{a,q}, l^{b,q})$.
Then in Lemma \ref{lem:q}, we calculate the expression of the specific K functional for weighted $(l^{a,q}, l^{b,q})$.
And in Theorem \ref{lem:7.6}, we give the real interpolation space of weighted $(l^{a,q}, l^{b,q})$.
Finally in Corollary \ref{cor:7.7},
we give the real interpolation spaces of the corresponding Besov spaces and the K functional defined by the wavelet coefficients.
In Section \ref{sec:7.11}, we study the cases where $s_0\neq s_1$ and $q_0\neq q_1$.
In Theorem \ref{lem:layin}, we get the real interpolation spaces $(l^{s_0,q_0}, l^{s_1,q_1})_{\theta,r}$ by applying
Lemma \ref{lem:q} and Lions-Peetre Iteration Theorem.
In Theorem \ref{lem:q01}, we obtain the K functional $K(t,f, l^{s_0,q_0}, l^{s_1,q_1})$ by applying
Lemma \ref{lem:q} and Holmstedt Theorem.
In Corollary \ref{cor:7.10}, for $s_0\neq s_1$ and $q_0\neq q_1$, we get
the corresponding real interpolation space is still Besov space
and we obtain the relative K functional.
In Section \ref{sec:7.2}, we study the cases  $p_0=p_1$ and $s_0=s_1$.
With the help of cuboid functional,
we reveal that the relative real interpolation spaces for the cases  $p_0=p_1$ and $s_0=s_1$
are the spaces where the usual norms of the frequency are taken first,
and then the Lorentz index are taken for the obtained norms.

\vspace{0.2cm}
In Section \ref{sec:8}, we calculate the K functional for the cases where $p_0<p_1$ and $q_0=q_1$ based on cuboid K functional.
This is because, for the $K_q$ functional,
the functional on the layer grid and the summation over the main grid are commutative.
We get the relative K functional in
the equations \eqref{eq:982} and \eqref{eq:983}
defined by the summation of a group of K functional at different single layer
and in Theorem \ref{th:8.3}.
We generalized Lou-Yang-He-He's Theorem \cite{LYHH} to the case $s_0\neq s_1$.

\vspace{0.2cm}
In Section \ref{sec:9},
we use the quasi-trigonometric inequality satisfied by the norm of the function spaces
to deeply study the structure of the cuboid functional
and obtain the vertex K functional.
Therefore, we transform the functional problem into the classification problem of wavelet coefficients,
which provides support for K functional analysis by analyzing the structure of lattice topology.

\vspace{0.2cm}
In Section \ref{sec:9b}, we use vertex K functionals
to  give a short proof of the equivalent wavelet characterization of K functionals
when $q_0=q_1$.
Finally, in Section \ref{sec:10},
we consider vertex $K_{\infty}$ functional.
By introducing two kinds of power spaces,
and by this change of topology,
we get two sets of intermediate spaces $(X_0,X_1)$ and $(Y_0^j, Y_1^j)$.
See Lemmas \ref{cor:11.2} and \ref{cor:11.1}.
The functional of intermediate space has the topological compatibility.
See Lemma \ref{lem:ab}.
Finally, {\bf the wavelet characterization Theorem \ref{th:11.4} of all the K functional for $0<p_0\neq p_1\leq \infty$ and $0<q_0\neq q_1<\infty$
is given by using vertex functional and power spaces
which contain three levels of nonlinear topology meaning.}

\section{Preliminaries on interpolation spaces}\label{sec:2}
In this section, we present some preliminaries on interpolation spaces.
\subsection{K functional and basic properties}\label{sec:2.1}
We recall that  $(A_{0},\;A_{1})$ is a pair of quasi-normed spaces which are continuously embedded in a Hausdorff space $X$
and assume that $A_0$ and $A_1$ are compatible normed vector spaces. See \cite{BL, CDL, Peetre, TriebelB, YCP}.
For any threshold $t>0$,
$\forall f\in A_0 + A_1$ and $f_0\in A_0$,
define  a set of dynamically varying norms
$$ K(t,f, A_0, A_1,f_0) =  \|f_0\|_{A_0} + t \|f-f_0\|_{A_1} $$
for all $f\in A_{0}+A_{1}$ with $f_{0}\in A_{0}$ and $f_{1}\in A_{1}$.
$K$-functional is the lower bound of this dynamically varying norm, i.e.
$$K:= K(t,f, A_0, A_1) = \inf_{f=f_0+ f_1}
\{ \|f_0\|_{A_0} + t \|f_1\|_{A_1}\}.$$
$K(t,f,  A_0, A_1)$ is a nonnegative, increasing concave function of $t$.
K functionals are very similar to cost functions with regularized terms in learning algorithms, see \cite{LCWG}.

\begin{remark}
In learning algorithms, $ \|f_0\|_{A_0}$ is often taken as the empirical mean,
the norm of $A_1$ is chosen often as $L^{p}$ norm,
and the threshold $t$ is a relatively small quantity.
The unlabeled data is taken in $A_0+A_1$,
and the processed data is labeled $+1$ in $A_0$ and labeled $-1$ in $A_1$.
In this paper,
the nonlinear functional structure of K functional is analyzed and converted into vertex functional,
and it is concluded that the essence of K functional is data classification.
Then the data is classified by the lattice topology.
In learning algorithms, the way to achieve the goal of classification is through a lot of computation.
The learning algorithm hopes to have an effective and simple way to label data
\cite{HMY, HWJJ}.

{\bf Before our work}, people used {\bf gradient descent} to figure out how to minimize the cost function.
However, in the era of big data,
the amount of computation is too large, so {\bf distributed algorithm} is adopted.
Various efforts to optimize distributed algorithms appear.
However, these algorithms do not probe into the intrinsic mathematical structure of cost function,
so they cannot best match cost function.
This paper changes this approach.
Through some nonlinear functional analysis and nonlinear topological flow analysis,
the calculation of K-functional only needs to consider the lattice topology.
{\bf Sorting data by calculating the grid topology will greatly reduce the amount of computation}.
Therefore, the study of K functional is one of the theoretical bases of learning algorithm.
\end{remark}

K functionals have the following commutative properties:
\begin{lemma} \label{lem:CK}
For $t>0$, we have
$K(t,f, A_0, A_1)= t K(t^{-1}, f, A_1, A_0).$
\end{lemma}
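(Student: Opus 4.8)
The plan is to prove the identity $K(t,f,A_0,A_1) = t\,K(t^{-1},f,A_1,A_0)$ directly from the definition of the $K$-functional by manipulating the infimum. First I would write out both sides explicitly: the left-hand side is $\inf_{f=f_0+f_1}\{\|f_0\|_{A_0} + t\|f_1\|_{A_1}\}$, where the infimum is over all decompositions with $f_0\in A_0$ and $f_1\in A_1$. The right-hand side, by the same definition applied to the pair $(A_1,A_0)$ with parameter $t^{-1}$, is $t\cdot\inf_{f=g_1+g_0}\{\|g_1\|_{A_1} + t^{-1}\|g_0\|_{A_0}\}$, where now $g_1\in A_1$ and $g_0\in A_0$.

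Next I would observe that the set of admissible decompositions is the same in both cases: a decomposition $f=f_0+f_1$ with $f_0\in A_0$, $f_1\in A_1$ is exactly a decomposition $f=g_1+g_0$ with $g_1=f_1\in A_1$, $g_0=f_0\in A_0$. Under this identification, the quantity being minimized on the right, after pulling the factor $t$ inside, becomes $t\|f_1\|_{A_1} + \|f_0\|_{A_0}$, which is term-for-term equal to the expression $\|f_0\|_{A_0} + t\|f_1\|_{A_1}$ appearing on the left. Since the two infima are taken over the same index set of the same nonnegative function, they coincide; multiplying back by $t>0$ (which is a fixed positive constant and hence commutes with the infimum) gives the claimed equality.

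There is essentially no obstacle here — the statement is a formal symmetry of the $K$-functional, and the only points worth stating carefully are that $t>0$ so that $t\cdot\inf(\cdot)=\inf(t\cdot\ )$ and that $A_0+A_1 = A_1+A_0$ so the $K$-functional is defined for the same $f$ on both sides. The proof is two lines once the infima are written side by side; I would present it as a short direct computation rather than invoking any auxiliary lemma.
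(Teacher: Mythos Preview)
Your proof is correct and is exactly the standard direct computation from the definition. The paper does not actually supply a proof of this lemma; it is stated as a basic commutativity property of the $K$-functional and taken as known, so your argument is precisely what a reader would fill in.
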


Based on K functional, we can define the corresponding real interpolation space. For $0<\theta<1$,
$$\|f\|_{(A_0,A_1)_{\theta,\infty,K}}= \sup\limits_{t>0} t^{-\theta} K(t, f, A_0, A_1).$$
$$(A_0, A_1)_{\theta,\infty, K} = \{f: f\in A_0 +A_1, \|f\|_{(A_0,A_1)_{\theta,\infty, K}}<\infty\}.$$
Further, for $0<\eta<\infty$, define
$$\|f\|_{(A_0,A_1)_{\theta,\eta,K}}=\lf \{\int^{\infty}_{0} [ t^{-\theta} K(t, f, A_0, A_1)] ^{\eta}
\frac{dt}{t}\r \}^{\frac{1}{\eta}}.$$
$$(A_0, A_1)_{\theta,\eta, K} = \{f: f\in A_0 +A_1, \|f\|_{(A_0,A_1)_{\theta,\eta, K}}<\infty\}.$$
The interpolation spaces have the following commutative properties:
\begin{lemma}\label{lem:commutative}
Given $0<\theta<1$ and $ 0<\eta\leq \infty$. Then
\begin{equation*}\label{eq:2221}
\|f\|_{(A_0,A_1)_{\theta,\eta, K}}= \|f\|_{(A_1,A_0)_{1-\theta,\eta, K}}.
\end{equation*}
\end{lemma}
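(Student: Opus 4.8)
The plan is to deduce the identity from the elementary symmetry relation for the $K$-functional recorded in Lemma \ref{lem:CK}, namely $K(t,f,A_0,A_1)=t\,K(t^{-1},f,A_1,A_0)$, together with the substitution $s=t^{-1}$ in the integral (respectively the supremum) defining the interpolation norm. Since $A_0+A_1=A_1+A_0$ as ambient spaces, the two interpolation spaces in the statement consist of exactly the same elements $f$, so it suffices to prove the equality of the two quasi-norms for every admissible $f$.

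First I would treat the case $0<\eta<\infty$. Starting from
$$\|f\|_{(A_0,A_1)_{\theta,\eta,K}}^{\eta}=\int_0^{\infty}\big[t^{-\theta}K(t,f,A_0,A_1)\big]^{\eta}\,\frac{dt}{t},$$
I substitute $K(t,f,A_0,A_1)=t\,K(t^{-1},f,A_1,A_0)$ from Lemma \ref{lem:CK}, which turns the integrand into $\big[t^{1-\theta}K(t^{-1},f,A_1,A_0)\big]^{\eta}\,\frac{dt}{t}$. Then I apply the change of variables $s=t^{-1}$: this is a bijection of $(0,\infty)$ that preserves the multiplicative Haar measure, since $\frac{dt}{t}=-\frac{ds}{s}$ and the limits of integration merely swap. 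The result is
$$\int_0^{\infty}\big[s^{-(1-\theta)}K(s,f,A_1,A_0)\big]^{\eta}\,\frac{ds}{s}=\|f\|_{(A_1,A_0)_{1-\theta,\eta,K}}^{\eta},$$
and taking $\eta$-th roots finishes this case.

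Next I would handle $\eta=\infty$ in the same spirit: from $\|f\|_{(A_0,A_1)_{\theta,\infty,K}}=\sup_{t>0}t^{-\theta}K(t,f,A_0,A_1)$, inserting the identity of Lemma \ref{lem:CK} gives $\sup_{t>0}t^{1-\theta}K(t^{-1},f,A_1,A_0)$, and replacing $t$ by $s=t^{-1}$ (again a bijection of $(0,\infty)$) recovers $\sup_{s>0}s^{-(1-\theta)}K(s,f,A_1,A_0)=\|f\|_{(A_1,A_0)_{1-\theta,\infty,K}}$. There is essentially no serious obstacle; the only points deserving a word of care are that $s=t^{-1}$ preserves $\frac{dt}{t}$ so that no Jacobian factor survives, and that $1-\theta\in(0,1)$ whenever $\theta\in(0,1)$, so the right-hand side is a legitimate interpolation norm of the same type. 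Both are immediate.
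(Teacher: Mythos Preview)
Your proof is correct and is precisely the standard argument: apply Lemma~\ref{lem:CK} and the substitution $s=t^{-1}$, which preserves $\tfrac{dt}{t}$. The paper itself states Lemma~\ref{lem:commutative} without proof, treating it as a known preliminary fact, so your argument supplies exactly the routine verification one would expect.
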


The following Lions-Peetre's iterative Theorem  has been used often to get complex real interpolation spaces from simple real interpolation spaces.
\begin{lpthm}\label{lem:LP}
For $0<\theta_0, \theta_1, \eta<1, 0<q_0, q_1\leq \infty, 1\leq q\leq \infty$  and\;　$\theta= (1-\eta) \theta_0 + \eta \theta_1$, we have
$$\{ (A_0, A_1) _{\theta_0, q_0}, (A_0, A_1)_{\theta_1, q_1}\}_{\eta, q} = (A_0, A_1)_{\theta, q}.$$
\end{lpthm}
See \cite{BL, LionsP, Peetre}.
Furthermore, we can use
the following Lemma proved by Holmstedt in \cite{Holmstedt}
to compute the relative K functional.
\begin{holthm} \label{th:Holmstedt}
Let $A_0$ and $A_1$ be a couple of quasi-normed spaces and for $i=1,2$, write
$E_{i}= (A_0, A_1)_{\theta_i, q_i} $.
If $0<\theta_0<\theta_1<1$, $\eta= \theta_1-\theta_0$ and $0<q_0,q_1\leq \infty$, then
$$\begin{array}{c}
K(t,f, E_0, E_1)\\
=  \lf\{ \int^{t^{\frac{1}{\eta}}}_{0} (s^{-\theta_0} K(s, f, A_0, A_1))^{q_0} \frac{ds}{s}\r\}^{\frac{1}{q_0}}
+ t \lf\{\int^{\infty}_{t^{\frac{1}{\eta}}} (s^{-\theta_1} K(s, f, A_0, A_1))^{q_1} \frac{ds}{s}\r\}^{\frac{1}{q_1}}.
\end{array} $$
\end{holthm}

The calculation of many K functionals is quite complex.
Holmstedt-Peetre \cite{HP} introduced $K_{\xi}$ functional to improve the situation.
For $0<\xi<\infty$, define the functional $K_{\xi}$ by
$$K_{\xi}=K_{\xi}(t,f,A_{0},A_{1})=\inf\limits_{f=f_{0}+f_{1}}(\|f_{0}\|_{A_{0}}^{\xi}+t^{\xi}\|f_{1}\|_{A_{1}}^{\xi})^{\frac{1}{\xi}}.$$
For $\xi=\infty$, define
$$K_{\infty}=K_{\infty}(t,f,A_{0},A_{1})=\inf\limits_{f=f_{0}+f_{1}}\max (\|f_{0}\|_{A_{0}}, t\|f_{1}\|_{A_{1}}).$$
And for $0<\xi\leq \infty$,
$$
\|f\|_{(A_{0},A_{1})_{\theta,q,K_{\xi}}}=\left\{\int_{0}^{\infty}[t^{-\theta}K_{\xi}(t,f,A_{0},A_{1})]^{q}\frac{dt}{t}\right\}^{\frac{1}{q}}.
$$
Holmstedt-Peetre \cite{HP} tell us that the real interpolation spaces formed by
the two kinds of K functionals are equivalent.
\begin{lemma}\label{le3.1}
Let $(A_{0},A_{1})$ be a couple of quasi-normed spaces.
For any $0<\xi \leq \infty$, we have
$$\|f\|_{(A_{0},A_{1})_{\theta,q,K}}\sim\|f\|_{(A_{0},A_{1})_{\theta,q,K_{\xi}}}.$$
\end{lemma}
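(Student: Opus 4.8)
The plan is to prove Lemma \ref{le3.1}, i.e. the equivalence
$\|f\|_{(A_{0},A_{1})_{\theta,q,K}}\sim\|f\|_{(A_{0},A_{1})_{\theta,q,K_{\xi}}}$
for all $0<\xi\leq\infty$, by comparing the two functionals $K$ and $K_\xi$ pointwise in $t$. First I would record the elementary two–sided bound that holds for the $\ell^\xi$ versus $\ell^1$ (or $\ell^\infty$) norm of a pair of nonnegative numbers $a,b$: one always has $\max(a,b)\le (a^\xi+b^\xi)^{1/\xi}\le 2^{1/\xi}\max(a,b)$ when $0<\xi<\infty$, and $(a^\xi+b^\xi)^{1/\xi}\le a+b\le 2^{1-1/\xi}(a^\xi+b^\xi)^{1/\xi}$ when $\xi\ge 1$, with the reverse-type inequalities (using quasi-triangle constants) when $0<\xi<1$. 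Applying this with $a=\|f_0\|_{A_0}$, $b=t\|f_1\|_{A_1}$ and then taking the infimum over all decompositions $f=f_0+f_1$ gives, for a fixed $t$,
$$c_\xi\, K_\infty(t,f,A_0,A_1)\le K_\xi(t,f,A_0,A_1)\le C_\xi\, K(t,f,A_0,A_1),$$
and also $K_\infty(t,f,A_0,A_1)\le K(t,f,A_0,A_1)\le 2K_\infty(t,f,A_0,A_1)$, so all three functionals $K$, $K_\xi$, $K_\infty$ are pointwise comparable up to constants depending only on $\xi$ (and the quasi-norm constants of $A_0,A_1$). Substituting these pointwise equivalences into the defining integral $\{\int_0^\infty [t^{-\theta}K_{\bullet}(t,f,A_0,A_1)]^q \frac{dt}{t}\}^{1/q}$ (and the obvious $\sup$ version when $q=\infty$) immediately yields the claimed norm equivalence, since $t^{-\theta}$ and the measure $dt/t$ are untouched.

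The one genuine subtlety — and the step I expect to be the main obstacle — is that taking an infimum does not in general commute with the nonlinear map $(a,b)\mapsto(a^\xi+b^\xi)^{1/\xi}$, so passing from the pointwise inequality between $\|f_0\|_{A_0}^\xi+t^\xi\|f_1\|_{A_1}^\xi$-type quantities for a single decomposition to the corresponding inequality between the infima must be done carefully. The clean way around this is: for the upper bound $K_\xi\le C_\xi K$, fix $\varepsilon>0$, choose a near-optimal decomposition $f=f_0+f_1$ for $K(t,f,A_0,A_1)$, and plug it into the definition of $K_\xi$, using $(a^\xi+b^\xi)^{1/\xi}\le C_\xi(a+b)$ (valid since this holds whenever $a,b\ge 0$ with $C_\xi=\max(1,2^{1/\xi-1})$ or $C_\xi=1$ according to $\xi\lessgtr 1$); letting $\varepsilon\to 0$ gives the bound. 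For the lower bound $c_\xi K_\infty\le K_\xi$ one argues symmetrically with a near-optimal decomposition for $K_\xi$ and the inequality $\max(a,b)\le (a^\xi+b^\xi)^{1/\xi}$. Thus one never needs to interchange inf with the nonlinearity; one only ever transports a single decomposition from one functional to the other.

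Finally I would assemble the chain $K_\infty\lesssim K_\xi\lesssim K\lesssim K_\infty$ (the last inequality because a decomposition near-optimal for $K_\infty$ has $t\|f_1\|_{A_1}\le K_\infty+\varepsilon$ and $\|f_0\|_{A_0}\le K_\infty+\varepsilon$, hence $\|f_0\|_{A_0}+t\|f_1\|_{A_1}\le 2K_\infty+2\varepsilon$), conclude $K\sim K_\xi\sim K_\infty$ pointwise with constants independent of $t$ and $f$, and then integrate. When the spaces $A_0,A_1$ are only quasi-normed rather than normed, the triangle-type inequalities above should be replaced by their quasi-norm analogues, which only changes the constants $c_\xi,C_\xi$; since the statement asserts equivalence ``$\sim$'' and not equality, this causes no difficulty. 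This gives the full proof of Lemma \ref{le3.1}.
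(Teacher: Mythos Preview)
Your argument is correct: the pointwise equivalence $K_\infty(t,f)\le K_\xi(t,f)\le 2^{1/\xi}K_\infty(t,f)$ together with $K_\infty(t,f)\le K(t,f)\le 2K_\infty(t,f)$, established by transporting a single near-optimal decomposition as you describe, gives the claimed norm equivalence after integrating against $t^{-\theta q}\,dt/t$. Note, however, that the paper does not supply its own proof of this lemma; it simply records it as a known result of Holmstedt--Peetre \cite{HP}, so there is no in-paper argument to compare against.
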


\subsection{K functional for Lebesgue functions}\label{sec:2.2}
Lorentz \cite{Lorentz} has considered the distribution function and the non-increasing rearrangement function of Lebesgue functions
and introduced Lorentz spaces.
For set $E$, let $|E|$ denote the measure of $E$.
Consider the following right continuons non-increasing functions:\\
$$\sigma_{f}(\lambda)= |\{x: |f(x)|>\lambda\}| {\rm\, and \,} f^{*}(\tau)= \inf \{\lambda: \sigma_{f}(\lambda)\leq \tau\}.$$
For distribution function and rearrangement function, there is the following relation,
see page 231 of Cheng-Deng-Long's book \cite{CDL}.
\begin{lemma} \label{th:CDL}
Let $\Phi(u)$ be a continuously increasing function satisfying $\Phi(0)=0$.
For any measurable function $f$ and $\lambda>0$ on the general measure space $(X, \mathfrak{F}, \mu)$,
\begin{equation}\label{eq:(8)}
\int_{\{x: |f|\leq \lambda\}} \Phi(|f|) d\mu= \int^{\infty}_{\sigma_{f}(\lambda)} \Phi(f^{*}(t)) dt.
\end{equation}
\begin{equation}\label{eq:(9)}
\int_{\{x:|f|> \lambda\}} \Phi(|f|) d\mu= \int^{\sigma_{f} (\lambda)}_{0} \Phi(f^{*}(t)) dt.
\end{equation}
\end{lemma}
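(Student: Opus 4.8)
The plan is to reduce both identities to the Cavalieri (layer-cake) principle together with the equimeasurability of $f$ and its non-increasing rearrangement $f^{*}$. First I would record the layer-cake representation: since $\Phi$ is continuous, increasing and $\Phi(0)=0$, for every nonnegative measurable $g$ on any measure space $(Y,\nu)$ one may write $\Phi(g(y))=\int_{(0,\infty)}\mathbf{1}_{\{s<g(y)\}}\,d\Phi(s)$, where $d\Phi$ is the Lebesgue--Stieltjes measure of $\Phi$; Tonelli's theorem (all integrands are nonnegative) then gives
$$\int_Y \Phi(g)\,d\nu=\int_{(0,\infty)}\nu(\{y:g(y)>s\})\,d\Phi(s).$$
This one formula will be applied twice for each identity: once with $\nu=\mu$ and $g=|f|$ restricted to a level set of $|f|$, and once with $\nu$ equal to Lebesgue measure on a half-line and $g=f^{*}$. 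The only hypothesis beyond those stated is enough $\sigma$-finiteness to legitimize Tonelli, which is automatic since $d\Phi$ is $\sigma$-finite on $(0,\infty)$ and the relevant level sets carry the reduced measure.

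The second ingredient is the duality between $f^{*}$ and $\sigma_{f}$. From the definition $f^{*}(\tau)=\inf\{\lambda:\sigma_{f}(\lambda)\le\tau\}$ and the fact that $\sigma_{f}$ is non-increasing and right-continuous, I would establish the pointwise equivalence $f^{*}(t)>s\iff t<\sigma_{f}(s)$, valid outside a countable (hence Lebesgue-null) set of exceptional values. Consequently $\{t>0:f^{*}(t)>s\}=(0,\sigma_{f}(s))$ up to a null set, which yields the equimeasurability $|\{t:f^{*}(t)>s\}|=\sigma_{f}(s)$. This is the genuinely delicate point of the argument: the generalized-inverse relationship must be handled at jump points and flat pieces of $\sigma_{f}$, but since these contribute only countably many exceptional $s$ they are invisible to the $d\Phi$- and $dt$-integrals.

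With these tools the computation is symmetric. To prove \eqref{eq:(9)}, applying the layer-cake formula to the left side with $Y=\{|f|>\lambda\}$ gives $\int_{(0,\infty)}\mu(\{|f|>\max(\lambda,s)\})\,d\Phi(s)$, which after splitting the $s$-integral at $s=\lambda$ equals $\Phi(\lambda)\sigma_{f}(\lambda)+\int_{\lambda}^{\infty}\sigma_{f}(s)\,d\Phi(s)$. Applying it to the right side with $Y=(0,\sigma_{f}(\lambda))$ and $g=f^{*}$ gives $\int_{(0,\infty)}|\{t<\sigma_{f}(\lambda):f^{*}(t)>s\}|\,d\Phi(s)$; using the duality to evaluate the inner measure as $\min(\sigma_{f}(\lambda),\sigma_{f}(s))$ and splitting again at $s=\lambda$ produces the identical expression, which proves \eqref{eq:(9)}.

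For \eqref{eq:(8)} I would run the same argument with $Y=\{|f|\le\lambda\}$ on the left and $Y=(\sigma_{f}(\lambda),\infty)$ on the right; both sides collapse to $\int_{0}^{\lambda}(\sigma_{f}(s)-\sigma_{f}(\lambda))\,d\Phi(s)$, since the set $\{|f|\le\lambda\}\cap\{|f|>s\}$ and the interval $(\sigma_{f}(\lambda),\sigma_{f}(s))$ are both empty for $s\ge\lambda$. Alternatively, once \eqref{eq:(9)} is in hand, \eqref{eq:(8)} follows by taking $\lambda\to\infty$ to obtain the global identity $\int_{X}\Phi(|f|)\,d\mu=\int_{0}^{\infty}\Phi(f^{*})\,dt$ and subtracting; this route, however, requires first checking finiteness to avoid an $\infty-\infty$ ambiguity, so I would prefer the direct symmetric computation, which needs no such hypothesis.
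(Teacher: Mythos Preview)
Your proof is correct and follows the standard layer-cake/equimeasurability route. One small remark: the equivalence $f^{*}(t)>s\iff \sigma_{f}(s)>t$ is in fact exact for every $s,t>0$ once you use the right-continuity of $\sigma_{f}$ (the forward implication is immediate from the infimum definition, and the reverse follows by choosing $\epsilon=\sigma_{f}(s)-t$ in the definition of right-continuity at $s$), so there is no need to excise a countable exceptional set. This does not affect your argument, but it streamlines the write-up.

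As for comparison with the paper: the paper does not prove this lemma at all. It is stated with a citation to page~231 of Cheng--Deng--Long \cite{CDL} and used as a known fact. So your proposal supplies a complete argument where the paper simply imports the result from a textbook; there is nothing to compare on the level of method.
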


Lorentz space $L^{p,q}$ is defined by non-increasing function:
\begin{definition}\label{def:2.7}
Given $0<p,q<\infty$. Then\\
{\rm (i)} $f\in L^{p,q} \Leftrightarrow \{ \frac{q}{p} \int^{\infty}_{0}  (t^{\frac{1}{p}} f^{*}(t))^{q} \frac{dt}{t} \}^{\frac{1}{q}}< +\infty.$\\
{\rm (ii)} $f\in L^{p,\infty} \Leftrightarrow \sup\limits_{t>0} t^{\frac{1}{p}} f^{*}(t)< +\infty.$
\end{definition}
Hunt and Holmstedt have found out the following K functional of Lebesgue function
by using the rearrangement functions. See \cite{BL, Lorentz, Peetre}.
\begin{lemma}\label{lem:2.5}
{\rm (i)} If $0<p<\infty$, then $K(t,f, L^{p}, L^{\infty}) = [ \int ^{t^{p}}_{0} | f^{*}(\tau)| ^{p} d\tau ]^{\frac{1}{p}}.$\\
{\rm (ii)} If $0<p_{0}<p_{1}<\infty$ and $\frac{1}{\alpha}= \frac{1}{p_{0}}- \frac{1}{p_{1}}$, then\\
$$K(t,f, L^{p_{0}}, L^{p_{1}}) = \lf[ \int ^{t^{\alpha}}_{0} | f^{*}(\tau)| ^{p_{0}} d\tau\r ]^{\frac{1}{p_{0}}}
+ t\lf [ \int ^{\infty}_{t^{\alpha }} | f^{*}(\tau)| ^{p_{1}} d\tau\r ]^{\frac{1}{p_{1}}}.$$
\end{lemma}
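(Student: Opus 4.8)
The plan is to prove each part by the classical two--sided estimate, using Lemma~\ref{th:CDL} to rewrite $L^{p}$--norms of truncations of $f$ as integrals of $f^{*}$. For the upper bound I would exhibit a near--optimal splitting $f=f_{0}+f_{1}$ obtained by truncating $f$ at a height read off from the rearrangement: at height $\lambda=f^{*}(t^{p})$ in part~(i) and at height $\lambda=f^{*}(t^{\alpha})$ in part~(ii), taking for $f_{1}$ the pointwise truncation of $f$ at absolute value $\lambda$ and for $f_{0}=f-f_{1}$ the remaining ``high part''. For the lower bound I would start from an \emph{arbitrary} admissible splitting $f=g_{0}+g_{1}$ and use the sub--additivity of the non--increasing rearrangement, $(g_{0}+g_{1})^{*}(s+u)\le g_{0}^{*}(s)+g_{1}^{*}(u)$, together with $g_{1}^{*}(u)\le\|g_{1}\|_{L^{\infty}}$ in part~(i) and the Chebyshev bound $g_{1}^{*}(u)\le u^{-1/p_{1}}\|g_{1}\|_{L^{p_{1}}}$ in part~(ii). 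The hypotheses $f\in L^{p}+L^{\infty}$ and $f\in L^{p_{0}}+L^{p_{1}}$ serve only to make the integrals below finite.

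For part~(i), the truncation at $\lambda=f^{*}(t^{p})$ obeys $\|f_{1}\|_{L^{\infty}}\le\lambda$, while \eqref{eq:(9)} with $\Phi(u)=u^{p}$ and the standard fact $\sigma_{f}(f^{*}(t^{p}))\le t^{p}$ give $\|f_{0}\|_{L^{p}}^{p}\le\int_{\{|f|>\lambda\}}|f|^{p}\,dx=\int_{0}^{\sigma_{f}(\lambda)}(f^{*}(\tau))^{p}\,d\tau\le\int_{0}^{t^{p}}(f^{*}(\tau))^{p}\,d\tau$; since $f^{*}$ is non--increasing we also have $t\lambda\le(\int_{0}^{t^{p}}(f^{*})^{p})^{1/p}$, so adding the two estimates bounds $K(t,f,L^{p},L^{\infty})$ by a multiple of $(\int_{0}^{t^{p}}(f^{*}(\tau))^{p}\,d\tau)^{1/p}$. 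Conversely, sub--additivity gives $f^{*}(u)\le g_{0}^{*}(u/2)+\|g_{1}\|_{L^{\infty}}$, so by Minkowski's inequality $(\int_{0}^{t^{p}}(f^{*})^{p})^{1/p}\le 2^{1/p}\|g_{0}\|_{L^{p}}+t\|g_{1}\|_{L^{\infty}}$; taking the infimum over all splittings yields the matching lower bound.

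For part~(ii), the truncation at $\lambda=f^{*}(t^{\alpha})$ gives, via \eqref{eq:(8)}--\eqref{eq:(9)}, $\|f_{0}\|_{L^{p_{0}}}^{p_{0}}\le\int_{0}^{t^{\alpha}}(f^{*})^{p_{0}}$ and $\|f_{1}\|_{L^{p_{1}}}^{p_{1}}\lesssim\lambda^{p_{1}}t^{\alpha}+\int_{t^{\alpha}}^{\infty}(f^{*})^{p_{1}}$; the value of $\alpha$ enters through the identity $1+\alpha/p_{1}=\alpha/p_{0}$, which turns the stray term $t\lambda\,t^{\alpha/p_{1}}$ into $t^{\alpha/p_{0}}f^{*}(t^{\alpha})\le(\int_{0}^{t^{\alpha}}(f^{*})^{p_{0}})^{1/p_{0}}$, so the upper bound follows on adding. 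For the lower bound one uses $f^{*}(u)\le g_{0}^{*}(u/2)+(u/2)^{-1/p_{1}}\|g_{1}\|_{L^{p_{1}}}$: the tail $\int_{0}^{t^{\alpha}}u^{-p_{0}/p_{1}}\,du$ converges because $p_{0}<p_{1}$ and, once the exponent $1/p_{0}$ is applied, produces exactly the factor $(t^{\alpha})^{1/\alpha}=t$, giving $(\int_{0}^{t^{\alpha}}(f^{*})^{p_{0}})^{1/p_{0}}\lesssim\|g_{0}\|_{L^{p_{0}}}+t\|g_{1}\|_{L^{p_{1}}}$; the symmetric estimate for $t(\int_{t^{\alpha}}^{\infty}(f^{*})^{p_{1}})^{1/p_{1}}$, now using $g_{0}^{*}(u)\le u^{-1/p_{0}}\|g_{0}\|_{L^{p_{0}}}$ and the convergent tail $\int_{t^{\alpha}}^{\infty}u^{-p_{1}/p_{0}}\,du$ ($p_{1}>p_{0}$, whose $1/p_{1}$--th power supplies a factor $t^{-1}$ that cancels the external $t$), gives the same bound; summing and taking the infimum completes part~(ii).

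The step I expect to be the main obstacle is the lower bound --- showing that no decomposition beats the explicit truncation --- where the sub--additivity of the rearrangement and the correct balancing point $u\sim t^{\alpha}$ (equivalently, the value $\tfrac1\alpha=\tfrac1{p_{0}}-\tfrac1{p_{1}}$, which is exactly what makes the two tail integrals converge and the powers of $t$ come out right) are indispensable; the upper bound, by contrast, is a direct computation once the truncation height is fixed. I would also flag that, with the usual normalizations of the $L^{p}$ quasi-norms, the identities in Lemma~\ref{lem:2.5} should be read as equivalences with constants depending only on $p$ (resp.\ $p_{0},p_{1}$); the genuinely exact instance is the endpoint $K(t,f,L^{1},L^{\infty})=\int_{0}^{t}f^{*}$, and the general case can alternatively be recovered from it through the rescaling $K(t,f,L^{p},L^{\infty})^{p}\sim K(t^{p},|f|^{p},L^{1},L^{\infty})$.
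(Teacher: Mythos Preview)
The paper does not prove Lemma~\ref{lem:2.5}: it simply records it as a known result of Hunt and Holmstedt and refers the reader to \cite{BL, Lorentz, Peetre}. Your proposal is the standard proof one finds in those references (truncation at the right height for the upper bound, sub-additivity of the rearrangement plus Chebyshev for the lower bound), and the argument is correct. Two minor remarks: your appeal to ``Minkowski's inequality'' in the lower bound of part~(i) needs the quasi-triangle form when $0<p<1$, which you already anticipate by insisting the identities be read as equivalences; and your closing observation that the equalities in the statement are really equivalences up to constants depending only on $p$ (resp.\ $p_{0},p_{1}$) is exactly right and worth keeping.
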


We will later develop the discrete form of this lemma without proof.
By Lemma \ref{th:CDL},
there exist following relations between the big data set of the function and the rearrangement function.
\begin{lemma}\label{lem:2.6}
For $\lambda>0$, we have\\
{\rm (i)} $ f^{*}(0)= \sup |f|$.\\
{\rm (ii)} $\int_{\{x: |f|\leq \lambda\}} |f|^{p} dx =
\int ^{+\infty} _{\sigma(\lambda)} |f^{*}(t)|^{p} dt.$\\
{\rm (iii)} $ \int_{\{x: |f|> \lambda\}} |f|^{p} dx =
\int ^{\sigma(\lambda)}_{0} |f^{*}(t)|^{p} dt.$
\end{lemma}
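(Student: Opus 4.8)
The plan is to derive all three identities as immediate specializations of Lemma~\ref{th:CDL}, so the work is essentially bookkeeping with definitions.

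For part (i), I would only unwind the definitions of $f^{*}$ and $\sigma_{f}$. Since $\sigma_{f}\ge 0$, we have $f^{*}(0)=\inf\{\lambda:\sigma_{f}(\lambda)\le 0\}=\inf\{\lambda:\sigma_{f}(\lambda)=0\}$. Now $\sigma_{f}(\lambda)=|\{x:|f(x)|>\lambda\}|=0$ exactly when $|f(x)|\le\lambda$ for almost every $x$, i.e. exactly when $\lambda\ge\esssup|f|$; taking the infimum over such $\lambda$ gives $f^{*}(0)=\esssup|f|$, which is what ``$\sup|f|$'' denotes here. The right-continuity of $\sigma_{f}$ recorded in the text guarantees the infimum is attained when $\esssup|f|<\infty$, while when $\esssup|f|=\infty$ both sides equal $+\infty$.

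For parts (ii) and (iii), I would apply Lemma~\ref{th:CDL} with $\Phi(u)=u^{p}$. For every $0<p<\infty$ the map $u\mapsto u^{p}$ is continuous and strictly increasing on $[0,\infty)$ and vanishes at $0$ (this holds for $0<p<1$ as well), so the hypotheses of Lemma~\ref{th:CDL} are met. Then \eqref{eq:(8)} becomes $\int_{\{|f|\le\lambda\}}|f|^{p}\,d\mu=\int_{\sigma_{f}(\lambda)}^{\infty}(f^{*}(t))^{p}\,dt$, which is (ii) since $f^{*}\ge 0$ gives $(f^{*}(t))^{p}=|f^{*}(t)|^{p}$; and \eqref{eq:(9)} becomes $\int_{\{|f|>\lambda\}}|f|^{p}\,d\mu=\int_{0}^{\sigma_{f}(\lambda)}(f^{*}(t))^{p}\,dt$, which is (iii).

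I do not anticipate any genuine obstacle: the lemma is Lemma~\ref{th:CDL} read in the case $\Phi(u)=u^{p}$ together with the definitional computation of $f^{*}(0)$. The only points deserving a line of care are verifying that $u\mapsto u^{p}$ satisfies the hypotheses of Lemma~\ref{th:CDL} across the full range $0<p<\infty$, and the degenerate case $\esssup|f|=\infty$ in part (i), where both sides are $+\infty$.
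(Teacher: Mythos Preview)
Your proposal is correct and matches the paper's approach exactly: the paper does not give a detailed proof but simply introduces the lemma with the sentence ``By Lemma~\ref{th:CDL}, there exist following relations\ldots'', which is precisely your argument for (ii) and (iii) with $\Phi(u)=u^{p}$, while (i) is the standard definitional check you carry out.
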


Combining the above two lemmas,
we know that the K functional is determined by the norm of the large value point part and the small value point part
bounded by a certain value in the corresponding space.
{\bf Later we will use the discrete form of these lemmas to classify the vertices of cuboids}.
The real interpolation spaces of all Lebesgue spaces have been known.
According to the above lemmas \ref{lem:2.5} and \ref{lem:2.6},
the real interpolation spaces of Lebesgue spaces are Lorentz spaces in Definition \ref{def:2.7}
and the following Lemma extend Marcinkiwicz theorem to Lorentz spaces.
See \cite{BL, CDL, Hunt1,Hunt2, Triebel, TriebelB}.
\begin{hthm}\label{lem:LL}
If $0<\theta<1, 0<q\leq \infty, 0<p_{0}<p_{1}\leq \infty$ and $\frac{1}{p}= \frac{1-\theta}{p_0} + \frac{\theta}{p_1},$ then
$$( L^{p_{0}}, L^{p_{1}})_{\theta,q}  = L^{p,q}.$$
\end{hthm}

The real interpolation spaces of Lebesgue spaces have been well studied.
But for more general function spaces, for example,
Besov spaces, Triebel-Lizorkin spaces,
Besov-Lorentz spaces and Tribel-Lizorkin-Lorentz spaces,
the non-linear structure relative to interpolation is very complex and
the corresponding interpolation spaces are far from clear.
See \cite{BL, Peetre, Triebel, TriebelB}.
Yang-Cheng-Peng \cite{YCP} use wavelets, vector valued Fefferman-Stein maximum operator and Whitney decomposition
to consider some properties of Tribel-Lizorkin-Lorentz spaces and Besov-Lorentz spaces.
In this paper, we consider systematically the interpolation spaces for Besov spaces by wavelets,
characterize all the nonlinear structure of the real interpolation of Besov spaces
and solve Peetre's conjecture.
Further the Lemma \ref{lem:dtod} tells us that our ideas can be applied to more general distribution spaces.

\section{Preliminaries on Peetre's conjecture} \label{sec:3}
In this section, we first recall the definition of homogeneous and non-homogeneous Besov type spaces
and then present what is Peetre's conjecture on the real interpolation spaces of Besov spaces.

\subsection{Besov type space}\label{sec:3.1}
Homogeneous and non-homogeneous Besov type spaces are based on distribution theory.
Let $S$ be the space of all Schwartz functions on $\mathbb{R}^{n}$.
Let $S_{0}$ be the space of all Schwartz functions $f$ on $\mathbb{R}^{n}$
such that $\int x^{\alpha} f(x) dx=0, \forall \alpha\in \mathbb{N}^{n}$.
The space of all tempered distributions on $S(\mathbb{R}^n)$
which is equipped with the weak-$\ast$ topology is denoted by $\mathcal{S}'(\mathbb{R}^n)$.
Respectively, the space of all tempered distributions on $S_{0}(\mathbb{R}^{n})$
which is equipped with the weak-$\ast$ topology is denoted by
$\mathcal{S}_{0}'(\mathbb{R}^n)$.
Hence $\mathcal{S}'(\mathbb{R}^{n})\subset \mathcal{S}_{0}'(\mathbb{R}^n).$
Denote the space of all polynomials on $\mathbb{R}^{n}$ by $P(\mathbb{R}^{n})$.
Then we have $\mathcal{S}_{0}'(\mathbb{R}^n)=\mathcal{S}'(\mathbb{R}^{n})\backslash P(\mathbb{R}^{n})$.

For Besov type spaces, we use the definition based on the Littlewood-Paley decomposition, see \cite{TriebelB}.
Given a nonnegative function $\widehat{\varphi}(\xi)\in S(\mathbb{R}^{n})$ such that
${\rm supp}\;\widehat{\varphi}\subset\{\xi\in \mathbb{R}^{n}:|\xi|\leq 2\}$ and $\hat{\varphi}(\xi)=1$ if $|\xi|\leq\frac{1}{2}$.
Define
$$\varphi_{u}(x)=2^{n(u+1)}\varphi(2^{u+1}x)-2^{nu}\varphi(2^{u}x).$$
Then the family of functions $\hat{\varphi}(\xi), \{\hat{\varphi}_{u}(\xi)\}_{u\in \mathbb{Z}}$ satisfy
\begin{equation*}
\left\{ \begin{aligned}
&{\rm supp}\;\hat{\varphi}_{u}\subset\{\xi\in \mathbb{R}^{n},\;\frac{1}{2}\leq 2^{-u}|\xi|\leq2\}.& \\
&|\hat{\varphi}_{u}(\xi)|\geq C>0, \;{\rm if} \;\frac{1}{2}<C_{1}\leq 2^{-u}|\xi|\leq C_{2}<2.&\\
&|\partial^{k}\hat{\varphi}_{u}(\xi)|\leq C_{k}2^{-u|k|}, \;{\rm for}\; {\rm any}\;k\in \mathbb{N}^{n}.&\\
&\sum\limits_{u=-\infty}^{+\infty}\hat{\varphi}_{u}(\xi)=1,\;{\rm for}\;{\rm all} \;\xi\in \mathbb{R}^{n}\setminus\{0\}.&\\
&\hat{\varphi}(\xi)+\sum\limits_{u\geq 0}\hat{\varphi}_{u}(\xi)=1,\;{\rm for}\;{\rm all} \;\xi\in \mathbb{R}^{n}.&
\end{aligned}
\right.
\end{equation*}

If $f\in S_{0}'(\mathbb{R}^{n})$,  for all $u\in \mathbb{Z}$, define $ f^{u}=\varphi_{u}\ast f$.
If $f\in S'(\mathbb{R}^{n})$, for any $u\geq 1$, define $ f_{u}=\varphi_{u}\ast f$
and  $f_{0}= (\varphi+ \varphi_0)\ast f. $
 $f^{u}\ and\ f_{u}$ are called the $u-$th dyadic block of the Littlewood-Paley decomposition of $f$.

For $s\in \mathbb{R}$ and $0<p\leq \infty$, we say $f$ belongs to the Sobolev spaces $\dot{W}^{s,p}$,
if $(-\Delta)^{-\frac{s}{2}}f \in L^{p}$.
We recall  the definition of $\dot{B}^{s,q}_{p}$ and $B^{s,q}_{p}$.
Besov spaces are defined by the weighted $l^{s,q}$ norm of the sequence of Lebesgue norms $L^{p}$ over a ring,
or equivalents, by the sequence $l^{q}$ norm of the Sobolev norm $W^{s,p}$ over a ring.
 See \cite{CDL, Triebel, TriebelB,Yang1}.
That is to say,

\begin{definition}\label{de1.1}
Let  $0< p, q\leq\infty$ and $s\in \mathbb{R}$. We have\\
{\rm(\romannumeral1)}
$f(x)\in \dot{B}^{s,q}_{p}$,   if $f\in S_{0}'(\mathbb{R}^{n})$ and   $\left[\sum\limits_{u\in \mathbb{Z}}2^{us q}\|f^{u}(x)\|_{L^{p}}^{q}\right]^{\frac{1}{q}}<\infty$.\\
{\rm(\romannumeral2)} $f(x)\in B^{s,q}_{p}$,  if $f\in S'(\mathbb{R}^{n})$ and  $\left[\sum\limits_{u\geq 0}2^{us q}\|f_{u}(x)\|_{L^{p}}^{q}\right]^{\frac{1}{q}}<\infty$.\\
When $q=\infty$,  it should be replaced by the supremum.
\end{definition}

We recall the definition of  Besov-Lorentz spaces which have been studied in \cite{BL, Peetre, YCP}.
Let $E\subset \mathbb{R}^{n}$, we denote $|E|$ the Lebesgue measure of $E$.
\begin{definition}\label{de1.2}
Assume that $0< p, q,r\leq\infty$, $s\in \mathbb{R}$ and $u,v\in \mathbb{Z}$. Then\\
(\romannumeral1)  $f(x)\in \dot{B}^{s,q}_{p,r}$, if $f\in S_{0}'(\mathbb{R}^{n})$ and $$\left[\sum\limits_{u\in \mathbb{Z}}2^{uqs}\left(\sum\limits_{v\in \mathbb{Z}}2^{rv}|\{x\in \mathbb{R}^{n}:|f^{u}(x)|>2^{v}\}|^{\frac{r}{p}}\right)^{\frac{q}{r}}\right]^{\frac{1}{q}}<\infty.$$
(\romannumeral2)  $f(x)\in B^{s,q}_{p,r}$, if $f\in S'(\mathbb{R}^{n})$ and  $$\left[\sum\limits_{u\geq 0}2^{uqs}\left(\sum\limits_{v\in \mathbb{Z}}2^{rv}|\{x\in \mathbb{R}^{n}:|f_{u}(x)|>2^{v}\}|^{\frac{r}{p}}\right)^{\frac{q}{r}}\right]^{\frac{1}{q}}<\infty.$$
As $p=\infty$ or $q=\infty$ or $r=\infty$,  it should be modified by  supremum.
\end{definition}

The definition of the above spaces are independent of the choice of  $\varphi_{u}$,
see \cite{BL,TriebelB,Yang1,Yang2,YCP}.

\subsection{
Peetre's conjecture}\label{sec:3.2}
Peetre \cite{Peetre} applied Lions-Peetre's iterative Theorem of interpolation and other skills
to get a part of real interpolation spaces of Besov spaces.
\begin{theorem}\label{lem:3.3}
{\bf Theorem 6 at page 106 of  Peetre \cite{Peetre}. }
Let $0<q_0, q_1\leq \infty, 0<\theta<1$, $\frac{1}{q}= \frac{1-\theta}{q_0}+ \frac{\theta}{q_1}$ and $0<r\leq \infty.$\\

{\rm (i)} Given $1\leq p\leq \infty$ and $s_0\neq  s_1\in \mathbb{R}$, then
$$(B^{s_0,q_0}_{p}, B^{s_1,q_1}_{p})_{\theta, r} = B^{s,r}_{p}, {\rm \; if \; } s= (1-\theta) s_0 + \theta s_1.$$

{\rm (ii)} Given $1\leq p\leq \infty$ and $s\in \mathbb{R}$, then
$$(B^{s,q_0}_{p}, B^{s,q_1}_{p})_{\theta, r} = B^{s,r}_{p}, {\rm \; if \; } r=q.$$

{\rm (iii)} Given $1\leq p_{0}\neq  p_{1} \leq \infty,  s_0, s_1\in \mathbb{R}$ and
$\frac{1}{p}= \frac{1-\theta}{p_0}+ \frac{\theta}{p_1}$, then
$$(B^{s_0,q_0}_{p_0}, B^{s_1,q_1}_{p_1})_{\theta, r} = B^{s,q}_{p,q}, {\rm \; if \; } r=q.$$
\end{theorem}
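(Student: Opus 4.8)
The plan is to pass from Besov (and Besov--Lorentz) spaces to weighted vector-valued sequence spaces through a retraction--coretraction pair, compute the interpolation there, and pass back. Fix $\tilde\varphi$ dual to the Littlewood--Paley resolution, so that $\sum_{u}\hat{\tilde\varphi}_{u}\,\hat\varphi_{u}\equiv 1$ (with the standard modification near the origin in the non-homogeneous case), and define $S\colon f\mapsto(f_{u})_{u\ge 0}$, where $f_{u}$ are the Littlewood--Paley blocks of $f$ as in Definition \ref{de1.1}, and $R\colon (g_{u})_{u\ge 0}\mapsto\sum_{u}\tilde\varphi_{u}\ast g_{u}$. Since $\|\tilde\varphi_{u}\|_{L^{1}}=\|\tilde\varphi\|_{L^{1}}$ is scale invariant, Young's inequality shows for every $1\le p\le\infty$ that $S$ is an isometry of $B^{s,q}_{p}$ into the weighted space $\ell^{q}_{s}(L^{p})=\{(g_{u}):(\sum_{u}2^{suq}\|g_{u}\|_{L^{p}}^{q})^{1/q}<\infty\}$ and that $R$ maps $\ell^{q}_{s}(L^{p})$ boundedly onto $B^{s,q}_{p}$ with $RS=\mathrm{id}$; the Fourier supports of neighbouring blocks being almost disjoint makes the boundedness of $R$ immediate. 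The same works with $L^{p}$ replaced by the Lorentz space $L^{p,r}$ whenever $1<p<\infty$, since convolution by an $L^{1}$ kernel is bounded on such $L^{p,r}$ (write $L^{p,r}$ as a real interpolation space of two Lebesgue spaces and use Hunt's Theorem \ref{lem:LL}). Because interpolation functors commute with retractions, each of (i)--(iii) becomes a computation of $(\ell^{q_{0}}_{s_{0}}(X_{0}),\ell^{q_{1}}_{s_{1}}(X_{1}))_{\theta,r}$ for suitable $X_{0},X_{1}$.

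Everything then rests on two facts about these sequence spaces, with $s=(1-\theta)s_{0}+\theta s_{1}$ throughout. \emph{(a) Diagonal case.} If $\tfrac1q=\tfrac{1-\theta}{q_{0}}+\tfrac{\theta}{q_{1}}$ then
$$\big(\ell^{q_{0}}_{s_{0}}(X_{0}),\ell^{q_{1}}_{s_{1}}(X_{1})\big)_{\theta,q}=\ell^{q}_{s}\big((X_{0},X_{1})_{\theta,q}\big),$$
the discrete weighted form of the vector-valued interpolation theorem for $L^{p}(w;A)$ spaces; the power weights $2^{s_{i}u}$ are absorbed into the weight on the left and reappear as $2^{su}$ on the right, and no relation between $s_{0}$ and $s_{1}$ is needed. \emph{(b) Shifted case.} If $s_{0}\neq s_{1}$ then, for \emph{all} $0<q_{0},q_{1},r\le\infty$ and any quasi-Banach $A$,
$$\big(\ell^{q_{0}}_{s_{0}}(A),\ell^{q_{1}}_{s_{1}}(A)\big)_{\theta,r}=\ell^{r}_{s}(A).$$
For (b) I would compute $K(t,g)$ directly: assuming without loss of generality $s_{0}>s_{1}$, the optimal splitting puts the $u$-th block into the first space precisely for $u$ below the transition index $u_{t}\sim(\log t)/(s_{0}-s_{1})$, giving $K(t,g)\sim\|g\chi_{\{u\le u_{t}\}}\|_{\ell^{q_{0}}_{s_{0}}(A)}+t\,\|g\chi_{\{u>u_{t}\}}\|_{\ell^{q_{1}}_{s_{1}}(A)}$; inserting this into $\int_{0}^{\infty}(t^{-\theta}K(t,g))^{r}\,dt/t$, substituting $t=2^{(s_{0}-s_{1})\tau}$, and invoking the discrete Hardy inequality collapses each inner sum against its geometric weight and leaves exactly $\sum_{u}2^{sur}\|g_{u}\|_{A}^{r}$. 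Alternatively, once (a) is known for a fixed fine index one may deduce (b) by writing $B^{s_{i},q_{i}}_{p}=(B^{\sigma_{0},q}_{p},B^{\sigma_{1},q}_{p})_{\eta_{i},q_{i}}$ with $\sigma_{0}\neq\sigma_{1}$ and applying the Lions--Peetre Iterative Theorem together with reiteration (in the quasi-Banach form when $0<r<1$).

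Granting (a) and (b), the three cases follow at once. For (i) take $X_{0}=X_{1}=L^{p}$ and $s_{0}\neq s_{1}$ in (b): $(\ell^{q_{0}}_{s_{0}}(L^{p}),\ell^{q_{1}}_{s_{1}}(L^{p}))_{\theta,r}=\ell^{r}_{s}(L^{p})$, and $R$ carries this to $B^{s,r}_{p}$. For (ii) take $X_{0}=X_{1}=L^{p}$, $s_{0}=s_{1}=s$, $r=q$ in (a): the right-hand side is $\ell^{q}_{s}((L^{p},L^{p})_{\theta,q})=\ell^{q}_{s}(L^{p})=B^{s,q}_{p}$, using $(A,A)_{\theta,q}=A$. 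For (iii) take $X_{i}=L^{p_{i}}$ and $r=q$ in (a); note $1<p<\infty$ is automatic from $1\le p_{0}<p_{1}\le\infty$ and $0<\theta<1$, so the Lorentz-valued retraction applies, and by Hunt's Theorem \ref{lem:LL}, $(L^{p_{0}},L^{p_{1}})_{\theta,q}=L^{p,q}$. Hence the right-hand side is $\ell^{q}_{s}(L^{p,q})$, which by Definition \ref{de1.2} is precisely the Besov--Lorentz space $B^{s,q}_{p,q}$.

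\textbf{Main obstacle.} The essential point is fact (b) when both $q_{0}\neq q_{1}$ and $r\neq q$: one must show that unequal smoothness makes the fine summation exponents disappear, and this is exactly where the transition-index analysis of the $K$-functional together with the Hardy inequality (equivalently, the full reiteration theorem over the whole quasi-Banach range $0<r\le\infty$) is indispensable; without the hypothesis $s_{0}\neq s_{1}$ this collapse fails, which is why part (i) is restricted accordingly and why the genuinely hard cases of Peetre's conjecture, treated in the rest of this paper, lie outside Theorem \ref{lem:3.3}. A lesser point is keeping the retraction and all identifications correct at the parameter endpoints: suprema replace sums when $q=\infty$ or $p=\infty$, and the Lorentz-valued retraction needs $1<p<\infty$, which is fortunately forced in case (iii).
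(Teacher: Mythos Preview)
Your proof is correct and is essentially the classical argument (retraction to weighted vector-valued sequence spaces, then the two standard sequence-space interpolation lemmas); this is the route taken in Peetre's book and in Bergh--L\"ofstr\"om, to which the paper is simply referring when it states Theorem~\ref{lem:3.3}. The paper gives no proof of Theorem~\ref{lem:3.3} at the point where it is stated: it is quoted as background from \cite{Peetre}.

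That said, the paper does re-derive parts (i) and (ii) later, in Corollaries~\ref{cor:7.7} and~\ref{cor:7.10}, by a genuinely different mechanism. Instead of the Littlewood--Paley retraction $S,R$ and the abstract vector-valued interpolation theorem (your fact~(a)), the paper uses the wavelet isomorphism of Lemma~\ref{lem:cbesov} to land in the fully discrete space $\tilde l^{s,q}_{p}(\Lambda)$, then freezes the layer-grid $\ell^{p}$ norm (Lemmas~\ref{lem:7.1}--\ref{lem:lay}) to reduce to a scalar weighted sequence space $l^{s,q}(\mathbb N)$ on the main grid. There it computes the $K$-functional \emph{explicitly} (Lemma~\ref{lem:q}, the ``vertex'' splitting at the transition index), obtaining first the equal-$q$ case (Theorem~\ref{lem:7.6}) and then the general $q_0\neq q_1$ case by Lions--Peetre reiteration (Theorem~\ref{lem:layin}) and Holmstedt's formula (Theorem~\ref{lem:q01}). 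The content of your fact~(b) --- that unequal smoothness kills the fine indices --- is exactly Lemma~\ref{lem:q} plus reiteration in the paper's language; your direct Hardy-inequality computation is replaced by the paper's vertex classification on $\mathbb N$.

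What each buys: your route is shorter and stays within the standard toolbox, but yields only the space identity. The paper's route is heavier but produces an explicit wavelet expression for the $K$-functional itself (equations \eqref{eq:basic} and \eqref{eq:basic2}), which is precisely the data needed later to attack the genuinely new cases $r\neq q$ that lie beyond Theorem~\ref{lem:3.3}. For part~(iii) the paper does not give a separate re-proof; it is subsumed (for $r=q$) by the general machinery of Sections~\ref{sec:8} and~\ref{sec:9b}, whereas your use of fact~(a) with $X_i=L^{p_i}$ and Hunt's theorem is the direct classical argument.
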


Note the particularity of $q$, the condition $r=q$ in Theorem \ref{lem:3.3}
implies that very few indices are known for the real interpolation spaces.
One did not know how to deal with the case where $r\neq q$.
\begin{conjecture} \label{con:1}
Given $1\leq p_{0}, p_{1} \leq \infty$,
$0<q_0, q_1\leq \infty$, $\frac{1}{q}= \frac{1-\theta}{q_0}+ \frac{\theta}{q_1}, 0<\theta<1$ and $0<r\leq \infty.$
Peetre proposed to find out the expression of
$(B^{s_0,q_0}_{p_0}, B^{s_1,q_1}_{p_1})_{\theta, r}$ for $r\neq q$
at page 110 in his book \cite{Peetre} of 1976.
\end{conjecture}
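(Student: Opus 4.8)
The plan is to establish the wavelet characterization of the $K$-functional case by case, exploiting the dual nature of wavelets: they are an unconditional basis for the Besov spaces (so the $K$-functional of a function is comparable to a discrete $K$-functional on wavelet coefficients), and they carry a fully discrete grid structure $\Lambda$ stratified into layer grids $\Gamma_j$ with main grid $\mathbb{Z}$. The starting point is to replace $K(t,f,B^{s_0,q_0}_{p_0},B^{s_1,q_1}_{p_1})$ by the wavelet $K$-functional of Theorem \ref{th:6.1}, and then to pass, via the coefficient analysis of Theorem \ref{th:5.3}, to a cuboid functional built from the wavelet coefficients on each cuboid. From here the four cases diverge. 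For case (i), $p_0=p_1=p$, the $L^p$-norm and the summation over the main grid commute in the right order: one first takes the $L^p$ norm on each layer grid $\Gamma_j$ to produce a sequence indexed by $\mathbb{Z}$, then computes a weighted $(l^{a,q_0},l^{b,q_1})$ $K$-functional for that sequence; the vertex $K$-functional lemma (Lemma \ref{lem:7.4}) and the explicit computation of Lemma \ref{lem:q} give Corollaries \ref{cor:7.7}, \ref{cor:7.10} and Theorem \ref{th:7.11}, with the genuinely new phenomenon ($r\neq q$, $s_0=s_1$) handled by the cuboid functional in Section \ref{sec:7.2}.

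For cases (ii) and (iii), $q_0=q_1=q$, the key structural fact is that for the $K_q$ functional the layer-grid functional and the main-grid summation commute, so one can first compute a $K$-functional on each layer grid $\Gamma_j$ and then glue across $j$ using the compatibility between the main grid and the full grid. I would carry this out in two ways: the first (Theorems \ref{th:8.1}--\ref{th:8.3}) proceeds directly through the cuboid $K$-functional, generalizing the Lou-Yang-He-He argument to $s_0\neq s_1$; the second (Theorem \ref{th:10.1}, Corollary \ref{cor:10.1}) introduces the vertex $K$-functional of Section \ref{sec:9}, which converts the functional computation into a classification of the vertices of the cuboids — here the discrete analogues of Lemmas \ref{lem:2.5} and \ref{lem:2.6} (splitting according to large-value and small-value vertices at a threshold) do the work, and the grid topology compatibility (Section \ref{sec:9b}) assembles the pieces. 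The asymmetry between $p_0<p_1$ and $p_1<p_0$ is absorbed by the commutation Lemmas \ref{lem:CK} and \ref{lem:commutative}.

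For case (iv), $p_0\neq p_1$ and $q_0\neq q_1<\infty$, both layers of nonlinearity are present simultaneously — this is the case Peetre singled out as intractable with existing tools, and it is where I expect the main obstacle. The plan is: first compute the vertex $K_\infty$-functional on each layer grid $\Gamma_j$; then use two kinds of power spaces to build intermediate couples $(X_0,X_1)$ on the main grid and $(Y_0^j,Y_1^j)$ on the layer grids (Lemmas \ref{cor:11.2}, \ref{cor:11.1}); invoke the topological compatibility of these intermediate spaces (Lemma \ref{lem:ab}) to transfer the $K$-functional from the layer grids to the main grid; and finally apply the power spaces once more, together with Holmstedt's theorem and the Lions-Peetre iteration theorem, to descend to the full grid $\Lambda$. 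The delicate point throughout is that the three successive nonlinear compositions (cuboid/vertex functional, power-space topology change, grid gluing) do not in general commute, so the order of operations is forced; verifying that each change of topology is compatible with the previous one — i.e. that Lemma \ref{lem:ab} applies at each stage — is the crux, and the bookkeeping of the indices $a,b$ arising from the weights $2^{us}$ on each $\Gamma_j$ must be tracked carefully so that the final exponents match the claimed Besov or Besov-Lorentz type description. Once these compatibilities are in place, the equivalence of $K$ and $K_\xi$ (Lemma \ref{le3.1}) lets us freely pass between the functionals used at different stages, and collecting the four cases yields Theorem \ref{th:main}.
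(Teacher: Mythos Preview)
Your proposal is essentially the paper's own strategy: wavelet $K$-functional $\to$ cuboid functional $\to$ case splitting, with Section~\ref{sec:7} handling $p_0=p_1$, Sections~\ref{sec:8} and~\ref{sec:9b} handling $q_0=q_1$ via the two methods you describe, and Section~\ref{sec:10} handling the general case through vertex $K_\infty$, power spaces, and the compatibility Lemma~\ref{lem:ab}. One correction for case~(iv): the paper does \emph{not} invoke Holmstedt's theorem or Lions--Peetre iteration there --- those are used only in case~(i) (Section~\ref{sec:7.11}, for $s_0\neq s_1$, $q_0\neq q_1$); in case~(iv) the three Lemmas~\ref{cor:11.2}, \ref{lem:ab}, \ref{cor:11.1} alone suffice, with the nonlinear change of parameter $t\mapsto s$ in \eqref{eq:t.1}--\eqref{eq:t.4} doing the work that you seem to expect Holmstedt to do.
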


The resolution of this conjecture faces many difficulties.
The norm of function space only reflects the continuous structure of functions,
but K functional reflects the internal geometric structure of functions and non-linearity.
In the process of dealing with this longstanding open problem,
we found that
{\bf K-functional reflects two types of nonlinearity at the same time:
functional nonlinearity and topological nonlinearity.}

\section {Preliminaries on wavelets}\label{sec:4}
In this section, we introduce the preliminaries on wavelets.
We recall the wavelet characterization for Besov type spaces
and progress on real interpolation spaces of Besov spaces.

\subsection{Wavelets and Besov type spaces}\label{sec:4.1}
We use regular orthogonal tensorial wavelets. See \cite{Meyer, Yang1, YCP}.
For dimension 1, let $\Phi^{0}$ be the father wavelet and let $\Phi^{1}$ be the mother wavelet.
For dimension $n$, let directional set $\Xi= \{0,1\}^{n}$. For $ \epsilon\in \Xi$, denote $\Phi^{\epsilon}(x) = \prod\limits^{n}_{i=1} \Phi^{\epsilon_{i}}(x_{i})$.
For $j\in \mathbb{Z}, \epsilon\in \Xi, k\in \mathbb{Z}^{n}$, let
$\Phi^{\epsilon}_{j,k}(x) = 2^{\frac{nj}{2}} \Phi^{\epsilon}( 2^{j} x-k).$
$\epsilon$ denotes the property of whether the integral of the direction function is zero for each coordinate axis,
$j$ is a quantity related to frequency, $k$ is a quantity related to position.
To characterize both homogeneous spaces and non-homogeneous spaces, we need the following notations:
$$\dot{\Xi}= \{ \epsilon: \epsilon\in \{0,1\}^{n} \backslash \{(0,\cdots,0)\}\},
\Xi_{0}= \Xi  {\rm \, and \, for \,} j\geq 1, \Xi_{j}=\dot{\Xi}$$
$$\dot{\Gamma}= \{ \gamma= (\epsilon, k), \epsilon\in \dot{\Xi}, k\in \mathbb{Z}^{n}\},
\mathbb{N}= \{j\in \mathbb{Z}, j\geq 0\}$$
$$\Gamma_{0}=\Gamma = \{ \gamma= (\epsilon, k), \epsilon\in \Xi_{0}, k\in \mathbb{Z}^{n}\},
{\rm and \, for \,} j\geq 1, \Gamma_{j} = \dot{\Gamma}.$$
$$\dot{\Lambda}= \{(j,\gamma), j\in \mathbb{Z}, \gamma\in \dot{\Gamma}\} {\rm \, and \,}
\Lambda = \{(j,\gamma), j\in \mathbb{N}, \gamma\in \Gamma_{j}\}.$$
We do not distinguish between $(\epsilon,j,k)$ and $(j,\gamma)$ and denote
$\Phi_{j,\gamma}(x) = \Phi^{\epsilon}_{j,k}(x)$. Denote
$f_{j,\gamma}= \langle f, \Phi_{j,\gamma}\rangle $.

We use indices set $\dot{\Lambda}$ to consider homogeneous spaces
and  indices set $\Lambda$ to consider non-homogeneous spaces.
For $\Lambda$, we specifically point out that $j$ represents the frequency layer lattice of the object being studied,
and denote $\Gamma_{j}$ the marked grid which represents the direction and position lattice.
In particular, we point out that wavelet has two characteristics: (1) wavelet basis $\Phi_{j,\gamma}(x)$ and (2) discrete grid
$\Lambda$ which is composed by main grid $\mathbb{Z}$ and layer grid $\Gamma_j$.
The former is used for nonlinear functional research and the latter for lattice topology research.

By using wavelets knowledge in \cite{Meyer, Yang1},
 Lemma \ref{lem:dtod}  provides the basis for studying the discretization of the general distribution space in below.
\begin{lemma}\label{lem:dtod}
If we use Meyer wavelets, then wavelet theory shows :\\
{\rm (i)} $f\rightarrow \{f_{j,\gamma}\}_{(j,\gamma)\in \Lambda}$ is an isomorphic mapping from $S'(\mathbb{R}^{n})$ to $\mathbb{C}^{\Lambda}$.
\\
{\rm (ii)} $f\rightarrow \{f_{j,\gamma}\}_{(j,\gamma)\in \dot{\Lambda}}$ is an isomorphic mapping from $S'_{0}(\mathbb{R}^{n})$ to $\mathbb{C}^{\dot{\Lambda}}$.
\end{lemma}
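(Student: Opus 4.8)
\textbf{Proof proposal for Lemma \ref{lem:dtod}.}

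The plan is to reduce both statements to the standard theory of Meyer wavelets as developed in \cite{Meyer, Yang1}, where one knows that the Meyer wavelet system $\{\Phi_{j,\gamma}\}$ forms an orthonormal basis of $L^2(\mathbb{R}^n)$ consisting of Schwartz functions, all of whose moments vanish (for $\epsilon\neq 0$). The key point is that, because each $\widehat{\Phi^\epsilon}$ is compactly supported and $C^\infty$, the wavelet coefficient map $f\mapsto\{f_{j,\gamma}\}=\{\langle f,\Phi_{j,\gamma}\rangle\}$ is well-defined on tempered distributions, and the reconstruction $f=\sum_{(j,\gamma)}f_{j,\gamma}\Phi_{j,\gamma}$ converges in $\mathcal{S}'$. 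First I would recall the seminormed structure of $S(\mathbb{R}^n)$ and $S_0(\mathbb{R}^n)$, noting that $S_0$ is the closed subspace of $S$ of functions with all moments zero, and that $\mathcal{S}_0'=\mathcal{S}'\backslash P$ as stated in Section \ref{sec:3.1}.

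For part (i), I would proceed in two directions. \emph{Continuity of $f\mapsto\{f_{j,\gamma}\}$:} for fixed $(j,\gamma)$, $f\mapsto\langle f,\Phi_{j,\gamma}\rangle$ is continuous on $\mathcal{S}'$ by definition of the weak-$\ast$ topology, and one estimates $|f_{j,\gamma}|$ by finitely many Schwartz seminorms of $\Phi_{j,\gamma}$, which gives the required control in $\mathbb{C}^\Lambda$ with its product topology. \emph{Continuity of the inverse:} given a sequence $\{c_{j,\gamma}\}\in\mathbb{C}^\Lambda$ arising as wavelet coefficients of some $f\in\mathcal{S}'$, one reconstructs $f=\sum c_{j,\gamma}\Phi_{j,\gamma}$; the series converges in $\mathcal{S}'$ because pairing against any $g\in S$ gives $\sum c_{j,\gamma}\langle\Phi_{j,\gamma},g\rangle$, and the rapid decay of the analysis coefficients $\langle\Phi_{j,\gamma},g\rangle$ of the Schwartz function $g$ (polynomial growth of $c_{j,\gamma}$ against super-polynomial decay in $j$ and $k$) makes the sum absolutely convergent. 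Injectivity is the statement that $f_{j,\gamma}=0$ for all $(j,\gamma)$ forces $f=0$, which follows from completeness of the non-homogeneous Meyer system in $\mathcal{S}'$. Surjectivity onto $\mathbb{C}^\Lambda$: here one must observe that \emph{every} sequence of polynomial growth — indeed every element of $\mathbb{C}^\Lambda$, since the target carries the product (not a weighted) topology — is hit; the point is that an arbitrary sequence, truncated, gives finite linear combinations of Schwartz functions, hence elements of $\mathcal{S}'$, and the map is onto the full product space because one only needs the reconstruction series to converge in $\mathcal{S}'$, which the frequency-localization of Meyer wavelets guarantees even for coefficients with moderate growth; combined with a density/closedness argument this yields the isomorphism of topological vector spaces.

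Part (ii) is the homogeneous analogue and I would treat it the same way, now using the homogeneous Meyer system $\{\Phi_{j,\gamma}\}_{(j,\gamma)\in\dot\Lambda}$ with $j$ ranging over all of $\mathbb{Z}$ and $\epsilon\in\dot\Xi$. The essential extra point is that this system is complete in $\mathcal{S}_0'$ precisely because moments are killed: a tempered distribution is annihilated by all homogeneous Meyer wavelets iff it is a polynomial, so on the quotient $\mathcal{S}_0'=\mathcal{S}'\backslash P$ the coefficient map is injective. Reconstruction in $\mathcal{S}_0'$ is unproblematic since test functions against which we pair lie in $S_0$, whose Fourier transforms vanish to infinite order at the origin, so the low-frequency divergence that would obstruct convergence in $\mathcal{S}'$ does not arise. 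I expect the main obstacle to be the surjectivity/topological-isomorphism part — one must argue carefully that the product space $\mathbb{C}^{\dot\Lambda}$ (respectively $\mathbb{C}^\Lambda$), with no growth restriction, is the correct target and that the inverse map is continuous for the product topology; this hinges on the Schwartz-kernel-type estimates for $\langle\Phi_{j,\gamma},g\rangle$ when $g\in S_0$ (resp. $g\in S$), which are exactly the estimates recorded in \cite{Meyer, Yang1}. Everything else is bookkeeping with seminorms.
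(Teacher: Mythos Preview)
The paper does not actually prove this lemma: it is stated with the preamble ``By using wavelets knowledge in \cite{Meyer, Yang1}'' and is treated as a citation of standard facts, with no argument given. So there is no ``paper's own proof'' to compare against; your reduction to the results in \cite{Meyer, Yang1} is already the paper's entire approach.

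That said, your sketch contains a genuine gap at the surjectivity step, and it is worth flagging because the lemma as stated is in fact not literally true if $\mathbb{C}^{\Lambda}$ is read as the full product space of \emph{all} complex sequences. The wavelet coefficients of a tempered distribution satisfy a polynomial growth bound $|f_{j,\gamma}|\le C(1+|j|+|k|)^{N}$ for some $C,N$ depending on $f$; conversely, only sequences of such tempered growth give rise to a reconstruction series converging in $\mathcal{S}'$. A sequence like $c_{j,\gamma}=2^{2^{j}}$ lies in $\mathbb{C}^{\Lambda}$ but cannot be the coefficient sequence of any $f\in\mathcal{S}'$, because pairing the formal reconstruction against any fixed Schwartz $g$ with $\langle\Phi_{j,\gamma},g\rangle\neq 0$ for infinitely many $j$ diverges. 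Your paragraph slides from ``every element of $\mathbb{C}^{\Lambda}$'' to ``coefficients with moderate growth'' without resolving the discrepancy; the ``density/closedness argument'' you allude to cannot repair this, since the image is not closed in the product topology.

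For the purposes of this paper the issue is harmless: everything downstream only uses that $f\mapsto\{f_{j,\gamma}\}$ is injective on $\mathcal{S}'$ (resp.\ $\mathcal{S}_0'$) and that $f$ is recovered from its coefficients, which is exactly what \cite{Meyer, Yang1} provide. If you want a correct formulation, either (a) replace $\mathbb{C}^{\Lambda}$ by the space of sequences of tempered growth, or (b) weaken ``isomorphic mapping onto'' to ``injective, with continuous left inverse on the image''. Your arguments for injectivity and for convergence of the reconstruction in $\mathcal{S}'$ (resp.\ $\mathcal{S}_0'$) are fine as sketched.
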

So without causing confusion, we do not distinguish between the function $f$ and the sequence $f_{j,\gamma}$.
Furthermore, we have the following characterization for Besov spaces:
\begin{lemma} \label{lem:cbesov}
Assume that $0< p, q \leq\infty$ and $s\in \mathbb{R}$. Then
$$f(x)= \sum\limits_{(j,\gamma)\in \dot{\Lambda}} f_{j,\gamma} \Phi_{j,\gamma}\in \dot{B}^{s,q}_{p} \Leftrightarrow
\sum\limits_{j\in \mathbb{Z}} 2^{jq(s+\frac{n}{2}-\frac{n}{p})} (\sum\limits_{\gamma\in \dot{\Gamma}}
|f_{j,\gamma}|^{p})^{\frac{q}{p}} < +\infty.$$
$$f(x)= \sum\limits_{(j,\gamma)\in \Lambda} f_{j,\gamma} \Phi_{j,\gamma}\in B^{s,q}_{p} \Leftrightarrow
\sum\limits_{j\geq 0} 2^{jq(s+\frac{n}{2}-\frac{n}{p})} (\sum\limits_{\gamma\in \Gamma_{j}}
|f_{j,\gamma}|^{p})^{\frac{q}{p}} < +\infty.$$
\end{lemma}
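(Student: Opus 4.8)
The plan is to follow the classical route to wavelet characterizations, taking advantage of the fact that for Meyer wavelets the spectral localization is exact. Write $\Delta_{j}f:=\sum_{\gamma\in\Gamma_{j}}f_{j,\gamma}\Phi_{j,\gamma}$ for the wavelet block at scale $j$; by Lemma \ref{lem:dtod} this is well defined for every $f\in S'(\mathbb{R}^{n})$ (resp.\ $S_{0}'(\mathbb{R}^{n})$), and it is exactly the piece of $f$ whose Fourier transform is supported in a fixed dyadic annulus $\{|\xi|\sim 2^{j}\}$ for $j\ge 1$ (for $j=0$ one also picks up the father-wavelet directions $\Xi_{0}=\Xi$ and the corresponding low-frequency ball). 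Since these annuli overlap only a bounded number of their neighbours, one gets $\varphi_{u}\ast f=\sum_{|j-u|\le N}\varphi_{u}\ast\Delta_{j}f$ and, symmetrically, $\Delta_{j}f=\sum_{|u-j|\le N}\Delta_{j}(\varphi_{u}\ast f)$ for a fixed $N=N(n)$. Combined with the boundedness of the convolution operators $g\mapsto\varphi_{u}\ast g$ on $L^{p}$ uniformly in $u$ (their kernels are $L^{1}$-normalised dilates, so this holds for all $0<p\le\infty$), this reduces the whole statement to comparing, scale by scale, $\|\Delta_{j}f\|_{L^{p}}$ with the Littlewood--Paley blocks $\|\varphi_{u}\ast f\|_{L^{p}}$, $|u-j|\le N$.

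\textbf{The scalewise estimate.} The heart of the matter is the equivalence
\[
\|\Delta_{j}f\|_{L^{p}}\ \sim\ 2^{j(\frac{n}{2}-\frac{n}{p})}\Big(\sum_{\gamma\in\Gamma_{j}}|f_{j,\gamma}|^{p}\Big)^{1/p},
\]
uniformly in $j$, with the usual modification for $p=\infty$. For the bound ``$\lesssim$'' I would use that after the dilation $x\mapsto 2^{-j}x$ the family $\{2^{-nj/2}\Phi_{j,\gamma}\}_{\gamma}$ becomes a single fixed family $\{\Phi^{\epsilon}(\cdot-k)\}_{k,\epsilon}$ of rapidly decaying functions; estimating $\sum_{\gamma}f_{j,\gamma}\Phi_{j,\gamma}$ by the quasi-triangle inequality together with the decay of $\Phi^{\epsilon}$ (a Schur-type bound when $p\ge 1$, a direct $\ell^{p}$-into-$\ell^{1}$ type bound when $p<1$) yields control by the $\ell^{p}$ norm of the coefficients, with the Jacobian $2^{nj}$ split as $2^{nj/2}\cdot 2^{nj/2}$ and the $L^{p}$-rescaling producing the factor $2^{-nj/p}$. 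For ``$\gtrsim$'', the exact orthogonality of Meyer wavelets across scales gives $f_{j,\gamma}=\langle\Delta_{j}f,\Phi_{j,\gamma}\rangle$, hence $|f_{j,\gamma}|\le 2^{nj/2}\int|\Phi^{\epsilon}(2^{j}x-k)|\,|\Delta_{j}f(x)|\,dx$; since $\Delta_{j}f$ is band-limited at frequency $\lesssim 2^{j}$, a Bernstein / Plancherel--Polya estimate bounds $|\Delta_{j}f|$ on the cube $Q_{j,k}=2^{-j}(k+[0,1]^{n})$ by a suitably weighted average of $|\Delta_{j}f|$ over a dilate of $Q_{j,k}$, and summing these local contributions in $\ell^{p}$ produces $(\sum_{\gamma}|f_{j,\gamma}|^{p})^{1/p}\lesssim 2^{nj/2}2^{-nj/p}\|\Delta_{j}f\|_{L^{p}}$.

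\textbf{Assembling.} Multiplying the scalewise equivalence by $2^{jqs}$, summing in $j$ over $\mathbb{Z}$ (resp.\ $j\ge 0$), and transferring between $\|\Delta_{j}f\|_{L^{p}}$ and the blocks $\|\varphi_{u}\ast f\|_{L^{p}}$ via the finite-overlap relations above (a bounded shift $j\leftrightarrow u$ changes the weighted $\ell^{q}$ norm by at most a constant), one lands on $\|f\|_{\dot{B}^{s,q}_{p}}\sim\big(\sum_{j\in\mathbb{Z}}2^{jq(s+n/2-n/p)}(\sum_{\gamma\in\dot{\Gamma}}|f_{j,\gamma}|^{p})^{q/p}\big)^{1/q}$, which is the first claimed identity. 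For the non-homogeneous case the same scalewise estimate handles $j\ge 1$, while the block $j=0$ corresponds to $(\varphi+\varphi_{0})\ast f$ together with the directions $\Xi_{0}=\Xi$ and is treated identically; the independence of the Littlewood--Paley construction from the choice of $\{\varphi_{u}\}$ has already been recorded in Section \ref{sec:3.1}.

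\textbf{Main obstacle.} The delicate point is to run both directions of the scalewise estimate uniformly over the full quasi-Banach range $0<p\le\infty$. For $p<1$ Minkowski's inequality is unavailable, so the ``$\lesssim$'' direction must rely on honest decay and almost-orthogonality rather than the triangle inequality, and the ``$\gtrsim$'' direction must use the band-limited maximal (Plancherel--Polya) inequality in place of the Hardy--Littlewood maximal function, which is not $L^{p}$-bounded for $p\le 1$. Once these uniform scalewise bounds are in hand, the rest is bookkeeping, and the underlying wavelet facts may be quoted from \cite{Meyer, Yang1}.
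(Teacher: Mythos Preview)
The paper does not prove Lemma~\ref{lem:cbesov} at all: it is stated as a preliminary fact, and the surrounding text (``By using wavelets knowledge in \cite{Meyer, Yang1}\ldots Furthermore, we have the following characterization for Besov spaces'') makes clear it is quoted from the literature, after which the paper moves on to Definition~\ref{def:discretespace}. So there is no paper proof to compare against.

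Your proposal is the standard and correct route to this result. The decomposition into (i) finite overlap between the Meyer wavelet scales $\Delta_j f$ and the Littlewood--Paley blocks $\varphi_u\ast f$, (ii) the scalewise equivalence $\|\Delta_j f\|_{L^p}\sim 2^{j(n/2-n/p)}(\sum_\gamma|f_{j,\gamma}|^p)^{1/p}$, and (iii) reassembly in $\ell^q$, is exactly how the references \cite{Meyer, Yang1, TriebelB} proceed. You have also correctly flagged the one genuine subtlety, namely the range $0<p\le 1$: there the Plancherel--Polya / Peetre maximal inequality for band-limited functions replaces any appeal to the Hardy--Littlewood maximal function, and on the synthesis side one uses the $p$-triangle inequality $\|\sum g_k\|_{L^p}^p\le\sum\|g_k\|_{L^p}^p$ together with the rapid decay of $\Phi^\epsilon$. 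One small sharpening: your ``$\gtrsim$'' argument can be made more direct by observing that for Meyer wavelets $\Phi_{j,\gamma}$ is itself band-limited at frequency $\sim 2^j$, so the sampling/Plancherel--Polya step applies directly to $\Delta_j f$ without an intermediate pointwise bound on $|f_{j,\gamma}|$. With that in place the sketch is complete and matches what one finds in the cited sources.
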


Based on  Lemmas \ref{lem:dtod} and \ref{lem:cbesov},
we specifically define the following sequence spaces which will be used later.
\begin{definition}\label{def:discretespace}
Assume that $0< p, q \leq\infty$ and $s\in \mathbb{R}$. Then

{\rm (i)} $\{f_{j,\gamma}\} \in l^{s,q}_{p}= l^{s,q}_{p}(\Lambda) \Leftrightarrow \sum\limits_{j\geq 0} 2^{jqs} (\sum\limits_{\gamma\in \Gamma_{j}} |f_{j,\gamma}|^{p})^{\frac{q}{p}} < +\infty.$

{\rm (ii)} $\{f_{j,\gamma}\} \in \tilde{l}^{s,q}_{p}= \tilde{l}^{s,q}_{p}(\Lambda) \Leftrightarrow \{f_{j,\gamma}\} \in l^{s+\frac{n}{2}-\frac{n}{p},q}_{p}= l^{s+\frac{n}{2}-\frac{n}{p},q}_{p}(\Lambda) .$

{\rm (iii)} $\{f_{j}\} \in l^{s,q}=l^{s,q}(\mathbb{N}) \Leftrightarrow \{\sum\limits_{j\in \mathbb{N}} 2^{jsq} |f_{j}|^{q}\}^{\frac{1}{q}}<\infty.$

\end{definition}
 Lemma \ref{lem:cbesov} establishes a one-to-one correspondence between functions in Besov spaces on $\mathbb{R}^{n}$
and discrete sequences on grid $\dot{\Lambda}$ or $\Lambda$.
Lemma \ref{lem:cbesov} implies the following result.
\begin{corollary}
Assume that $0< p, q \leq\infty$ and $s\in \mathbb{R}$. Then
$$f(x) \in B^{s,q}_{p}\Leftrightarrow \{ f_{j,\gamma} \}\in \tilde{l}^{s,q}_{p}= l^{s+\frac{n}{2}-\frac{n}{p},q}_{p}.$$
\end{corollary}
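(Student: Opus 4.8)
The plan is to read the corollary directly off Lemma~\ref{lem:cbesov} together with the bookkeeping conventions of Definition~\ref{def:discretespace}; no new analysis is needed. First I would invoke Lemma~\ref{lem:dtod}(i), so that the identification $f \leftrightarrow \{f_{j,\gamma}\}_{(j,\gamma)\in\Lambda}$ between a tempered distribution $f\in S'(\mathbb{R}^n)$ and a sequence in $\mathbb{C}^{\Lambda}$ is unambiguous; in particular the wavelet expansion $f=\sum_{(j,\gamma)\in\Lambda} f_{j,\gamma}\Phi_{j,\gamma}$ makes sense in $S'$ and the coefficients $f_{j,\gamma}=\langle f,\Phi_{j,\gamma}\rangle$ are well defined. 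This lets me pass freely between the distribution side and the coefficient side of the claimed equivalence.

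Next I would apply the non-homogeneous part of Lemma~\ref{lem:cbesov}, which says that $f\in B^{s,q}_{p}$ if and only if
$$\sum_{j\geq 0} 2^{jq(s+\frac{n}{2}-\frac{n}{p})}\Big(\sum_{\gamma\in\Gamma_{j}}|f_{j,\gamma}|^{p}\Big)^{\frac{q}{p}}<\infty,$$
with the endpoint cases $p=\infty$ or $q=\infty$ handled by the usual supremum convention. The one observation to make is that, by Definition~\ref{def:discretespace}(i) applied with weight exponent $a:=s+\frac{n}{2}-\frac{n}{p}$, the displayed condition is \emph{verbatim} the statement $\{f_{j,\gamma}\}\in l^{a,q}_{p}=l^{s+\frac{n}{2}-\frac{n}{p},q}_{p}$. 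Finally, Definition~\ref{def:discretespace}(ii) \emph{defines} $\tilde l^{s,q}_{p}$ to be precisely $l^{s+\frac{n}{2}-\frac{n}{p},q}_{p}$, so chaining the two equivalences yields
$$f\in B^{s,q}_{p}\iff \{f_{j,\gamma}\}\in l^{s+\frac{n}{2}-\frac{n}{p},q}_{p}=\tilde l^{s,q}_{p},$$
which is the assertion.

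Since the corollary is a direct consequence of the wavelet characterization, there is no genuine obstacle: the entire substance of the statement lives in Lemma~\ref{lem:cbesov}, and the corollary merely repackages it in the sequence-space notation that the rest of the paper uses. The only point deserving a word of care — and it is settled upstream — is that the characterization of $B^{s,q}_{p}$ is independent of the Littlewood–Paley function $\varphi_{u}$ and of the chosen regular orthogonal (e.g.\ Meyer) wavelet, so that $B^{s,q}_{p}$, $l^{s+\frac{n}{2}-\frac{n}{p},q}_{p}$ and $\tilde l^{s,q}_{p}$ all refer to the same object; this independence was recorded after Definitions~\ref{de1.1}–\ref{de1.2} and in \cite{Meyer, Yang1, YCP}.
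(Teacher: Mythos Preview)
Your proposal is correct and matches the paper's own treatment: the paper presents this corollary as an immediate consequence of Lemma~\ref{lem:cbesov} (it literally says ``Lemma~\ref{lem:cbesov} implies the following result'') without giving a separate proof. Your unpacking via Definition~\ref{def:discretespace}(i)--(ii) is exactly the intended one-line translation from the wavelet characterization to the sequence-space notation $\tilde l^{s,q}_{p}=l^{s+\frac{n}{2}-\frac{n}{p},q}_{p}$.
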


Furthermore, we recall the wavelet characterization of Besov-Lorentz spaces.
For $j\in \mathbb{Z}$, denote $f^{j}(x)= 2^{\frac{nj}{2}} \sum\limits_{\gamma\in \dot{\Gamma} } |f_{j,\gamma}| \chi (2^{j}x-k).$
For $j\in \mathbb{N}$, denote $f_{j}(x)= 2^{\frac{nj}{2}} \sum\limits_{\gamma\in \Gamma_{j} } |f_{j,\gamma}| \chi (2^{j}x-k)$
and  $f^{\infty}_{j}(x)= 2^{\frac{nj}{2}} \sup\limits_{\epsilon\in \Xi_{j}} \sum\limits_{k\in \mathbb{Z}^{n} } |f_{j,\gamma}| \chi (2^{j}x-k).$
According to \cite{YCP}, we have the following characterization for Besov-Lorentz spaces:
\begin{lemma}\label{lem:Besov-Lorentz}
Assume that $0< p, q,r\leq\infty$ and $s\in \mathbb{R}$. Then
$$f(x)= \sum\limits_{(j,\gamma)\in \dot{\Lambda}} f_{j,\gamma} \Phi_{j,\gamma}\in \dot{B}^{s,q}_{p,r} \Leftrightarrow
\sum\limits_{j\in \mathbb{Z}} 2^{jsq} \lf\{ \sum\limits_{u\in \mathbb{Z}} 2^{ur}
|\{x: f^{j}(x)> 2^{u}\}|^{\frac{r}{p}}\r\}^{\frac{q}{r}} < +\infty.$$
$$f(x)= \sum\limits_{(j,\gamma)\in \Lambda} f_{j,\gamma} \Phi_{j,\gamma}\in B^{s,q}_{p,r} \Leftrightarrow
\sum\limits_{j\in \mathbb{N}} 2^{jsq}\lf \{ \sum\limits_{u\in \mathbb{Z}} 2^{ur}
|\{x: f_{j}(x)> 2^{u}\}|^{\frac{r}{p}}\r\}^{\frac{q}{r}} < +\infty. $$
\end{lemma}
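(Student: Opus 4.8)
The plan is to run the same argument that underlies the wavelet characterization of Besov spaces (Lemma \ref{lem:cbesov}), but with the single-frequency-layer $L^{p}$ norm replaced by the single-layer Lorentz-type level-set sum $\sum_{u}2^{ur}|\{\,\cdot\,>2^{u}\}|^{r/p}$ appearing in the definition of the Besov--Lorentz space (Definition \ref{de1.2}); the statement is due to \cite{YCP}, and I would organize the argument as follows. The key object is the frequency-layer-wise comparison between the Littlewood--Paley block $\vz_{j}\ast f$ and the discretized wavelet function $f^{j}(x)=2^{\frac{nj}{2}}\sum_{\gamma\in\dot\Gamma}|f_{j,\gamma}|\chi(2^{j}x-k)$. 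Observe first that, because $\dot\Xi$ is finite and the $\chi(2^{j}\cdot-k)$ are disjointly supported indicators of cubes of volume $2^{-nj}$, the function $f^{j}$ is constant on each dyadic cube of mesh $2^{-j}$ and is comparable there to $2^{\frac{nj}{2}}\max_{\epsilon}|f_{j,(\epsilon,k)}|$.

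\textbf{Step 1: frequency localization and maximal bounds.} Since $\vz_{j}$ is frequency-supported in the annulus $|\xi|\sim 2^{j}$ while the wavelet $\Phi_{j',\gamma}$ is (essentially) localized at frequency $2^{j'}$, there is a fixed $N$, depending only on $\vz$ and the wavelet, with $\vz_{j}\ast f=\vz_{j}\ast\big(\sum_{|j-j'|\le N}\sum_{\gamma}f_{j',\gamma}\Phi_{j',\gamma}\big)$, and dually $f^{j}$ feels only the blocks $\vz_{j'}\ast f$ with $|j-j'|\le N$ (for compactly supported wavelets this identity is replaced by the usual almost-orthogonality estimate with rapidly decaying tails, cf.\ \cite{Meyer, Yang1}). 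The kernels $\vz_{j}\ast\Phi_{j',\gamma}$ are, after rescaling by $2^{j}$, rapidly decreasing bumps centred at $2^{-j'}k$; summing over $\gamma$ then yields the two-sided pointwise bounds
$$|\vz_{j}\ast f(x)|\ \ls\ \sum_{|j-j'|\le N} M f^{j'}(x),\qquad f^{j}(x)\ \ls\ \sum_{|j-j'|\le N} M\big(|\vz_{j'}\ast f|\big)(x),$$
with $M$ a Hardy--Littlewood-type maximal operator.

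\textbf{Step 2: level sets and assembling the sums.} The reason the argument covers the whole quasi-Banach range $0<p,q,r\le\fz$ is that on each layer every function occurring above is constant on the dyadic cubes of mesh $2^{-j'}$, so $M$ applied to it is comparable to a nearest-neighbour dyadic average; hence $\{Mg>\lambda\}\subset\{g>c\lambda\}$ up to dilation by an absolute constant. This turns the bounds of Step 1 into, for each $j$ and $u$,
$$|\{x:|\vz_{j}\ast f(x)|>2^{u}\}|\ \ls\ \sum_{|j-j'|\le N}|\{x:f^{j'}(x)>c\,2^{u}\}|$$
and the reverse inequality with the two functions interchanged. I would then insert these into the defining quantity of Definition \ref{de1.2}: replacing $2^{u}$ by $c\,2^{u}$ costs only a power of $c$ after reindexing the $u$-sum; using that the distribution function of a sum of $2N+1$ nonnegative functions is dominated by the sum of their distribution functions at a comparable level, then raising to the power $q/r$ and summing over $j$ against the slowly varying weight $2^{jsq}$ (so $2^{(j\pm N)sq}\sim 2^{jsq}$), absorbs all losses into constants depending only on $N,s,p,q,r$. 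This yields both inequalities between Definition \ref{de1.2}'s condition (with $f^{u}=\vz_{u}\ast f$) and the right-hand side of the lemma (with $f^{j}$), i.e.\ the homogeneous statement. The non-homogeneous statement is identical for $j\ge1$, with the single low-frequency block $f_{0}=(\vz+\vz_{0})\ast f$ --- corresponding to the father-wavelet coefficients $\epsilon=0$ --- handled by the same frequency-localization/maximal estimate; the standard supremum modifications cover $p,q$ or $r$ equal to $\fz$ (on each layer one uses $f^{\infty}_{j}$ in place of $f_{j}$ when $p=\fz$). Independence of the choice of $\{\vz_{u}\}$ is automatic, since the right-hand sides involve only the wavelet coefficients.

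\textbf{Main obstacle.} The delicate point is Step 2: transferring the comparison of $\vz_{j}\ast f$ with its wavelet discretization from the pointwise level to the level of distribution functions, \emph{uniformly} for all $0<p\le\fz$ and $0<r\le\fz$ --- especially in the range $p\le1,\ r\le1$ where $M$ is unbounded on $L^{p}$ and Fefferman--Stein is not available. The resolution is exactly the dyadic rigidity noted above: on each fixed layer everything is constant on the grid of mesh $2^{-j}$, so $M$ reduces to a local dyadic average that merely dilates level sets by an absolute constant, which is harmless for the discrete $u$-sums.
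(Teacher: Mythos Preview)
The paper does not prove this lemma; it simply records it as a consequence of \cite{YCP}. Your overall architecture --- compare $\vz_{j}\ast f$ with the step function $f^{j}$ layer by layer via almost-orthogonality and pointwise maximal bounds, then pass to the Lorentz-type level-set sums --- is the standard route and is essentially what that reference does.

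There is, however, a genuine gap in Step~2. The assertion that ``$M$ applied to it is comparable to a nearest-neighbour dyadic average; hence $\{Mg>\lambda\}\subset\{g>c\lambda\}$ up to dilation by an absolute constant'' is false, even for step functions on a fixed dyadic mesh. Take $g=N\chi_{Q}$ for a single cube $Q$ of sidelength $2^{-j}$: then $Mg(x)\sim N|Q|\,|x|^{-n}$ for large $|x|$, so $|\{Mg>\lambda\}|\sim N|Q|/\lambda$ for $\lambda<N$, which is not controlled by $|\{g>c\lambda\}|=|Q|$. In particular $Mg\notin L^{p,r}$ whenever $p\le1$, so your mechanism cannot yield $\|Mf^{j'}\|_{L^{p,r}}\ls\|f^{j'}\|_{L^{p,r}}$ in the quasi-Banach range. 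Note also that $\vz_{j'}\ast f$ is smooth, not constant on dyadic cubes, so the premise does not even apply to the second of your two bounds in Step~1.

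The repair is standard but differs in the two directions. For the bound $f^{j}\ls \sum_{|j-j'|\le N} M(|\vz_{j'}\ast f|)$, the function $\vz_{j'}\ast f$ is band-limited to $|\xi|\sim 2^{j'}$, so Peetre's maximal inequality lets you replace $M$ by $M_{\sigma}g:=\big(M(|g|^{\sigma})\big)^{1/\sigma}$; choosing $0<\sigma<p$ makes $M_{\sigma}$ bounded on $L^{p,r}$ because $M$ is bounded on $L^{p/\sigma,r/\sigma}$ with $p/\sigma>1$. For the bound $|\vz_{j}\ast f|\ls\sum_{|j-j'|\le N} Mf^{j'}$, do not pass to $M$ at all: keep the convolution form $f^{j'}\ast\psi_{j'}$ with $\psi_{j'}(y)=2^{nj'}(1+2^{j'}|y|)^{-M}$, observe that since $f^{j'}$ is constant on cubes of mesh $2^{-j'}$ one has $(f^{j'}\ast\psi_{j'})(x)\ls\sum_{m\in\zn}(1+|m|)^{-M}f^{j'}(x-2^{-j'}m)$, and apply the $\rho$-triangle inequality for the $L^{p,r}$ quasi-norm with $\rho=\min(p,r,1)$ to get $\|f^{j'}\ast\psi_{j'}\|_{L^{p,r}}\ls\|f^{j'}\|_{L^{p,r}}$ once $M\rho>n$. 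With these two fixes your argument goes through.
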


\subsection{Known advances on the Peetre conjecture}\label{sec:4.2}
In 1974, Cwikel \cite{C} proved that
the Lions-Peetre formula have no reasonable generalization for any $r\neq q$.
Peetre conjecture never got off the ground because of the lack of a way to deal with its complex nonlinearity.
Until recently,
Lou-Yang-He-He \cite{LYHH} considered the particular indices $s_0=s_1=s\in \mathbb{R}, 0<q_0=q_1=q\leq \infty$ and $p_0\neq p_1$
by exploiting the commutativity of the $K_{q}$ functional with respect to frequency level and the infimum.
Let $\chi(x)$ be the unit cube $[0,1]^{n}$.
Let $$c_{j,n}(\tau)= \inf \lf[ \lambda: |\{ x\in \mathbb{R}^{n}, 2^{\frac{nj}{2}}
\sum\limits_{\gamma\in \dot{\Gamma}} |f_{j,\gamma}| \chi(2^{j}x-k)> \lambda \} | \leq \tau\r].$$
For $1<p_0, p_1<\infty$ and $\frac{1}{\alpha}= \frac{1}{p_0}-\frac{1}{p_1}$, denote
$$b^{p_0,p_1}_{j,n,u} = \lf[\int^{ 2^{u\alpha} }_{0} |c_{j,n}(\tau)|^{p_0} d\tau \r]^{\frac{1}{p_0}}
+\lf[\int^{\infty}_{2^{u\alpha}} |c_{j,n}(\tau)|^{p_1} d\tau \r]^{\frac{1}{p_1}} .$$
Based on wavelets and commutativity of summation and infimum,
Lou-Yang-He-He \cite{LYHH} obtained the following real interpolation spaces of homogeneous Besov spaces.
\begin{theorem}
Given $s\in \mathbb{R}, \theta\in (0,1)$, $0<q, r\leq \infty$, $1<p_0<p_1<\infty$ and
$\frac{1}{p}= \frac{1-\theta}{p_0}+ \frac{\theta}{p_1}$.
We have

{\rm (i)} $f(x)\in (\dot{B}^{s,q}_{p_0}, \dot{B}^{s,q}_{\infty})_{\theta, r}$ if and only if
$$\sum\limits_{u} 2^{-u r \theta} \lf\{ \sum\limits_{j\in \mathbb{Z}} 2^{jsq}
\lf[\int^{2^{u p_0}}_{0} (c_{j,n}(\tau)) ^{p_0} d\tau \r]^{\frac{q}{p_0}} \r\}^{\frac{r}{q}}<\infty.$$

{\rm (ii)} $f(x)\in (\dot{B}^{s,q}_{p_0}, \dot{B}^{s,q}_{p_1})_{\theta, r}$ if and only if
$$\sum\limits_{u} 2^{-u\eta \theta} \lf\{ \sum\limits_{j\in \mathbb{Z}} 2^{jsq}
[b^{p_0,p_1}_{j,n,u} ]^{q} \r\}^{\frac{r}{q}}<\infty.$$

By Lemma \ref{lem:Besov-Lorentz}, when $r=q$, the above spaces are Besov-Lorentz spaces $\dot{B}^{s,q}_{p,q}.$

\end{theorem}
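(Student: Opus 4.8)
The plan is to transfer everything to the wavelet side and to exploit that, when $s_0=s_1$ and $q_0=q_1$, both endpoints are weighted $\ell^q$ sums over the frequency variable $j$ with the \emph{same} weight, so that the $K_q$ functional factorizes through the single layers.

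\textbf{Step 1 (wavelet reduction and common weight).} First I would invoke Lemma \ref{lem:cbesov} to replace $f$ by its wavelet coefficient sequence $\{f_{j,\gamma}\}$. Writing $f^{j}(x)=2^{nj/2}\sum_{\gamma\in\dot\Gamma}|f_{j,\gamma}|\chi(2^{j}x-k)$ and using the disjointness of the supports $\chi(2^{j}x-k)$ over $k$, one has $\|f^{j}\|_{L^{p}}\sim 2^{jn(1/2-1/p)}(\sum_{\gamma}|f_{j,\gamma}|^{p})^{1/p}$, so that for $i=0,1$
$$\|f\|_{\dot{B}^{s,q}_{p_i}}^{q}\sim\sum_{j\in\mathbb{Z}}2^{jq(s+n/2-n/p_i)}\Big(\sum_{\gamma}|f_{j,\gamma}|^{p_i}\Big)^{q/p_i}=\sum_{j\in\mathbb{Z}}2^{jsq}\|f^{j}\|_{L^{p_i}}^{q}.$$
The $p$-dependent dilation factor is absorbed into the $L^{p_i}$ norm, so both endpoints are realized as weighted $\ell^{q}$ sums over $j$ with the \emph{common} weight $2^{jsq}$ of the \emph{same} layer functions $f^{j}$. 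I would also record that $c_{j,n}$ is exactly the non-increasing rearrangement $(f^{j})^{*}$, so Lemma \ref{lem:2.5} applies verbatim on each layer.

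\textbf{Step 2 (commutativity of $K_q$ with the frequency sum).} This is the heart of the matter and the reason one works with $K_{q}$ rather than $K$. Since an admissible decomposition $f=f_{0}+f_{1}$ may be carried out coefficient by coefficient, hence layerwise as $f^{j}=f_{0}^{j}+f_{1}^{j}$, and since the exponent $\xi=q$ in $K_{q}$ matches the outer $\ell^{q}$ summation, the infimum commutes with $\sum_{j}$:
$$K_{q}(t,f,\dot{B}^{s,q}_{p_0},\dot{B}^{s,q}_{p_1})^{q}=\inf_{f=f_{0}+f_{1}}\sum_{j}2^{jsq}\big(\|f_{0}^{j}\|_{L^{p_0}}^{q}+t^{q}\|f_{1}^{j}\|_{L^{p_1}}^{q}\big)=\sum_{j}2^{jsq}K_{q}(t,f^{j},L^{p_0},L^{p_1})^{q}.$$
The passage from the coefficient decomposition to the scalar problem for the couple $(L^{p_0},L^{p_1})$ on the non-negative step function $f^{j}$ I would justify by a truncation and positivity argument: the optimal split of $f^{j}$ may be taken monotone and so corresponds to a split of the moduli $|f_{j,\gamma}|$. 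Lemma \ref{le3.1} then lets me move freely between $K$ and $K_{q}$, both globally and on each layer, so the displayed identity persists for $K$ up to equivalence.

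\textbf{Step 3 (single layer, assembly, and the $r=q$ collapse).} On each layer I read off $K(t,f^{j},L^{p_0},L^{p_1})$ from Lemma \ref{lem:2.5}: in case (ii) it equals $[\int_{0}^{t^{\alpha}}|c_{j,n}|^{p_0}d\tau]^{1/p_0}+t[\int_{t^{\alpha}}^{\infty}|c_{j,n}|^{p_1}d\tau]^{1/p_1}$, while in case (i), with $p_1=\infty$, it equals $[\int_{0}^{t^{p_0}}|c_{j,n}|^{p_0}d\tau]^{1/p_0}$. Substituting into $\{\int_{0}^{\infty}[t^{-\theta}K_{q}(t,f)]^{r}\,dt/t\}^{1/r}$ and discretizing $t=2^{u}$ produces the stated sums: in case (i) the sampling $t=2^{u}$ lands directly on the claimed expression with weight $2^{-ur\theta}$ and split point $2^{up_0}$, whereas in case (ii) the extra factor $t$ on the second layer term forces a reparametrization together with a discrete Hardy inequality, after which the two layer terms recombine into $b^{p_0,p_1}_{j,n,u}$ and the weight takes the form $2^{-u\eta\theta}$ with $\eta$ the exponent generated by the change of variables. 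Finally, for $r=q$ I would interchange the $u$ and $j$ summations; the inner $u$-sum then reproduces the Lorentz $L^{p,q}$ quasi-norm of $f^{j}$ (via Lemma \ref{lem:2.6} and Definition \ref{def:2.7} with $1/p=(1-\theta)/p_0+\theta/p_1$), and Lemma \ref{lem:Besov-Lorentz} identifies the space as $\dot{B}^{s,q}_{p,q}$.

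\textbf{Main obstacle.} The decisive and delicate point is Step 2, the commutativity of the infimum with the frequency summation. It is available precisely because $s_0=s_1$ and $q_0=q_1$ render the two endpoint norms weighted $\ell^{q}$ sums with one common weight, and because $K_{q}$ with $\xi=q$ matches that exponent; this is exactly the mechanism that breaks down once $q_0\neq q_1$, which is why the general conjecture requires the cuboid and vertex functional machinery developed later rather than this direct layerwise argument. A secondary technical nuisance is the honest reduction of the layer problem from wavelet coefficients to the rearrangement $c_{j,n}=(f^{j})^{*}$, together with the Hardy-inequality bookkeeping underlying the case (ii) discretization.
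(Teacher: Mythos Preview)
Your approach is correct and is essentially the one the paper attributes to Lou--Yang--He--He \cite{LYHH} and then carries out in detail for the non-homogeneous analog in Theorem~\ref{th:8.2}: reduce to wavelet coefficients, use the cuboid/positivity reduction so that the split can be taken layerwise, and exploit that with $\xi=q$ the $K_q$ functional commutes with the $\ell^q$ sum over $j$, after which each layer is handled by the classical $(L^{p_0},L^{p_1})$ formula of Lemma~\ref{lem:2.5}. Your Step~3(ii) is a bit hand-wavy about the passage from $K(2^{u},f^{j})$ (which carries the factor $2^{u}$ on the $p_1$ term) to $b^{p_0,p_1}_{j,n,u}$ (which does not), but this is exactly the discretization/Hardy bookkeeping you flag, and the paper itself does not spell it out here either, stopping in Theorem~\ref{th:8.2} at the undiscretized $K_q$ expression.
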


In addition, Besoy-Haroske-Triebel \cite{BHT}
are also concerned about this longstanding open problem
and considered $(\dot{B}_{p_0}^{s_0 , q_{0}}, \dot{B}_{p_1}^{s_1, q_{1}})_{\theta, r}$
where $p_0=q_0$, $p_1=q_1$  and $s_0-s_1= \frac{\alpha}{p_0}-\frac{\alpha}{p_1}$.
In fact, they used wavelets and trace theorem based on the results in \cite{BCT, YCP} to get
$(\dot{B}_{p_0}^{s-\alpha / p_0, p_{0}}, \dot{B}_{p_1}^{s- \alpha / p_1, p_{1}})_{\theta, r}$
where $0<p_0<p_1<\infty, 0<\theta<1, -\infty<\alpha<\infty, 0<r\leq \infty$ and $s\in \mathbb{R}$.

\section{Cuboid K functional} \label{sec:6}

In this section,
we establish the wavelet functional
and obtain the cuboid functional
by analyzing the coefficient characteristics of the approximation wavelet functional.
It is concluded that wavelet is an unconditional basis for interpolation space.
Devore-Popov have considered the relation of K functional
between Besov spaces and the corresponding discrete spaces for frequency in Theorem 6.1 of \cite{DP}.
The following Theorem \ref{th:6.1} gives a fully discrete form of their result.
Then through a series of analysis,
the object of obtaining functional is gradually reduced,
so that the functional is limited to the cuboid,
and the cuboid functional is obtained.

Continuous K functional is the lower bound of the dynamic norm from $(\mathcal{S}', \mathcal{S}')$ to $\mathbb{R}_{+}$.
By using wavelets, we transform the dynamic norm to the discrete dynamic norm
from $(\mathbb{C}^{\Lambda}, \mathbb{C}^{\Lambda})$ to $\mathbb{R}_{+}$.
Thus, we transform the relative K functional to the extreme value of sequence defined on grid $\Lambda$
which takes values in $\mathbb{C}^{\Lambda}$ and get discrete K functional.
In fact, by wavelet characterization, there exists a one to one correspondence between function and sequence.
Given $0<\xi<\infty$.
Because of Lemma \ref{le3.1}, we consider $K_{\xi}$ functional.
In fact, by wavelet characterization Lemma \ref{lem:cbesov} on Besov spaces,
we obtain the following equivalence between the K-functional of the real interpolation of Besov space and the K-functional of the real interpolation of the discretized space.
\begin{theorem}\label{th:6.1}
Given $s_0,s_1\in \mathbb{R}, 0<p_0,p_1,q_0,q_1\leq \infty$ and $0<\xi<\infty$.
K functional for $f \in B^{s_0,q_0}_{p_0}+ B^{s_1,q_1}_{p_1}\in S'(\mathbb{R}^{n})$
can be transformed to the infimum of sequence $\{f_{j,\gamma}\}$
defined on $\tilde{l}^{s_0,q_0}_{p_0}+ \tilde{l}^{s_1,q_1}_{p_1}\in \mathbb{C}^{\Lambda}$:
$$\begin{array}{l}
K_{\xi} (t,f, B^{s_0,q_0}_{p_0}, B^{s_1,q_1}_{p_1})=
\inf \limits_{g\in B^{s_0,q_0}_{p_0}} K_{\xi} (t,f, B^{s_0,q_0}_{p_0}, B^{s_1,q_1}_{p_1}, g)\\
= \inf \limits_{\{g_{j,\gamma}\}_{(j,\gamma)\in \Lambda}\in \tilde{l}^{s_0,q_0}_{p_0}} K_{\xi} (t,f, \tilde{l}^{s_0,q_0}_{p_0}, \tilde{l}^{s_1,q_1}_{p_1}, g_{j,\gamma})
=K_{\xi} (t,f, \tilde{l}^{s_0,q_0}_{p_0}, \tilde{l}^{s_1,q_1}_{p_1}).
\end{array}$$
\end{theorem}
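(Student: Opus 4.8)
The plan is to reduce the computation of $K_\xi$ for the function-space pair $(B^{s_0,q_0}_{p_0}, B^{s_1,q_1}_{p_1})$ to the computation of $K_\xi$ for the sequence-space pair $(\tilde l^{s_0,q_0}_{p_0}, \tilde l^{s_1,q_1}_{p_1})$ via the wavelet isomorphism of Lemma \ref{lem:dtod}, using the two-sided norm equivalence of Lemma \ref{lem:cbesov}. First I would recall that by Lemma \ref{lem:dtod}(i) the map $T: f \mapsto \{f_{j,\gamma}\}_{(j,\gamma)\in\Lambda}$ is a bijection from $S'(\mathbb{R}^n)$ onto $\mathbb{C}^\Lambda$, and that under this identification Lemma \ref{lem:cbesov} (with the normalization recorded in Definition \ref{def:discretespace}(ii)) gives $\|f\|_{B^{s_i,q_i}_{p_i}} \sim \|Tf\|_{\tilde l^{s_i,q_i}_{p_i}}$ for $i=0,1$, with equivalence constants $c_i, C_i$ independent of $f$. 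The key point is that $T$ is \emph{linear}, so it transports any decomposition $f = g + (f-g)$ into the decomposition $Tf = Tg + T(f-g)$ and vice versa; thus the set of admissible splittings is the same on both sides, and only the norms change — by bounded factors.

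The main steps, in order: (1) Fix $t>0$ and $f \in B^{s_0,q_0}_{p_0} + B^{s_1,q_1}_{p_1}$, so that $Tf \in \tilde l^{s_0,q_0}_{p_0} + \tilde l^{s_1,q_1}_{p_1}$. Given any splitting $f = g + h$ with $g \in B^{s_0,q_0}_{p_0}$, $h \in B^{s_1,q_1}_{p_1}$, apply Lemma \ref{lem:cbesov} to each piece to bound
$$\big(\|Tg\|_{\tilde l^{s_0,q_0}_{p_0}}^\xi + t^\xi \|Th\|_{\tilde l^{s_1,q_1}_{p_1}}^\xi\big)^{1/\xi}
\le \max(C_0,C_1)\,\big(\|g\|_{B^{s_0,q_0}_{p_0}}^\xi + t^\xi \|h\|_{B^{s_1,q_1}_{p_1}}^\xi\big)^{1/\xi}.$$
Taking the infimum over splittings on the right and noting (via linearity of $T$) that every sequence splitting $Tf = a + b$ arises as $T$ of a function splitting, one gets $K_\xi(t,Tf,\tilde l^{s_0,q_0}_{p_0},\tilde l^{s_1,q_1}_{p_1}) \le \max(C_0,C_1)\, K_\xi(t,f,B^{s_0,q_0}_{p_0},B^{s_1,q_1}_{p_1})$. (2) Run the same argument with $T^{-1}$ (which exists and is linear by Lemma \ref{lem:dtod}(i)) and the lower equivalence constants $c_0,c_1$ to obtain the reverse inequality $K_\xi(t,f,B^{s_0,q_0}_{p_0},B^{s_1,q_1}_{p_1}) \le \max(c_0^{-1},c_1^{-1})\, K_\xi(t,Tf,\tilde l^{s_0,q_0}_{p_0},\tilde l^{s_1,q_1}_{p_1})$. (3) Combine (1) and (2) to get the claimed equivalence; the intermediate equalities in the displayed formula of the statement record that the infimum defining $K_\xi$ on the function side, the infimum over sequence splittings $\{g_{j,\gamma}\}$, and the infimum defining $K_\xi$ on the sequence side are the \emph{same} infimum after the identification $g \leftrightarrow \{g_{j,\gamma}\}$, so they coincide (up to the harmless equivalence constants from Lemma \ref{lem:cbesov}, which one may absorb by working with the equivalent renormalized sequence norms, or simply read ``$=$'' as ``$\sim$'').

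I expect the only real subtlety — not a deep obstacle, but the point that needs care — to be the handling of the case $\xi<\infty$ uniformly in $t$, and the fact that the equivalence constants in Lemma \ref{lem:cbesov} depend on the choice of wavelet and on the indices $p_i,q_i,s_i$ but crucially \emph{not} on $f$ or on $t$; this independence is what makes the estimate an honest equivalence of the two $K_\xi$-functionals as functions of $t$, hence (by Lemma \ref{le3.1}) of the resulting real interpolation norms. A secondary technical point is that the $\max(\cdot,\cdot)$ constants above behave well under the quasi-norm conventions for $p_i$ or $q_i$ less than $1$; since the wavelet characterization in Lemma \ref{lem:cbesov} is already stated for the full range $0<p,q\le\infty$, this is automatic. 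With these observations the proof is essentially a transcription of the infimum through the linear isomorphism $T$, and no genuinely new ingredient beyond Lemmas \ref{lem:dtod} and \ref{lem:cbesov} is required.
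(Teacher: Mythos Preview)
Your proposal is correct and follows essentially the same approach as the paper: both arguments reduce the Besov $K_\xi$-functional to the sequence $K_\xi$-functional by transporting decompositions through the wavelet isomorphism of Lemma~\ref{lem:dtod} and invoking the norm equivalence of Lemma~\ref{lem:cbesov}. The paper's own proof is in fact considerably terser than yours --- it simply writes down the identity in equation~\eqref{eq:Kxi} identifying $K_\xi(t,f,B^{s_0,q_0}_{p_0},B^{s_1,q_1}_{p_1},g)$ with $K_\xi(t,\{f_{j,\gamma}\},\tilde l^{s_0,q_0}_{p_0},\tilde l^{s_1,q_1}_{p_1},\{g_{j,\gamma}\})$ and declares that this ``transforms the study of function spaces to which of sequence,'' treating the wavelet sequence norm as the Besov norm without tracking equivalence constants; your version with explicit constants and the two-sided estimate is the more careful execution of the same idea.
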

\begin{proof}
For $f= (f_{j,\gamma})_{(j,\gamma)\in \Lambda}$ and $ g= (g_{j,\gamma})_{(j,\gamma)\in \Lambda}$,
consider the norm with perturbation under threshold $t$ defined by the sum of sequence
\begin{equation}\label{eq:Kxi}
\begin{array}{cl} & K_{\xi} (t,f, B^{s_0,q_0}_{p_0}, B^{s_1,q_1}_{p_1}, g)\\
=&
\lf\{\lf[\sum\limits_{j\geq 0} 2^{j q_0 (s_0+ \frac{n}{2}-\frac{n}{p_0})}
(\sum\limits_{\gamma\in \Gamma_j} g_{j,\gamma}^{p_0}) ^{\frac{q_0}{p_0}} \r] ^{\frac{\xi}{q_0}}\r.\\
& \lf.+ t^{\xi} \lf[\sum\limits_{j\geq 0} 2^{j q_1 (s_1+ \frac{n}{2}-\frac{n}{p_1})}
(\sum\limits_{\gamma\in \Gamma_j} (g_{j,\gamma}-f_{j,\gamma}) ^{p_1}) ^{\frac{q_1}{p_1}} \r] ^{\frac{\xi}{q_1}}\r\}^{\frac{1}{\xi}}\\
\equiv &
K_{\xi} (t,\{f_{j, \gamma}\}, \tilde{l}^{s_0,q_0}_{p_0}, \tilde{l}^{s_1,q_1}_{p_1}, \{g_{j,\gamma}\}).
\end{array}
\end{equation}
$K^{\xi}_{\xi} (t,f, B^{s_0,q_0}_{p_0}, B^{s_1,q_1}_{p_1}, g)$ is defined for
$f, g\in B^{s_0,q_0}_{p_0} + B^{s_1,q_1}_{p_1}\in S'(\mathbb{R}^{n})$.
But $K^{\xi}_{\xi} (t,\{f_{j,\gamma}\}, \linebreak \tilde{l}^{s_0,q_0}_{p_0}, \tilde{l}^{s_1,q_1}_{p_1}, \{g_{j,\gamma}\})$ is defined for
$\{f_{j,\gamma}\}_{(j,\gamma)\in \Lambda}, \{g_{j,\gamma}\}_{(j,\gamma)\in \Lambda} \in \tilde{l}^{s_0,q_0}_{p_0} + \tilde{l}^{s_1,q_1}_{p_1}\in \mathbb{C}^{\Lambda}.$
Thus $K_{\xi} (t,f, B^{s_0,q_0}_{p_0}, B^{s_1,q_1}_{p_1}, g)$ maps $B^{s_0,q_0}_{p_0} + B^{s_1,q_1}_{p_1}$ to $\mathbb{R}_{+}$
and $K_{\xi} (t,\{f_{j,\gamma}\}, \tilde{l}^{s_0,q_0}_{p_0}, \tilde{l}^{s_1,q_1}_{p_1}, \{g_{j,\gamma}\})$ maps $\tilde{l}^{s_0,q_0}_{p_0} + \tilde{l}^{s_1,q_1}_{p_1}$ to $\mathbb{R}_{+}$.
The above equation \eqref{eq:Kxi} transform the study of function spaces to which of sequence.
\end{proof}


Then we analyze the wavelet coefficient characteristics of approximate wavelet functional
and we turn the K functionals over the infinite complex field into the K functionals over the cuboid of the real field.
Let $ \mathfrak{C}^{\Lambda}_{f}= \{0\leq g_{j,\gamma}\leq |f_{j,\gamma}|\}_{(j,\gamma)\in \Lambda}$
denote an infinite cuboid  in $\mathbb{R}^{\Lambda}_{+}$.
For function $f$, using positive equivalence classes,
we transform the K functional to the infimum of positive sequence  on infinite cuboid
$ \mathfrak{C}^{\Lambda}_{f}$.
Let $(j,\gamma)\in \Lambda$.
If $f_{j,\gamma}\neq 0$,
let $(Bf)_{j,\gamma}= \frac{ f_{j,\gamma}}{|f_{j,\gamma}|}$.
If $f_{j,\gamma}= 0$, let $(Bf)_{j,\gamma}=1$.
Then $|(Bf)_{j,\gamma}|=1$ and $f_{j,\gamma}= |f_{j,\gamma}| (Bf)_{j,\gamma}.$
Let $\tilde{g}_{j,\gamma}= g_{j,\gamma}  (Bf)_{j,\gamma}$,
let $\tilde{g}=\{\tilde{g}_{j,\gamma}\}_{(j,\gamma)\in \Lambda}$.
In general, we can only conclude that
$\tilde{g}_{j,\gamma}\in \mathbb{C}^{\Lambda}.$
We call the following functional cuboid functional
$$K^{\mathfrak{C}}_{\xi} (t,f, \tilde{l}^{s_0,q_0}_{p_0}, \tilde{l}^{s_1,q_1}_{p_1})=
\inf \limits_{g_{j,\gamma}\in \mathfrak{C}^{\Lambda}_{f}}
K_{\xi} (t, Af, \tilde{l}^{s_0,q_0}_{p_0}, \tilde{l}^{s_1,q_1}_{p_1}, \tilde{g}).$$
$K^{\mathfrak{C}}_{\xi}$ denote the  functional on infinite cuboid,
so there is a function whose lower bound exists.
If $\xi=1$, denote further $K_{\mathfrak{C}}=K^{\mathfrak{C}}_{\xi}$.
The  functional $K^{\mathfrak{C}}_{\xi}$ on infinite cuboid equals the $K_{\xi}$ functional.
The following theorem transforms the study of K functional into the study of cuboid K functional.
\begin{theorem}\label{th:5.3}
Given $s_0,s_1\in \mathbb{R}, 0<p_0,p_1,q_0,q_1\leq \infty, 0<\xi<\infty$. We have,
\begin{equation}\label{eq:cuboid}
\begin{array}{rcl}
K_{\xi} (t,f, \tilde{l}^{s_0,q_0}_{p_0}, \tilde{l}^{s_1,q_1}_{p_1})
&=&K ^{\mathfrak{C}}_{\xi} (t, Af, \tilde{l}^{s_0,q_0}_{p_0}, \tilde{l}^{s_1,q_1}_{p_1}).
\end{array}
\end{equation}
\end{theorem}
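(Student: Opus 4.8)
The plan is to prove the two inequalities hidden in \eqref{eq:cuboid} separately, abbreviating $X_0=\tilde{l}^{s_0,q_0}_{p_0}$ and $X_1=\tilde{l}^{s_1,q_1}_{p_1}$. The only structural property of these sequence spaces I will use is the one read off directly from Definition \ref{def:discretespace}: the quasi-norm of a sequence depends on its coordinates only through the moduli $|f_{j,\gamma}|$ and is nondecreasing in each of them, the endpoint cases $p_i=\infty$ or $q_i=\infty$ (a sum replaced by a supremum) being equally monotone. I will also use the elementary identity
\[
|f_{j,\gamma}-\tilde g_{j,\gamma}|=|f_{j,\gamma}|-g_{j,\gamma},\qquad (j,\gamma)\in\Lambda ,
\]
valid for every $g\in\mathfrak{C}^{\Lambda}_f$, which holds because $\tilde g_{j,\gamma}=g_{j,\gamma}(Bf)_{j,\gamma}$ and $f_{j,\gamma}=|f_{j,\gamma}|(Bf)_{j,\gamma}$ lie on the same ray while $0\le g_{j,\gamma}\le|f_{j,\gamma}|$. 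In particular $K^{\mathfrak{C}}_{\xi}$ is unambiguously the infimum of $(\|g\|_{X_0}^{\xi}+t^{\xi}\,\||f|-g\|_{X_1}^{\xi})^{1/\xi}$ over $g\in\mathfrak{C}^{\Lambda}_f$, since the argument $Af$ and the symbol $\tilde g$ enter the cuboid functional only through the magnitude sequence $\{|f_{j,\gamma}|\}$ and the numbers $\{g_{j,\gamma}\}$.

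First, the inequality $K_{\xi}\le K^{\mathfrak{C}}_{\xi}$ is immediate. Each $g\in\mathfrak{C}^{\Lambda}_f$ produces a decomposition $f=\tilde g+(f-\tilde g)$ of the sequence $f$, with $\|\tilde g\|_{X_0}=\|g\|_{X_0}$ and $\|f-\tilde g\|_{X_1}=\||f|-g\|_{X_1}$ both finite whenever the cuboid functional is; thus this decomposition is one of the competitors in the infimum defining $K_{\xi}(t,f,X_0,X_1)$. Passing to the infimum over the smaller family $\mathfrak{C}^{\Lambda}_f$ can only increase the value, which is the claim.

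The substance of the theorem is the reverse inequality: the rigid cuboid family of decompositions is in fact no restriction at all. Given an arbitrary admissible decomposition $f=h+(f-h)$ with $h\in X_0$ and $f-h\in X_1$ (decompositions with an infinite term may be discarded), I would define the coordinatewise truncation $g_{j,\gamma}=\min(|f_{j,\gamma}|,|h_{j,\gamma}|)$. Then $g=\{g_{j,\gamma}\}\in\mathfrak{C}^{\Lambda}_f$, and setting $\tilde g_{j,\gamma}=g_{j,\gamma}(Bf)_{j,\gamma}$ as in the definition of $K^{\mathfrak{C}}_{\xi}$ one has $|\tilde g_{j,\gamma}|=g_{j,\gamma}\le|h_{j,\gamma}|$ and, by the identity above,
\[
|f_{j,\gamma}-\tilde g_{j,\gamma}|=|f_{j,\gamma}|-g_{j,\gamma}=\max\big(0,\;|f_{j,\gamma}|-|h_{j,\gamma}|\big)\le|f_{j,\gamma}-h_{j,\gamma}| ,
\]
the last step being the triangle inequality for the modulus. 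By the coordinatewise monotonicity of the quasi-norms of $X_0$ and $X_1$ this yields $\|\tilde g\|_{X_0}\le\|h\|_{X_0}$ and $\|f-\tilde g\|_{X_1}\le\|f-h\|_{X_1}$, hence $K_{\xi}(t,Af,X_0,X_1,\tilde g)\le K_{\xi}(t,f,X_0,X_1,h)$. Taking the infimum over $g\in\mathfrak{C}^{\Lambda}_f$ on the left and over all admissible $h$ on the right gives $K^{\mathfrak{C}}_{\xi}\le K_{\xi}$, and together with the previous paragraph this proves \eqref{eq:cuboid}.

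I expect the only delicate point to be conceptual rather than computational: recognizing that forcing $\tilde g_{j,\gamma}$ onto the ray through $f_{j,\gamma}$ with modulus at most $|f_{j,\gamma}|$ is exactly what collapses the complex quantity $|f_{j,\gamma}-\tilde g_{j,\gamma}|$ to the real one $|f_{j,\gamma}|-g_{j,\gamma}$, so that the single truncation $g_{j,\gamma}=\min(|f_{j,\gamma}|,|h_{j,\gamma}|)$ dominates both the $X_0$-part and the $X_1$-part of any competitor at once. No compactness, no existence of a minimizing decomposition, and no properties of $\xi$ beyond $0<\xi<\infty$ are needed; since the whole argument is coordinatewise, it is insensitive to the endpoint cases $p_i,q_i\in\{\infty\}$ and to whether one works over $\Lambda$ or $\dot\Lambda$.
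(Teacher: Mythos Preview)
Your proof is correct and rests on the same structural observation as the paper's---that the quasi-norms of $\tilde l^{s_i,q_i}_{p_i}$ depend only on the moduli of the coordinates and are monotone in them---but your execution is more direct. The paper establishes the nontrivial inequality by a four-step reduction of an arbitrary competitor $g$: first rotate by $(Bf)_{j,\gamma}$ to compare against $|f_{j,\gamma}|$, then drop the imaginary part of $g_{j,\gamma}$, then replace $g_{j,\gamma}$ by $|g_{j,\gamma}|$, and finally reflect or zero out to force $0\le g_{j,\gamma}\le|f_{j,\gamma}|$. Your single truncation $g_{j,\gamma}=\min(|f_{j,\gamma}|,|h_{j,\gamma}|)$ accomplishes all four steps at once and makes transparent, via the reverse triangle inequality, why both the $X_0$- and $X_1$-pieces improve simultaneously. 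The paper's stepwise route has the minor pedagogical advantage of isolating which feature of a general competitor is being discarded at each stage, while yours is shorter and shows that only the lattice property of the norms is needed.
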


\begin{proof}
(i)
Since $$|\tilde{g}_{j,\gamma}-f_{j,\gamma}|= |(\tilde{g}_{j,\gamma}  -f_{j,\gamma}) (Bf)_{j,\gamma}^{-1}|
= |g_{j,\gamma}  -f_{j,\gamma} (Bf)_{j,\gamma}^{-1}|
= |g_{j,\gamma}  -|f_{j,\gamma}||.$$

Hence $$K_{\xi} (t,\{f_{j, \gamma}\}, \tilde{l}^{s_0,q_0}_{p_0}, \tilde{l}^{s_1,q_1}_{p_1}, \{\tilde{g}_{j,\gamma}\})
= K_{\xi} (t,\{|f_{j, \gamma}|\}, \tilde{l}^{s_0,q_0}_{p_0}, \tilde{l}^{s_1,q_1}_{p_1}, \{g_{j,\gamma}\} ).$$

The K functional on the left-hand side of the above equation is defined by the lower limit of the sequences
$\{f_{j,\gamma}\}_{(j,\gamma)\in \Lambda} , \{g_{j,\gamma} \}_{(j,\gamma)\in \Lambda}$
which takes values on $\tilde{l}^{s_0,q_0}_{p_0}+ \tilde{l}^{s_1,q_1}_{p_1}\in \mathbb{C}^{\Lambda}$.
The K functional on the right-hand side of the above equation is defined by the lower limit of the sequences
$\{|f_{j,\gamma}|\}_{(j,\gamma)\in \Lambda} , \{g_{j,\gamma} \}_{(j,\gamma)\in \Lambda}$
where $\{|f_{j,\gamma}|\}_{(j,\gamma)\in \Lambda}$ is defined on $\tilde{l}^{s_0,q_0}_{p_0}+ \tilde{l}^{s_1,q_1}_{p_1}\in \mathbb{R}_{+}^{\Lambda}$
and $\{g_{j,\gamma}\}_{(j,\gamma)\in \Lambda}$ is defined on $\tilde{l}^{s_0,q_0}_{p_0}+ \tilde{l}^{s_1,q_1}_{p_1}\in \mathbb{C}^{\Lambda}$.

(ii) Since $|g_{j,\gamma}-|f_{j,\gamma}||
= \sqrt{||f_{j,\gamma}|- {\rm Re}\, g_{j,\gamma}|^{2} + ({\rm Im}\, g_{j,\gamma})^{2} }
\geq ||f_{j,\gamma}|- {\rm Re}\, g_{j,\gamma}|.$
Hence
$$K_{\xi} (t,\{|f_{j, \gamma}|\}, \tilde{l}^{s_0,q_0}_{p_0}, \tilde{l}^{s_1,q_1}_{p_1}, \{g_{j,\gamma}\} )
> K_{\xi} (t,\{|f_{j, \gamma}|\}, \tilde{l}^{s_0,q_0}_{p_0}, \tilde{l}^{s_1,q_1}_{p_1}, \{{\rm Re}\, g_{j,\gamma}\} ).$$
To consider the infimum, we need consider only for the case where $\{g_{j,\gamma}\}_{(j,\gamma)\in \Lambda}$ takes value in  $\mathbb{R}^{\Lambda}$.
Hence K functional defined by the infimum of $K_{\xi} (t,f, \tilde{l}^{s_0,q_0}_{p_0}, \tilde{l}^{s_1,q_1}_{p_1}, g_{j,\gamma})$
where the sequence $\{f_{j,\gamma}\}_{(j,\gamma)\in \Lambda}, \{g_{j,\gamma}\}_{(j,\gamma)\in \Lambda}$
take values on $\tilde{l}^{s_0,q_0}_{p_0}+ \tilde{l}^{s_1,q_1}_{p_1}\in \mathbb{C}^{\Lambda}$
can be transformed to the K functional defined by the infimum of sequence $\{|f_{j,\gamma}|\}_{(j,\gamma)\in \Lambda}$, $\{g_{j,\gamma}\}_{(j,\gamma)\in \Lambda}$
where $\{ |f_{j,\gamma}| \}_{(j,\gamma)\in \Lambda} $ is defined on  $\tilde{l}^{s_0,q_0}_{p_0}+ \tilde{l}^{s_1,q_1}_{p_1}\in \mathbb{R}_{+}^{\Lambda}$
and $\{g_{j,\gamma}\}_{(j,\gamma)\in \Lambda} $ is defined on $\tilde{l}^{s_0,q_0}_{p_0}+ \tilde{l}^{s_1,q_1}_{p_1}\in \mathbb{R}^{\Lambda}$.

(iii) Further, if $g_{j,\gamma}<0$, then
$$|f_{j,\gamma}|- g_{j,\gamma}=|f_{j,\gamma}|+|g_{j,\gamma}|> |f_{j,\gamma}|- |g_{j,\gamma}|.$$
Hence,
$$K (t,\{|f_{j, \gamma}|\}, \tilde{l}^{s_0,q_0}_{p_0}, \tilde{l}^{s_1,q_1}_{p_1}, \{g_{j,\gamma}\} )>
K (t,\{|f_{j, \gamma}|\}, \tilde{l}^{s_0,q_0}_{p_0}, \tilde{l}^{s_1,q_1}_{p_1}, \{|g_{j,\gamma}|\} ).$$
Hence we can restrict $\{g_{j,\gamma}\}_{(j,\gamma)\in \Lambda} \in \mathbb{R}^{\Lambda}$ to the case
$g_{j,\gamma}\geq 0$.

(iv) Fourthly, if $g_{j,\gamma}\geq 2|f_{j,\gamma}|$, then choose $g_{j,\gamma}^{0}=0$, we have
$$g_{j,\gamma}>g^0_{j,\gamma} \mbox { and } ||f_{j,\gamma}|- g_{j,\gamma}|\geq |f_{j,\gamma}|= |f_{j,\gamma}|- g^0_{j,\gamma}.$$
If $|f_{j,\gamma}|< g_{j,\gamma}<2|f_{j,\gamma}|$, then there exists $g^{0}_{j,\gamma}$ satisfying that
(i) $0<g^{0}_{j,\gamma}< |f_{j,\gamma}|$
and  (ii) $g^{0}_{j,\gamma}+g^{0}_{j,\gamma}=2|f_{j,\gamma}|$. Hence
$$g_{j,\gamma}>g^0_{j,\gamma} \mbox { and }
||f_{j,\gamma}|- g_{j,\gamma}|=g_{j,\gamma}-|f_{j,\gamma}| =|f_{j,\gamma}|-g^{0}_{j,\gamma}.$$
If $0\leq g_{j,\gamma}\leq |f_{j,\gamma}|$, then choose $g_{j,\gamma}^{0}=g_{j,\gamma}$. We have
$$K (t,\{|f_{j, \gamma}|\}, \tilde{l}^{s_0,q_0}_{p_0}, \tilde{l}^{s_1,q_1}_{p_1}, \{g_{j,\gamma}\} )>
K (t,\{|f_{j, \gamma}|\}, \tilde{l}^{s_0,q_0}_{p_0}, \tilde{l}^{s_1,q_1}_{p_1}, \{g^0_{j,\gamma}\} ).$$
Hence we can restrict $\{g_{j,\gamma}\}_{(j,\gamma)\in \Lambda} \in \mathbb{R}_{+}^{\Lambda}$
to the case $|f_{j,\gamma}|- g_{j,\gamma}\geq 0$, $\forall (j,\gamma)\in \Lambda$.
K functional defined by the infimum of sequence $\{f_{j,\gamma}\}_{(j,\gamma)\in \Lambda}, \{g_{j,\gamma}\}_{(j,\gamma)\in \Lambda}$
defined on $\tilde{l}^{s_0,q_0}_{p_0}+ \tilde{l}^{s_1,q_1}_{p_1}\in \mathbb{R}_{+}^{\Lambda}$
can be transformed to the K functional defined by the infimum of sequence $\{|f_{j,\gamma}|\}_{(j,\gamma)\in \Lambda}, \{g_{j,\gamma}\}_{(j,\gamma)\in \Lambda}$
where $\{|f_{j,\gamma}|\}_{(j,\gamma)\in \Lambda}$ is defined on $\tilde{l}^{s_0,q_0}_{p_0}+ \tilde{l}^{s_1,q_1}_{p_1}\in \mathbb{R}_{+}^{\Lambda}$
and $\{g_{j,\gamma}\}_{(j,\gamma)\in \Lambda}$  is defined on the infinite cuboid $\mathfrak{C}^{\Lambda}_{f}$.

Using positive equivalence classes and the above four steps,
we have transform the K functional to the infimum of positive sequence  on infinite cuboid
$ \mathfrak{C}^{\Lambda}_{f}$.
Hence we get \eqref{eq:cuboid}.

\end{proof}

The following theorem states that the wavelet basis is the unconditional basis of the Besov real interpolation space
and allows us to compare the sizes of K functionals by comparing the absolute values of the wavelet coefficients of the functions.
\begin{theorem}\label{th:absolute}
Given $s_0,s_1\in \mathbb{R}, 0<p_0,p_1,q_0,q_1\leq \infty$ and $0<\xi<\infty$.
If $|f_{j,\gamma}| \leq |g_{j,\gamma}|, \forall (j,\gamma)\in \Lambda$, then
$$K_{\xi} (t,f, \tilde{l}^{s_0,q_0}_{p_0}, \tilde{l}^{s_1,q_1}_{p_1})\leq K_{\xi} (t,g, \tilde{l}^{s_0,q_0}_{p_0}, \tilde{l}^{s_1,q_1}_{p_1}).$$
\end{theorem}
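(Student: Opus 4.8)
The plan is to reduce the claim to the cuboid description of $K_\xi$ from Theorem \ref{th:5.3} and then exploit coordinatewise monotonicity. By Theorem \ref{th:5.3}, for any sequence $h=\{h_{j,\gamma}\}$ one has
$$K_\xi\big(t,h,\tilde l^{s_0,q_0}_{p_0},\tilde l^{s_1,q_1}_{p_1}\big)=\inf_{0\le w_{j,\gamma}\le|h_{j,\gamma}|}\Psi_h(w),$$
where, by \eqref{eq:Kxi} applied to the nonnegative target $\{|h_{j,\gamma}|\}$ and an admissible $w$ (so that $|h_{j,\gamma}|-w_{j,\gamma}\ge0$),
$$\Psi_h(w)=\left\{\Big[\sum_{j\ge0}2^{jq_0(s_0+\frac n2-\frac n{p_0})}\big(\sum_{\gamma\in\Gamma_j}w_{j,\gamma}^{p_0}\big)^{\frac{q_0}{p_0}}\Big]^{\frac\xi{q_0}}+t^\xi\Big[\sum_{j\ge0}2^{jq_1(s_1+\frac n2-\frac n{p_1})}\big(\sum_{\gamma\in\Gamma_j}(|h_{j,\gamma}|-w_{j,\gamma})^{p_1}\big)^{\frac{q_1}{p_1}}\Big]^{\frac\xi{q_1}}\right\}^{\frac1\xi}$$
(with the usual $\sup$ conventions when an index equals $\infty$). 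The structural observation I would record first is that $\Psi_h(w)$ is nondecreasing in each $w_{j,\gamma}$ through the first bracket and nondecreasing in each residual $|h_{j,\gamma}|-w_{j,\gamma}$ through the second bracket, since $x\mapsto x^{p}$, weighted summation, $x\mapsto x^{q/p}$, summation, $x\mapsto x^{\xi/q}$ and $x\mapsto x^{1/\xi}$ are all nondecreasing on $[0,\infty)$ for every admissible exponent.

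Next I would fix $f,g$ with $|f_{j,\gamma}|\le|g_{j,\gamma}|$ for all $(j,\gamma)\in\Lambda$, and transport an arbitrary competitor for $g$ to one for $f$ by truncation. Given $w'$ admissible for $g$, i.e. $0\le w'_{j,\gamma}\le|g_{j,\gamma}|$, set $w_{j,\gamma}:=\min\{w'_{j,\gamma},|f_{j,\gamma}|\}$. Then $0\le w_{j,\gamma}\le|f_{j,\gamma}|$, so $w$ is admissible for $f$; moreover $w_{j,\gamma}\le w'_{j,\gamma}$, and $|f_{j,\gamma}|-w_{j,\gamma}\le|g_{j,\gamma}|-w'_{j,\gamma}$, as one checks in two cases: if $w'_{j,\gamma}\le|f_{j,\gamma}|$ then $w_{j,\gamma}=w'_{j,\gamma}$ and the inequality reduces to $|f_{j,\gamma}|\le|g_{j,\gamma}|$, while if $w'_{j,\gamma}>|f_{j,\gamma}|$ then $w_{j,\gamma}=|f_{j,\gamma}|$, so the left side is $0$ and the right side is $\ge0$ because $w'$ is admissible for $g$. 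By the monotonicity of $\Psi$ recorded above this gives $\Psi_f(w)\le\Psi_g(w')$, hence $K_\xi(t,f,\ldots)\le\Psi_f(w)\le\Psi_g(w')$ for every admissible $w'$; taking the infimum over $w'$ and invoking Theorem \ref{th:5.3} once more yields $K_\xi(t,f,\ldots)\le K_\xi(t,g,\ldots)$.

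The only step that is not bookkeeping — and hence the crux — is this transport of decompositions. One cannot simply say ``the infimum over the larger cuboid $\mathfrak C^\Lambda_g$ is automatically the smaller one,'' because enlarging $|f|$ to $|g|$ changes not only the admissible set $\{0\le w\le|f|\}$ but also the target of the perturbation functional, from $\{|f_{j,\gamma}|\}$ to $\{|g_{j,\gamma}|\}$. The truncation $w=\min\{w',|f|\}$ is precisely the device that simultaneously prevents the first bracket from growing and keeps the residual $|f|-w$ below $|g|-w'$; once this is set up, the remainder is the routine coordinatewise monotonicity of the $\tilde l^{s,q}_p$-type expressions together with two applications of Theorem \ref{th:5.3}.
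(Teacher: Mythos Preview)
Your argument is correct. The paper does not supply a separate proof of this theorem; it is stated immediately after Theorem~\ref{th:5.3} as a consequence of the cuboid representation, and your truncation $w_{j,\gamma}=\min\{w'_{j,\gamma},|f_{j,\gamma}|\}$ is exactly the natural way to make that implication precise, including your explicit handling of the subtlety that both the admissible set and the target change when $|f|$ is replaced by $|g|$.
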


 Theorem \ref{th:absolute} tells us the $K_{\xi}$ functional is determined by the absolute value of the wavelet coefficients.
Hence denote $Af$ the function whose wavelet coefficients
are equal to the absolute value of the wavelet coefficients of $f$ and
$(Af)_{j,\gamma}=|f_{j,\gamma}|$.
According to  Theorems \ref{th:5.3} and \ref{th:absolute},
we only need to compute discrete K functionals with positive wavelet coefficients.
So we assume that all the wavelet coefficients are positive.

\section{Grid K functional} \label{sec:5}

We need to point out,
full grid $\Lambda$ is composed by main grid $\mathbb{N}$ and layer grid $\Gamma_{j}$.
$\mathbb{N}$ denote frequency stratification. For given $j\in \mathbb{N}$,
$\Xi_{j}$ denote the direction set.
Layer grid $\Gamma_{j}$  marks the direction and position at layer $j$.
In order to figure out the grid K functional,
we need to first figure out the discrete functional on main grid $\mathbb{N}$ and layer grid $\Gamma_{j}$.

In this section, we present some basic knowledge on lattices.
We consider some basic properties of $l^{p}$ real interpolations on the simple index set $\mathbb{N}$ and $\Gamma_{j}$ that make up the complex index set $\Lambda$.
The results in Sections \ref{sec:5.1} and \ref{sec:5.2} are corresponding to the generalization of the continuous case, we only state theorems without giving proofs.
In Section \ref{sec:8.1} we consider the K functionals when the function is restricted to a single layer and the classification on the corresponding index set.

\subsection{Main grid $\mathbb{N}$} \label{sec:5.1}
Consider first the layer grid $\mathbb{N}$. For $E\subset \mathbb{N}$, let $\#E$ denote the number of points in the set $\mathbb{N}$.
When $E$ is an empty set, denote  $\#E=0$.
Given $a=(a_j)_{j\in \mathbb{N}}$ where $a_j\geq 0$,
$b=(b_j)_{j\in \mathbb{N}}$ where $b_j\geq 0$.
Denote $E_{a}(\lambda) = \{ j\geq 0, a_{j}> \lambda\}, \forall \lambda>0$.
Denote the distribution function $\sigma_{a}(\lambda)= \# E_{a}(\lambda), \forall \lambda>0$ which maps $\mathbb{R}^{+} \rightarrow \mathbb{N}$.
Denote the non-increasing rearrangement function $a^{*}(t)= \inf \{\lambda:  \sigma_{a}(\lambda)\leq t\}, \forall t\in \mathbb{N}$ which maps $\mathbb{N} \rightarrow \mathbb{R}^{+}$.
Denote $$K(t,a, l^{q_0}, l^{q_1}, b) = \lf[\sum\limits_{j\geq 0} b_{j}^{q_0}\r]^{\frac{1}{q_0}}
+ t \lf[\sum\limits_{j\geq 0} (a_{j}-b_{j})^{q_1}\r]^{\frac{1}{q_1}}.$$
Denote K functional on the layer grid $\mathbb{N}$
$$K(t,a, l^{q_0}, l^{q_1}) = \inf\limits_{0\leq b_j\leq a_{j}} K(t,a, l^{q_0}, l^{q_1}, b).$$

In the following four cases,
we represent K functionals by rearrangement functions on the layer grid.
By similar skills in Lemma \ref{lem:2.5}, 
we have the following results.
\begin{lemma}\label{lem:5.1}
{\rm (i)} If $0<q_0<q_1=\infty$, then
$$K(t,a, l^{q_0}, l^{\infty}) = [ \int^{t^{q_0}}_{0} (a^{*}(\tau))^{q_0} d\tau ] ^{\frac{1}{q_0}}.$$

{\rm (ii)} If $0<q_0<q_1<\infty, \frac{1}{\beta}= \frac{1}{q_0}- \frac{1}{q_1}$, then
$$K(t,a, l^{q_0}, l^{q_1}) = [ \int^{t^{\beta}}_{0} (a^{*}(\tau))^{q_0} d\tau ] ^{\frac{1}{q_0}}
+ t [ \int^{\infty}_{t^{\beta}} (a^{*}(\tau))^{q_1} d\tau ] ^{\frac{1}{q_1}}.$$
\end{lemma}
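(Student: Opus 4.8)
\textbf{Proof proposal for Lemma \ref{lem:5.1}.}

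The plan is to treat this as the discrete (sequence) analogue of Lemma \ref{lem:2.5}, replacing integration over $\rr^+$ with summation over $\mathbb{N}$ and the non-increasing rearrangement $f^*$ of a measurable function with the non-increasing rearrangement $a^*$ of the nonnegative sequence $a=(a_j)_{j\in\mathbb{N}}$. The key structural fact I would establish first is that, for the purpose of computing $K(t,a,l^{q_0},l^{q_1})$, one may replace $a$ by its rearrangement $a^*$: since all the norms $\|\cdot\|_{l^{q}}$ are invariant under permutations of the index set and depend only on the absolute values of the entries (and we already assume $a_j\ge 0$), a splitting $a=b+(a-b)$ with $0\le b_j\le a_j$ for the rearranged sequence corresponds to an equally good splitting for the original, and vice versa. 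Hence $K(t,a,l^{q_0},l^{q_1})=K(t,a^*,l^{q_0},l^{q_1})$, and from now on I would work with the non-increasing sequence $a^*$.

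For part (i), with $q_1=\infty$, the optimal decomposition is the standard ``truncation'' one: given the threshold, put into the $l^\infty$ part the common excess above some level $\mu\ge 0$ and into the $l^{q_0}$ part the capped sequence $\min(a^*_j,\mu)$. Concretely one analyzes $\inf_{\mu\ge 0}\big\{[\sum_j (a^*_j-\mu)_+^{q_0}]^{1/q_0}\cdot(\text{small part})\big\}$ — more precisely one shows the infimum defining $K$ is attained (or approached) by $b_j=\min(a^*_j,\mu)$, so that $\|a-b\|_{l^\infty}=(a^*_0-\mu)_+$ and $\|b\|_{l^{q_0}}=[\sum_j\min(a^*_j,\mu)^{q_0}]^{1/q_0}$, then optimize in $\mu$. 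Recognizing that $\#\{j:a^*_j>\mu\}=\sigma_a(\mu)$ and that $a^*$ is precisely the generalized inverse of $\sigma_a$, one rewrites the capped sum as $\int_0^{t^{q_0}}(a^*(\tau))^{q_0}\,d\tau$ after choosing $\mu=a^*(t^{q_0})$; this is exactly the discrete Hardy-type computation that turns a sum of rearranged terms into an integral of the rearrangement function. For part (ii) I would invoke the Holmstedt-type two-term structure: the optimal split sends the ``large'' coordinates (those with $a^*_j$ above a cutoff determined by $t^\beta$) essentially entirely into $l^{q_1}$ and the ``small'' ones into $l^{q_0}$, where $1/\beta=1/q_0-1/q_1$ makes the two contributions balance; this mirrors Lemma \ref{lem:2.5}(ii) line for line with $p_i\leftrightarrow q_i$ and $\int_0^{t^\alpha},\int_{t^\alpha}^\infty$ replaced by $\int_0^{t^\beta},\int_{t^\beta}^\infty$ of $(a^*(\tau))^{q_i}$.

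The main obstacle — and the reason this needs more than a one-line ``same as the continuous case'' — is the justification that the rearranged, truncated splitting is genuinely optimal in the discrete setting, i.e. proving the lower bound $K(t,a,l^{q_0},l^{q_1})\ge(\text{right-hand side})$. For an arbitrary admissible split $b$ one must show $\|b\|_{l^{q_0}}+t\|a-b\|_{l^{q_1}}$ cannot beat the rearranged truncation; the clean way is a coordinatewise/level-set argument showing that, level by level, moving ``mass'' so that $b$ coincides with a capped rearrangement only decreases the functional (a discrete Hardy–Littlewood rearrangement inequality for the two norms involved), together with a convexity argument in the cutoff parameter to locate the optimum at $t^{q_0}$ (resp. $t^\beta$). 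Since the statement already says ``by similar skills in Lemma \ref{lem:2.5}'', I expect the paper to omit these details; in a full write-up I would supply the discrete rearrangement inequality and the optimization in $\mu$ as the two genuine steps, everything else being bookkeeping identifying $\sum$ over the rearranged sequence with $\int$ of $a^*$.
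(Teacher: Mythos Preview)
Your overall plan is exactly what the paper does: the paper gives no proof of this lemma at all, stating only ``by similar skills in Lemma~\ref{lem:2.5}'' and, in the preamble to Section~\ref{sec:5}, ``the results in Sections~\ref{sec:5.1} and~\ref{sec:5.2} are corresponding to the generalization of the continuous case, we only state theorems without giving proofs.'' So your strategy --- reduce to the rearranged sequence by permutation invariance, then imitate the Holmstedt truncation argument from the continuous $L^{p_0},L^{p_1}$ case --- is precisely the intended route, and your remark that the lower bound requires a genuine (if standard) rearrangement/monotonicity argument is a fair observation about what a complete write-up would need.

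That said, your description of the optimal split has the two pieces interchanged. In $K(t,a,l^{q_0},l^{q_1})=\inf\big(\|b\|_{l^{q_0}}+t\|a-b\|_{l^{q_1}}\big)$ with $q_0<q_1$, the \emph{large} coordinates of $a^*$ (those with index below the cutoff $t^{q_0}$ or $t^\beta$) go into the $l^{q_0}$ piece $b$, and the \emph{small} tail goes into the $l^{q_1}$ piece $a-b$; this is why the first term in the formula is $\big[\int_0^{t^\beta}(a^*)^{q_0}\big]^{1/q_0}$, capturing the top of the rearrangement. Your text twice says the opposite (``put into the $l^\infty$ part the common excess above some level $\mu$'', ``sends the `large' coordinates \ldots\ into $l^{q_1}$''). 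With $b_j=\min(a^*_j,\mu)$ as you wrote, $\|b\|_{l^{q_0}}$ need not even be finite for $a\in l^{q_0}+l^\infty$. The correct choice is $b_j=a^*_j$ on the top block and $b_j=0$ (or the truncated excess $(a^*_j-\mu)_+$) elsewhere. Once you fix this orientation, your sketch goes through.
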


If $q_0>q_1$ and $\frac{1}{\beta}= \frac{1}{q_1}- \frac{1}{q_0}$, then we have
$$\begin{array}{rcl} K(t,a, l^{q_0}, l^{q_1})&=& \inf (\|b\|_{l^{q_0}}+ t \|f_1\|_{l^{q_1}})
= t \inf (\|a-b\|_{l^{q_1}} + t^{-1} \|b\|_{l^{q_0}})\\
&=& t [ \int^{t^{-\beta}}_{0} (a^{*}(\tau))^{q_1} d\tau ] ^{\frac{1}{q_1}}
+ t^{-1} [ \int^{\infty}_{t^{-\beta}} (a^{*}(\tau))^{q_0} d\tau ] ^{\frac{1}{q_0}}\\
&=& [ \int^{\infty}_{t^{-\beta}} (a^{*}(\tau))^{q_0} d\tau ] ^{\frac{1}{q_0}}
+ t [ \int^{t^{-\beta}}_{0} (a^{*}(\tau))^{q_1} d\tau ] ^{\frac{1}{q_1}} .
\end{array}$$
Similar to the Lemma \ref{lem:5.1}, we have
\begin{lemma} \label{lem:5.2}
{\rm (i)} $0<q_1<q_0=\infty$, $K(t,a, l^{q_1}, l^{\infty}) = [ \int^{t^{-q_1}}_{0} (a^{*}(\tau))^{q_1} d\tau ] ^{\frac{1}{q_1}}$.

{\rm (ii)} $0<q_1<q_0<\infty, \frac{1}{\beta}= \frac{1}{q_1}- \frac{1}{q_0}$,
$$K(t,a, l^{q_0}, l^{q_1}) = [ \int^{\infty}_{t^{-\beta}} (a^{*}(\tau))^{q_0} d\tau ] ^{\frac{1}{q_0}}
+ t [ \int^{t^{-\beta}}_{0} (a^{*}(\tau))^{q_1} d\tau ] ^{\frac{1}{q_1}}.$$
\end{lemma}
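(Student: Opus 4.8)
\textbf{Proof plan for Lemma \ref{lem:5.2}.}

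The plan is to derive both formulas from the commutativity Lemma \ref{lem:CK}, namely $K(t,a,l^{q_0},l^{q_1}) = t\,K(t^{-1},a,l^{q_1},l^{q_0})$, together with the already-established Lemma \ref{lem:5.1} in which the \emph{first} exponent is the smaller one. Indeed, in the present statement we have $q_1 < q_0$, so the pair $(l^{q_1}, l^{q_0})$ is exactly of the type handled by Lemma \ref{lem:5.1}: the first index $q_1$ is strictly smaller than the second index $q_0$ (with $q_0 = \infty$ allowed in part (i), $q_0 < \infty$ in part (ii)). So the whole argument is a change of variables plus bookkeeping; no genuinely new analytic input is needed, which is why the authors only state these lemmas.

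For part (ii), I would start from Lemma \ref{lem:5.1}(ii) applied to the couple $(l^{q_1}, l^{q_0})$ with the smaller exponent $q_1$ first. Writing $\frac{1}{\beta} = \frac{1}{q_1} - \frac{1}{q_0}$ (which is exactly the $\beta$ in the statement), Lemma \ref{lem:5.1}(ii) gives
$$K(s,a,l^{q_1},l^{q_0}) = \lf[\int_0^{s^{\beta}} (a^{*}(\tau))^{q_1}\,d\tau\r]^{\frac{1}{q_1}} + s\lf[\int_{s^{\beta}}^{\infty} (a^{*}(\tau))^{q_0}\,d\tau\r]^{\frac{1}{q_0}}.$$
Now substitute $s = t^{-1}$ and multiply through by $t$, using $K(t,a,l^{q_0},l^{q_1}) = t\,K(t^{-1},a,l^{q_1},l^{q_0})$. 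Since $s^{\beta} = t^{-\beta}$, the first term becomes $t\lf[\int_0^{t^{-\beta}}(a^{*})^{q_1}\r]^{1/q_1}$ and the second becomes $t\cdot t^{-1}\lf[\int_{t^{-\beta}}^{\infty}(a^{*})^{q_0}\r]^{1/q_0} = \lf[\int_{t^{-\beta}}^{\infty}(a^{*})^{q_0}\r]^{1/q_0}$, which is precisely the claimed formula after reordering the two summands. Part (i) is the limiting case $q_0 = \infty$: apply Lemma \ref{lem:5.1}(i) to the couple $(l^{q_1}, l^{\infty})$ to get $K(s,a,l^{q_1},l^{\infty}) = \lf[\int_0^{s^{q_1}}(a^{*}(\tau))^{q_1}\,d\tau\r]^{1/q_1}$, then set $s = t^{-1}$ and multiply by $t$; the factor $t$ and the exponent $1/q_1$ combine to move $t$ inside as $t^{q_1}$ in the limit of integration, yielding $\lf[\int_0^{t^{-q_1}}(a^{*}(\tau))^{q_1}\,d\tau\r]^{1/q_1}$ — note here the outer $t$ is absorbed because $t\lf[\int_0^{t^{-q_1}}\cdots\r]^{1/q_1}$ is rewritten using $t = \lf[t^{q_1}\r]^{1/q_1}$ together with the substitution scaling; I would double-check this normalization against the displayed inline computation just above the lemma, which already carries out the $q_0 > q_1$, $q_0 < \infty$ case by hand and confirms the pattern.

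The only mildly delicate point — and the one I would be most careful about — is matching conventions: the rearrangement $a^{*}$ is defined on $\mathbb{N}$, the integrals are really sums against a step function, and one must confirm that the infimum defining $K$ is attained (or approached) by sequences $b$ that, after rearrangement, are the truncations $a^{*}\wedge\lambda$, exactly as in the continuous Holmstedt/Hunt computation recalled in Lemma \ref{lem:2.5}. Since the paper explicitly says these lattice lemmas are ``corresponding to the generalization of the continuous case'' and states them without proof, I would simply invoke the discrete analogue of that optimal-truncation argument, cite Lemma \ref{lem:CK} and Lemma \ref{lem:5.1}, and record the change of variable $t \mapsto t^{-1}$ as the mechanism converting the ``small index first'' formulas into the ``large index first'' formulas. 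No obstacle of substance arises; the bookkeeping of the exponent on $t$ in part (i) is the single place where an off-by-one-in-the-exponent slip is easy to make.
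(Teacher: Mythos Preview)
Your approach is exactly the one the paper uses: the displayed computation immediately preceding Lemma~\ref{lem:5.2} carries out precisely the commutativity step $K(t,a,l^{q_0},l^{q_1})=t\,K(t^{-1},a,l^{q_1},l^{q_0})$ followed by an application of Lemma~\ref{lem:5.1}(ii), and the lemma itself is then stated with only the remark ``Similar to the Lemma~\ref{lem:5.1}.'' So for part~(ii) your plan and the paper's argument coincide.

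One correction regarding part~(i): your instinct that the normalization needs checking is right, but your attempted fix is not. The factor $t$ in $t\,K(t^{-1},a,l^{q_1},l^{\infty})=t\bigl[\int_0^{t^{-q_1}}(a^{*})^{q_1}\bigr]^{1/q_1}$ cannot be ``absorbed'' into the integral; the integrand does not scale that way. In fact the statement of part~(i) as printed appears to contain a typo: either the left-hand side should read $K(t,a,l^{\infty},l^{q_1})$ (big index first, matching part~(ii)) and the right-hand side should carry a prefactor $t$, or the order $K(t,a,l^{q_1},l^{\infty})$ is intended and the upper limit should be $t^{q_1}$ rather than $t^{-q_1}$, which would simply reproduce Lemma~\ref{lem:5.1}(i). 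Your method is sound; the discrepancy lies in the stated formula, not in your argument.
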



\subsection{Layer grid $\Gamma_{j}$ and classification of vertex} \label{sec:5.2}

For any $j\geq 0$, $E\subset \Gamma_{j}$ is a discrete set and let $\# E$ denote the number of points $\gamma$ in $E\subset \Gamma_{j}$.
When $E$ is an empty set, denote  $\#E=0$.
Hence $|\{x: f^{\infty}_{j}(x)> 2^{u}\}|= 2^{-nj} \#\{ \gamma\in \Gamma_{j}, 2^{\frac{nj}{2}} |f_{j,\gamma}|>2^{u}\}$.
$\forall \lambda>0$,
denote $E_{f_{j}}(\lambda) = \{ \gamma\in \Gamma_{j}, |f_{j,\gamma}| >\lambda\}$.
and $\sigma_{f_{j}}(\lambda)= \# E_{f_{j}}(\lambda)$ is a function which maps from $\mathbb{R}^{+}$ to $\mathbb{N}$.
$\forall t\in \mathbb{N}, f^{*}_{j}(t)= \inf \{\lambda:  \sigma_{f_{j}}(\lambda)\leq t\}$
is a function maps from $\mathbb{N}$ to $\mathbb{R}^{+}$.
We assume $f_{j,\gamma}=|f_{j,\gamma}|$ and denote
\begin{equation*}
\begin{array}{l}
K(t,f_{j}, l^{p_0}, l^{p_1})
= \inf\limits_{0\leq g_{j,\gamma}\leq |f_{j,\gamma}|}
\{  (\sum\limits_{\gamma\in \Gamma_j} g_{j,\gamma}^{p_0})^{\frac{1}{p_0}}
+t (\sum\limits_{\gamma\in \Gamma_j} |f_{j,\gamma}- g_{j,\gamma}|^{p_1})^{\frac{1}{p_1}}\}.
\end{array}
\end{equation*}

We need to classify the vertices of cuboids according to the above equations \eqref{eq:(8)} and \eqref{eq:(9)}
in the above Lemma \ref{th:CDL}.
We have to use the notations of
$\Gamma_{j}^{0}(t)$ and $\Gamma_{j}^{1}(t)$
throughout the rest of this paper.
{\bf Case 1.} $0<p_0<p_1=\infty$.
For $\sigma_{f_{j}}(\lambda) = t^{p_0}$, denote $\Gamma_{j}^{0}(t) = \{\gamma: |f_{j,\gamma}|>\lambda\},
\Gamma_{j}^{1}(t) = \{\gamma: \gamma\notin \Gamma_{j}^{0}(t)\}$.
\begin{lemma}\label{lem:6.4}
Given $0<p_0<\infty$. Then
\begin{equation}\label{eq:WG1}
K(t,f_{j}, l^{p_0}, l^{\infty}) = \lf[\int^{t^{p_0}}_{0} (f^{*}_{j}(\tau))^{p_0} d\tau\r]^{\frac{1}{p_0}}
= \lf(\sum\limits_{\gamma\in \Gamma^{0}_{j}(t)} f^{p_0}_{j,\gamma} \r)^{\frac{1}{p_0}}.
\end{equation}
\end{lemma}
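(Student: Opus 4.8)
The plan is to prove the identity $K(t,f_j,l^{p_0},l^\infty) = \left[\int_0^{t^{p_0}} (f_j^*(\tau))^{p_0}\,d\tau\right]^{1/p_0} = \left(\sum_{\gamma\in\Gamma_j^0(t)} f_{j,\gamma}^{p_0}\right)^{1/p_0}$ in two halves: first the equality of the integral expression with the sum over $\Gamma_j^0(t)$, and then the identification of both with the K functional. Throughout we use the standing reductions (Theorems \ref{th:5.3} and \ref{th:absolute}) that let us assume $f_{j,\gamma}=|f_{j,\gamma}|\ge 0$ and restrict the competitors $g_{j,\gamma}$ to the cuboid $0\le g_{j,\gamma}\le f_{j,\gamma}$. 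The rearrangement function $f_j^*$ on $\mathbb{N}$ is piecewise constant, and since $\sigma_{f_j}(\lambda)$ takes integer values, the condition $\sigma_{f_j}(\lambda)=t^{p_0}$ picks out exactly the threshold $\lambda$ such that the $t^{p_0}$ largest coefficients form $\Gamma_j^0(t)$; thus $\int_0^{t^{p_0}}(f_j^*(\tau))^{p_0}\,d\tau$ is literally the sum of the $p_0$-th powers of the $t^{p_0}$ largest values $f_{j,\gamma}$, which is $\sum_{\gamma\in\Gamma_j^0(t)} f_{j,\gamma}^{p_0}$. This is the discrete analogue of Lemma \ref{lem:2.6}(iii), and I would invoke the discrete form of Lemma \ref{th:CDL} rather than redo it.

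For the main equality $K(t,f_j,l^{p_0},l^\infty)=\left(\sum_{\gamma\in\Gamma_j^0(t)} f_{j,\gamma}^{p_0}\right)^{1/p_0}$, I would argue both inequalities. For ``$\le$'': exhibit the explicit competitor $g_{j,\gamma}=f_{j,\gamma}$ for $\gamma\in\Gamma_j^1(t)$ and $g_{j,\gamma}=0$ for $\gamma\in\Gamma_j^0(t)$; wait — to make the $l^\infty$ part small we instead set $g$ equal to $f$ off the top set and equal to $\lambda$ (the threshold value) on the top set, so that $f_{j,\gamma}-g_{j,\gamma}$ is bounded by $\lambda=f_j^*(t^{p_0})$ while the $l^{p_0}$ piece is $\sum_{\gamma\in\Gamma_j^0(t)}(f_{j,\gamma}-\lambda)^{p_0}$... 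Actually the cleanest choice giving exactly the claimed value uses $g_{j,\gamma}=(f_{j,\gamma}-\lambda)_+$, which vanishes on $\Gamma_j^1(t)$, has $l^{p_0}$-norm $\left(\sum_{\gamma\in\Gamma_j^0(t)}(f_{j,\gamma}-\lambda)^{p_0}\right)^{1/p_0}$ and leaves $\|f_j-g_j\|_{l^\infty}=\lambda$; letting $\lambda\to 0$ along the appropriate threshold recovers the bound. The correct path is the standard one for $(L^p,L^\infty)$: one shows the infimum is attained in the limit by truncation at level $f_j^*(t^{p_0})$. For ``$\ge$'': given any admissible $g$ with $\|f_j-g_j\|_{l^\infty}\le M$, every coefficient of $f_j$ exceeding $M$ contributes at least $f_{j,\gamma}-M$ to... this is getting delicate, so the honest approach is to quote Lemma \ref{lem:5.1}(i) (its layer-grid analogue on $\Gamma_j$, which the paper says follows ``by similar skills in Lemma \ref{lem:2.5}'') to get $K(t,f_j,l^{p_0},l^\infty)=\left[\int_0^{t^{p_0}}(f_j^*(\tau))^{p_0}\,d\tau\right]^{1/p_0}$ directly, and then only the first (combinatorial) equality needs a new argument.

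Concretely, the steps in order: (1) recall the reductions to nonnegative coefficients on the cuboid; (2) apply the $\Gamma_j$-analogue of Lemma \ref{lem:5.1}(i) / Lemma \ref{lem:2.5}(i) to get the integral formula for $K(t,f_j,l^{p_0},l^\infty)$; (3) unwind the definition of $\sigma_{f_j}$, $f_j^*$ and the set $\Gamma_j^0(t)$ determined by $\sigma_{f_j}(\lambda)=t^{p_0}$, using that $\sigma_{f_j}$ is $\mathbb{N}$-valued and right-continuous, to see that $\Gamma_j^0(t)$ consists of the $t^{p_0}$ largest coefficients (here one should note that if $t^{p_0}$ is not an integer or coincides with a jump of $\sigma_{f_j}$ there is a harmless convention on ties, since $f_j^*$ is constant across such plateaus); (4) conclude $\int_0^{t^{p_0}}(f_j^*(\tau))^{p_0}\,d\tau=\sum_{\gamma\in\Gamma_j^0(t)} f_{j,\gamma}^{p_0}$ by the discrete Lemma \ref{th:CDL}, i.e. matching the area under the step function $f_j^*$ raised to the $p_0$ with the sum over the selected indices. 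The main obstacle is purely bookkeeping: making the definition of $\Gamma_j^0(t)$ via ``$\sigma_{f_j}(\lambda)=t^{p_0}$'' precise when $t^{p_0}$ falls inside a jump of the integer-valued distribution function, i.e. handling ties among the $f_{j,\gamma}$ and the choice of $\lambda$; once the ties convention is fixed (any choice works since $f_j^*$ is flat there), everything is routine. I would flag this tie issue explicitly and then dispatch it in one sentence.
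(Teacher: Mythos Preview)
Your proposal is correct and matches the paper's own treatment: the paper explicitly states that the results in Sections~\ref{sec:5.1} and~\ref{sec:5.2} are discrete analogues of the continuous case and are ``stated without giving proofs,'' pointing to Lemma~\ref{lem:2.5} for the $K$-functional formula and to equations~\eqref{eq:(8)}--\eqref{eq:(9)} of Lemma~\ref{th:CDL} for rewriting the integral as the sum over $\Gamma_j^0(t)$. Your steps (2) and (4) are exactly these two citations, and your observation about the tie convention when $t^{p_0}$ falls in a jump of the integer-valued $\sigma_{f_j}$ is a detail the paper does not make explicit; your one-sentence resolution (any choice works since $f_j^*$ is flat on plateaus) is the right way to dispatch it.
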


{\bf Case 2.} $0<p_0<p_1<\infty$ and $\frac{1}{\alpha}=\frac{1}{p_0}- \frac{1}{p_1}$.
For $\sigma_{f_{j}}(\lambda) = t^{\alpha}$, denote $\Gamma_{j}^{0}(t) = \{\gamma: |f_{j,\gamma}|>\lambda\},
\Gamma_{j}^{1}(t) = \{\gamma: \gamma\notin \Gamma_{j}^{0}(t)\}$.
\begin{lemma} \label{lem:6.5}
Given $0<p_0<p_1<\infty$ and $\frac{1}{\alpha}=\frac{1}{p_0}- \frac{1}{p_1}$. Then
\begin{equation}\label{eq:WG2}
\begin{array}{rcl}
K(t,f_{j}, l^{p_0}, l^{p_1}) &=& \lf[\int^{t^{\alpha}}_{0} (f^{*}_{j}(\tau))^{p_0} d\tau\r]^{\frac{1}{p_0}}
+ t\lf[\int^{\infty}_{t^{\alpha}} (f^{*}_{j}(\tau))^{p_1} d\tau\r]^{\frac{1}{p_1}}\\
&=& \lf[\sum\limits_{\gamma\in \Gamma^{0}_{j}(t)} f^{p_0}_{j,\gamma} \r]^{\frac{1}{p_0}}
+ t\lf[\sum\limits_{\gamma\in \Gamma^{1}_{j}(t)} f^{p_1}_{j,\gamma} \r]^{\frac{1}{p_1}}.
\end{array}
\end{equation}
\end{lemma}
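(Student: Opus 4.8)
The plan is to reduce the problem to the continuous $L^{p}$ result of Lemma~\ref{lem:2.5}(ii) combined with the discrete rearrangement identities of Lemma~\ref{th:CDL}. First I would observe that the quantity $K(t,f_j, l^{p_0}, l^{p_1})$ is, by its very definition, the infimum over the finite cuboid $\{0\le g_{j,\gamma}\le f_{j,\gamma}\}$ of $(\sum_{\gamma} g_{j,\gamma}^{p_0})^{1/p_0} + t(\sum_\gamma (f_{j,\gamma}-g_{j,\gamma})^{p_1})^{1/p_1}$, which is formally the $K$-functional $K(t, f_j, l^{p_0}(\Gamma_j), l^{p_1}(\Gamma_j))$ for the couple of sequence spaces on the index set $\Gamma_j$. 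Since $\Gamma_j$ is a countable set with counting measure, $l^{p}(\Gamma_j)$ is isometrically a Lebesgue space over a purely atomic measure space, and the non-increasing rearrangement $f_j^{*}$ defined via $\sigma_{f_j}$ is exactly the rearrangement in the sense of Section~\ref{sec:2.2}. Hence Lemma~\ref{lem:2.5}(ii) (or rather its discrete analogue, which the authors announced they would develop without proof) applies verbatim and gives the first equality
$$K(t,f_{j}, l^{p_0}, l^{p_1}) = \lf[\int ^{t^{\alpha}}_{0} | f^{*}_j(\tau)| ^{p_0} d\tau\r ]^{\frac{1}{p_0}} + t\lf [ \int ^{\infty}_{t^{\alpha }} | f^{*}_j(\tau)| ^{p_1} d\tau\r ]^{\frac{1}{p_1}},$$
with $\frac1\alpha = \frac1{p_0}-\frac1{p_1}$.

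The second equality is then a matter of converting the two rearrangement integrals back into sums over the explicitly-defined index subsets $\Gamma_j^0(t)$ and $\Gamma_j^1(t)$. Here I would pick $\lambda$ so that $\sigma_{f_j}(\lambda)=t^{\alpha}$ (taking the appropriate one-sided value if $\sigma_{f_j}$ jumps over $t^\alpha$, which is the delicate atomic point), set $\Gamma_j^0(t)=\{\gamma: f_{j,\gamma}>\lambda\}$ and $\Gamma_j^1(t)$ its complement, and then apply Lemma~\ref{th:CDL} with $\Phi(u)=u^{p_0}$ on the large-value set and $\Phi(u)=u^{p_1}$ on the small-value set. Concretely, equation~\eqref{eq:(9)} gives $\sum_{\gamma\in\Gamma_j^0(t)} f_{j,\gamma}^{p_0} = \int_0^{\sigma_{f_j}(\lambda)}(f_j^*(\tau))^{p_0}d\tau = \int_0^{t^\alpha}(f_j^*(\tau))^{p_0}d\tau$, and equation~\eqref{eq:(8)} gives $\sum_{\gamma\in\Gamma_j^1(t)} f_{j,\gamma}^{p_1} = \int_{\sigma_{f_j}(\lambda)}^{\infty}(f_j^*(\tau))^{p_1}d\tau = \int_{t^\alpha}^{\infty}(f_j^*(\tau))^{p_1}d\tau$. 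Substituting these two identities into the rearrangement formula yields the claimed wavelet-coefficient expression. The proof of Lemma~\ref{lem:6.4} (Case~1) is the same argument with $p_1=\infty$, using only \eqref{eq:(9)}.

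The main obstacle I anticipate is the handling of the atomic jump: for a general sequence $f_{j,\gamma}$ there need not be a threshold $\lambda$ with $\sigma_{f_j}(\lambda)$ exactly equal to $t^{\alpha}$ (the distribution function is integer-valued and piecewise constant), so one must either restrict attention to $t$ for which $t^\alpha\in\mathbb{N}$ and use monotone density in $t$, or allow a partial atom — i.e. split the optimal $g_{j,\gamma}$ so that one boundary coefficient is only partially assigned to $\Gamma_j^0(t)$. The cleanest route is to verify directly that the sequence $g^\star_{j,\gamma}$ which equals $f_{j,\gamma}$ on $\Gamma_j^0(t)$ and $0$ on $\Gamma_j^1(t)$ (adjusted at the boundary atom) is an optimizer: the two component norms are then exactly the two sums, so the right-hand side is an upper bound for $K$, while Lemma~\ref{lem:2.5}(ii) supplies the matching lower bound via the rearrangement inequality $\|g\|_{l^{p_0}}+t\|f_j-g\|_{l^{p_1}}\ge \|g^*\|_{L^{p_0}}+t\|(f_j-g)^*\|_{L^{p_1}}$ and a convexity argument that the optimal split puts the largest coefficients entirely into the $l^{p_0}$ part. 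Everything else — measurability of $\sigma_{f_j}$, the elementary identities for $\#$ on finite subsets, the $q=\infty$ modifications — is routine and can be cited from Section~\ref{sec:2.2} and Lemma~\ref{th:CDL}.
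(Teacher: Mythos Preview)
Your proposal is correct and matches the paper's approach: the paper does not actually give a proof of this lemma --- at the start of Section~\ref{sec:5} it states explicitly that ``the results in Sections~\ref{sec:5.1} and~\ref{sec:5.2} are corresponding to the generalization of the continuous case, we only state theorems without giving proofs,'' and just before Lemma~\ref{lem:6.5} it signals that the second equality comes from equations~\eqref{eq:(8)} and~\eqref{eq:(9)} of Lemma~\ref{th:CDL}, which is precisely your argument. Your observation about the atomic jump (that $\sigma_{f_j}$ is integer-valued, so $\sigma_{f_j}(\lambda)=t^{\alpha}$ need not hold exactly) is a genuine point the paper glosses over; the paper's convention is to write this as an equality while implicitly meaning equivalence up to constants, and your suggested fix via the vertex splitting $g^\star$ is the natural way to make it rigorous.
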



\subsection{Single layer K functional}\label{sec:8.1}
In this subsection, we consider K functionals corresponding to real interpolation of Besov spaces restricted to a single layer
and discuss vertex classification in a single layer.
For $s_0,s_1\in \mathbb{R}, 0<p_0, p_1, q_0, q_1 \leq \infty$,
denote $\tilde{s}= s_1-s_0+ \frac{n}{p_0}-\frac{n}{p_1}$ from here in this paper.
For $j\geq 0$, denote
\begin{equation}\label{eq:6.30000}
f_{j}=(f_{j,\gamma})_{\gamma\in \Gamma_j} \mbox{ and }
\tilde{f}_{j}=(|f_{j,\gamma}|)_{\gamma \in \Gamma_j}.
\end{equation}
For $0<\xi<\infty$, we have
\begin{theorem}\label{th:single}
Given $s_0,s_1\in \mathbb{R}, 0<p_0, p_1, q_0, q_1 \leq \infty$ and $0<\xi<\infty$.

{\rm (i)} If $p_0=p_1=p$, then
$$K_{\xi} (t,f_{j}, B^{s_0,q_0}_{p_0}, B^{s_1,q_1}_{p_1})\sim
2^{j(s_0 +\frac{n}{2}- \frac{n}{p_0})} \min (1, t2^{j\tilde{s}})
\lf(\sum\limits_{\gamma\in \Gamma_j} |f_{j,\gamma}| ^{p}\r) ^{\frac{1}{p}}.$$

{\rm (ii)} If $p_0\neq p_1$, then
$$K_{\xi} (t,f_{j}, B^{s_0,q_0}_{p_0}, B^{s_1,q_1}_{p_1})\sim K(t,f_{j}, B^{s_0,q_0}_{p_0}, B^{s_1,q_1}_{p_1})=
2^{j(s_0 +\frac{n}{2}- \frac{n}{p_0})} K(t2^{j\tilde{s}}, \tilde{f}_{j}, l^{p_0}, l^{p_1}).$$
\end{theorem}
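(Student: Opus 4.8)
The plan is to reduce the computation to the single frequency layer $j$ by means of the discretization and cuboid results already established, and then to carry out a direct scaling computation. First I would apply Theorem \ref{th:6.1} to rewrite $K_{\xi}(t,f_j,B^{s_0,q_0}_{p_0},B^{s_1,q_1}_{p_1})$ as the discrete functional $K_{\xi}(t,\{f_{j,\gamma}\},\tilde{l}^{s_0,q_0}_{p_0},\tilde{l}^{s_1,q_1}_{p_1})$, and then Theorems \ref{th:5.3} and \ref{th:absolute} to pass to the cuboid functional, i.e.\ to restrict the infimum to sequences $\{g_{j',\gamma}\}$ with $0\le g_{j',\gamma}\le|f_{j',\gamma}|$ for all $(j',\gamma)\in\Lambda$. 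Since $f_j$ is supported on the single layer $j$, so that $f_{j',\gamma}=0$ for $j'\ne j$, the cuboid constraint forces $g_{j',\gamma}=0$ for $j'\ne j$. Hence only layer $j$ survives, and the $l^{q_0}$- and $l^{q_1}$-sums defining the $\tilde{l}$-norms in \eqref{eq:Kxi} each collapse to their single $j$-th term.

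Next I would carry out the scaling. On layer $j$ one has, by Definition \ref{def:discretespace}, $\|g_j\|_{\tilde{l}^{s_0,q_0}_{p_0}}=2^{j(s_0+\frac n2-\frac n{p_0})}\big(\sum_{\gamma\in\Gamma_j}g_{j,\gamma}^{p_0}\big)^{1/p_0}$ and likewise for $\|f_j-g_j\|_{\tilde{l}^{s_1,q_1}_{p_1}}$. Therefore
$$K_{\xi}(t,f_j,B^{s_0,q_0}_{p_0},B^{s_1,q_1}_{p_1},g_j)=\lf\{2^{j\xi(s_0+\frac n2-\frac n{p_0})}\Big(\sum_{\gamma\in\Gamma_j}g_{j,\gamma}^{p_0}\Big)^{\frac{\xi}{p_0}}+t^{\xi}\,2^{j\xi(s_1+\frac n2-\frac n{p_1})}\Big(\sum_{\gamma\in\Gamma_j}|f_{j,\gamma}-g_{j,\gamma}|^{p_1}\Big)^{\frac{\xi}{p_1}}\r\}^{\frac1\xi}.$$
Pulling out the common factor $2^{j(s_0+\frac n2-\frac n{p_0})}$ and using $(s_1+\frac n2-\frac n{p_1})-(s_0+\frac n2-\frac n{p_0})=s_1-s_0+\frac n{p_0}-\frac n{p_1}=\tilde{s}$, the bracketed quantity is exactly $K_{\xi}(t2^{j\tilde{s}},\tilde{f}_j,l^{p_0},l^{p_1},g_j)$ with $\tilde{f}_j=(|f_{j,\gamma}|)_{\gamma\in\Gamma_j}$ as in \eqref{eq:6.30000}. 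Taking the infimum over the cuboid $0\le g_{j,\gamma}\le|f_{j,\gamma}|$ yields the exact identity $K_{\xi}(t,f_j,B^{s_0,q_0}_{p_0},B^{s_1,q_1}_{p_1})=2^{j(s_0+\frac n2-\frac n{p_0})}K_{\xi}(t2^{j\tilde{s}},\tilde{f}_j,l^{p_0},l^{p_1})$, and in particular (case $\xi=1$) the displayed equality in part (ii).

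It then remains to remove the $\xi$ and, in case (i), to evaluate the $l^p$ functional. To replace $K_{\xi}$ by $K$ I would use the elementary bound $\max(a,b)\le(a^{\xi}+b^{\xi})^{1/\xi}\le 2^{1/\xi}\max(a,b)$ for $a,b\ge0$ and $0<\xi<\infty$, which on taking infima shows $K_{\xi}(t,\cdot)\sim K(t,\cdot)$ uniformly in $t$ with constant depending only on $\xi$ (the pointwise form underlying Lemma \ref{le3.1}); this already gives part (ii), since the same comparison also applies directly on the Besov side. For part (i), with $p_0=p_1=p$, the choices $g_j=f_j$ and $g_j=0$ give $K(\sigma,\tilde{f}_j,l^p,l^p)\le\min(1,\sigma)\|\tilde{f}_j\|_{l^p}$, while from $|f_{j,\gamma}|^p\le C_p\big(g_{j,\gamma}^p+(|f_{j,\gamma}|-g_{j,\gamma})^p\big)$ with $C_p=\max(1,2^{p-1})$ one obtains $\max(\|g_j\|_{l^p},\|f_j-g_j\|_{l^p})\gtrsim_p\|\tilde{f}_j\|_{l^p}$, hence $\|g_j\|_{l^p}+\sigma\|f_j-g_j\|_{l^p}\gtrsim_p\min(1,\sigma)\|\tilde{f}_j\|_{l^p}$; thus $K(\sigma,\tilde{f}_j,l^p,l^p)\sim\min(1,\sigma)\|\tilde{f}_j\|_{l^p}$, and substituting $\sigma=t2^{j\tilde{s}}$ into the identity above gives part (i).

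The argument is mostly bookkeeping once the cuboid reduction of Theorem \ref{th:5.3} is in hand; the only point needing care is the manipulation of the $\xi$-th powers together with the quasi-triangle constants when some of $p_0,p_1,q_0,q_1$ lie below $1$. Because the layer is single, the $l^{q_i}$-sums are trivial, so only the $l^{p_i}$ quasi-norm constants $C_{p_i}$ ever enter, and these are harmless as they are absorbed into the implicit constants in $\sim$. I therefore expect no genuine obstacle beyond this careful tracking of constants.
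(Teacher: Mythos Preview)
Your proposal is correct and follows essentially the same route as the paper: reduce to a single layer via the cuboid restriction, write out the $K_{\xi}$ functional on that layer, pull out the factor $2^{j(s_0+\frac n2-\frac n{p_0})}$ using $\tilde{s}=s_1-s_0+\frac n{p_0}-\frac n{p_1}$, and use the elementary equivalence $(a^{\xi}+b^{\xi})^{1/\xi}\sim a+b$. The paper's proof is in fact terser than yours---it declares part (i) ``easy to see'' without the explicit $\min(1,\sigma)$ computation you supply---so your argument is a faithful and slightly more detailed version of the same idea.
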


\begin{proof}
According to the equation \eqref{eq:Kxi},
\begin{equation*}
\begin{array}{rl}
& K_{\xi} (t,f_j, B^{s_0,q_0}_{p_0}, B^{s_1,q_1}_{p_1}, g_j)\\
=&
\{ 2^{j \xi (s_0+ \frac{n}{2}-\frac{n}{p_0})}
(\sum\limits_{\gamma\in \Gamma_j} g_{j,\gamma}^{p_0}) ^{\frac{\xi}{p_0}}
+ t^{\xi}  2^{j \xi (s_1+ \frac{n}{2}-\frac{n}{p_1})}
(\sum\limits_{\gamma\in \Gamma_j} (g_{j,\gamma}-f_{j,\gamma}) ^{p_1}) ^{\frac{\xi}{p_1}}   \}^{\frac{1}{\xi}}\\
\sim &
2^{j  (s_0+ \frac{n}{2}-\frac{n}{p_0})}
(\sum\limits_{\gamma\in \Gamma_j} g_{j,\gamma}^{p_0}) ^{\frac{1}{p_0}}
+ t  2^{j (s_1+ \frac{n}{2}-\frac{n}{p_1})}
(\sum\limits_{\gamma\in \Gamma_j} (g_{j,\gamma}-f_{j,\gamma}) ^{p_1}) ^{\frac{1}{p_1}}   \\
\equiv &
K (t,f_{j}, \tilde{l}^{s_0,q_0}_{p_0}, \tilde{l}^{s_1,q_1}_{p_1}, g_{j}).
\end{array}
\end{equation*}

It is easy to see that (i) is true.
Furthermore, if $p_0\neq p_1$, then
\begin{equation*} \label{eq:singlelayer}
\begin{array}{l}
K(t,f_{j}, \tilde{l}^{s_0,q_0}_{p_0}, \tilde{l}^{s_1,q_1}_{p_1})\\
\sim 2^{j(s_0 +\frac{n}{2}- \frac{n}{p_0})}
\inf\limits_{0\leq g_{j,\gamma}\leq |f_{j,\gamma}|}
\lf\{  (\sum\limits_{\gamma} g_{j,\gamma}^{p_0})^{\frac{1}{p_0}}
+t 2^{j\tilde{s}} (\sum\limits_{\gamma} ||f_{j,\gamma}|- g_{j,\gamma}|^{p_1})^{\frac{1}{p_1}}\r\}\\
= 2^{j(s_0 +\frac{n}{2}- \frac{n}{p_0})} K(t2^{j\tilde{s}},\tilde{f}_{j}, l^{p_0}, l^{p_1}).
\end{array}
\end{equation*}

\end{proof}

Theorem \ref{th:single}
provides a classification method of wavelet coefficients for the case of single-layer K functional for $p_0\neq p_1$.
By Lemma \ref{lem:CK} and by similarity, we just have to think about $0<p_0<p_1\leq \infty$.
Let $f^{*}_{j}$ be the relative function defined in the above section \ref{sec:5.2}.
\begin{theorem}\label{th:6.60000}
Given $s_0,s_1\in \mathbb{R}, 0< q_0, q_1 \leq \infty$.

(i) For $0<p_0<p_1=\infty$, by equation \eqref{eq:WG1} in Lemma \ref{lem:6.4}, we have
\begin{equation}\label{eq:666}
\begin{array}{rcl}
K(t,f_{j}, \tilde{l}^{s_0,q_0}_{p_0}, \tilde{l}^{s_1,q_1}_{\infty})  &= &2^{j(s_0 +\frac{n}{2}- \frac{n}{p_0})} \lf[\int^{(t2^{j\tilde{s}})^{p_0}}_{0} (f^{*}_{j}(\tau))^{p_0} d\tau\r]^{\frac{1}{p_0}}\\
&=& 2^{j(s_0 +\frac{n}{2}- \frac{n}{p_0})} \lf(\sum\limits_{\gamma\in \Gamma^{0}_{j}(t2^{j\tilde{s}})} f^{p_0}_{j,\gamma} \r)^{\frac{1}{p_0}}.
\end{array}
\end{equation}

(ii) For $0<p_0<p_1<\infty$, by equation \eqref{eq:WG2} in Lemma \ref{lem:6.5}, we have
\begin{equation}\label{eq:667}
\begin{array}{rl}
&K(t,f_{j}, \tilde{l}^{s_0,q_0}_{p_0}, \tilde{l}^{s_1,q_1}_{p_1}) \\
=& 2^{j(s_0 +\frac{n}{2}- \frac{n}{p_0})}
\lf[\int^{(t2^{j\tilde{s}})^{\alpha}}_{0} (f^{*}_{j}(\tau))^{p_0} d\tau\r]^{\frac{1}{p_0}}
+ t 2^{j(s_1 +\frac{n}{2}- \frac{n}{p_1})} \lf[\int^{\infty}_{(t2^{j\tilde{s}})^{\alpha}} (f^{*}_{j}(\tau))^{p_1} d\tau\r]^{\frac{1}{p_1}}\\
=& 2^{j(s_0 +\frac{n}{2}- \frac{n}{p_0})} \lf(\sum\limits_{\gamma\in \Gamma^{0}_{j}(t2^{j\tilde{s}})} f^{p_0}_{j,\gamma} \r)^{\frac{1}{p_0}}
+ t2^{j(s_1 +\frac{n}{2}- \frac{n}{p_1})}\lf (\sum\limits_{\gamma\in \Gamma^{1}_{j}(t2^{j\tilde{s}})} f^{p_1}_{j,\gamma} \r)^{\frac{1}{p_1}}.
\end{array}
\end{equation}
\end{theorem}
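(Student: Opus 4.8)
\textbf{Proof proposal for Theorem \ref{th:6.60000}.}
The plan is to combine Theorem \ref{th:single}(ii), which already reduces the single-layer $K$ functional to a rescaled discrete $K$ functional on $\Gamma_j$, with Lemmas \ref{lem:6.4} and \ref{lem:6.5}, which evaluate precisely that discrete $K$ functional. First I would invoke Theorem \ref{th:single}(ii): since $p_0\neq p_1$ here, we have
$$K(t,f_{j}, \tilde{l}^{s_0,q_0}_{p_0}, \tilde{l}^{s_1,q_1}_{p_1})
= 2^{j(s_0 +\frac{n}{2}- \frac{n}{p_0})}\, K(t2^{j\tilde{s}}, \tilde{f}_{j}, l^{p_0}, l^{p_1}),$$
where $\tilde{f}_j=(|f_{j,\gamma}|)_{\gamma\in\Gamma_j}$ and $\tilde{s}=s_1-s_0+\frac{n}{p_0}-\frac{n}{p_1}$. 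Because we have already normalized (Theorems \ref{th:5.3} and \ref{th:absolute}) so that all wavelet coefficients are nonnegative, $\tilde{f}_j$ agrees with $f_j$, and the rearrangement function $f^{*}_j$ attached to $f_j$ in Section \ref{sec:5.2} is exactly the one governing $K(\cdot,\tilde{f}_j,l^{p_0},l^{p_1})$.

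Next I would substitute the threshold $t\rightsquigarrow t2^{j\tilde{s}}$ into the evaluations of the discrete layer $K$ functional. For case (i), $p_1=\infty$, Lemma \ref{lem:6.4} gives
$K(s,f_j,l^{p_0},l^\infty)=\big[\int_0^{s^{p_0}}(f^{*}_j(\tau))^{p_0}\,d\tau\big]^{1/p_0}=\big(\sum_{\gamma\in\Gamma_j^0(s)}f^{p_0}_{j,\gamma}\big)^{1/p_0}$; setting $s=t2^{j\tilde{s}}$ and multiplying by the prefactor $2^{j(s_0+\frac n2-\frac n{p_0})}$ yields \eqref{eq:666}. For case (ii), $p_0<p_1<\infty$, Lemma \ref{lem:6.5} gives the two-term expression
$K(s,f_j,l^{p_0},l^{p_1})=\big[\int_0^{s^{\alpha}}(f^{*}_j(\tau))^{p_0}d\tau\big]^{1/p_0}+s\big[\int_{s^{\alpha}}^\infty(f^{*}_j(\tau))^{p_1}d\tau\big]^{1/p_1}$ with $\frac1\alpha=\frac1{p_0}-\frac1{p_1}$, and these two integrals equal the sums over $\Gamma_j^0(s)$ and $\Gamma_j^1(s)$ respectively by the definition of $\Gamma_j^0,\Gamma_j^1$ via $\sigma_{f_j}(\lambda)=s^{\alpha}$. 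Here a small bookkeeping point appears: the leading $2^{j(s_0+\frac n2-\frac n{p_0})}$ multiplies the whole bracket, and in the second term one has $2^{j(s_0+\frac n2-\frac n{p_0})}\cdot t2^{j\tilde s}=t\,2^{j(s_1+\frac n2-\frac n{p_1})}$ by the very definition of $\tilde s$; tracking this identity is what produces the asymmetric exponents in \eqref{eq:667}.

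The only genuinely delicate point is the identification of the integral $\int_0^{s^{\#}}(f^{*}_j(\tau))^{p}\,d\tau$ with the finite sum $\sum_{\gamma\in\Gamma_j^0(s)}f^{p}_{j,\gamma}$ (and its complement), i.e. the discrete analogue of Lemma \ref{th:CDL} / Lemma \ref{lem:2.6} on the grid $\Gamma_j$ with counting measure. Since $f^{*}_j$ is the nonincreasing rearrangement of the nonnegative sequence $(f_{j,\gamma})$ and $\sigma_{f_j}$ is integer-valued, one has to handle the case where $s^{\#}$ is not an integer or where ties in $|f_{j,\gamma}|$ straddle the cut level $\lambda$; but $\Gamma_j^0(s)$ was defined precisely as $\{\gamma:|f_{j,\gamma}|>\lambda\}$ for the $\lambda$ with $\sigma_{f_j}(\lambda)=s^{\#}$, which makes the layer-cake computation go through exactly as in the continuous lemmas already quoted. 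I expect this to be the main (and only real) obstacle; everything else is substitution and the arithmetic identity $s_0+\tilde s+\frac n2-\frac n{p_1}=s_1+\frac n2-\frac n{p_1}+\big(s_0-s_1+\frac n{p_1}-\frac n{p_0}\big)+\tilde s$ collapsing correctly. Hence I would state Lemmas \ref{lem:6.4}–\ref{lem:6.5} as the substantive inputs and present Theorem \ref{th:6.60000} as their rescaled corollary.
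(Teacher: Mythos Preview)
Your proposal is correct and follows exactly the paper's approach: the paper does not give a separate proof of Theorem \ref{th:6.60000} at all, treating it as an immediate corollary of Theorem \ref{th:single}(ii) combined with Lemmas \ref{lem:6.4} and \ref{lem:6.5} (the references to \eqref{eq:WG1} and \eqref{eq:WG2} are built into the statement itself). Your substitution $t\mapsto t2^{j\tilde s}$ and the bookkeeping identity $2^{j(s_0+\frac n2-\frac n{p_0})}\cdot t2^{j\tilde s}=t\,2^{j(s_1+\frac n2-\frac n{p_1})}$ are precisely what the paper leaves implicit.
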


Later we will use this above classification to consider the corresponding K functional.
In Sections \ref{sec:7} and \ref{sec:8}, we use cuboid functional.
In Section \ref{sec:7}, we obtain the concrete wavelet characterization of K functional for the case $p_0=p_1$.
In Section \ref{sec:8}, we obtain the concrete wavelet characterization of K functional for the case $q_0=q_1$ and $p_0<p_1$.
Cuboid functional is not enough to consider more complex indices.
In Section \ref{sec:9}, we first narrow down the K functionals further to the vertex K functionals.
In Section \ref{sec:9b}, we apply vertex K functional to get a new concrete wavelet characterization of K functional for $q_0=q_1$.
In Section \ref{sec:10}, we consider the case $p_0 \neq p_1.$
We introduce vertex $K_{\infty}$ functional, power spaces and topology compatiability.
We get finally the concrete wavelet characterization of K functional for $p_0\neq p_1$ and $q_0\neq q_1$.

\section{K functional when $p_0=p_1$} \label{sec:7}

For $p_0=p_1=p$,
the real interpolation spaces
$(B^{s_0, q_0}_{p}, B^{s_1, q_1}_{p})_{\theta, r}$
are Besov spaces for the cases where (i) $s_0\neq s_1$ and (ii) $s_0=s_1$ and $r=q$.
For the cases where $s_0=s_1$ and $r\neq q$,
no one had considered it before.
We will obtain the relative concrete wavelet characterization of K functional and prove that the corresponding real interpolation space runs out of the Besov space series.
For $s_0\neq s_1$,
to the best of our knowledge,
no one has systematically given a concrete expression for the K functional before.
Here we systematically give the specific wavelet characterization of K functional
$K_{\xi} (t,f, B^{s_0,q_0}_{p}, B^{s_1,q_1}_{p})$.

In section \ref{sec:6.2}, we use cuboid K functionals to convert the computation of these functionals
into real interpolation functionals
with respect to $f^{p}$ functions at lattice points $\mathbb{N}$.
In the remaining three sections, we compute the K functional in the following three cases:
(i) $q_0=q_1$; (ii) $s_0\neq s_1$ and $q_0\neq q_1$; (iii) $s_0=s_1$.
We calculate the corresponding K functional,
and get the specific form of the corresponding interpolation space and the relationship with Besov spaces.

\subsection{Transformation of K functional when $p$ is equal} \label{sec:6.2}
When $p_0=p_1=p$, we solidify the norm on the layer grid $\Gamma_j$ and transform the K functional on the main grid.
{\bf We transform the study of K functionals into the study of K functional
for weighted norm $l^{s,q}$ of sequences $f^{p}$ over main grid $\mathbb{N}$.}
Note that
\begin{equation*}
\begin{array}{rcl}  K_{\xi} (t,f, B^{s_0,q_0}_{p}, B^{s_1,q_1}_{p}, g) &=&
\{ [\sum\limits_{j\geq 0} 2^{j q_0 (s_0+ \frac{n}{2}-\frac{n}{p})}
(\sum\limits_{\gamma} g_{j,\gamma}^{p}) ^{\frac{q_0}{p}} ] ^{\frac{\xi}{q_0}}\\
&&+ t^{\xi} [\sum\limits_{j\geq 0} 2^{j q_1 (s_1+ \frac{n}{2}-\frac{n}{p})}
(\sum\limits_{\gamma} (g_{j,\gamma}-f_{j,\gamma}) ^{p}) ^{\frac{q_1}{p}} ] ^{\frac{\xi}{q_1}}\}^{\frac{1}{\xi}}
\end{array}
\end{equation*}
When $p_0=p_1=p$, we transform the K functional for Besov spaces to the K functional for layer sequence.
We may assume that $f_{j,\gamma}=|f_{j,\gamma}|$.

{\bf Case 1}. If $p_0=p_1= p=\infty$, then $\forall j\in \mathbb{N}$, denote
\begin{equation}\label{eq:7.10000}
f^{\infty}_{j}= \sup\limits_{\gamma} |f_{j,\gamma}|, f^{\infty} = (f^{\infty}_{j})_{j\geq 0}, g^{\infty}_{j}= \sup\limits_{\gamma} |g_{j,\gamma}|.
\end{equation}
Hence $f^{\infty}_{j}= g^{\infty}_{j}+ f^{\infty}_{j}-g^{\infty}_{j}$. Since $g_{j,\gamma}\in \mathfrak{C}^{\Lambda}_{f}$, we have
$$0\leq \sup\limits_{\gamma} (|f_{j,\gamma}|- g_{j,\gamma}) \leq f^{\infty}_{j} \leq \sup\limits_{\gamma} g_{j,\gamma}
+ \sup\limits_{\gamma} (|f_{j,\gamma}|- g_{j,\gamma}).$$
For $f^{\infty}$ defined in the equation \eqref{eq:7.10000},  we have
\begin{lemma} \label{lem:7.1}
Given $s_0,s_1\in \mathbb{R}, 0<q_0,q_1\leq \infty$ and $0<\xi<\infty$. Then
\begin{equation*}
\begin{array}{rl} &  K_{\xi} (t,f, B^{s_0,q_0}_{\infty}, B^{s_1,q_1}_{\infty})\\
\sim & \inf \limits_{0\leq g_{j}\leq f^{\infty}_{j}}
\{[\sum\limits_{j\geq 0} 2^{j q_0 (s_0+ \frac{n}{2})}
(g^{\infty}_{j}) ^{q_0} ] ^{\frac{\xi}{q_0}} + t^{\xi} [\sum\limits_{j\geq 0} 2^{j q_1 (s_1+ \frac{n}{2})}
(f^{\infty}_{j}-g^{\infty}_{j}) ^{q_1} ] ^{\frac{\xi}{q_1}}\}^{\frac{1}{\xi}}\\
=& \inf \limits_{0\leq g_{j}\leq f^{\infty}_{j}} K_{\xi} (t, f^{\infty}_{j} , l^{s_0+ \frac{n}{2},q_0}, l^{s_1+ \frac{n}{2},q_1},g^{\infty}_{j} )
= K_{\xi} (t, f^{\infty}, l^{s_0+ \frac{n}{2},q_0}, l^{s_1+ \frac{n}{2},q_1}).
\end{array}
\end{equation*}
\end{lemma}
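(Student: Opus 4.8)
The plan is to pass from the Besov $K_{\xi}$-functional to an infimum over the cuboid $\mathfrak{C}^{\Lambda}_{f}$ and then to observe that, when $p=\infty$, the optimization over the layer grid $\Gamma_{j}$ at each frequency $j$ is driven by the single scalar $f^{\infty}_{j}$ of \eqref{eq:7.10000}. First, by Theorems \ref{th:6.1}, \ref{th:5.3} and \ref{th:absolute} we may assume $f_{j,\gamma}=|f_{j,\gamma}|\ge 0$, and then $K_{\xi}(t,f,B^{s_0,q_0}_{\infty},B^{s_1,q_1}_{\infty})$ equals the infimum over sequences $g$ with $0\le g_{j,\gamma}\le f_{j,\gamma}$ of the expression \eqref{eq:Kxi} specialized to $p_0=p_1=\infty$; there the inner norm over $\Gamma_{j}$ is the supremum $\sup_{\gamma}(\cdot)$ and $s_{i}+\frac{n}{2}-\frac{n}{p}=s_{i}+\frac{n}{2}$. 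The two concluding equalities in the statement are just the definitions of the weighted $K_{\xi}$-functional on $\mathbb{N}$ (Definition \ref{def:discretespace}(iii) together with the $K_{\xi}$ definition in Section \ref{sec:2}), so it remains to prove the equivalence $\sim$ between this cuboid infimum and $K_{\xi}(t,f^{\infty},l^{s_0+\frac{n}{2},q_0},l^{s_1+\frac{n}{2},q_1})$.

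For the inequality ``$\le$'', I would lift any admissible scalar competitor $0\le g^{\infty}_{j}\le f^{\infty}_{j}$ back to the layer grid by rescaling: set $g_{j,\gamma}=(g^{\infty}_{j}/f^{\infty}_{j})\,f_{j,\gamma}$ if $f^{\infty}_{j}>0$, and $g_{j,\gamma}=0$ otherwise. Then $0\le g_{j,\gamma}\le f_{j,\gamma}$, so $g$ lies in the cuboid, and moreover $\sup_{\gamma}g_{j,\gamma}=g^{\infty}_{j}$ and $\sup_{\gamma}(f_{j,\gamma}-g_{j,\gamma})=f^{\infty}_{j}-g^{\infty}_{j}$. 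Hence the value of \eqref{eq:Kxi} at this $g$ is exactly $K_{\xi}(t,f^{\infty},l^{s_0+\frac{n}{2},q_0},l^{s_1+\frac{n}{2},q_1},g^{\infty})$, and taking the infimum over $g^{\infty}$ gives $K_{\xi}(t,f,B^{s_0,q_0}_{\infty},B^{s_1,q_1}_{\infty})\le K_{\xi}(t,f^{\infty},l^{s_0+\frac{n}{2},q_0},l^{s_1+\frac{n}{2},q_1})$.

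For ``$\ge$'', I would start from an arbitrary cuboid competitor $g$ and set $g^{\infty}_{j}=\sup_{\gamma}g_{j,\gamma}$; then $0\le g^{\infty}_{j}\le f^{\infty}_{j}$, and the sandwich inequality $0\le\sup_{\gamma}(f_{j,\gamma}-g_{j,\gamma})\le f^{\infty}_{j}\le\sup_{\gamma}g_{j,\gamma}+\sup_{\gamma}(f_{j,\gamma}-g_{j,\gamma})$ recorded just before the lemma yields $\sup_{\gamma}(f_{j,\gamma}-g_{j,\gamma})\ge f^{\infty}_{j}-g^{\infty}_{j}\ge 0$. Since the $s_{0}$-term of \eqref{eq:Kxi} depends on $g$ only through $g^{\infty}_{j}$, and its $s_{1}$-term is monotone increasing in each $\sup_{\gamma}(f_{j,\gamma}-g_{j,\gamma})$ (with the usual modification if $q_{0}$ or $q_{1}=\infty$), the value of \eqref{eq:Kxi} at $g$ is $\ge K_{\xi}(t,f^{\infty},l^{s_0+\frac{n}{2},q_0},l^{s_1+\frac{n}{2},q_1},g^{\infty})\ge K_{\xi}(t,f^{\infty},l^{s_0+\frac{n}{2},q_0},l^{s_1+\frac{n}{2},q_1})$; taking the infimum over $g$ closes this direction.

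The only genuine content is this layer-by-layer collapse of the $\Gamma_{j}$-supremum onto the scalar $f^{\infty}_{j}$: the rescaling supplies a matching competitor that stays inside the cuboid, and the sandwich inequality supplies the lower bound. The routine care is for the degenerate layers $f^{\infty}_{j}=0$ (where $g^{\infty}_{j}=0$ is forced) and for $q_{0}=\infty$ or $q_{1}=\infty$ (where the outer $\ell^{q}$-sums become suprema). I expect the main obstacle to be purely the notational bookkeeping of the cuboid reduction inherited from Theorems \ref{th:5.3} and \ref{th:6.1}; the analytic heart is just the two one-line inequalities above, and in particular no vertex classification is needed here, that entering only when $p_{0}\ne p_{1}$ starting from Section \ref{sec:5.2}.
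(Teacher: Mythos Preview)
Your proposal is correct and follows essentially the same route as the paper: the sandwich inequality you invoke is exactly the one displayed just before Lemma \ref{lem:7.1}, and your rescaling $g_{j,\gamma}=(g^{\infty}_{j}/f^{\infty}_{j})f_{j,\gamma}$ is the $p=\infty$ analogue of the construction the paper writes out only in Case~2 for Lemma \ref{lem:lay}. Your write-up is in fact more complete than the paper's, since it spells out both directions and notes that the argument yields equality rather than merely $\sim$.
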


{\bf Case 2}. If $0<p=p_0=p_1 <\infty$, then denote
\begin{equation}\label{eq:7.20000}
f^{p}_{j}= (\sum\limits_{\gamma} |f_{j,\gamma}|^{p})^{\frac{1}{p}}, f^{p} = (f^{p}_{j})_{j\geq 0}.
\end{equation}
Hence we have $0 \leq (\sum\limits_{\gamma} |g_{j,\gamma}|^{p})^{\frac{1}{p}} \leq f^{p}_{j}$ and
$$0\leq [ \sum\limits_{\gamma} (f_{j,\gamma}- g_{j,\gamma})^{p}]^{\frac{1}{p}}
\leq f^{p}_{j} \leq (\sum\limits_{\gamma} |g_{j,\gamma}|^{p})^{\frac{1}{p}}
+ [\sum\limits_{\gamma} (f_{j,\gamma}- g_{j,\gamma})^{p}]^{\frac{1}{p}}.$$
Further, we can take $g_{j\gamma}= g^{p}_{j} \frac{f_{j,\gamma}}{f^{p}_{j}}$ and
$(\sum\limits_{\gamma} |g_{j,\gamma}|^{p})^{\frac{1}{p}}= g^{p}_{j}$,
$(\sum\limits_{\gamma} (f_{j,\gamma}- g_{j,\gamma})^{p})^{\frac{1}{p}}= f^{p}_{j}- g^{p}_{j}$.
For $f^{p}$ defined in the equation \eqref{eq:7.20000}, we have

\begin{lemma} \label{lem:lay}
Given $s_0,s_1\in \mathbb{R}, 0<q_0,q_1\leq \infty$ and $ 0<\xi<\infty$.
\begin{equation*}
\begin{array}{rl} &  K_{\xi} (t,f, B^{s_0,q_0}_{p}, B^{s_1,q_1}_{p}) \\
\sim & \{ \inf \limits_{0\leq g^{p}_{j}\leq f^{p}_{j}}
[\sum\limits_{j\geq 0} 2^{j q_0 (s_0+ \frac{n}{2}-\frac{n}{p})}
(g^{p}_{j}) ^{q_0} ] ^{\frac{\xi}{q_0}} + t^{\xi} [\sum\limits_{j\geq 0} 2^{j q_1 (s_1+ \frac{n}{2}-\frac{n}{p})}
(f^{p}_{j}-g^{p}_{j}) ^{q_1} ] ^{\frac{\xi}{q_1}}\}^{\frac{1}{\xi}}\\
=&  \inf \limits_{0\leq g^{p}_{j}\leq f^{p}_{j}} K_{\xi} (t, f^{p}_{j}, l^{s_0+ \frac{n}{2}-\frac{n}{p},q_0}, l^{s_1+ \frac{n}{2}-\frac{n}{p},q_1},g^{p}_{j} )\\
=& K_{\xi} (t,f^{p}, l^{s_0+ \frac{n}{2}-\frac{n}{p},q_0}, l^{s_1+ \frac{n}{2}-\frac{n}{p},q_1}).
\end{array}
\end{equation*}
\end{lemma}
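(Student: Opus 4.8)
The plan is to follow the same reduction strategy already used in Lemma \ref{lem:7.1}, adapting it to the case $0<p=p_0=p_1<\infty$. First I would start from the explicit formula for $K_\xi(t,f,B^{s_0,q_0}_{p},B^{s_1,q_1}_{p},g)$ displayed just before the statement, and use Theorems \ref{th:5.3} and \ref{th:absolute} to reduce to nonnegative wavelet coefficients $f_{j,\gamma}=|f_{j,\gamma}|$ and to infima over the cuboid $\mathfrak{C}^{\Lambda}_{f}$, i.e. over sequences $0\le g_{j,\gamma}\le f_{j,\gamma}$. The key observation is that, at each fixed layer $j$, the only quantities entering the functional are the $\ell^p$-norms $\big(\sum_{\gamma}g_{j,\gamma}^{p}\big)^{1/p}$ and $\big(\sum_{\gamma}(f_{j,\gamma}-g_{j,\gamma})^{p}\big)^{1/p}$, so the problem decouples: the layer grid $\Gamma_j$ gets ``solidified'' into a single scalar $g^p_j$.

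The heart of the argument is a two-sided comparison. For the inequality $K_\xi(t,f,B^{s_0,q_0}_{p},B^{s_1,q_1}_{p})\gtrsim K_\xi(t,f^p,l^{s_0+\frac n2-\frac np,q_0},l^{s_1+\frac n2-\frac np,q_1})$, given any admissible $g$ I would set $g^p_j=\big(\sum_{\gamma}g_{j,\gamma}^{p}\big)^{1/p}$; then $0\le g^p_j\le f^p_j$, and by the reverse quasi-triangle inequality in $\ell^p$ (valid for all $0<p\le\infty$ with a constant depending only on $p$) one has $\big(\sum_{\gamma}(f_{j,\gamma}-g_{j,\gamma})^{p}\big)^{1/p}\gtrsim f^p_j-g^p_j$, since $0\le f_{j,\gamma}-g_{j,\gamma}$ and these are coordinatewise dominated by $f_{j,\gamma}$; combined with the lower bound $\big(\sum_{\gamma}(f_{j,\gamma}-g_{j,\gamma})^{p}\big)^{1/p}\ge 0$ and the chain of inequalities already displayed in the excerpt (the line $0\le[\sum_\gamma(f_{j,\gamma}-g_{j,\gamma})^p]^{1/p}\le f^p_j\le\cdots$), this shows the layer data $(g^p_j,f^p_j-g^p_j)$ is an admissible competitor for $K_\xi(t,f^p,\dots)$ up to constants, and taking the infimum gives one direction. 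For the reverse inequality, given any scalar sequence $0\le g^p_j\le f^p_j$ I would use the explicit choice $g_{j,\gamma}=g^p_j\,f_{j,\gamma}/f^p_j$ (as noted in the excerpt, with the convention $g_{j,\gamma}=0$ when $f^p_j=0$), which realizes exactly $\big(\sum_\gamma g_{j,\gamma}^p\big)^{1/p}=g^p_j$ and $\big(\sum_\gamma(f_{j,\gamma}-g_{j,\gamma})^p\big)^{1/p}=f^p_j-g^p_j$; substituting into $K_\xi(t,f,B^{s_0,q_0}_p,B^{s_1,q_1}_p,g)$ reproduces the weighted sequence functional on the main grid $\mathbb{N}$, so taking the infimum over $g^p_j$ gives the other direction.

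Finally I would assemble the two bounds into the stated equivalence $K_\xi(t,f,B^{s_0,q_0}_p,B^{s_1,q_1}_p)\sim\inf_{0\le g^p_j\le f^p_j}K_\xi(t,f^p_j,l^{s_0+\frac n2-\frac np,q_0},l^{s_1+\frac n2-\frac np,q_1},g^p_j)=K_\xi(t,f^p,l^{s_0+\frac n2-\frac np,q_0},l^{s_1+\frac n2-\frac np,q_1})$, where the last equality is just the definition of the $K_\xi$ functional for the sequence pair on $\mathbb{N}$. The implicit constants depend only on $p$ and $\xi$ (through the quasi-triangle inequality in $\ell^p$ and the passage from $K_\xi$ to $K$), which is harmless for the $\sim$ claim.

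The main obstacle is the lower bound direction: one must be careful that the reverse quasi-triangle inequality $f^p_j-g^p_j\lesssim\big(\sum_\gamma(f_{j,\gamma}-g_{j,\gamma})^p\big)^{1/p}$ holds with a uniform constant for $0<p<1$ as well as $p\ge 1$, and that this constant does not degrade when raised to the powers $q_0/p$, $q_1/p$, $\xi/q_i$ appearing in the outer sums. Since $f^p=(f^p_j)_j$ is fixed throughout and the inequality is applied coordinatewise in $j$ before summing, the constant stays under control, but this is the step that genuinely uses $p<\infty$ and must be spelled out rather than waved through; everything else is bookkeeping with the formula for $K_\xi(t,f,B^{s_0,q_0}_p,B^{s_1,q_1}_p,g)$ and the cuboid reduction from Section \ref{sec:6}.
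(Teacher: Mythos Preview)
Your plan matches the paper's almost verbatim: reduce to the cuboid via Theorems~\ref{th:5.3} and~\ref{th:absolute}, observe that at each layer only $\|g_j\|_p$ and $\|f_j-g_j\|_p$ enter the functional, get one inequality from the explicit choice $g_{j,\gamma}=g^p_j\,f_{j,\gamma}/f^p_j$, and the other from the (reverse) triangle inequality in $\ell^p$. The paper's argument is exactly the displayed chain before the lemma together with that same explicit construction.

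There is, however, a real gap in the step you correctly flag as the main obstacle---and the paper's sketch shares it. The inequality $\|f_j-g_j\|_p\gtrsim f^p_j-\|g_j\|_p$ with a constant depending only on $p$ is \emph{false} for $0<p<1$. Take $p=\tfrac12$, $f_{j,\gamma_i}=1$ for $1\le i\le N$, $g_{j,\gamma_i}=1$ for $i<N$ and $g_{j,\gamma_N}=0$; then $\|f_j\|_{1/2}=N^2$, $\|g_j\|_{1/2}=(N-1)^2$, $\|f_j-g_j\|_{1/2}=1$, and the ratio $1/(2N-1)\to 0$ as $N\to\infty$. So setting $g^p_j=\|g_j\|_p$ does not yield an admissible competitor with a uniform constant. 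The fix is cheap: instead take $g^p_j=\max\bigl(0,\,f^p_j-\|f_j-g_j\|_p\bigr)$. Then $f^p_j-g^p_j\le\|f_j-g_j\|_p$ by construction, while $g^p_j\le C_p\|g_j\|_p$ follows from the \emph{forward} quasi-triangle inequality $f^p_j\le C_p\bigl(\|g_j\|_p+\|f_j-g_j\|_p\bigr)$, which does hold for all $0<p<\infty$ with $C_p=\max(1,2^{1/p-1})$. (Equivalently, split on whether $\|g_j\|_p\ge\|f_j-g_j\|_p$ and take $g^p_j\in\{0,f^p_j\}$, in the spirit of Section~\ref{sec:9}.) With this adjustment the rest of your argument goes through unchanged.
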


For the above two cases, we consider in particular the case where $s_0= s_1=s$.
We transform the study of K functionals on weighted space $l^{s,q}$ into the study of K functionals on unweighted space $l^{q}$.
In fact, for $0<p\leq \infty$ and for $f^{p}$ defined in the equations \eqref{eq:7.10000} and \eqref{eq:7.20000},
we set the weight function and denote
\begin{equation}\label{eq:7.30000}
f^{s,p}_{j} = 2^{j(s+\frac{n}{2}-\frac{n}{p})} f^{p}_{j}, f^{s,p}= (f^{s,p}_{j})_{j\geq 0}, g^{s,p}= (g^{s,p}_{j})_{j\geq 0}.
\end{equation}
Hence for $f^{s,p}$ defined in the equation \eqref{eq:7.30000}, by Lemmas \ref{lem:7.1} and \ref{lem:lay},
K functionals become $l^{q}$ functionals  of the Sobolev norm $W^{s,p}$ over the ring:
\begin{corollary}\label{lem:7.3}
Given $s_0=s_1=s\in \mathbb{R}, 0<p, q_0\neq q_1\leq \infty$ and $0<\xi<\infty$. Then
\begin{equation*}
\begin{array}{rcl}  K_{\xi} (t,f, B^{s,q_0}_{p}, B^{s,q_1}_{p}) &\sim & \inf \limits_{0\leq g^{s,p}_{j}\leq f^{s,p}_{j}}
\{[\sum\limits_{j\geq 0} (g^{s,p}_{j}) ^{q_0} ] ^{\frac{\xi}{q_0}} + t^{\xi} [\sum\limits_{j\geq 0}
(f^{s,p}_{j}-g^{s,p}_{j}) ^{q_1} ] ^{\frac{\xi}{q_1}}\}^{\frac{1}{\xi}}\\
&=& \inf \limits_{0\leq g^{s,p}_{j}\leq f^{s,p}_{j}} K_{\xi} (t,f^{s,p}, l^{q_0}, l^{q_1},g^{s,p} ) =
K_{\xi} (t, f^{s,p}, l^{q_0}, l^{q_1})
\end{array}
\end{equation*}
\end{corollary}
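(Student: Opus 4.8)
The plan is to reduce Corollary~\ref{lem:7.3} to the two cases of Lemma~\ref{lem:lay} (for $0<p<\infty$) and Lemma~\ref{lem:7.1} (for $p=\infty$) by specializing to $s_0=s_1=s$, and then to absorb the dyadic weights into the sequences via a change of variable. First I would recall that Lemma~\ref{lem:lay} already gives, for $0<p<\infty$,
$$ K_{\xi}(t,f, B^{s,q_0}_{p}, B^{s,q_1}_{p}) \sim K_{\xi}(t, f^{p}, l^{s+\frac n2-\frac np,q_0}, l^{s+\frac n2-\frac np,q_1}), $$
and similarly Lemma~\ref{lem:7.1} handles $p=\infty$ with exponent $s+\frac n2$; uniformly both read $K_{\xi}(t,f^{p},l^{s+\frac n2-\frac np,q_0},l^{s+\frac n2-\frac np,q_1})$ if we adopt the convention $\frac n p=0$ when $p=\infty$ (as is implicit in Definition~\ref{def:discretespace}).

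The core step is then to observe that the weighted functional on the right is, after the substitution $f^{s,p}_j = 2^{j(s+\frac n2-\frac np)} f^{p}_j$ from \eqref{eq:7.30000}, nothing but an \emph{unweighted} K functional. Concretely, in the infimum defining $K_\xi(t,f^{p},l^{s+\frac n2-\frac np,q_0},l^{s+\frac n2-\frac np,q_1})$ one ranges over splittings $f^{p}_j = g^{p}_j + (f^{p}_j-g^{p}_j)$ with $0\le g^{p}_j\le f^{p}_j$; multiplying through by the fixed positive weight $2^{j(s+\frac n2-\frac np)}$ sets up a bijection between such splittings and splittings $f^{s,p}_j = g^{s,p}_j + (f^{s,p}_j-g^{s,p}_j)$ with $0\le g^{s,p}_j\le f^{s,p}_j$, where $g^{s,p}_j := 2^{j(s+\frac n2-\frac np)} g^{p}_j$. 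Under this bijection
$$ \Big[\sum_{j\ge 0} 2^{j q_0(s+\frac n2-\frac np)} (g^{p}_j)^{q_0}\Big]^{\frac{1}{q_0}} = \Big[\sum_{j\ge 0} (g^{s,p}_j)^{q_0}\Big]^{\frac{1}{q_0}} = \|g^{s,p}\|_{l^{q_0}}, $$
and likewise the second term equals $\|f^{s,p}-g^{s,p}\|_{l^{q_1}}$. Hence term by term inside the $\{\cdot\}^{1/\xi}$ the two functionals agree, and taking the infimum gives $K_{\xi}(t,f^{p},l^{s+\frac n2-\frac np,q_0},l^{s+\frac n2-\frac np,q_1}) = K_{\xi}(t,f^{s,p},l^{q_0},l^{q_1})$, which is exactly the displayed chain in the Corollary (the middle equality being the definition of $K_\xi$ via its defining infimum, and the last the identification with $\inf_{0\le g^{s,p}_j\le f^{s,p}_j} K_\xi(t,f^{s,p},l^{q_0},l^{q_1},g^{s,p})$). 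The $q=\infty$ cases are handled by replacing the relevant $\ell^{q}$ sum by a supremum throughout, which does not affect the bijection argument.

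I do not expect a genuine obstacle here: the only subtlety worth spelling out is that the weight $2^{j(s+\frac n2-\frac np)}$ is strictly positive and $j$-independent within each term, so scaling the admissible set $\{0\le g^{p}_j\le f^{p}_j\}$ by it is an honest bijection preserving the order constraint, and that this scaling is \emph{compatible with both summands simultaneously} because it is the same weight in both (this is precisely where $s_0=s_1$ is used — if $s_0\ne s_1$ the two summands would carry different weights and could not both be normalized at once). One should also note that $\tilde f_j$, $f^{p}_j$ etc.\ are defined from $|f_{j,\gamma}|$, so the reduction ``we may assume $f_{j,\gamma}=|f_{j,\gamma}|$'' made just before Lemma~\ref{lem:7.1} (justified by Theorems~\ref{th:5.3} and~\ref{th:absolute}) is already in force and needs no repetition. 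With these remarks the Corollary follows immediately from Lemmas~\ref{lem:7.1} and~\ref{lem:lay}.
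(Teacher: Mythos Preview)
Your proposal is correct and follows exactly the paper's approach: the Corollary is stated as an immediate consequence of Lemmas~\ref{lem:7.1} and~\ref{lem:lay} via the substitution $f^{s,p}_j = 2^{j(s+\frac n2-\frac np)} f^{p}_j$ from \eqref{eq:7.30000}, and your bijection argument simply makes explicit the one-line ``set the weight function'' step that the paper leaves implicit. Your remark that $s_0=s_1$ is precisely what allows the same weight to be absorbed into both summands is the key observation, and is also implicit in the paper's treatment.
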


For the rest of this section, we calculate $K_{\xi} (t,f^{p}, l^{s_0+ \frac{n}{2}-\frac{n}{p},q_0}, l^{s_1+ \frac{n}{2}-\frac{n}{p},q_1})$
and $K_{\xi} (t,f^{s,p}, l^{q_0}, l^{q_1},g^{s,p} )$ in three cases.

\subsection{K functional where $p_0=p_1,q_0=q_1$  and $s_0\neq s_1$} \label{sec:7.1}
Let $0<q_0=q_1=q \leq \infty$ and $a, b\in \mathbb{R}$. 
We know
\begin{equation}\label{eq:7ps}
K (t,f, l^{a,q}, l^{b,q})= \inf \limits_{0\leq g_{j}\leq f_{j}}
\lf(\sum\limits_{j\geq 0} 2^{jaq} g_{j}^{q}\r)^{\frac{1}{q}}
+ t\lf(\sum\limits_{j\geq 0} 2^{jbq} (f_{j}- g_{j})^{q}\r)^{\frac{1}{q}}.
\end{equation}
We transform the K functional in \eqref{eq:7ps} into vertex K functional: a classification problem of layers.
In fact, let $\mathbb{N}_{V}$ be a subset of $\mathbb{N}$ and let $\mathbb{N}^{c}_{V}$ be the relative complementary set.
Denote
\begin{equation}\label{eq:nNV}
K _{\mathbb{N}_{V}} (t,f, l^{a,q}, l^{b,q})=
\lf(\sum\limits_{j\in \mathbb{N}_{V}} 2^{jaq} f_{j}^{q}\r)^{\frac{1}{q}}
+ t\lf(\sum\limits_{j\in \mathbb{N}^{c}_{V}} 2^{jbq} f_{j}^{q}\r)^{\frac{1}{q}}.
\end{equation}
We consider the lower bound of $K _{\mathbb{N}_{V}} (t,f, l^{a,q}, l^{b,q})$ in the equation \eqref{eq:nNV} and define
\begin{equation}\label{eq:NV}
K _{V} (t,f, l^{a,q}, l^{b,q})= \inf \limits_{\mathbb{N}_{V} }
K _{\mathbb{N}_{V}} (t,f, l^{a,q}, l^{b,q}).
\end{equation}
For $f_{j}>0$ in the equation \eqref{eq:7ps},
we distinguish the cases where $g_{j}\geq \frac{1}{2} f_{j}$ and $g_{j}< \frac{1}{2} f_{j}$.
$K (t,f, l^{a,q}, l^{b,q})$ in \eqref{eq:7ps} is equivalent to vertex K functional $K _{V} (t,f, l^{a,q}, l^{b,q})$ in \eqref{eq:NV}.
\begin{lemma}\label{lem:7.4}
Given $a,b \in \mathbb{R}$ and $0<q\leq \infty$.  We have
$$K (t,f, l^{a,q}, l^{b,q})\sim K _{V} (t,f, l^{a,q}, l^{b,q}).$$
\end{lemma}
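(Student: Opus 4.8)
The plan is to show the two inequalities $K(t,f,l^{a,q},l^{b,q}) \lesssim K_V(t,f,l^{a,q},l^{b,q})$ and $K_V(t,f,l^{a,q},l^{b,q}) \lesssim K(t,f,l^{a,q},l^{b,q})$, exploiting that we may take $f_j \ge 0$ everywhere by Theorem \ref{th:absolute}. For the first inequality, given any subset $\mathbb{N}_V \subset \mathbb{N}$ I would simply use the admissible splitting $g_j = f_j$ for $j \in \mathbb{N}_V$ and $g_j = 0$ for $j \in \mathbb{N}_V^c$ in the infimum defining $K$ in \eqref{eq:7ps}; this is a legal competitor since $0 \le g_j \le f_j$, and it produces exactly $K_{\mathbb{N}_V}(t,f,l^{a,q},l^{b,q})$. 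Taking the infimum over all $\mathbb{N}_V$ gives $K(t,f,l^{a,q},l^{b,q}) \le K_V(t,f,l^{a,q},l^{b,q})$ with constant $1$.

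For the reverse inequality, I would start from a near-optimal splitting $g = (g_j)$ with $0 \le g_j \le f_j$ for the infimum in \eqref{eq:7ps}, and classify the indices as suggested in the text: let $\mathbb{N}_V = \{j : g_j < \tfrac12 f_j\}$ (so that $f_j - g_j > \tfrac12 f_j$, i.e.\ $f_j < 2(f_j-g_j)$) and $\mathbb{N}_V^c = \{j : g_j \ge \tfrac12 f_j\}$. On $\mathbb{N}_V^c$ we have $f_j \le 2 g_j$, hence $\sum_{j\in\mathbb{N}_V^c} 2^{jaq} f_j^q \le 2^q \sum_{j\in\mathbb{N}_V^c} 2^{jaq} g_j^q \le 2^q \sum_{j\ge 0} 2^{jaq} g_j^q$; on $\mathbb{N}_V$ we have $f_j \le 2(f_j - g_j)$, hence $\sum_{j\in\mathbb{N}_V} 2^{jbq} f_j^q \le 2^q \sum_{j\ge 0} 2^{jbq}(f_j-g_j)^q$. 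Combining these two estimates,
$$K_{\mathbb{N}_V}(t,f,l^{a,q},l^{b,q}) \le 2\Big(\sum_{j\ge 0} 2^{jaq} g_j^q\Big)^{1/q} + 2t\Big(\sum_{j\ge 0} 2^{jbq}(f_j-g_j)^q\Big)^{1/q} \le 2\,K(t,f,l^{a,q},l^{b,q},g),$$
and taking the infimum over $g$ (using that $g$ was near-optimal) yields $K_V(t,f,l^{a,q},l^{b,q}) \le C\,K(t,f,l^{a,q},l^{b,q})$. For $q = \infty$ the same argument runs verbatim with the $\ell^q$-sums replaced by suprema, and the quasi-triangle constant of the quasi-norm (if $q<1$) only affects the implied constant, not the structure.

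The only genuine subtlety — and the step I would be most careful about — is the behavior at indices where $f_j = 0$: there $g_j = 0$ is forced and such $j$ can be placed in either $\mathbb{N}_V$ or $\mathbb{N}_V^c$ without changing anything, so the classification above is well-defined; one should state once at the outset that these indices are irrelevant. Everything else is a direct comparison of finite (or supremal) sums, so no further obstruction arises; the argument is the discrete layer-grid analogue of the classical fact that splitting a function into its ``large'' and ``small'' parts at a threshold is comparable to the optimal $K$-functional splitting.
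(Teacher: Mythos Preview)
Your proof is correct and follows exactly the paper's approach (which is only sketched there in one line: split according to whether $g_j \ge \tfrac12 f_j$ or $g_j < \tfrac12 f_j$). One small labeling slip: with your choice $\mathbb{N}_V = \{j : g_j < \tfrac12 f_j\}$, the quantities you bound are $\sum_{j\in\mathbb{N}_V^c} 2^{jaq} f_j^q$ and $\sum_{j\in\mathbb{N}_V} 2^{jbq} f_j^q$, which by the paper's convention \eqref{eq:nNV} combine to $K_{\mathbb{N}_V^c}$, not $K_{\mathbb{N}_V}$; since $K_V$ is an infimum over \emph{all} subsets this is immaterial, but you should either swap the definition to $\mathbb{N}_V = \{j : g_j \ge \tfrac12 f_j\}$ or write $K_{\mathbb{N}_V^c}$ in the displayed estimate.
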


Then we give the specific expression of the vertex K functional by the classification of $j$.
Fix $a<b$ and $0<q\leq \infty$. If $t 2^{j(b-a)}>1$, then denote $j\in \mathbb{N}_{t}.$
Otherwise, denote $j\in \mathbb{N}^{c}_{t}.$ Define
\begin{equation} \label{eq:basic}
W  (t,f, l^{a,q}, l^{b,q})= \lf(\sum\limits_{j\in \mathbb{N}_{t}} 2^{jaq} f_{j}^{q}\r)^{\frac{1}{q}}
+ t\lf(\sum\limits_{j\notin \mathbb{N}_{t}} 2^{jbq} f_{j}^{q}\r)^{\frac{1}{q}}.
\end{equation}
Combine the equations \eqref{eq:NV} and \eqref{eq:basic}, by Lemma \ref{lem:7.4}, we have
\begin{lemma} \label{lem:q}
Given $a<b$ and $0<q\leq \infty$.  We have
$$ K (t,f, l^{a,q}, l^{b,q}) \sim  W (t,f, l^{a,q}, l^{b,q}).$$
\end{lemma}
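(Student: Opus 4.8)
The plan is to prove Lemma~\ref{lem:q} by combining Lemma~\ref{lem:7.4} with a careful analysis of how the optimal splitting set $\mathbb{N}_V$ in \eqref{eq:NV} must look when $a<b$. By Lemma~\ref{lem:7.4} we already know $K(t,f,l^{a,q},l^{b,q})\sim K_V(t,f,l^{a,q},l^{b,q})$, so it suffices to show $K_V(t,f,l^{a,q},l^{b,q})\sim W(t,f,l^{a,q},l^{b,q})$, where $W$ is the functional in \eqref{eq:basic} associated to the \emph{specific} threshold set $\mathbb{N}_t=\{j:\ t2^{j(b-a)}>1\}$. Since $W$ is one particular instance of $K_{\mathbb{N}_V}$ (take $\mathbb{N}_V=\mathbb{N}_t$), the inequality $K_V\le W$ is immediate from the infimum definition \eqref{eq:NV}. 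The content is therefore the reverse inequality $W\lesssim K_V$, i.e.\ that no competing set $\mathbb{N}_V$ can do essentially better than the threshold set $\mathbb{N}_t$.

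For the reverse inequality I would argue termwise, comparing $W$ against $K_{\mathbb{N}_V}$ for an arbitrary admissible $\mathbb{N}_V$. Split $\mathbb{N}$ into four pieces according to membership in $\mathbb{N}_t$ versus $\mathbb{N}_V$. On $\mathbb{N}_t\cap\mathbb{N}_V$ and on $\mathbb{N}_t^c\cap\mathbb{N}_V^c$ the two functionals assign the same weight ($2^{jaq}$ resp.\ $t^q2^{jbq}$) to $f_j^q$, so these terms match exactly. On $\mathbb{N}_t\setminus\mathbb{N}_V$ (so $j\in\mathbb{N}_t$, hence $t2^{j(b-a)}>1$, but $j$ is placed in the ``$b$'' group by $\mathbb{N}_V$), $W$ charges $2^{jaq}f_j^q$ while $K_{\mathbb{N}_V}$ charges $t^q2^{jbq}f_j^q=2^{jaq}f_j^q\cdot(t2^{j(b-a)})^q> 2^{jaq}f_j^q$, so $W$'s term is dominated by $K_{\mathbb{N}_V}$'s term. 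Symmetrically, on $\mathbb{N}_V\setminus\mathbb{N}_t$ we have $t2^{j(b-a)}\le 1$, so $W$ charges $t^q2^{jbq}f_j^q\le 2^{jaq}f_j^q$, which is at most $K_{\mathbb{N}_V}$'s term $2^{jaq}f_j^q$. Summing the $q$-th powers over each group and using the quasi-triangle inequality $(x+y)^{1/q}\le C(x^{1/q}+y^{1/q})$ (with the obvious modification for $q=\infty$, replacing sums by suprema throughout) yields $W(t,f,l^{a,q},l^{b,q})\le C\,K_{\mathbb{N}_V}(t,f,l^{a,q},l^{b,q})$ with a constant independent of $\mathbb{N}_V$ and $t$; taking the infimum over $\mathbb{N}_V$ gives $W\lesssim K_V$.

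Putting the two directions together gives $W(t,f,l^{a,q},l^{b,q})\sim K_V(t,f,l^{a,q},l^{b,q})\sim K(t,f,l^{a,q},l^{b,q})$, which is the claim of Lemma~\ref{lem:q}. I would also double-check the endpoint conventions: when $q=\infty$ the expressions in \eqref{eq:basic} and \eqref{eq:nNV} are read as $\sup_j 2^{ja}f_j + t\sup_j 2^{jb}f_j$ over the respective index sets, and the four-region comparison above goes through verbatim with ``$\sum$'' replaced by ``$\sup$'' and the quasi-triangle constant replaced by $2$; when $b=a$ is excluded by hypothesis so $\mathbb{N}_t$ is genuinely a ``tail'' set $\{j:\ 2^{j(b-a)}>t^{-1}\}=\{j> j_0(t)\}$ for a threshold $j_0(t)$, which is what makes $W$ a clean Holmstedt-type split.

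The main obstacle is not any single estimate but making sure the comparison constant in the reverse inequality is truly uniform in the competing set $\mathbb{N}_V$ and in $t$: one must resist the temptation to optimize $\mathbb{N}_V$ directly and instead run the termwise domination argument, since the infimum in \eqref{eq:NV} is over all subsets and an explicit minimizer need not exist in the $q<1$ (quasi-norm) range. A secondary technical point is handling indices $j$ with $t2^{j(b-a)}$ exactly equal to $1$ (a measure-zero set of $t$'s, easily absorbed into the constant) and the case $f_j=0$, which contributes nothing to either side. Once the uniform termwise bound is in place, Lemma~\ref{lem:q} follows with essentially no further computation.
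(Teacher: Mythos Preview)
Your proposal is correct and follows the same overall scaffold as the paper: reduce via Lemma~\ref{lem:7.4} to comparing $K_V$ with $W$, note $K_V\le W$ trivially since $W=K_{\mathbb{N}_t}$ is one competitor, and then establish the reverse inequality.

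The difference lies in how the reverse direction is argued. The paper proceeds by an iterative point-moving argument: starting from an arbitrary $\mathbb{N}_V$, it removes one index $j_0\in\mathbb{N}_V\setminus\mathbb{N}_t$ at a time (and then adds indices from $\mathbb{N}_t\setminus\mathbb{N}_V$), claiming each single move decreases $K_{\mathbb{N}_V}$, and ``continues this process'' to reach $\mathbb{N}_t$. Your four-region termwise comparison instead bounds each of $W$'s two $q$-th-power sums by the sum of both of $K_{\mathbb{N}_V}$'s $q$-th-power sums in one stroke, yielding $W\le C\,K_{\mathbb{N}_V}$ uniformly in $\mathbb{N}_V$ and $t$. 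Your route is more robust: the paper's single-step monotonicity claim is delicate when $q\neq 1$ (moving one term between the two $(\cdot)^{1/q}$ blocks need not be monotone, because the map $x\mapsto x^{1/q}$ is nonlinear), and the ``continue this process'' is informal when infinitely many indices must be moved. The cost of your approach is only a harmless constant, which is exactly what the $\sim$ in the lemma asks for. Your remarks on the $q=\infty$ endpoint and on boundary indices $t2^{j(b-a)}=1$ are appropriate and complete the argument.
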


\begin{proof}
It is sufficient to prove $$ K _{V} (t,f, l^{a,q}, l^{b,q})= W (t,f, l^{a,q}, l^{b,q}).$$
(i) By definition,
$ K_{V} (t,f, l^{a,q}, l^{b,q})\leq  W (t,f, l^{a,q}, l^{b,q}).$\\
(ii) In turn, we prove
$$K_{\mathbb{N}_{V}}\geq K_{\mathbb{N}_{V}\bigcap \mathbb{N}_{t}}\geq K_{\mathbb{N}_{t}}.$$
In fact, if there exists $j_0\in \mathbb{N}_{V}$ such that $j_0\notin \mathbb{N}_{t}$,
then we denote $\mathbb{N}^{j_0,-}_{V}= \{j\in \mathbb{N}_{V} \mbox {\, and \,} j\neq j_0\}$.
It is easy to see that
$$K _{\mathbb{N}^{j_0,-}_{V}} (t,f, l^{a,q}, l^{b,q})< K _{\mathbb{N}_{V}} (t,f, l^{a,q}, l^{b,q}).$$
We continue this process and we get
$K_{\mathbb{N}_{V}}\geq K_{\mathbb{N}_{V}\bigcap \mathbb{N}_{t}}$.

If there exists $j_0\in \mathbb{N}_{t}$ and $j_0\notin \mathbb{N}_{V}\bigcap \mathbb{N}_{t}$,
then we denote $\mathbb{N}^{j_0}_{V}= \{j\in \mathbb{N}_{V} \mbox {\, or \,} j= j_0\}$.
It is easy to see that
$$K^{j_0} _{\mathbb{N}_{V}} (t,f, l^{a,q}, l^{b,q})< K _{\mathbb{N}_{V}\bigcap \mathbb{N}_{t}} (t,f, l^{a,q}, l^{b,q}).$$
We continue this process and we get
$K_{\mathbb{N}_{V}\bigcap \mathbb{N}_{t}}\geq K_{\mathbb{N}_{t}} $.

\end{proof}

Given $0<\theta_0<1$, $0<q_0,  q\leq \infty$,
$a<b$ and $s_0= a(1-\theta_0) + b\theta_0 $.
Applying the K functional obtained above,
we can obtain the real interpolation spaces of the corresponding weighted spaces
and we can obtain further the real interpolation spaces and K functional of the corresponding Besov spaces.
For weighted discrete Lebesgue spaces $l^{a,q}$ and $l^{b,q}$, by Lemma \ref{lem:q}, we have
\begin{theorem} \label{lem:7.6}
Given $0<\theta_0<1$, $0<q_0,  q\leq \infty$,
$a<b$ and $s_0= a(1-\theta_0) + b\theta_0 $.
We have
$$(l^{a,q}, l^{b,q})_{\theta_0, q_0}= l^{a(1-\theta_0)+ b\theta_0, q_0} = l^{s_0,q_0}.$$
\end{theorem}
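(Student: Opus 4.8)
The plan is to deduce Theorem~\ref{lem:7.6} from the explicit K functional description in Lemma~\ref{lem:q} together with the definition of the real interpolation space $(A_0,A_1)_{\theta_0,q_0}$ via the $(\theta_0,q_0)$-norm of $t^{-\theta_0}K(t,f,A_0,A_1)$. Since Lemma~\ref{lem:q} gives $K(t,f,l^{a,q},l^{b,q})\sim W(t,f,l^{a,q},l^{b,q})$ with
$$W(t,f,l^{a,q},l^{b,q})=\lf(\sum\limits_{j\in \mathbb{N}_{t}} 2^{jaq} f_{j}^{q}\r)^{\frac{1}{q}}+ t\lf(\sum\limits_{j\notin \mathbb{N}_{t}} 2^{jbq} f_{j}^{q}\r)^{\frac{1}{q}},$$
where $j\in\mathbb{N}_t$ iff $t2^{j(b-a)}>1$, i.e. iff $2^{jb}t > 2^{ja}$, the whole problem reduces to computing
$$\lf\{\int_0^\infty \big[t^{-\theta_0}W(t,f,l^{a,q},l^{b,q})\big]^{q_0}\frac{dt}{t}\r\}^{1/q_0}$$
and showing it is comparable to $\lf(\sum_{j\ge 0} 2^{j s_0 q_0} f_j^{q_0}\r)^{1/q_0}$ with $s_0=a(1-\theta_0)+b\theta_0$.

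First I would discretize the outer integral. Because the condition $j\in\mathbb{N}_t$ only changes as $t$ crosses the dyadic-type thresholds $2^{j(a-b)}$, I would split $(0,\infty)=\bigcup_{k\in\mathbb{Z}} I_k$ where $I_k$ is an interval on which the partition of $\mathbb{N}$ into $\mathbb{N}_t$ and $\mathbb{N}_t^c$ is constant; equivalently, parametrize by $t\sim 2^{k(a-b)}$ (using $a<b$, so $a-b<0$) and note that on $I_k$ we have $\mathbb{N}_t=\{j: j<k\}$ or $\{j\le k\}$ up to a harmless shift. On each such block $\int_{I_k}[t^{-\theta_0}W(t)]^{q_0}\frac{dt}{t}$ is comparable to $[2^{-k(a-b)\theta_0}W_k]^{q_0}$ where $W_k = \big(\sum_{j<k}2^{jaq}f_j^q\big)^{1/q}+2^{k(a-b)}\big(\sum_{j\ge k}2^{jbq}f_j^q\big)^{1/q}$. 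So the interpolation norm is comparable to $\big(\sum_{k}[2^{k(b-a)\theta_0}W_k]^{q_0}\big)^{1/q_0}$. This is exactly a discrete Hardy-type inequality: the two pieces are a ``tail sum below $k$'' weighted by $2^{k(b-a)\theta_0}$ and a ``tail sum above $k$'' weighted by $2^{k(b-a)\theta_0}\cdot 2^{k(a-b)}=2^{k(b-a)(\theta_0-1)}$. Since $b-a>0$ and $0<\theta_0<1$, both weights have the right sign of exponent to apply the standard discrete Hardy inequalities (summing a convergent geometric series in $k$ for each fixed $j$), which collapse the double sum to $\big(\sum_j 2^{jaq_0}2^{j(b-a)\theta_0 q_0}f_j^{q_0}\big)^{1/q_0}=\big(\sum_j 2^{js_0 q_0}f_j^{q_0}\big)^{1/q_0}$, using $a+(b-a)\theta_0=s_0$. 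The case $q_0=\infty$ (and $q=\infty$) is handled the same way with suprema replacing sums. I would also invoke Lemma~\ref{lem:commutative} / the $a<b$ reduction to avoid treating $a>b$ separately.

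The main obstacle I expect is the bookkeeping in the Hardy-inequality step when $q\neq q_0$: one cannot simply interchange the inner $\ell^q$ norm (over $j$) with the outer $\ell^{q_0}$ summation (over $k$), so I would argue each direction of the equivalence separately. For the easy direction, choosing the ``diagonal'' decomposition $f_j = $ (its own value put entirely on one side according to whether $j<k$) and testing gives the lower bound $\big(\sum_j 2^{js_0q_0}f_j^{q_0}\big)^{1/q_0}\lesssim \|f\|_{(l^{a,q},l^{b,q})_{\theta_0,q_0}}$ directly from the definition of $K_V$ in \eqref{eq:NV}. For the upper bound I would bound $W_k$ crudely by its two summands, apply the discrete Hardy inequality to each (this is where the geometric decay $2^{\pm k(b-a)}$ against the fixed sign of $\theta_0\in(0,1)$ does the work), and sum; the inner exponent $q$ only enters through the monotone embedding $\ell^q\hookrightarrow\ell^{\max(q,q_0)}$ or a Minkowski/Hölder step, which is routine once the geometric series is in hand. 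The borderline subtlety is making sure the constants do not blow up as $\theta_0\to 0$ or $1$, but since $\theta_0$ is fixed this is not an issue. Finally, transferring from $l^{a,q},l^{b,q}$ back to the stated Besov/sequence-space statement is immediate from Definition~\ref{def:discretespace} and the identification $a=s_0+\tfrac n2-\tfrac np$ type substitution already set up in Section~\ref{sec:6.2}.
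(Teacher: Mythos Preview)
Your strategy---feed the explicit formula from Lemma~\ref{lem:q} into the definition of $(\,\cdot\,,\,\cdot\,)_{\theta_0,q_0}$, discretize in $t$, and reduce to a discrete Hardy inequality---is exactly the computation the paper leaves implicit (it writes only ``by Lemma~\ref{lem:q}'' before stating the theorem), and is the standard way to derive this classical identification. So the plan is right.

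There is, however, a sign slip that would make your Hardy step fail as written. From the definition before \eqref{eq:basic}, $j\in\mathbb{N}_t$ iff $t\,2^{j(b-a)}>1$; with your parametrization $t\sim 2^{k(a-b)}$ this gives $\mathbb{N}_t=\{j>k\}$, not $\{j<k\}$. Consequently in $W_k$ the $2^{ja}$-term should be the \emph{tail above} $k$ and the $2^{jb}$-term the \emph{tail at or below} $k$. With your (swapped) assignment, the piece
\[
\sum_{k} 2^{k(b-a)\theta_0 q_0}\Big(\sum_{j<k}2^{jaq}f_j^q\Big)^{q_0/q}
\]
has, for each fixed $j$, the outer weight $2^{k(b-a)\theta_0 q_0}$ \emph{increasing} over $k>j$, so the geometric series diverges instead of collapsing. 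Once you swap the tails, the exponents $(b-a)\theta_0>0$ and $(b-a)(\theta_0-1)<0$ are paired with the correct sides and both Hardy sums collapse to $\sum_j 2^{js_0 q_0}f_j^{q_0}$ as you intended. Your remark about the $q\neq q_0$ bookkeeping is then handled by the standard trick of inserting a small geometric weight $2^{-(j-k)\epsilon q}$ inside the $\ell^q$-norm to pass to an inner $\ell^1$-sum before applying Hardy; the lower bound is immediate from the single diagonal term $j=k$.
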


According to the layer language Lemmas \ref{lem:7.1} and \ref{lem:lay}, the translation of the above Theorem \ref{lem:7.6}
says that the relative real interpolation spaces is still Besov spaces.
\begin{corollary} \label{cor:7.7}
Given $0<\theta_0<1$, $0<p, q_0,  q\leq \infty$,
$a<b$ and $s_0= a(1-\theta_0) + b\theta_0 $. Then
$$(B^{a,q}_{p}, B^{b,q}_{p})_{\theta_0, q_0}= B^{a(1-\theta_0)+ b\theta_0, q_0}_{p} = B^{s_0,q_0}_{p}.$$
$$K(t,f, B^{a,q}_{p}, B^{b,q}_{p})=  W(t,f^{p}, l^{a+\frac{n}{2}-\frac{n}{p},q}, l^{b+\frac{n}{2}-\frac{n}{p},q}), $$
where $f^{p}$ is defined in the equations \eqref{eq:7.10000} and \eqref{eq:7.20000} by wavelet coefficients,
$W(t,f^{p}, l^{a,q}, l^{b,q})$ is defined by wavelet coefficients in \eqref{eq:basic}.
\end{corollary}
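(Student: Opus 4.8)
The plan is to deduce everything from the layer-transformation lemmas of Section~\ref{sec:6.2} together with the vertex K functional identity of Lemma~\ref{lem:q}. The two assertions of the corollary are really the same statement read at two levels: an identification of the interpolation space as a set with equivalent norm, and the concrete wavelet formula for the underlying K functional. I would organize the argument so that the K functional formula comes first and the space identification falls out of it.

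First, I would reduce the Besov-space K functional to a weighted-sequence K functional on the main grid $\mathbb{N}$. By Lemma~\ref{th:absolute} we may assume $f_{j,\gamma}=|f_{j,\gamma}|\ge 0$, and then Lemma~\ref{lem:7.1} (for $p=\infty$) or Lemma~\ref{lem:lay} (for $0<p<\infty$) gives, with $\xi=1$ and $f^p$ as in \eqref{eq:7.10000}--\eqref{eq:7.20000},
\[
K(t,f,B^{a,q}_{p},B^{b,q}_{p})\sim K\bigl(t,f^{p},l^{a+\frac{n}{2}-\frac{n}{p},q},l^{b+\frac{n}{2}-\frac{n}{p},q}\bigr).
\]
(Here I invoke Lemma~\ref{le3.1} to pass between $K$ and $K_\xi$ if needed.) Writing $a'=a+\frac n2-\frac np$ and $b'=b+\frac n2-\frac np$, we have $a'<b'$ since $a<b$, so Lemma~\ref{lem:q} applies and yields
\[
K\bigl(t,f^{p},l^{a',q},l^{b',q}\bigr)\sim W\bigl(t,f^{p},l^{a',q},l^{b',q}\bigr),
\]
which is exactly the claimed identity $K(t,f,B^{a,q}_{p},B^{b,q}_{p})= W(t,f^{p},l^{a+\frac n2-\frac np,q},l^{b+\frac n2-\frac np,q})$ up to the usual $\sim$. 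This proves the second display.

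Second, I would obtain the space identification. One route is purely at the sequence level: by Lemma~\ref{lem:lay} (or~\ref{lem:7.1}) the map $f\mapsto f^{p}$ is, up to the fixed weight $2^{j(s+\frac n2-\frac np)}$ absorbed into the exponents, an isometric-type correspondence between $B^{s,q}_{p}$ and the weighted sequence space $l^{s+\frac n2-\frac np,q}(\mathbb{N})$; one then applies Theorem~\ref{lem:7.6} with $a'<b'$, $s_0'=a'(1-\theta_0)+b'\theta_0=s_0+\frac n2-\frac np$, to get $(l^{a',q},l^{b',q})_{\theta_0,q_0}=l^{s_0',q_0}$, and translates back using Lemmas~\ref{lem:cbesov}--\ref{lem:Besov-Lorentz} to conclude $(B^{a,q}_{p},B^{b,q}_{p})_{\theta_0,q_0}=B^{s_0,q_0}_{p}$ with equivalent norms. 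Alternatively, and perhaps more cleanly, one integrates the K functional formula directly: plugging the formula for $K(t,f,B^{a,q}_{p},B^{b,q}_{p})\sim W(t,f^p,l^{a',q},l^{b',q})$ into $\|f\|_{(\cdot,\cdot)_{\theta_0,q_0}}=\{\int_0^\infty (t^{-\theta_0}K)^{q_0}\,dt/t\}^{1/q_0}$ and evaluating, using the dyadic nature of the threshold $t2^{j(b-a)}>1$ defining $\mathbb{N}_t$, recovers the Besov norm $\{\sum_j 2^{jq_0 s_0}(\sum_\gamma|f_{j,\gamma}|^p)^{q_0/p}\}^{1/q_0}$ — this is a standard Holmstedt-type summation. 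I would present the first route as the main line and remark that the second gives an independent check.

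The main obstacle I anticipate is not any single hard estimate but rather bookkeeping of the weight shift $s\mapsto s+\frac n2-\frac np$ and making sure the exponents $a'<b'$ line up with the hypotheses of Lemma~\ref{lem:q} (which requires $a<b$, used here via $a'<b'$) and of Theorem~\ref{lem:7.6}; one must also handle the endpoint $p=\infty$ and $q=\infty$ cases, where the layer norms become suprema, by citing Lemma~\ref{lem:7.1} in place of Lemma~\ref{lem:lay} and interpreting all $l^q$ quantities as $\sup$. The other mild subtlety is that Lemma~\ref{lem:lay} gives the reduction only up to equivalence constants depending on $\xi$; since the final statement is an equality of spaces with equivalent norms (and the displayed K functional identity is itself only asserted up to $\sim$, as is standard in this paper), these constants are harmless, but I would state this explicitly so the ``$=$'' in the corollary is correctly read as ``$\sim$''.
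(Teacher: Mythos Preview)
Your proposal is correct and follows essentially the same route as the paper: the corollary is stated there as a direct ``translation'' of Theorem~\ref{lem:7.6} (and implicitly Lemma~\ref{lem:q}) via the layer-transformation Lemmas~\ref{lem:7.1} and~\ref{lem:lay}, which is exactly your reduction $K(t,f,B^{a,q}_p,B^{b,q}_p)\sim K(t,f^p,l^{a',q},l^{b',q})\sim W(t,f^p,l^{a',q},l^{b',q})$ followed by the sequence-level identification. Your added bookkeeping on the weight shift $s\mapsto s+\tfrac n2-\tfrac np$, the $p=\infty$ and $q=\infty$ endpoints, and the reading of ``$=$'' as ``$\sim$'' is useful detail the paper leaves implicit.
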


\subsection{K functional where $p_0=p_1$  and $s_0\neq s_1, q_0\neq q_1$} \label{sec:7.11}
We consider then $q_0\neq q_1$ and $s_0\neq s_1$.
For these cases, we can use the result that $q_0=q_1=q$ and standard techniques like the {\bf Lions-Peetre Iterative Theorem}.
Given $0<\theta_0, \theta_1, \theta<1$, $0<q_0, q_1, q\leq \infty$, $0<r\leq \infty$, $s_0\neq s_1, \theta_0\neq \theta_1$,
$a= \frac{s_0 \theta_1 - s_1 \theta_0} {\theta_1 -\theta_0}$, $b= \frac{s_1 (1-\theta_0) - s_0 (1- \theta_1)}{\theta_1 -\theta_0}$.
Hence $a(1-\theta_0) + b\theta_0 = s_0$ and $a(1-\theta_1) + b\theta_1 = s_1$.
For weighted discrete Lebesgue spaces $l^{a,q}$ and $l^{b,q}$,
by applying Lions-Peetre's Iterative Theorem, we have
\begin{theorem} \label{lem:layin}
If $s_0\neq s_1$, $0<q_0\neq q_1\leq \infty$, $0<\theta<1$ and $0<r\leq \infty$, then
$$(l^{s_0,q_0}, l^{s_1,q_1})_{\theta,r}= l^{s_0(1-\theta )+ s_1 \theta, r}.$$
\end{theorem}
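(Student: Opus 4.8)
The plan is to realize both endpoint spaces $l^{s_0,q_0}$ and $l^{s_1,q_1}$ as real interpolation spaces of one and the same couple of weighted sequence spaces, and then to read off the assertion from the reiteration property of the real $K$-method, i.e.\ from the Lions--Peetre Iterative Theorem. The role of Lemma \ref{lem:q} is already absorbed into Theorem \ref{lem:7.6}, which tells us precisely which weighted sequence couple $(l^{a,q},l^{b,q})$ to use.

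First I would fix any $q$ with $0<q\le\infty$ and choose $\theta_0,\theta_1\in(0,1)$ with $\theta_0\neq\theta_1$, arranged so that $\frac{s_1-s_0}{\theta_1-\theta_0}>0$ (possible since $s_0\neq s_1$), and set
$$a=\frac{s_0\theta_1-s_1\theta_0}{\theta_1-\theta_0},\qquad b=\frac{s_1(1-\theta_0)-s_0(1-\theta_1)}{\theta_1-\theta_0}.$$
A routine computation gives $a(1-\theta_i)+b\theta_i=s_i$ for $i=0,1$ and $b-a=\frac{s_1-s_0}{\theta_1-\theta_0}>0$, so $a<b$; hence Theorem \ref{lem:7.6} applies and yields $(l^{a,q},l^{b,q})_{\theta_0,q_0}=l^{s_0,q_0}$ and $(l^{a,q},l^{b,q})_{\theta_1,q_1}=l^{s_1,q_1}$. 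I would then apply the Lions--Peetre Iterative Theorem to the couple $(l^{a,q},l^{b,q})$ with outer exponent $\eta=\theta$: writing $\theta'=(1-\theta)\theta_0+\theta\theta_1\in(0,1)$, this gives $\big((l^{a,q},l^{b,q})_{\theta_0,q_0},(l^{a,q},l^{b,q})_{\theta_1,q_1}\big)_{\theta,r}=(l^{a,q},l^{b,q})_{\theta',r}$. Substituting the previous two identities into the left-hand side, and applying Theorem \ref{lem:7.6} once more to the right-hand side together with the convexity identity $a(1-\theta')+b\theta'=(1-\theta)\big(a(1-\theta_0)+b\theta_0\big)+\theta\big(a(1-\theta_1)+b\theta_1\big)=(1-\theta)s_0+\theta s_1$, I would arrive at $(l^{s_0,q_0},l^{s_1,q_1})_{\theta,r}=l^{(1-\theta)s_0+\theta s_1,r}$, which is the claim.

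The main obstacle is a range-of-exponents issue: the Lions--Peetre Iterative Theorem as recalled in this paper is stated for the outer index in $[1,\infty]$, whereas here $r$ is allowed to range over all of $(0,\infty]$. For $1\le r\le\infty$ the argument above is complete as written; for $0<r<1$ I would instead invoke the quasi-normed form of the reiteration theorem for the real method (which remains valid for $0<r\le\infty$), or bypass reiteration altogether by computing $K(t,f,l^{s_0,q_0},l^{s_1,q_1})$ directly — inserting the explicit $K$-functional of the couple $(l^{a,q},l^{b,q})$ furnished by Lemma \ref{lem:q} into the Holmstedt Theorem — which is exactly the computation carried out in the subsequent Theorem \ref{lem:q01}. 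Apart from this point, the only work is the elementary bookkeeping of the parameters $a,b,\theta_0,\theta_1$, which I regard as routine.
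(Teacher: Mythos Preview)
Your proposal is correct and follows essentially the same route as the paper: choose $a,b,\theta_0,\theta_1$ so that $l^{s_i,q_i}=(l^{a,q},l^{b,q})_{\theta_i,q_i}$ via Theorem~\ref{lem:7.6}, then apply the Lions--Peetre Iterative Theorem. You are in fact more careful than the paper on two points: you verify $a<b$ (needed for Theorem~\ref{lem:7.6}) and you flag the range-of-exponents issue for $0<r<1$, which the paper's proof glosses over by simply invoking the reiteration lemma as stated.
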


\begin{proof}
By Theorem \ref{lem:7.6}, for $a<b$,  we have
$$(l^{a,q}, l^{b,q})_{\theta_0, q_0}= l^{a(1-\theta_0)+ b\theta_0, q_0} = l^{s_0,q_0}.$$
$$(l^{a,q}, l^{b,q})_{\theta_1, q_1}= l^{a(1-\theta_1)+ b\theta_1, q_1} = l^{s_1,q_1}.$$
By applying Lemma \ref{lem:LP}, we get the conclusion of the above Theorem \ref{lem:layin}.

\end{proof}

Given $0<\theta_0<\theta_1<1$, $r= \theta_1-\theta_0$ and $0<q_0,q_1\leq \infty$.
By Holmstedt Theorem and Theorem \ref{lem:layin}, applying the notation in the equation \eqref{eq:basic}, we have
\begin{theorem} \label{lem:q01}
If $s_0\neq s_1$ and $0<q_0\neq q_1\leq \infty$, then
\begin{equation} \label{eq:basic2}
\begin{array}{l}
K(t,f, l^{s_0,q_0}, l^{s_1,q_1})\\
=  ( \int^{t^{\frac{1}{r}}}_{0} (s^{-\theta_0} W(s, f, l^{a,q},l^{b,q}))^{q_0} \frac{ds}{s})^{\frac{1}{q_0}}
+ t ( \int^{\infty}_{t^{\frac{1}{r}}} (s^{-\theta_1} W(s, f, l^{a,q},l^{b,q}))^{q_1} \frac{ds}{s})^{\frac{1}{q_1}}.
\end{array} \end{equation}
\end{theorem}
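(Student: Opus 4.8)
The plan is to realize both endpoint spaces $l^{s_0,q_0}$ and $l^{s_1,q_1}$ as real interpolation spaces of one fixed couple, apply Holmstedt Theorem to that couple, and then substitute the explicit expression for its $K$ functional provided by Lemma \ref{lem:q}.

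First I would fix the auxiliary data: choose $0<\theta_0<\theta_1<1$ and let $a,b$ be as defined just before Theorem \ref{lem:layin}, so that $a(1-\theta_i)+b\theta_i=s_i$ for $i=0,1$ and $b-a=(s_1-s_0)/(\theta_1-\theta_0)$. Possibly after using the commutativity identity $K(t,f,l^{s_0,q_0},l^{s_1,q_1})=tK(t^{-1},f,l^{s_1,q_1},l^{s_0,q_0})$ of Lemma \ref{lem:CK} to exchange the two spaces and relabel, we may assume $s_0<s_1$, hence $a<b$, which is exactly the regime in which $W(\cdot,\cdot,l^{a,q},l^{b,q})$ of \eqref{eq:basic} is defined. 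Here $0<q\le\infty$ is an arbitrary auxiliary exponent; the final right-hand side will turn out not to depend on it.

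Next, exactly as in the proof of Theorem \ref{lem:layin}, I would apply Theorem \ref{lem:7.6} once with $(\theta_0,q_0)$ and once with $(\theta_1,q_1)$ to obtain $(l^{a,q},l^{b,q})_{\theta_0,q_0}=l^{s_0,q_0}$ and $(l^{a,q},l^{b,q})_{\theta_1,q_1}=l^{s_1,q_1}$. Then I would apply Holmstedt Theorem to the couple $A_0=l^{a,q}$, $A_1=l^{b,q}$ with $E_i=(A_0,A_1)_{\theta_i,q_i}$ and $\eta=\theta_1-\theta_0=r$; this expresses $K(t,f,l^{s_0,q_0},l^{s_1,q_1})$ as the two-term Holmstedt functional whose inner quantity is $K(s,f,l^{a,q},l^{b,q})$, split at the threshold $s=t^{1/r}$. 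Finally, since $a<b$, Lemma \ref{lem:q} yields $K(s,f,l^{a,q},l^{b,q})\sim W(s,f,l^{a,q},l^{b,q})$ with constants independent of $s$, and inserting this equivalence into both the $q_0$-integral over $(0,t^{1/r})$ and the $q_1$-integral over $(t^{1/r},\infty)$ (with suprema when $q_0$ or $q_1$ equals $\infty$) gives \eqref{eq:basic2}, up to the equivalence constants in the convention of Holmstedt Theorem.

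The step requiring the most care is this last substitution of $W$ for $K$ under the Holmstedt integrals: one must invoke that the equivalence $K(s,\cdot)\sim W(s,\cdot)$ holds uniformly in $s$, so that monotonicity of $x\mapsto x^{q_i}$ and of the integral transfers it to the two outer norms, and one should observe that $s\mapsto W(s,f,l^{a,q},l^{b,q})$ is measurable, being given by the displayed closed form on each range of $s$ over which the partition $\mathbb{N}_s$ in \eqref{eq:basic} stays constant. I would also record that the right-hand side of \eqref{eq:basic2} does not depend on the auxiliary parameters $\theta_0,\theta_1$ (hence $a,b$) or $q$, which is automatic since each admissible choice produces a quantity equivalent to the left-hand side, which carries none of these parameters; in particular the argument does not even use $q_0\neq q_1$, a hypothesis that merely delimits the regime complementary to Section \ref{sec:7.1}.
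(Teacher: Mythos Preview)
Your proposal is correct and follows exactly the approach indicated in the paper: the paper states just before Theorem~\ref{lem:q01} that it follows ``By Holmstedt Theorem and Theorem~\ref{lem:layin}, applying the notation in the equation~\eqref{eq:basic}'', which is precisely your plan of identifying $l^{s_i,q_i}=(l^{a,q},l^{b,q})_{\theta_i,q_i}$ via Theorem~\ref{lem:7.6}, applying Holmstedt Theorem with $\eta=r=\theta_1-\theta_0$, and substituting $W$ for $K$ using Lemma~\ref{lem:q}. Your additional remarks on the uniform-in-$s$ equivalence, measurability of $W$, and independence from the auxiliary parameters are useful clarifications that the paper leaves implicit.
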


When $p_0=p_1=p$, $s_0\neq s_1$  and $q_0\neq q_1$,
the real interpolation spaces is still Besov spaces.
According to the layer language Lemmas \ref{lem:7.1} and   \ref{lem:lay} and equation \eqref{eq:basic2},
the translation of the above Theorems \ref{lem:layin} and \ref{lem:q01} is
\begin{corollary} \label{cor:7.10}
Given $0<\theta<1, 0<p, r, q_0, q_1\leq \infty$
and $s_0, s_1\in \mathbb{R}$. If $s_0\neq s_1$ and $s= (1-\theta) s_0 + \theta s_1$,
then for $f^{p}$ defined in the equations \eqref{eq:7.10000} and \eqref{eq:7.20000} by wavelet coefficients
and for $K(t,f, l^{s_0,q_0}, l^{s_1,q_1})$ defined in \eqref{eq:basic2},
we have the following wavelet characterization of
$K(t,f, B^{s_0,q_0}_{p}, B^{s_1,q_1}_{p})$:
$$K(t,f, B^{s_0,q_0}_{p}, B^{s_1,q_1}_{p})=  K(t,f^{p}, l^{s_0+\frac{n}{2}-\frac{n}{p},q_0}, l^{s_1+\frac{n}{2}-\frac{n}{p},q_1}). $$
Hence
$$(B^{s_0,q_0}_{p}, B^{s_1,q_1}_{p})_{\theta, r} = B^{s,r}_{p}.$$

\end{corollary}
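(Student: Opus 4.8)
The plan is to reduce the computation entirely to the collapsed ``main grid'' picture of Sections~\ref{sec:6.2} and \ref{sec:7.11}, and then transport the resulting identities back to Besov spaces through the wavelet characterization Lemma~\ref{lem:cbesov}.

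\emph{First}, I would collapse the layer grids $\Gamma_j$. Applying Lemma~\ref{lem:lay} when $0<p<\infty$, or Lemma~\ref{lem:7.1} when $p=\infty$, with $\xi=1$, and using that for $p_0=p_1=p$ an optimal layerwise splitting may be chosen of the proportional form $g_{j,\gamma}=(g^{p}_{j}/f^{p}_{j})\,f_{j,\gamma}$, one gets
$$K(t,f,B^{s_0,q_0}_{p},B^{s_1,q_1}_{p})\sim K\big(t,f^{p},l^{s_0+\frac n2-\frac np,q_0},l^{s_1+\frac n2-\frac np,q_1}\big),$$
where $f^{p}=(f^{p}_{j})_{j\ge 0}$ is the layer sequence of \eqref{eq:7.10000}--\eqref{eq:7.20000} and the right side is a $K$ functional between weighted sequence spaces over $\mathbb N$ (with equality in the wavelet-normalized Besov norm). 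This identifies the middle object of the statement and reduces everything to the pair $(l^{a',q_0},l^{b',q_1})$ with $a'=s_0+\frac n2-\frac np$ and $b'=s_1+\frac n2-\frac np$.

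\emph{Second}, for the explicit wavelet formula I would quote Theorem~\ref{lem:q01}: since $s_0\ne s_1$ gives $a'\ne b'$ and $q_0\ne q_1$, formula~\eqref{eq:basic2} writes $K(t,f^{p},l^{a',q_0},l^{b',q_1})$ as a Holmstedt-type integral of the functional $W$ of \eqref{eq:basic} built from an auxiliary pair $(l^{a,q},l^{b,q})$. The only thing to verify is that the auxiliary data $a,b,\theta_0,\theta_1,q$ of Section~\ref{sec:7.11} transform correctly under the common shift $s_i\mapsto s_i+\frac n2-\frac np$; this is automatic because $a,b$ are affine combinations of $s_0,s_1$ with coefficients summing to $1$, so the relations $a(1-\theta_i)+b\theta_i=s_i$ are preserved under a common shift. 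Substituting \eqref{eq:basic} into \eqref{eq:basic2} and feeding the result back through the identity of the first step gives the asserted wavelet representation of $K(t,f,B^{s_0,q_0}_{p},B^{s_1,q_1}_{p})$ purely in terms of the coefficients $f_{j,\gamma}$.

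\emph{Third}, for the space-level identity I would combine the first step (together with Lemma~\ref{le3.1}, to pass freely between $K$ and $K_\xi$) with Theorem~\ref{lem:layin}. From the equivalence $K(t,f,B^{s_0,q_0}_{p},B^{s_1,q_1}_{p})\sim K(t,f^{p},l^{a',q_0},l^{b',q_1})$ and the definition of the interpolation norm, $f\in(B^{s_0,q_0}_{p},B^{s_1,q_1}_{p})_{\theta,r}$ iff $f^{p}\in(l^{a',q_0},l^{b',q_1})_{\theta,r}$; Theorem~\ref{lem:layin}, applied with the shifted indices, identifies the latter space as $l^{(1-\theta)a'+\theta b',\,r}(\mathbb N)=l^{s+\frac n2-\frac np,r}(\mathbb N)$ with $s=(1-\theta)s_0+\theta s_1$. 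Unwinding the definition of $l^{\,\cdot\,,r}(\mathbb N)$ this says $\sum_{j\ge 0}2^{jr(s+\frac n2-\frac np)}\big(\sum_{\gamma\in\Gamma_j}|f_{j,\gamma}|^{p}\big)^{r/p}<\infty$, which by Lemma~\ref{lem:cbesov} is exactly $f\in B^{s,r}_{p}$; comparing the norms yields $(B^{s_0,q_0}_{p},B^{s_1,q_1}_{p})_{\theta,r}=B^{s,r}_{p}$.

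The substantive point is the first step: the commutation of the layerwise infimum with the infimum over the whole grid $\Lambda$, which is where the hypothesis $p_0=p_1$ is genuinely used and which is already established in Lemmas~\ref{lem:lay} and \ref{lem:7.1}; everything after that is bookkeeping. The only additional care is at the endpoints — when $p=\infty$ the $l^{p}$-collapse is replaced by the $\sup$-collapse of Lemma~\ref{lem:7.1}, and when some $q_i=\infty$ the sums and integrals above are read as suprema — but no new idea is needed.
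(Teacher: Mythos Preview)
Your proposal is correct and follows the same route as the paper: the paper simply states that the corollary is ``the translation of the above Theorems~\ref{lem:layin} and~\ref{lem:q01}'' through ``the layer language Lemmas~\ref{lem:7.1} and~\ref{lem:lay} and equation~\eqref{eq:basic2},'' which is exactly your three-step plan (collapse layer grids via Lemmas~\ref{lem:7.1}/\ref{lem:lay}, invoke Theorem~\ref{lem:q01} for the $K$-functional formula, and Theorem~\ref{lem:layin} plus Lemma~\ref{lem:cbesov} for the space identification). Your added bookkeeping---the invariance of the auxiliary $a,b,\theta_0,\theta_1$ under a common shift and the endpoint remarks---is not made explicit in the paper but is the natural way to fill in its one-line sketch.
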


\subsection{K functional for $s_0=s_1=s$ and $p_0=p_1=p$ } \label{sec:7.2}

When $s_0=s_1=s$ and $p_0=p_1=p$,
we know that the real interpolation spaces of Besov spaces are still Besov spaces
when $r=q$.
If $r\neq q$, the relative real interpolation spaces of Besov spaces run out the range Besov spaces
The known skills can not consider the cases where $r\neq q$.
We use a grid approach to handle such cases.
In fact, when $r\neq q$, it runs out of the Besov spaces and
becomes spaces with Lorentz indices to the frequency hierarchy.
Let $f^{s,p}$ be defined in Case 3 of Section \ref{sec:6.2} before Lemma \ref{lem:7.3}.
The interpolation space becomes the sequential Lorentz space of Sobolev space over the ring.
By Lemma \ref{lem:7.3}, the relative K functional is given by the following equation \eqref{eq:973}:
\begin{theorem}\label{th:7.11}
Given $s\in \mathbb{R}$ and $0<p, q_0\neq q_1\leq \infty$.
For $f^{s,p}$ defined in the equation \eqref{eq:7.30000} by wavelet coefficients and
$K(t, f^{s,p}, l^{q_0}, l^{q_1})$ obtained in Lemmas \ref{lem:5.1} and \ref{lem:5.2},
we have the following wavelet characterization of $K(t,f, B^{s,q_0}_{p}, B^{s,q_1}_{p})$:
\begin{equation}\label{eq:973}
K(t,f, B^{s,q_0}_{p}, B^{s,q_1}_{p}) = K(t, f^{s,p}, l^{q_0}, l^{q_1}).
\end{equation}
\end{theorem}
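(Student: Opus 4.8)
\textbf{Proof proposal for Theorem \ref{th:7.11}.}
The plan is to reduce the claimed wavelet characterization of $K(t,f, B^{s,q_0}_{p}, B^{s,q_1}_{p})$ to the already-established machinery of Section \ref{sec:6.2}, and in particular to Corollary \ref{lem:7.3}. First I would recall that by Theorems \ref{th:5.3} and \ref{th:absolute} we may, without loss of generality, assume $f_{j,\gamma}=|f_{j,\gamma}|\geq 0$, so that the infimum defining the $K_\xi$ functional runs over the infinite cuboid $\mathfrak{C}^\Lambda_f$. Then, since $s_0=s_1=s$ and $p_0=p_1=p$, the frequency weight $2^{j(s+\frac n2-\frac np)}$ is common to both terms of the dynamic norm $K_\xi(t,f, B^{s,q_0}_{p}, B^{s,q_1}_{p}, g)$; absorbing this weight into the layer quantity $f^p_j=(\sum_\gamma|f_{j,\gamma}|^p)^{1/p}$ produces the weighted sequence $f^{s,p}_j = 2^{j(s+\frac n2-\frac np)} f^p_j$ of \eqref{eq:7.30000}. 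This is exactly the content of Lemma \ref{lem:lay} followed by Corollary \ref{lem:7.3}, which identify $K_\xi(t,f, B^{s,q_0}_{p}, B^{s,q_1}_{p})$ with $K_\xi(t, f^{s,p}, l^{q_0}, l^{q_1})$.

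Next I would invoke Lemma \ref{le3.1} to pass from $K_\xi$ back to the ordinary $K=K_1$ functional: the two generate equivalent real interpolation spaces, and more to the point here, the displayed identity \eqref{eq:973} is an equivalence $\sim$ of K functionals, so replacing $K_\xi$ by $K$ on both sides costs only an absolute constant. This yields $K(t,f, B^{s,q_0}_{p}, B^{s,q_1}_{p}) \sim K(t, f^{s,p}, l^{q_0}, l^{q_1})$. Finally, the right-hand side $K(t, f^{s,p}, l^{q_0}, l^{q_1})$ is precisely the object whose explicit rearrangement-function formula is recorded in Lemmas \ref{lem:5.1} and \ref{lem:5.2} (distinguishing the subcases $q_0<q_1$ and $q_0>q_1$, with the $q_1=\infty$ or $q_0=\infty$ endpoints handled there as well), so the theorem follows by combining these.

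I should be careful about one bookkeeping point: Corollary \ref{lem:7.3} is stated for $q_0\neq q_1$, which matches the hypothesis of Theorem \ref{th:7.11}, so there is no gap there; and the reduction in Lemma \ref{lem:lay} uses the explicit near-optimal splitting $g_{j,\gamma}=g^p_j\, f_{j,\gamma}/f^p_j$, which shows the layer-wise infimum is attained (up to constants) by a scalar multiple of $f_j$ on each layer, so no information is lost in passing from the full cuboid infimum to the one-dimensional-per-layer infimum. The main (minor) obstacle is therefore not conceptual but notational: one must check that the weight $2^{j(s+\frac n2-\frac np)}$ is genuinely common to both summands — which is exactly where the hypotheses $s_0=s_1$ \emph{and} $p_0=p_1$ are both needed — and that the endpoint cases $p=\infty$ (where $f^\infty_j=\sup_\gamma|f_{j,\gamma}|$ replaces $f^p_j$, cf. \eqref{eq:7.10000} and Lemma \ref{lem:7.1}) and $q_i=\infty$ are subsumed by the cited lemmas. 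Once these are in place, \eqref{eq:973} is immediate.
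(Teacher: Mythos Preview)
Your proposal is correct and follows the same route as the paper: the paper's proof is essentially the single sentence ``By Lemma \ref{lem:7.3}, the relative K functional is given by the following equation \eqref{eq:973}'', i.e.\ a direct citation of Corollary \ref{lem:7.3}, and you have simply unpacked the chain of reductions (Theorems \ref{th:5.3}, \ref{th:absolute}, Lemmas \ref{lem:7.1}/\ref{lem:lay}, Corollary \ref{lem:7.3}, Lemma \ref{le3.1}) that lead to it. Your treatment is in fact more explicit than the paper's own, including the careful handling of the $p=\infty$ and $q_i=\infty$ endpoints.
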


\begin{remark}
(i) When $r\neq q$, the real interpolation spaces in Theorem \ref{th:7.11} are not Besov-Lorentz spaces in \cite{BL} and \cite{Peetre},
they are new function spaces and have nonlinearity with respect to different frequencies.

(ii) The real interpolation spaces in Corollaries \ref{cor:7.7} and \ref{cor:7.10} are known as Besov spaces,
but we figured out the specific wavelet representation of their K functionals.

\end{remark}

\section{Compatibility and K functional at $q$ equal} \label{sec:8}
For $q_0=q_1=q$, the $K_q$ functional is compatible based on cuboid K functional
and the summation of the main grid is commutative with the minimal functional.
Lou-Yang-He-He \cite{LYHH}  have studied homogeneous Besov spaces
for the cases where $s_0=s_1=s$ and $q_0=q_1=q$,
which is first progress of Peetre's conjecture since 1976.
In this section, we extend Lou-Yang-He-He's Theorem to non-homogeneous cases
and generalize it to the case without restriction on $s_0$ and $s_1$.
For $j\geq 0$, write
\begin{equation}\label{eq:8.10000}
\tilde{f}_{j}= \{(Af)_{j,\gamma}\}_{\gamma\in \Gamma_j}= \{|f_{j,\gamma}|\}_{\gamma\in \Gamma_j}.
\end{equation}

\subsection{K functional for $0<p_0<p_1<\infty$ and $q=\infty$} \label{sec:881}
In this subsection, we study the commutativity of upper and lower bounds.
\begin{theorem}\label{th:8.1}
Given $s_0,s_1\in \mathbb{R}$ and $0<p_0<p_1<\infty$.
For $\tilde{f}_{j}$ defined in \eqref{eq:8.10000} by wavelet coefficients
and $K(t,f, l^{p_0}, l^{p_1}) $ obtained in Lemmas \ref{lem:6.4} and \ref{lem:6.5},
we have the following wavelet characterization of $K(t,f, B^{s_0,\infty}_{p_0}, B^{s_1,\infty}_{p_1})$:
\begin{equation} \label{eq:982}
K(t,f, B^{s_0,\infty}_{p_0}, B^{s_1,\infty}_{p_1})= \sup\limits_{j\geq 0} 2^{j(s_0 +\frac{n}{2}- \frac{n}{p_0})} K(t2^{j\tilde{s}},\tilde{f}_{j}, l^{p_0}, l^{p_1}).
\end{equation}
\end{theorem}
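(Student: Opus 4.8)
The plan is to reduce the continuous $K$-functional to the discrete one via Theorem \ref{th:6.1}, then exploit the fact that for $q_0 = q_1 = \infty$ the supremum over the frequency layer $j$ commutes with the infimum defining the $K$-functional. First I would invoke Theorem \ref{th:6.1} with $\xi = 1$ (using Lemma \ref{le3.1} to identify $K$ with $K_1$) to write
$$K(t,f, B^{s_0,\infty}_{p_0}, B^{s_1,\infty}_{p_1}) = K(t, f, \tilde{l}^{s_0,\infty}_{p_0}, \tilde{l}^{s_1,\infty}_{p_1}) = \inf_{\{g_{j,\gamma}\}} \Big( \sup_{j\geq 0} 2^{j(s_0+\frac{n}{2}-\frac{n}{p_0})} \big(\textstyle\sum_\gamma g_{j,\gamma}^{p_0}\big)^{\frac{1}{p_0}} + t \sup_{j\geq 0} 2^{j(s_1+\frac{n}{2}-\frac{n}{p_1})} \big(\textstyle\sum_\gamma |f_{j,\gamma}-g_{j,\gamma}|^{p_1}\big)^{\frac{1}{p_1}} \Big).$$
By Theorems \ref{th:5.3} and \ref{th:absolute} I may assume all coefficients are nonnegative and restrict the infimum to the cuboid $0 \leq g_{j,\gamma} \leq |f_{j,\gamma}|$, so the decomposition on layer $j$ involves only the coordinates $\gamma \in \Gamma_j$ and is independent across distinct layers $j$.

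Next I would prove the two inequalities. For $"\geq"$: given any admissible splitting $\{g_{j,\gamma}\}$, the two suprema over $j$ of a sum are each bounded below by the value at any fixed $j$; hence for each fixed $j$ the bracketed quantity dominates $2^{j(s_0+\frac{n}{2}-\frac{n}{p_0})}(\sum_\gamma g_{j,\gamma}^{p_0})^{1/p_0} + t\, 2^{j(s_1+\frac{n}{2}-\frac{n}{p_1})}(\sum_\gamma |f_{j,\gamma}-g_{j,\gamma}|^{p_1})^{1/p_1}$, which after factoring out $2^{j(s_0+\frac{n}{2}-\frac{n}{p_0})}$ and recalling $\tilde s = s_1 - s_0 + \frac{n}{p_0} - \frac{n}{p_1}$ is exactly $2^{j(s_0+\frac{n}{2}-\frac{n}{p_0})} K(t2^{j\tilde s}, \tilde f_j, l^{p_0}, l^{p_1}, g_j)$ in the sense of the layer-$j$ $K$-functional of Section \ref{sec:5.2}. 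Taking the infimum over $g_j$ and then the supremum over $j$ gives the lower bound $\sup_j 2^{j(s_0+\frac{n}{2}-\frac{n}{p_0})} K(t2^{j\tilde s},\tilde f_j, l^{p_0},l^{p_1})$, i.e. the right-hand side of \eqref{eq:982}. For $"\leq"$: for each $j$ choose a near-optimal layer splitting $g_j^{(\varepsilon)}$ realizing $K(t2^{j\tilde s},\tilde f_j,l^{p_0},l^{p_1})$ up to $\varepsilon$; assembling these into a global sequence $\{g_{j,\gamma}^{(\varepsilon)}\}$ (legitimate precisely because the problem decouples over $j$ when $q_0=q_1=\infty$, so the global $A_0$- and $A_1$-norms are the suprema over $j$ of the layerwise norms), the bracketed global functional equals $\sup_j$ of the layerwise functionals $\leq \sup_j [2^{j(s_0+\frac{n}{2}-\frac{n}{p_0})} K(t2^{j\tilde s},\tilde f_j,l^{p_0},l^{p_1}) + \varepsilon]$; let $\varepsilon \to 0$. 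Finally, the single-layer $K$-functional $K(t2^{j\tilde s}, \tilde f_j, l^{p_0}, l^{p_1})$ is given explicitly by Lemmas \ref{lem:6.4} and \ref{lem:6.5} in terms of the rearrangement $f_j^*$ or, equivalently, the vertex sets $\Gamma_j^0(t2^{j\tilde s}), \Gamma_j^1(t2^{j\tilde s})$, which is what the statement references.

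The main obstacle is the commutation step $\inf_{\{g_{j,\gamma}\}} \sup_j = \sup_j \inf_{\{g_{j,\gamma}\}}$: this is where the hypothesis $q_0 = q_1 = \infty$ is essential, since it is the only case in which both the $A_0$-norm and the $A_1$-norm of a sequence are genuine suprema over the layer index $j$, so that minimizing each layer separately does not conflict with the others (unlike finite $q$, where the $\ell^q$ aggregation over $j$ couples the layers). I would take care to justify the measurability/attainment issues by working with $\varepsilon$-optimizers rather than exact minimizers, and to note that the restriction to the finite cuboid from Theorem \ref{th:5.3} guarantees the infimum over each layer is a genuine minimum of a continuous function on a compact-in-each-coordinate set, so the $\varepsilon$-argument closes cleanly. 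A minor point to verify is that $0 < p_0 < p_1 < \infty$ ensures $\alpha = (\frac{1}{p_0}-\frac{1}{p_1})^{-1}$ is finite and Lemma \ref{lem:6.5} applies; the endpoint $p_1 = \infty$ would instead invoke Lemma \ref{lem:6.4}, but that is excluded here.
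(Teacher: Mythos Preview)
Your overall strategy matches the paper's, but there is a genuine gap in your ``$\leq$'' direction. You write that after assembling the layerwise near-optima into a global sequence, ``the bracketed global functional equals $\sup_j$ of the layerwise functionals.'' This is false: the global quantity is
\[
\sup_{j\geq 0} F(g_j) \;+\; t\sup_{j\geq 0} G(g_j),
\]
with the two suprema possibly attained at \emph{different} values of $j$, whereas the ``$\sup_j$ of the layerwise functionals'' is $\sup_{j\geq 0}\bigl(F(g_j)+tG(g_j)\bigr)$. In general the former is strictly larger, so your upper bound does not close as stated.

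The paper fixes exactly this point by first proving the elementary two-sided estimate
\[
\sup_{j}(F(g_j)+tG(g_j)) \;\leq\; \sup_j F(g_j)+t\sup_j G(g_j) \;\leq\; 2\sup_{j}(F(g_j)+tG(g_j)),
\]
which shows $K(t,f,B^{s_0,\infty}_{p_0},B^{s_1,\infty}_{p_1}) \sim \inf_{g}\sup_{j}(F(g_j)+tG(g_j))$, and \emph{then} commutes $\inf_g$ and $\sup_j$ exactly (which is legitimate precisely because the layers decouple, as you correctly observe). Your argument becomes correct once you insert this inequality, at the price of a constant $2$; note that the ``$=$'' in \eqref{eq:982} is in fact an equivalence $\sim$, as the paper's own proof makes explicit. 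Your ``$\geq$'' direction is fine.
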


\begin{proof}
If $0<p_0<p_1<\infty$ and $q=\infty$, let
$$F(g_{j})= 2^{j(s_0 +\frac{n}{2}- \frac{n}{p_0})} (\sum\limits_{\gamma} g_{j,\gamma}^{p_0})^{\frac{1}{p_0}},\,\,
G(g_{j})= 2^{j(s_1 +\frac{n}{2}- \frac{n}{p_1})} (\sum\limits_{\gamma} |f_{j,\gamma}-g_{j,\gamma}|^{p_1})^{\frac{1}{p_1}}$$
We have
$$\sup\limits_{j\geq 0} (F(g_j) + t G(g_j))\leq \sup\limits_{j\geq 0} F(g_j) + t \sup\limits_{j\geq 0} G(g_j)
\leq 2\sup\limits_{j\geq 0} (F(g_j) + t G(g_j))$$

Hence,
$$\begin{array}{l}
K(t,f, B^{s_0,\infty}_{p_0}, B^{s_1,\infty}_{p_1})\\
= \inf\limits_{ g_{j,\gamma}\in \mathfrak{C}^{\Lambda}_{f} }
\{\sup\limits_{j\geq 0} 2^{j(s_0 +\frac{n}{2}- \frac{n}{p_0})} (\sum\limits_{\gamma} g_{j,\gamma}^{p_0})^{\frac{1}{p_0}}
+t \sup\limits_{j\geq 0} 2^{j(s_1 +\frac{n}{2}- \frac{n}{p_1})} (\sum\limits_{\gamma} |f_{j,\gamma}- g_{j,\gamma}|^{p_1})^{\frac{1}{p_1}}\}\\
\sim \inf\limits_{g_{j,\gamma}\in \mathfrak{C}^{\Lambda}_{f} }
\sup\limits_{j\geq 0} \{ 2^{j(s_0 +\frac{n}{2}- \frac{n}{p_0})} (\sum\limits_{\gamma} g_{j,\gamma}^{p_0})^{\frac{1}{p_0}}
+t 2^{j(s_1 +\frac{n}{2}- \frac{n}{p_1})} (\sum\limits_{\gamma} |f_{j,\gamma}- g_{j,\gamma}|^{p_1})^{\frac{1}{p_1}}\}\\
=\sup\limits_{j\geq 0} \inf\limits_{0\leq g_{j,\gamma}\leq |f_{j,\gamma}|}
\{ 2^{j(s_0 +\frac{n}{2}- \frac{n}{p_0})} (\sum\limits_{\gamma} g_{j,\gamma}^{p_0})^{\frac{1}{p_0}}
+t 2^{j(s_1 +\frac{n}{2}- \frac{n}{p_1})} (\sum\limits_{\gamma} |f_{j,\gamma}- g_{j,\gamma}|^{p_1})^{\frac{1}{p_1}}\}\\
=\sup\limits_{j\geq 0} K(t,\tilde{f}_{j}, \tilde{l}^{s_0,\infty}_{p_0}, \tilde{l}^{s_1,\infty}_{p_1})
\end{array}$$

Denote $\tilde{s}= s_1-s_0+ \frac{n}{p_0}-\frac{n}{p_1}$. By equation \eqref{eq:singlelayer},
$$\begin{array}{l}
K(t,f_{j}, \tilde{l}^{s_0,\infty}_{p_0}, \tilde{l}^{s_1,\infty}_{p_1})\\
\sim 2^{j(s_0 +\frac{n}{2}- \frac{n}{p_0})}
\inf\limits_{0\leq g_{j,\gamma}\leq |f_{j,\gamma}|}
\{  (\sum\limits_{\gamma} g_{j,\gamma}^{p_0})^{\frac{1}{p_0}}
+t 2^{j\tilde{s}} (\sum\limits_{\gamma} |f_{j,\gamma}- g_{j,\gamma}|^{p_1})^{\frac{1}{p_1}}\}\\
= 2^{j(s_0 +\frac{n}{2}- \frac{n}{p_0})} K(t2^{j\tilde{s}},\tilde{f}_{j}, l^{p_0}, l^{p_1})
\end{array}$$

\end{proof}

\subsection{K functional for $0<p_0<p_1<\infty$ and $0<q<\infty$} \label{sec:882}
In this subsection, we study the commutativity of $K_q$ functionals with respect to summability and lower bound.
By Theorem \ref{th:single}, $K(t,\tilde{f}_{j}, l^{s_0,q}_{p_0}, l^{s_0,q}_{p_1})$ is known and we have

\begin{theorem} \label{th:8.2}
Given $s_0,s_1\in \mathbb{R}, 0<p_0<p_1<\infty$ and $0<q<\infty$.
For $\tilde{f}_{j}$ defined in \eqref{eq:8.10000} by wavelet coefficients
and $K(t,\tilde{f}_{j}, B^{s_0,q}_{p_0}, B^{s_0,q}_{p_1})$ obtained by wavelet coefficients in Theorem \ref{th:single},
we have the following wavelet characterization of $K_q$ functional $K_q (t,f, B^{s_0,q}_{p_0}, B^{s_1,q}_{p_1})$:
\begin{equation} \label{eq:983}
\begin{array}{l}
K_q (t,f, B^{s_0,q}_{p_0}, B^{s_1,q}_{p_1})= \{ \sum\limits_{j\geq 0}  K(t,\tilde{f}_{j}, B^{s_0,q}_{p_0}, B^{s_0,q}_{p_1})^{q}\}^{\frac{1}{q}}.
\end{array}
\end{equation}
\end{theorem}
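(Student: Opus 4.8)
The plan is to exploit the commutativity of the $K_q$ functional (i.e., $\xi=q$ in the Holmstedt--Peetre notation) with respect to the $\ell^q$-summation over the main grid $\mathbb{N}$ and the infimum over the cuboid $\mathfrak{C}^{\Lambda}_f$. Starting from Theorem \ref{th:6.1} and Theorem \ref{th:5.3}, I may assume all wavelet coefficients are nonnegative and write the relevant quantity as
$$K_q(t,f, B^{s_0,q}_{p_0}, B^{s_1,q}_{p_1}) = \inf_{g_{j,\gamma}\in \mathfrak{C}^{\Lambda}_f}\lf\{\sum_{j\geq 0} \lf[2^{j(s_0+\frac n2-\frac n{p_0})}\lf(\sum_{\gamma\in\Gamma_j} g_{j,\gamma}^{p_0}\r)^{\frac1{p_0}}\r]^q + t^q\sum_{j\geq 0}\lf[2^{j(s_1+\frac n2-\frac n{p_1})}\lf(\sum_{\gamma\in\Gamma_j}(f_{j,\gamma}-g_{j,\gamma})^{p_1}\r)^{\frac1{p_1}}\r]^q\r\}^{\frac1q}.$$
The key structural observation is that, because both outer sums are plain $\ell^q$-sums over the same index $j$, the whole expression is $\lf(\sum_{j\geq 0} H_j(g_j)^q\r)^{1/q}$ where $H_j(g_j)^q = [F(g_j)]^q + t^q[G(g_j)]^q$ depends only on the coordinates $\{g_{j,\gamma}\}_{\gamma\in\Gamma_j}$ of the single layer $j$. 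Since the constraint $g_{j,\gamma}\in\mathfrak{C}^{\Lambda}_f$ decouples layer-by-layer (it is just $0\leq g_{j,\gamma}\leq |f_{j,\gamma}|$ for each $(j,\gamma)$), the infimum over the full cuboid commutes with the $\ell^q$-sum: $\inf_g\lf(\sum_j H_j(g_j)^q\r)^{1/q} = \lf(\sum_j \inf_{g_j} H_j(g_j)^q\r)^{1/q}$. This is the crucial step and the only place where $q_0=q_1=q$ is genuinely used — for $q_0\neq q_1$ the two outer sums have different exponents and this interchange fails, which is exactly why the cuboid method suffices here but not in general.

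The remaining steps are bookkeeping. After the interchange, $\inf_{g_j} H_j(g_j) = K_q(t, \tilde f_j, \tilde l^{s_0,q}_{p_0}, \tilde l^{s_1,q}_{p_1})$ where the infimum is now over a single layer's cuboid; by Lemma \ref{le3.1} (equivalence of $K_q$ and $K$ up to constants) and then by Theorem \ref{th:single}(ii), this single-layer $K_q$ functional is comparable to $K(t,\tilde f_j, B^{s_0,q}_{p_0}, B^{s_0,q}_{p_1})$, which in turn equals $2^{j(s_0+\frac n2-\frac n{p_0})} K(t2^{j\tilde s}, \tilde f_j, l^{p_0}, l^{p_1})$ and is explicitly given by Lemmas \ref{lem:6.4} and \ref{lem:6.5} via the vertex classification $\Gamma^0_j(t2^{j\tilde s})$, $\Gamma^1_j(t2^{j\tilde s})$. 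Substituting back yields \eqref{eq:983}. I would note that the $\sim$'s from Lemma \ref{le3.1} and Theorem \ref{th:single} produce constants independent of $t$ and $j$, so they can be absorbed into the overall equivalence.

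The main obstacle — really the only nontrivial point — is justifying the interchange of the infimum and the $\ell^q$-sum rigorously, including the case of infinitely many layers: one should check that a minimizing choice $g_j$ for each layer exists (or take near-minimizers) and that the resulting sequence still lies in $\tilde l^{s_0,q}_{p_0}+\tilde l^{s_1,q}_{p_1}$, so that the layerwise-optimal configuration is admissible for the global problem. This follows because each layerwise near-minimizer satisfies $H_j(g_j)\leq H_j(0) = t\,2^{j(s_1+\frac n2-\frac n{p_1})}(\sum_\gamma f_{j,\gamma}^{p_1})^{1/p_1}$ and also $H_j(g_j)\leq H_j(f_j)$-type bounds, so summing the $q$-th powers against the a priori finiteness of $K_q(t,f,\cdot,\cdot)$ (which one may assume, else both sides are infinite) keeps everything summable. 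Once that is in place the proof is essentially the displayed chain of equalities above followed by the cited single-layer formulas.
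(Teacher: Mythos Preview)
Your proposal is correct and follows essentially the same route as the paper's proof: reduce to the cuboid via Theorems \ref{th:6.1} and \ref{th:5.3}, observe that for $q_0=q_1=q$ the expression inside the $K_q$ functional is a single $\ell^q$-sum over $j$ of layerwise terms so that the infimum over the decoupled cuboid constraint commutes with the sum, and then identify each layerwise infimum via Theorem \ref{th:single}. Your added remark about near-minimizers and summability makes the interchange step more explicit than the paper's displayed chain, but the argument is the same.
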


\begin{proof}
For $0<p_0<p_1<\infty$ and $0<q<\infty$, we have
$$\begin{array}{l}
\{K_{q}(t,f, B^{s_0,q}_{p_0}, B^{s_1,q}_{p_1})\}^{q}\\
= \inf\limits_{ g_{j,\gamma}\in \mathfrak{C}^{\Lambda}_{f}}
\{\sum\limits_{j\geq 0} 2^{qj(s_0 +\frac{n}{2}- \frac{n}{p_0})} (\sum\limits_{\gamma} g_{j,\gamma}^{p_0})^{\frac{q}{p_0}}
+t^{q} \sum\limits_{j\geq 0} 2^{qj(s_1 +\frac{n}{2}- \frac{n}{p_1})} (\sum\limits_{\gamma} |f_{j,\gamma}- g_{j,\gamma}|^{p_1})^{\frac{q}{p_1}}\}\\
\sim \inf\limits_{g_{j,\gamma}\in \mathfrak{C}^{\Lambda}_{f} }
\sum\limits_{j\geq 0} \{ 2^{qj(s_0 +\frac{n}{2}- \frac{n}{p_0})} (\sum\limits_{\gamma} g_{j,\gamma}^{p_0})^{\frac{q}{p_0}}
+t^{q} 2^{qj(s_1 +\frac{n}{2}- \frac{n}{p_1})} (\sum\limits_{\gamma} |f_{j,\gamma}- g_{j,\gamma}|^{p_1})^{\frac{q}{p_1}}\}\\
=\inf\limits_{ g_{j,\gamma}\in \mathfrak{C}^{\Lambda}_{f} } \sum\limits_{j\geq 0}
K^{q}_{q}(t, f_{j}, \tilde{l}^{s_0,q}_{p_0}, \tilde{l}^{s_1,q}_{p_1}, g_j)\\
\sim \sum\limits_{j\geq 0} \inf\limits_{0\leq g_{j,\gamma}\leq |f_{j,\gamma}|}
K^{q}_{q}(t, f_{j}, \tilde{l}^{s_0,q}_{p_0}, \tilde{l}^{s_1,q}_{p_1}, g_j) \\
=\sum\limits_{j\geq 0} 2^{qj(s_0 +\frac{n}{2}- \frac{n}{p_0})} K(t 2^{j\tilde{s}},\tilde{f}_{j}, l^{p_0}, l^{p_1})^{q}.
\end{array}$$

Hence
$$\begin{array}{l}
K_{q}(t,f, B^{s_0,q}_{p_0}, B^{s_1,q}_{p_1})\\
\sim  \{ \sum\limits_{j\geq 0} 2^{qj(s_0 +\frac{n}{2}- \frac{n}{p_0})} K(t 2^{j\tilde{s}},\tilde{f}_{j}, l^{p_0}, l^{p_1})^{q}\}^{\frac{1}{q}}\\
\sim  \{ \sum\limits_{j\geq 0}  K(t,\tilde{f}_{j}, B^{s_0,q}_{p_0}, B^{s_0,q}_{p_1})^{q}\}^{\frac{1}{q}}
\end{array}$$

\end{proof}

\subsection{K functional when $q_0=q_1$ equals and $p_1=\infty$} \label{sec:883}
For $q_0=q_1=q$ and $p_1=\infty$, we have similar conclusion.
For $\tilde{f}_{j}$ defined in \eqref{eq:8.10000} by wavelet coefficients
and $K(t,\tilde{f}_{j}, \tilde{l}^{s_0,q_0}_{p_0}, \tilde{l}^{s_1,q_1}_{p_1})$ obtained in Theorem \ref{th:6.60000},
we have the following wavelet characterization:

\begin{theorem}\label{th:8.3}
{\rm (i)} If $s_0,s_1\in \mathbb{R}, 0<p_0<p_1=\infty$ and $q=\infty$, then
$$\begin{array}{l}
K(t,f, B^{s_0,\infty}_{p_0}, B^{s_1,\infty}_{\infty})\\
\sim \sup\limits_{j\geq 0} 2^{j(s_0 +\frac{n}{2}- \frac{n}{p_0})}
\inf\limits_{0\leq g_{j,\gamma}\leq |f_{j,\gamma}|}
\{  (\sum\limits_{\gamma} g_{j,\gamma}^{p_0})^{\frac{1}{p_0}}
+t 2^{j\tilde{s}} (\sup\limits_{\gamma} |f_{j,\gamma}- g_{j,\gamma}| \}\\
= \sup\limits_{j\geq 0} 2^{j(s_0 +\frac{n}{2}- \frac{n}{p_0})} K(t2^{j\tilde{s}},\tilde{f}_{j}, l^{p_0}, l^{\infty}).
\end{array}$$

{\rm (ii)} If $s_0,s_1\in \mathbb{R}, 0<p_0<p_1=\infty$ and $0<q<\infty$, then
$$\begin{array}{l}
K_{q}(t,f, B^{s_0,q}_{p_0}, B^{s_1,q}_{\infty})\\
\sim  \{ \sum\limits_{j\geq 0} 2^{qj(s_0 +\frac{n}{2}- \frac{n}{p_0})} K(t 2^{j\tilde{s}},\tilde{f}_{j}, l^{p_0}, l^{p_1})^{q}\}^{\frac{1}{q}}\\
\sim  \{ \sum\limits_{j\geq 0}  K(t,\tilde{f}_{j}, \tilde{l}^{s_0,q}_{p_0}, \tilde{l}^{s_0,q}_{\infty})^{q}\}^{\frac{1}{q}}.
\end{array}$$
\end{theorem}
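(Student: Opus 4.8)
The plan is to follow, step for step, the arguments already carried out for Theorems \ref{th:8.1} and \ref{th:8.2}; the only new feature is that the inner frequency norm is now the $\ell^{\infty}$ norm $\sup_{\gamma}|\cdot|$ instead of an $\ell^{p_1}$ norm with $p_1<\infty$, which changes nothing structurally. \textbf{Step 1: reduction to the cuboid functional.} By Theorem \ref{th:6.1} it suffices to treat the discrete $K_\xi$ functional on $\tilde{l}^{s_0,q}_{p_0}+\tilde{l}^{s_1,q}_{\infty}$; by Theorems \ref{th:absolute} and \ref{th:5.3} we may assume all wavelet coefficients are nonnegative and take the infimum only over the cuboid $\mathfrak{C}^{\Lambda}_{f}$; and by Lemma \ref{le3.1} it is enough to control $K$ (resp.\ $K_q$) rather than $K_\xi$. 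The one structural fact used throughout is that $\mathfrak{C}^{\Lambda}_{f}$ is the \emph{product} over $j$ of the layer cuboids $\{0\le g_{j,\gamma}\le|f_{j,\gamma}|\}_{\gamma\in\Gamma_j}$, and that inside $K_\xi(t,f,B^{s_0,q}_{p_0},B^{s_1,q}_{\infty},g)$ both frequency pieces depend on $g$ layer by layer.

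\textbf{Step 2: the case $q=\infty$.} As in the proof of Theorem \ref{th:8.1}, use the elementary equivalence $\sup_j F(g_j)+t\sup_j G(g_j)\sim\sup_j\bigl(F(g_j)+tG(g_j)\bigr)$ (the lower bound by monotonicity, the upper bound with constant $2$), where now $G(g_j)=2^{j(s_1+\frac{n}{2})}\sup_\gamma|f_{j,\gamma}-g_{j,\gamma}|$. Since the cuboid is a product one may interchange $\inf_{g\in\mathfrak{C}^{\Lambda}_{f}}$ with $\sup_{j\ge0}$: the inequality ``$\ge$'' is immediate, and for ``$\le$'' one picks, for each $j$ and each $\varepsilon>0$, a near-optimal $g_j$ in the layer cuboid and assembles these into one admissible $g$. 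The resulting layerwise infimum is precisely the single-layer functional $K(t,f_j,\tilde{l}^{s_0,\infty}_{p_0},\tilde{l}^{s_1,\infty}_{\infty})$, which by Theorem \ref{th:single}(ii) — equivalently by \eqref{eq:666} of Theorem \ref{th:6.60000} together with Lemma \ref{lem:6.4} — equals $2^{j(s_0+\frac{n}{2}-\frac{n}{p_0})}K(t2^{j\tilde{s}},\tilde{f}_j,l^{p_0},l^{\infty})$. This gives (i).

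\textbf{Step 3: the case $0<q<\infty$.} Raising $K_q$ to the $q$-th power turns the functional, by \eqref{eq:Kxi} with $\xi=q$, into a genuine sum over layers, $K_q^q(t,f,B^{s_0,q}_{p_0},B^{s_1,q}_{\infty},g)=\sum_{j\ge0}K_q^q(t,f_j,\tilde{l}^{s_0,q}_{p_0},\tilde{l}^{s_1,q}_{\infty},g_j)$, so no quasi-triangle inequality is needed at this stage. The product structure again allows the interchange $\inf_{g\in\mathfrak{C}^{\Lambda}_{f}}\sum_j(\cdots)=\sum_j\inf_{g_j}(\cdots)$ (the same $\varepsilon 2^{-j}$ assembly argument), so each summand becomes $K_q^q(t,f_j,\tilde{l}^{s_0,q}_{p_0},\tilde{l}^{s_1,q}_{\infty})$; Theorem \ref{th:single}(ii) with $\xi=q$ rewrites this as $2^{qj(s_0+\frac{n}{2}-\frac{n}{p_0})}K(t2^{j\tilde{s}},\tilde{f}_j,l^{p_0},l^{p_1})^q$, and taking the $q$-th root yields
$$K_q(t,f,B^{s_0,q}_{p_0},B^{s_1,q}_{\infty})\sim\Bigl\{\sum_{j\ge0}2^{qj(s_0+\frac{n}{2}-\frac{n}{p_0})}K(t2^{j\tilde{s}},\tilde{f}_j,l^{p_0},l^{p_1})^q\Bigr\}^{\frac{1}{q}}\sim\Bigl\{\sum_{j\ge0}K(t,\tilde{f}_j,\tilde{l}^{s_0,q}_{p_0},\tilde{l}^{s_0,q}_{\infty})^q\Bigr\}^{\frac{1}{q}},$$
the last equivalence being another application of Theorem \ref{th:single}(ii) to the single-layer K functionals.

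\textbf{Main difficulty.} There is no essentially hard step: the whole proof reduces to the two interchanges ``$\inf$ over the full cuboid $\leftrightarrow$ $\sup_j$ (resp.\ $\sum_j$)'', which are legitimate exactly because $\mathfrak{C}^{\Lambda}_{f}$ is a product and the two frequency pieces are layerwise. The only points needing a little care are (a) the quasi-norm constants of $B^{s,q}_p$ when $p_0<1$ or $q<1$, which are harmless as they are absorbed into ``$\sim$'', and (b) in the case $q=\infty$ the equivalence (not equality) introduced when splitting $\sup_j(F+tG)$ — the same loss already accepted in the proof of Theorem \ref{th:8.1}.
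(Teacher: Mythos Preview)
Your proposal is correct and follows precisely the approach the paper intends: the paper gives no explicit proof of Theorem \ref{th:8.3}, merely stating ``For $q_0=q_1=q$ and $p_1=\infty$, we have similar conclusion,'' so the argument is meant to be read off from the proofs of Theorems \ref{th:8.1} and \ref{th:8.2} with the inner $\ell^{p_1}$ norm replaced by $\sup_\gamma$. Your Steps 2 and 3 reproduce those two proofs faithfully, and your identification of the product structure of $\mathfrak{C}^{\Lambda}_{f}$ as the mechanism permitting the $\inf$--$\sup$ and $\inf$--$\sum$ interchanges is exactly the point.
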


\section{$K_{\xi}^V$ functionals and vertex classification} \label{sec:9}

The idea of cuboid functional in sections 7 and 8 apply only to $p_0= p_1$ or $q_0=q_1$, respectively.
For more general cases, we need to do some functional calculus using the quasi-triangle inequality.
In fact, for general indices,
we can turn $K_{\xi}^{\mathfrak{C}}$ functionals defined on cuboid into vertex functionals $K_{\xi}^{V}$
defined on the vertex of the cuboid.

This provides a theoretical basis for us to give the concrete wavelet expression of the K functional of the complex index in the next step.

This prompted us to think about finding new ways to describe the nonlinear topology of complex situations.

Given $s_0,s_1\in \mathbb{R}, 0<p_0,p_1,q_0,q_1\leq \infty$.
Note that, by wavelet characterization,
\begin{equation*} \label{eq:kd}
\begin{array}{rcl} K _{\xi} (t,f, B^{s_0,q_0}_{p_0}, B^{s_1,q_1}_{p_1}, f_0) &=&
\lf\{ \lf[\sum\limits_{j\geq 0} 2^{j q_0 (s_0+ \frac{n}{2}-\frac{n}{p_0})}
(\sum\limits_{\gamma} (f_{j,\gamma}-f^{0}_{j,\gamma})^{p_0}) ^{\frac{q_0}{p_0}} \r] ^{\frac{\xi}{q_0}}\r.\\
&&\lf.+ t \lf[\sum\limits_{j\geq 0} 2^{j q_1 (s_1+ \frac{n}{2}-\frac{n}{p_1})}
(\sum\limits_{\gamma} (f^{0}_{j,\gamma}) ^{p_1}) ^{\frac{q_1}{p_1}} \r] ^{\frac{\xi}{q_1}}\r\}^{\frac{1}{\xi}}
\end{array}
\end{equation*}
Let $Af$ be the function  whose wavelet coefficients
are equal to the absolute value of the wavelet coefficients of $f$.
By Lemma \ref{eq:cuboid}, we have
$$K_{\xi}(t,f, \tilde{l}^{s_0,q_0}_{p_0}, \tilde{l}^{s_0,q_1}_{p_1})= K_{\xi}^{\mathfrak{C}}(t,Af, \tilde{l}^{s_0,q_0}_{p_0}, \tilde{l}^{s_1,q_1}_{p_1})$$

Let $ V^{\Lambda}_{f} = \{ g_{j,\gamma}=0 {\, \rm or \, } |f_{j,\gamma}|\}_{(j,\gamma)\in \Lambda}$ be the vertex point set of
the cuboid $ \mathfrak{C}^{\Lambda}_{f}= \{0\leq g_{j,\gamma}\leq |f_{j,\gamma}|\}_{(j,\gamma)\in \Lambda}$.
We convert the $K_{\xi}^{\mathfrak{C}}$ functional on the cuboid $ \mathfrak{C}^{\Lambda}_{f} = \{0\leq g_{j,\gamma}\leq |f_{j,\gamma}|\}_{(j,\gamma)\in \Lambda}$ to the vertex case on $ V^{\Lambda}_{f}$.
Denote
$$K_{\xi}^{V} (t,f, \tilde{l}^{s_0,q_0}_{p_0}, \tilde{l}^{s_1,q_1}_{p_1})
= \inf \limits_{f^{0}_{j,\gamma}\in V^{\Lambda}_{f} } K_{\xi} (t, Af, \tilde{l}^{s_0,q_0}_{p_0}, \tilde{l}^{s_1,q_1}_{p_1}, f_0).$$
If $\xi=1$, denote $K_{V}= K_{\xi}^{V}$.
The vertex functionals $K_{\xi}^{V}$ is equivalent to cuboid functionals $K_{\xi}^{\mathfrak{C}}$.
\begin{theorem}\label{th:vertex}
Given $s_0,s_1\in \mathbb{R}, 0<p_0,p_1,q_0,q_1\leq \infty$. Then\\
{\rm (i)} $K _{\xi}^{\mathfrak{C}} (t,f, \tilde{l}^{s_0,q_0}_{p_0}, \tilde{l}^{s_1,q_1}_{p_1}) \leq K_{\xi}^{V} (t,f, \tilde{l}^{s_0,q_0}_{p_0}, \tilde{l}^{s_1,q_1}_{p_1})$.\\
{\rm (ii)} $K_{\xi}^{V} (t,f, \tilde{l}^{s_0,q_0}_{p_0}, \tilde{l}^{s_1,q_1}_{p_1})\leq 2 K_{\xi}^{\mathfrak{C}} (t,f, \tilde{l}^{s_0,q_0}_{p_0}, \tilde{l}^{s_1,q_1}_{p_1})$.
\end{theorem}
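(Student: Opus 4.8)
The plan is to prove the two inequalities separately, both of which follow from elementary properties of the functionals together with the geometry of the cuboid $\mathfrak{C}^{\Lambda}_{f}$ and its vertex set $V^{\Lambda}_{f}$. Part (i) is immediate: since $V^{\Lambda}_{f}\subset\mathfrak{C}^{\Lambda}_{f}$, the infimum defining $K_{\xi}^{\mathfrak{C}}$ is taken over a larger set than the infimum defining $K_{\xi}^{V}$, hence $K_{\xi}^{\mathfrak{C}}(t,f,\tilde{l}^{s_0,q_0}_{p_0},\tilde{l}^{s_1,q_1}_{p_1})\le K_{\xi}^{V}(t,f,\tilde{l}^{s_0,q_0}_{p_0},\tilde{l}^{s_1,q_1}_{p_1})$. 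No computation is needed beyond unwinding the definitions.

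For part (ii), the idea is a coordinatewise rounding argument: given any $\{g_{j,\gamma}\}\in\mathfrak{C}^{\Lambda}_{f}$, I replace each $g_{j,\gamma}\in[0,|f_{j,\gamma}|]$ by a vertex value $g^{0}_{j,\gamma}\in\{0,|f_{j,\gamma}|\}$, chosen so that the $A_0$-component and the $A_1$-component of the functional are each controlled. The natural choice is the threshold rule: if $g_{j,\gamma}\ge\frac12|f_{j,\gamma}|$ put $g^{0}_{j,\gamma}=|f_{j,\gamma}|$, otherwise put $g^{0}_{j,\gamma}=0$; then $g^{0}_{j,\gamma}\le 2g_{j,\gamma}$ and $|f_{j,\gamma}|-g^{0}_{j,\gamma}\le 2(|f_{j,\gamma}|-g_{j,\gamma})$. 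Feeding this into the expression \eqref{eq:Kxi} for $K_{\xi}(t,Af,\cdot,\cdot,\cdot)$, each of the two bracketed sums (the $\tilde{l}^{s_0,q_0}_{p_0}$-term built from $g^{0}_{j,\gamma}$ and the $\tilde{l}^{s_1,q_1}_{p_1}$-term built from $|f_{j,\gamma}|-g^{0}_{j,\gamma}$) is multiplied by at most $2^{\xi}$; since $K_{\xi}$ is the $\xi$-th root of a sum of these two terms, the whole functional is multiplied by at most $2$. Taking the infimum over $\{g_{j,\gamma}\}\in\mathfrak{C}^{\Lambda}_{f}$ on the right and noting that the rounded sequence lies in $V^{\Lambda}_{f}$ yields $K_{\xi}^{V}\le 2K_{\xi}^{\mathfrak{C}}$.

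The only point requiring a little care — and the step I would flag as the main obstacle — is making the monotonicity ``replace $g_{j,\gamma}$ by the larger $g^{0}_{j,\gamma}$, or $|f_{j,\gamma}|-g_{j,\gamma}$ by the larger $|f_{j,\gamma}|-g^{0}_{j,\gamma}$, and the corresponding $\tilde{l}^{s,q}_{p}$-norm only increases'' rigorous when some of $p_0,p_1,q_0,q_1$ equal $\infty$ and when $p<1$ or $q<1$, so that one is working with quasi-norms. This is handled by the fact, already established in Theorem \ref{th:absolute}, that $K_{\xi}$ is monotone in the absolute values of the coefficients: since $0\le g^{0}_{j,\gamma}\le 2g_{j,\gamma}$ and $0\le|f_{j,\gamma}|-g^{0}_{j,\gamma}\le 2(|f_{j,\gamma}|-g_{j,\gamma})$ coordinatewise, monotonicity plus homogeneity of the defining norms (the scalar $2$ factors out of each bracket exactly, even in the quasi-normed and supremum cases) gives the bound uniformly in all index ranges. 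Thus no separate case analysis is actually needed, and the argument closes cleanly.
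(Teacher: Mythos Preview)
Your proof is correct and follows essentially the same approach as the paper's: part (i) by set inclusion $V^{\Lambda}_{f}\subset\mathfrak{C}^{\Lambda}_{f}$, and part (ii) by the threshold rounding rule $g^{0}_{j,\gamma}=|f_{j,\gamma}|$ if $g_{j,\gamma}\ge\tfrac12|f_{j,\gamma}|$ and $g^{0}_{j,\gamma}=0$ otherwise, yielding the coordinatewise bounds $g^{0}_{j,\gamma}\le 2g_{j,\gamma}$ and $|f_{j,\gamma}|-g^{0}_{j,\gamma}\le 2(|f_{j,\gamma}|-g_{j,\gamma})$, from which the factor $2$ emerges by homogeneity. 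Your explicit remark that monotonicity and homogeneity handle the quasi-norm and $p,q=\infty$ cases uniformly is a point the paper leaves implicit, so if anything your write-up is slightly more careful there.
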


\begin{proof}
(1) $K _{\xi}^{\mathfrak{C}} (t,f, \tilde{l}^{s_0,q_0}_{p_0}, \tilde{l}^{s_1,q_1}_{p_1})$ takes infimum for all possible $f_0$ in the cuboid,
but $K _{\xi}^{V} (t,f, \tilde{l}^{s_0,q_0}_{p_0}, \tilde{l}^{s_1,q_1}_{p_1})$ takes infimum for all possible $f_0$ in the vertex of the cuboid.
Hence $K _{\xi}^{\mathfrak{C}}(t,f, \tilde{l}^{s_0,q_0}_{p_0}, \tilde{l}^{s_1,q_1}_{p_1}) \leq K _{\xi}^{V} (t,f, \tilde{l}^{s_0,q_0}_{p_0}, \tilde{l}^{s_1,q_1}_{p_1})$.

(2) For all $Af_{j,\gamma}>0$, we must have $f^{0}_{j,\gamma}\leq \frac{1}{2} Af_{j,\gamma}$ or $f^{0}_{j,\gamma}> \frac{1}{2} Af_{j,\gamma}$.
For the first case, we denote $(j,\gamma)\in \Gamma_{1,j}$ and take $\tilde{f}^{0}_{j,\gamma}=0$.
For the second case, we denote $(j,\gamma)\in \Gamma_{2,j}$ and take $\tilde{f}^{0}_{j,\gamma}=Af_{j,\gamma}$.
We get $\tilde{f}^{0}$ and
$K_{\xi} (t,Af, B^{s_0,q_0}_{p_0}, B^{s_1,q_1}_{p_1}, \tilde{f}_0)=
\{ [\sum\limits_{j\geq 0} 2^{j q_0 (s_0+ \frac{n}{2}-\frac{n}{p_0})}
(\sum\limits_{\gamma\in \Gamma_{1,j}} |f_{j,\gamma}|^{p_0}) ^{\frac{q_0}{p_0}} ] ^{\frac{\xi}{q_0}}
+ t^{\xi} [\sum\limits_{j\geq 0} 2^{j q_1 (s_1+ \frac{n}{2}-\frac{n}{p_1})}
(\sum\limits_{\gamma \in \Gamma_{2,j}} (f^{0}_{j,\gamma}) ^{p_1}) ^{\frac{q_1}{p_1}} ] ^{\frac{\xi}{q_1}}\}^{\frac{1}{\xi}}.
$

Furthermore,
$$(\sum\limits_{\gamma\in \Gamma_{1,j}} |f_{j,\gamma}|^{p_0}) ^{\frac{1}{p_0}}
\leq 2(\sum\limits_{\gamma\in \Gamma_{1,j}} (|f_{j,\gamma}|-f^{0}_{j,\gamma})^{p_0}) ^{\frac{1}{p_0}},$$
$$(\sum\limits_{\gamma\in \Gamma_{2,j}} |f_{j,\gamma}|^{p_1}) ^{\frac{1}{p_1}}
\leq 2(\sum\limits_{\gamma\in \Gamma_{2,j}} (f^{0}_{j,\gamma})^{p_1}) ^{\frac{1}{p_1}}.$$
Hence
\begin{align*} &\{ K _{\xi} (t,Af, B^{s_0,q_0}_{p_0}, B^{s_1,q_1}_{p_1}, \tilde{f}_0)\}^{\xi} \\
\leq &2^{\xi} [\sum\limits_{j\geq 0} 2^{j q_0 (s_0+ \frac{n}{2}-\frac{n}{p_0})}
(\sum\limits_{\gamma\in \Gamma_{1,j}} (f_{j,\gamma}-f^{0}_{j,\gamma})^{p_0}) ^{\frac{q_0}{p_0}} ] ^{\frac{1}{q_0}}\\
&+ 2^{\xi} t^{\xi} [\sum\limits_{j\geq 0} 2^{j q_1 (s_1+ \frac{n}{2}-\frac{n}{p_1})}
(\sum\limits_{\gamma\in \Gamma_{2,j}} (f^{0}_{j,\gamma}) ^{p_1}) ^{\frac{q_1}{p_1}} ] ^{\frac{1}{q_1}}\\
\leq &2^{\xi} [\sum\limits_{j\geq 0} 2^{j q_0 (s_0+ \frac{n}{2}-\frac{n}{p_0})}
(\sum\limits_{\gamma} (f_{j,\gamma}-f^{0}_{j,\gamma})^{p_0}) ^{\frac{q_0}{p_0}} ] ^{\frac{1}{q_0}}\\
&+ 2 ^{\xi} t^{\xi} [\sum\limits_{j\geq 0} 2^{j q_1 (s_1+ \frac{n}{2}-\frac{n}{p_1})}
(\sum\limits_{\gamma} (f^{0}_{j,\gamma}) ^{p_1}) ^{\frac{q_1}{p_1}} ] ^{\frac{1}{q_1}}\\
\leq& 2 ^{\xi} \{ K_{\xi} (t,f, B^{s_0,q_0}_{p_0}, B^{s_1,q_1}_{p_1}, f_0)\}^{\xi}.
\end{align*}
\end{proof}

Theorem \ref{th:vertex} provides a possibility to consider K functional by vertexes classification.
We introduce the definition of the support set of wavelet coefficients
and the definition of Besov space supported on special discrete sets.

\begin{definition}
For $s\in \mathbb{R}, 0<p,q\leq \infty, S\subset \Lambda$ and $f(x)= \sum\limits_{(j,\gamma)\in \Lambda} f_{j,\gamma} \Phi_{j,\gamma}(x)$, we define

{\rm (i)} ${\rm Supp} f_{j,\gamma}= \{ (j,\gamma)\in \Lambda, f_{j,\gamma}\neq 0\}.$

{\rm (ii)} $B^{s,q}_{p}(S)= \{ \{f_{j,\gamma}\}_{(j,\gamma)\in \Lambda}\in \tilde{l}^{s,q}_{p} \mbox{\, and \,} {\rm Supp} f_{j,\gamma} \subset S\}.$

\end{definition}

The vertex K functional divides the support set of wavelet coefficients into two parts to be determined.
But the corresponding classification information is unclear.
We need to figure it out by analyzing the grid topology.
In the later of this paper, we use vertex functional to extend the rearrangement function of Lorentz \cite{Lorentz}
to the case where different given frequencies are compounded in a complex nonlinear manner.

\section{Vertexes functional and $p_0\neq p_1$ and $q_0=q_1$ }\label{sec:9b}

In this section we use vertex K functionals to give a short proof of the equivalent expression of K functionals when $q_0=q_1$.
For $j\geq 0$, for $f_{j}$ defined in \eqref{eq:6.30000} by wavelet coefficients and for $\Gamma^{0}_{j}(t)$ and $\Gamma^{1}_{j}(t)$ defined in Section \ref{sec:5.2}, denote
\begin{equation}\label{eq:10.10000}
A_{j}= \|f_j\|_{B^{s_0,q_0}_{p_0}(\Gamma^{0}_{j}(t2^{j\tilde{s}}))} \mbox{ and } B_{j}= \|f_j\|_{B^{s_1,q_1}_{p_1}(\Gamma^{1}_{j}(t2^{j\tilde{s}}))}.
\end{equation}
By Lemmas \ref{lem:6.4}, \ref{lem:6.5}  and Theorem \ref{th:single}, we have
\begin{equation}\label{eq:ABC}
A_{j} + t B_{j}=  K(t, f_{j}, B^{s_0,q_0}_{p_0}, B^{s_1,q_1}_{p_1}).
\end{equation}
Denote
\begin{equation}
\label{eq:9b2}
f^{0}_{j}= f_j |_{\Gamma^{0}_{j}(t2^{j\tilde{s}})} \mbox{\, and \,} f^{1}_{j}= f_j|_{\Gamma^{1}_{j}(t2^{j\tilde{s}})}.
\end{equation}

For $q_0=q_1=q$ and $p_0<p_1$, we have the following wavelet characterization:
\begin{theorem}\label{th:10.1}
Given $s_0,s_1\in \mathbb{R}, 0<p_0<p_1\leq  \infty$  and $0<q\leq \infty$.
By equations \eqref{eq:10.10000} and \eqref{eq:ABC},
$A_{j} $, $B_{j}$ and $K(t, f_{j}, B^{s_0,q_0}_{p_0}, B^{s_1,q_1}_{p_1})$
are all defined by wavelet coefficients, we have
the following wavelet characterization of $K (t,f, B^{s_0,q}_{p_0}, B^{s_1,q}_{p_1})$:
\begin{equation}
\begin{array}{rcl}
K (t,f, B^{s_0,q}_{p_0}, B^{s_1,q}_{p_1})
&\sim &  \{\sum\limits_{j\geq 0} A_j^{q}\}^{\frac{1}{q}} + t \{\sum\limits_{j\geq 0} B_j^{q}\}^{\frac{1}{q}}\\
&\sim &
\{\sum\limits_{j\geq 0} K(t, f_{j}, B^{s_0,q_0}_{p_0}, B^{s_1,q_1}_{p_1})^{q}\}^{\frac{1}{q}} .
\end{array}
\end{equation}
\end{theorem}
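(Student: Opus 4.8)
The plan is to reduce the full K-functional for $(B^{s_0,q}_{p_0},B^{s_1,q}_{p_1})$ to a layerwise sum by exploiting the vertex functional $K^{V}_{\xi}$ and the commutativity of summation over the main grid $\mathbb{N}$ with the infimum. First I would pass to the wavelet side: by Theorem \ref{th:6.1} and Lemma \ref{le3.1}, $K(t,f,B^{s_0,q}_{p_0},B^{s_1,q}_{p_1})\sim K_{1}(t,f,\tilde l^{s_0,q}_{p_0},\tilde l^{s_1,q}_{p_1})$, and by Theorems \ref{th:5.3} and \ref{th:vertex} this is equivalent (up to the factor $2$) to the vertex functional $K^{V}(t,Af,\tilde l^{s_0,q}_{p_0},\tilde l^{s_1,q}_{p_1})$, where we may assume $f_{j,\gamma}=|f_{j,\gamma}|\ge 0$. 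So it suffices to work with
$$
\inf_{f^{0}_{j,\gamma}\in V^{\Lambda}_{f}}\Bigl\{\Bigl[\sum_{j\ge 0}2^{jq(s_0+\frac n2-\frac n{p_0})}\bigl(\sum_{\gamma}(f_{j,\gamma}-f^{0}_{j,\gamma})^{p_0}\bigr)^{\frac{q}{p_0}}\Bigr]^{\frac1q}
+ t\Bigl[\sum_{j\ge 0}2^{jq(s_1+\frac n2-\frac n{p_1})}\bigl(\sum_{\gamma}(f^{0}_{j,\gamma})^{p_1}\bigr)^{\frac{q}{p_1}}\Bigr]^{\frac1q}\Bigr\}.
$$

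The key step is the commutation argument already used in Theorem \ref{th:8.2}: for $0<q<\infty$ the quasi-triangle inequality gives
$(\sum_j (a_j+b_j)^{q})^{1/q}\sim (\sum_j a_j^{q})^{1/q}+(\sum_j b_j^{q})^{1/q}$ with constants depending only on $q$, so the two outer sums can be merged into a single sum over $j$ of the layerwise quantity
$2^{jq(s_0+\frac n2-\frac n{p_0})}(\sum_\gamma (f_{j,\gamma}-f^{0}_{j,\gamma})^{p_0})^{q/p_0}+t^{q}2^{jq(s_1+\frac n2-\frac n{p_1})}(\sum_\gamma (f^{0}_{j,\gamma})^{p_1})^{q/p_1}$, and then the infimum over the full vertex set $V^{\Lambda}_{f}$ decouples across $j$ because the choice of $f^{0}_{j,\gamma}$ on layer $j$ does not interact with layer $j'$. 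This turns the expression into $\bigl(\sum_{j\ge 0} K_{1}^{q}(t,f_j,\tilde l^{s_0,q}_{p_0},\tilde l^{s_1,q}_{p_1})\bigr)^{1/q}$, and by Theorem \ref{th:single}(ii), equation \eqref{eq:singlelayer}, together with Lemmas \ref{lem:6.4}, \ref{lem:6.5} and the definitions \eqref{eq:10.10000}, each single-layer K-functional equals $A_j+tB_j=K(t,f_j,B^{s_0,q_0}_{p_0},B^{s_1,q_1}_{p_1})$. This yields both equivalences in the statement. For the endpoint cases $q=\infty$, or $p_1=\infty$, one repeats the argument with the sup in place of the $\ell^q$-sum (using $\sup_j(a_j+b_j)\sim\sup_j a_j+\sup_j b_j$ and the fact that sup commutes with the layerwise infimum), exactly as in Theorems \ref{th:8.1} and \ref{th:8.3}; and for $p_1=\infty$ one replaces the inner $\ell^{p_1}$-norm by $\sup_\gamma$, invoking Theorem \ref{th:6.60000}(i) and Lemma \ref{lem:6.4}.

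I expect the main obstacle to be justifying the exchange of the infimum over $V^{\Lambda}_{f}$ with the sum over $j$ cleanly when $q<1$ (so the outer functional is only a quasi-norm): the quasi-triangle constant must be tracked, and one must check that the merged functional's infimum is attained layer-by-layer, i.e. that there is no loss in restricting to vertex configurations that are ``locally optimal'' on each $\Gamma_j$. This is genuinely harmless here precisely because the vertex set $V^{\Lambda}_{f}=\prod_{j}V^{\Gamma_j}_{f_j}$ is a product over layers, so the infimum of a sum of per-layer terms is the sum of per-layer infima; but stating this carefully, and noting that the reduction to vertices via Theorem \ref{th:vertex} already absorbed the relevant constants, is the one place where a little care is needed rather than a routine estimate. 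Everything else — the passage from $B$-spaces to sequence spaces, the $K_\xi\sim K$ equivalence, and the identification of the single-layer functional — is quoted directly from the earlier results.
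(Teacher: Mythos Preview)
Your proposal is correct and follows essentially the same route as the paper's own proof: reduce to the vertex functional via Theorems \ref{th:5.3} and \ref{th:vertex}, use the equivalence $a+tb\sim(a^{q}+t^{q}b^{q})^{1/q}$ to merge the two outer $\ell^{q}$-sums into a single sum over $j$, then exploit the product structure $V^{\Lambda}_{f}=\prod_{j}V^{\Gamma_j}_{f_j}$ so the infimum decouples layerwise, and finally identify each single-layer term with $A_{j}+tB_{j}$ via Theorem \ref{th:single} and Lemmas \ref{lem:6.4}, \ref{lem:6.5}. Your write-up is in fact more careful than the paper's (which compresses the commutation step into one display and a reference to Theorem \ref{th:vertex}), particularly in noting why the $q<1$ quasi-norm constants and the endpoint cases $q=\infty$, $p_1=\infty$ cause no trouble.
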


\begin{proof}

Let's call $S^{0}$ and $S^{1}$ a partition of $\Lambda$, if
$$S^{0}\bigcap S^{1}=\phi \mbox{ \, and \, } S^{0}\bigcup S^{1}= \Lambda.$$
By H\"older inequality, we know
$$\begin{array}{rl}
&\|f\|_{B^{s_0,q}_{p_0}(S^{0})} + t \|f\|_{B^{s_1,q}_{p_1}(S^{1})}\\
\sim &
\{\|f\|_{B^{s_0,q}_{p_0}(S^{0})}^{q} + t^{q} \|f\|_{B^{s_1,q}_{p_1}(S^{1})}^{q}\}^{\frac{1}{q}}\\
\sim &
\{ \sum\limits_{j\in \mathbb{N}} [ \, \|f_{j}\|_{B^{s_0,q}_{p_0}(S^{0})}^{q}  + t ^{q} \|f_{j}\|_{B^{s_1,q}_{p_1}(S^{1})} ^{q}\, ]
\}^{\frac{1}{q}}\\
\sim &
\{ \sum\limits_{j\in \mathbb{N}} [ \, \|f_{j}\|_{B^{s_0,q}_{p_0}(S^{0})} + t \|f_{j}\|_{B^{s_1,q}_{p_1}(S^{1})}\, ] ^{q}
\}^{\frac{1}{q}}.
\end{array}$$

Applying the definition of the vertex K functional and the vertex K functional Theorem \ref{th:vertex},
taking the limit of the above equations,
we get the conclusion of the Theorem \ref{th:10.1}.

\end{proof}

By Lemma \ref{lem:CK} and Theorem \ref{th:10.1}, we have
\begin{corollary}\label{cor:10.1}
Given $s_0,s_1\in \mathbb{R}, 0<p_1<p_0\leq  \infty$ and $0<q\leq \infty$.
By equations \eqref{eq:10.10000} and \eqref{eq:ABC},
$K(t, f_{j}, B^{s_1,q_1}_{p_1}, B^{s_0,q_0}_{p_0})$
is defined by wavelet coefficients, we have
the following wavelet characterization of $K (t,f, B^{s_0,q}_{p_0}, B^{s_1,q}_{p_1})$:
\begin{equation}
\begin{array}{rcl}
K (t,f, B^{s_0,q}_{p_0}, B^{s_1,q}_{p_1})
&\sim &  t K (t^{-1},f, B^{s_1,q}_{p_1}, B^{s_0,q}_{p_0})\\
&\sim &
t \{\sum\limits_{j\geq 0} K(t^{-1}, f_{j}, B^{s_1,q}_{p_1}, B^{s_0,q_0}_{p_0})^{q}\}^{\frac{1}{q}} .
\end{array}
\end{equation}
\end{corollary}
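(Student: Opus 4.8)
The plan is to reduce Corollary~\ref{cor:10.1} directly to Theorem~\ref{th:10.1} by swapping the two spaces in the couple. First I would invoke Lemma~\ref{lem:CK}, the commutativity identity $K(t,f,A_0,A_1)=tK(t^{-1},f,A_1,A_0)$, with $A_0=B^{s_0,q}_{p_0}$ and $A_1=B^{s_1,q}_{p_1}$. This produces the exact equality
\[
K(t,f, B^{s_0,q}_{p_0}, B^{s_1,q}_{p_1}) = t\,K\bigl(t^{-1},f, B^{s_1,q}_{p_1}, B^{s_0,q}_{p_0}\bigr),
\]
which is the first equivalence claimed (with equality, in fact, for every $t>0$).

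Next, since by hypothesis $0<p_1<p_0\leq\infty$, the couple $(B^{s_1,q}_{p_1},B^{s_0,q}_{p_0})$ has its smaller integrability exponent in the first slot, so Theorem~\ref{th:10.1} applies verbatim to it, with the roles $(s_0,p_0,q_0)\mapsto(s_1,p_1,q)$ and $(s_1,p_1,q_1)\mapsto(s_0,p_0,q)$ and with the threshold $t$ there replaced by $t^{-1}$. Applying it yields
\[
K\bigl(t^{-1},f, B^{s_1,q}_{p_1}, B^{s_0,q}_{p_0}\bigr) \sim \Bigl\{\textstyle\sum_{j\geq 0} K\bigl(t^{-1}, f_j, B^{s_1,q}_{p_1}, B^{s_0,q}_{p_0}\bigr)^{q}\Bigr\}^{1/q},
\]
where for each $j$ the single-layer quantity on the right is the one expressed by wavelet coefficients through Theorem~\ref{th:single} together with Lemmas~\ref{lem:6.4} and~\ref{lem:6.5}, read for the reversed couple (so the layer cut sets $\Gamma^0_j,\Gamma^1_j$ and the quantities $A_j,B_j$ of \eqref{eq:10.10000}--\eqref{eq:ABC} are re-indexed accordingly); the equivalence constants depend only on $s_0,s_1,p_0,p_1,q$, not on $t$ or $f$. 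Multiplying this equivalence by $t$ and chaining it with the identity from the first step gives
\[
K(t,f, B^{s_0,q}_{p_0}, B^{s_1,q}_{p_1}) \sim t\Bigl\{\textstyle\sum_{j\geq 0} K\bigl(t^{-1}, f_j, B^{s_1,q}_{p_1}, B^{s_0,q}_{p_0}\bigr)^{q}\Bigr\}^{1/q},
\]
the second claimed equivalence, valid for all $t>0$.

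The only point requiring care — and it is the main, fairly mild, obstacle — is bookkeeping: one must verify that the index substitution needed to invoke Theorem~\ref{th:10.1} genuinely respects all of its hypotheses ($0<p_1<p_0\leq\infty$, $0<q\leq\infty$, and $s_0,s_1\in\mathbb{R}$ arbitrary, which it does), and that the single-layer K functionals are consistently interpreted for the couple taken in the reversed order when one unwinds them via the wavelet formulas of Section~\ref{sec:8.1}. No estimate beyond Lemma~\ref{lem:CK} and Theorem~\ref{th:10.1} is needed, so the argument is short.
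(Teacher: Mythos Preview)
Your proposal is correct and matches the paper's approach exactly: the paper states Corollary~\ref{cor:10.1} immediately after the phrase ``By Lemma~\ref{lem:CK} and Theorem~\ref{th:10.1}, we have'' with no further argument, so the intended proof is precisely the two-step reduction you give (commutativity of $K$, then apply Theorem~\ref{th:10.1} to the swapped couple).
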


\begin{remark}
(i) Theorem \ref{th:10.1} and Corollary \ref{cor:10.1} provide a simple wavelet characterization of K functional for $q_0=q_1$
which generalized Lou-Yang-He-He's result \cite{LYHH} to the case where $s_0\neq s_1$.
Further, by applying vertex K functional, the proof of Theorem \ref{th:10.1} is much easier.

(ii) By Lemma \ref{lem:Besov-Lorentz} and Theorem \ref{th:10.1}, it is easy to see that
the relative real interpolation spaces in the main theorem \ref{th:main} are Besov-Lorentz spaces
when $q_0=q_1=q=r$.
{\bf For $r\neq q$, the relative real interpolation spaces are new function spaces}.
\end{remark}

\section{Three nonlinear topology structure and K functional}\label{sec:10}
We consider first some relation of $K_{\infty}$ functional for psedo-norm spaces and
their exponential spaces.
Then we give the specific wavelet expression of the K functional
for $s_0, s_1\in \mathbb{R}, 0<p_0\neq p_1\leq \infty, 0< q_0\neq q_1<\infty$.
In fact, we introduce first power spaces on the full grid.
That is to say,
we introduce the power space of Besov space.
$$\|f\|_{X_0} = \|f\|^{q_0}_{\tilde{l}^{s_0,q_0}_{p_0}(\Lambda)} \mbox{ and }
\|f\|_{X_1}= \|f\|^{q_1}_{\tilde{l}^{s_1, q_1}_{p_1}(\Lambda)}.$$
We introduce then power spaces on the layer grid.
For $j\geq 0$, denote
$$\|f\|_{Y^j_0} = \|f\|^{q_0}_{l^{p_0}(\Gamma_j)} \mbox { and }
\|f\|_{Y^j_1}= \|f\|^{q_1}_{l^{p_1}(\Gamma_j)}.$$
{\bf We establish the topology change on the layer grid and the topology change on the full grid respectively. And establish the topological compatibility of K functional: summation of main grid and minimal functional commutativity. }
By introducing power spaces,
we use three Lemmas \ref{cor:11.1}, \ref{lem:ab} and \ref{cor:11.2} to describe the corresponding nonlinearity one by one.
Finally, the corresponding wavelet characterization is given in Theorem \ref{th:11.4}.

We introduce first $K_{\infty}$ functional.
We denote $\Lambda_0 \bigoplus \Lambda_1= \Lambda$, if
(i) $\Lambda_0 \bigcup \Lambda_1= \Lambda$ and $\Lambda_0  \bigcap \Lambda_1= \phi$. Define
\begin{equation} \label{eq:11.1}
K_{\infty}(t,f, A_0, A_1)= \inf\limits_{\Lambda_0 \bigoplus \Lambda_1= \Lambda} \max (\|f\|_{A_0(\Lambda_0)} ,  t\|f\|_{A_1(\Lambda_1)}).
\end{equation}
It is easy to see that $K_{\infty}$ functional in the above \eqref{eq:11.1} has the same properties as $K_{\xi}$ functional
where $0<\xi<\infty$. We omit the detail and omit the proof.

The first level of nonlinearity is that of the K functional at a particular frequency level.
We consider nonlinear topological deformation properties of layer grid functional
and get $K_{\infty}(t,b_{j}, Y^j_0, Y^j_1)$.
Given $j\geq 0, 0<p_0\neq p_1\leq \infty$.
We describe the first type of nonlinearity in terms of the magnitude of $q_0$ and $q_1$ in two cases.
Denote $b_{j}= (b_{j,\gamma})_{\gamma\in \Gamma_{j}}$
where $b_{j,\gamma}= 2^{j(s_0+\frac{n}{2}-\frac{n}{p_0})} |f_{j,\gamma}|$.

{\bf Case (i).}
If $0<q_1<q_0< \infty$,  denote
$$s= t^{q_1} K_{\infty} (t,b_{j}, l^{p_0}(\Gamma_j), l^{p_1}(\Gamma_j))^{q_0 -q_1} .$$
For fixed $b_{j}$, by monotonicity, there exists a function $F_{j}$ that describes
the nonlinearity of $t$ as a function of $s$  such that
\begin{equation}\label{eq:t.3}
t= F_{j}(b_{j},l^{p_0}(\Gamma_j), l^{p_1}(\Gamma_j), q_0, q_1, s).
\end{equation}

{\bf Case (ii).}
If $0<q_0<q_1< \infty$,  denote
$$s^{-1}= t^{q_0} K_{\infty} (t,b_{j}, l^{p_1}(\Gamma_j), l^{p_0}(\Gamma_j))^{q_1 -q_0} .$$
Similar to Case (i), by monotonicity, there exists $F_{j}$ such that
\begin{equation}\label{eq:t.4}
t= F_{j}(b_{j},l^{p_1}(\Gamma_j), l^{p_0}(\Gamma_j), q_1, q_0, s^{-1}).
\end{equation}

The first level of nonlinearity is for fixed frequencies.
It refers to the representation of the K functional of the power space of fixed frequency
in a nonlinear way as the K functional on the grid $\Gamma_{j}$.
We have
\begin{lemma} \label{cor:11.2}
Given $j\geq 0$ and $ 0<p_0\neq p_1\leq \infty$. For $0<s,t<\infty$, we have

{\rm (i)} For $0<q_1<q_0< \infty$ and
$t= F_{j}(b_{j},l^{p_0}(\Gamma_j), l^{p_1}(\Gamma_j), q_0, q_1, s)$ defined in the equation \eqref{eq:t.3},
we have
\begin{equation}\label{eq:K11.3}K_{\infty}(s,b_{j}, Y^j_0, Y^j_1)= K_{\infty}(t,b_{j}, l^{p_0}(\Gamma_j), l^{p_1}(\Gamma_j))^{q_0}.
\end{equation}

{\rm (ii)} For $0<q_0<q_1< \infty$
and $t= F_{j}(b_{j},l^{p_1}(\Gamma_j), l^{p_0}(\Gamma_j), q_1, q_0, s^{-1})$
defined in the equation \eqref{eq:t.4},
we have
\begin{equation}\label{eq:K11.4}
K_{\infty}(s,b_{j}, Y^j_0, Y^j_1)= s K_{\infty}(s^{-1},b_{j}, Y^j_1, Y^j_0)
= s K_{\infty}(t,b_{j}, l^{p_1}(\Gamma_j), l^{p_0}(\Gamma_j))^{q_1}.
\end{equation}

\end{lemma}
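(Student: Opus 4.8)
The plan is to reduce both identities to a single scalar statement about the $\min$--$\max$ structure of the $K_\infty$ functional on the fixed layer $\Gamma_j$, and then to obtain Case (ii) from Case (i) by the commutation identity of Lemma~\ref{lem:CK}. Since $b_{j,\gamma}=2^{j(s_0+\frac n2-\frac n{p_0})}|f_{j,\gamma}|\geq 0$ and both \eqref{eq:K11.3} and \eqref{eq:K11.4} are trivial when $b_j=0$, I assume $b_j\neq 0$; this also makes $s$ a continuous, strictly increasing function of $t$ mapping $(0,\infty)$ onto $(0,\infty)$ — a product of the strictly increasing positive factor $t^{q_1}$ (resp.\ $t^{q_0}$) with a nondecreasing positive factor — so that $F_j$ in \eqref{eq:t.3}--\eqref{eq:t.4} is well defined.

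First I would unwind the definition \eqref{eq:11.1} on the layer grid $\Gamma_j$. For a partition $\Gamma_j=\Gamma_j^0\bigoplus\Gamma_j^1$ write $u=\|b_j|_{\Gamma_j^0}\|_{l^{p_0}(\Gamma_j)}$ and $v=\|b_j|_{\Gamma_j^1}\|_{l^{p_1}(\Gamma_j)}$. Because the restriction of the power quasi-norm $\|\cdot\|_{Y_i^j}$ to a subset of $\Gamma_j$ is the $q_i$-th power of the corresponding restriction of $\|\cdot\|_{l^{p_i}(\Gamma_j)}$, one has
$$K_\infty(t,b_j,l^{p_0}(\Gamma_j),l^{p_1}(\Gamma_j))=\inf_{\Gamma_j^0\bigoplus\Gamma_j^1=\Gamma_j}\max(u,tv),\qquad K_\infty(s,b_j,Y_0^j,Y_1^j)=\inf_{\Gamma_j^0\bigoplus\Gamma_j^1=\Gamma_j}\max(u^{q_0},sv^{q_1}).$$
Thus, writing $M=K_\infty(t,b_j,l^{p_0}(\Gamma_j),l^{p_1}(\Gamma_j))$, identity \eqref{eq:K11.3} amounts to the claim: \emph{if $s=t^{q_1}M^{q_0-q_1}$ then $\inf_P\max(u^{q_0},sv^{q_1})=M^{q_0}$}, the infimum being over partitions $P$. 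I would prove the lower bound by noting that for every $P$ one has $\max(u,tv)\geq M$, hence $u\geq M$ (so $u^{q_0}\geq M^{q_0}$) or $v\geq M/t$ (so $sv^{q_1}\geq t^{q_1}M^{q_0-q_1}(M/t)^{q_1}=M^{q_0}$); either way $\max(u^{q_0},sv^{q_1})\geq M^{q_0}$. For the upper bound, given $\ez>0$ pick $P$ with $\max(u,tv)\leq M+\ez$; then $u^{q_0}\leq(M+\ez)^{q_0}$ and $sv^{q_1}\leq t^{q_1}M^{q_0-q_1}\bigl((M+\ez)/t\bigr)^{q_1}=M^{q_0-q_1}(M+\ez)^{q_1}\leq(M+\ez)^{q_0}$, using $q_0-q_1>0$ and $M\leq M+\ez$; let $\ez\to 0$. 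This settles part (i).

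For part (ii), where $q_0<q_1$, I would invoke the commutative identity of Lemma~\ref{lem:CK} (valid for $K_\infty$), $K_\infty(s,b_j,Y_0^j,Y_1^j)=s\,K_\infty(s^{-1},b_j,Y_1^j,Y_0^j)$, and then apply part (i) with the two couples interchanged: take the first space to be $l^{p_1}(\Gamma_j)$ and the second to be $l^{p_0}(\Gamma_j)$, with exponents $q_1>q_0$, so that the corresponding power spaces are $(l^{p_1})^{q_1}=Y_1^j$ and $(l^{p_0})^{q_0}=Y_0^j$ and the admissible threshold relation read off from (i) is precisely $s^{-1}=t^{q_0}K_\infty(t,b_j,l^{p_1}(\Gamma_j),l^{p_0}(\Gamma_j))^{q_1-q_0}$, i.e.\ \eqref{eq:t.4}. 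Part (i) then gives $K_\infty(s^{-1},b_j,Y_1^j,Y_0^j)=K_\infty(t,b_j,l^{p_1}(\Gamma_j),l^{p_0}(\Gamma_j))^{q_1}$, and multiplying through by $s$ yields \eqref{eq:K11.4}.

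The only real content is the scalar lemma, and its delicate point is the choice of the exponent $q_0-q_1$ (resp.\ $q_1-q_0$) in the definition of $s$: it is forced by the requirement that at a partition balancing $u$ against $tv$ (so $u=tv=M$) the two terms $u^{q_0}$ and $sv^{q_1}$ become equal and both equal $M^{q_0}$. Everything else — the partition form of $K_\infty$ on $\Gamma_j$, the monotonicity of $t\mapsto s$, and the commutation identity — is routine.
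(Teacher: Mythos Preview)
Your proof is correct and follows essentially the same route as the paper: both reduce \eqref{eq:K11.3} to comparing $\inf_P\max(u^{q_0},sv^{q_1})$ with $M^{q_0}$ where $M=K_\infty(t,b_j,l^{p_0},l^{p_1})$ and $s=t^{q_1}M^{q_0-q_1}$, and both obtain (ii) from (i) via the commutation identity $K_\infty(s,\cdot,A_0,A_1)=sK_\infty(s^{-1},\cdot,A_1,A_0)$. The only cosmetic difference is that the paper factors out $M^{q_0}$ and asserts the normalized infimum $\inf_P\max\bigl((u/M)^{q_0},(tv/M)^{q_1}\bigr)$ equals $1$, whereas you give the explicit two-sided $\ez$-argument; your version is in fact the justification the paper's asserted equality needs, so the content is identical.
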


\begin{proof}
(i) For $0<q_1<q_0< \infty$, by applying \eqref{eq:t.3},
$$\begin{array}{l} K_{\infty}(s,b_{j}, Y^j_0, Y^j_1)= \inf\limits_{\Gamma^{0}_j \bigoplus \Gamma^{1}_j= \Gamma_{j}}
\max (\|b_{j}\|_{Y^j_0}, s\|b_{j}\|_{Y^j_1})\\
= \inf\limits_{\Gamma^{0}_j \bigoplus \Gamma^{1}_j= \Gamma_{j}}
\max (\|b_{j}\|^{q_0}_{l^{p_0}}, s\|b_{j}\|^{q_1}_{l^{p_1}})
\\
=K_{\infty}(t,b_{j}, l^{p_0}(\Gamma_j), l^{p_1}(\Gamma_j))^{q_0}
\inf\limits_{\Gamma^{0}_j \bigoplus \Gamma^{1}_j= \Gamma_{j}}
\max ( (\frac{\|b_{j}\|_{l^{p_0}(\Lambda_0)}} {K_{\infty}(t,b_{j}, l^{p_0}(\Gamma_j), l^{p_1}(\Gamma_j))})^{q_0},
(\frac{t \|b_{j}\|_{l^{p_1}(\Lambda_1)}} {K_{\infty}(t,b_{j}, l^{p_0}(\Gamma_j), l^{p_1}(\Gamma_j)))})^{q_1})
.\end{array}$$
Since
$$1=\inf\limits_{\Gamma^{0}_j \bigoplus \Gamma^{1}_j= \Gamma_{j}}
\max ( (\frac{\|b_{j}\|_{l^{p_0}(\Lambda_0)}} {K_{\infty}(t,b_{j}, l^{p_0}(\Gamma_j), l^{p_1}(\Gamma_j))})^{q_0},
(\frac{t \|b_{j}\|_{l^{p_1}(\Lambda_1)}} {K_{\infty}(t,b_{j}, l^{p_0}(\Gamma_j), l^{p_1}(\Gamma_j)))})^{q_1}),$$
we get \eqref{eq:K11.3}.

(ii) For $0<q_0<q_1< \infty$, by applying \eqref{eq:t.4},
$$\begin{array}{l} K_{\infty}(s^{-1},b_{j}, Y^j_1, Y^j_0)= \inf\limits_{\Gamma^{0}_j \bigoplus \Gamma^{1}_j= \Gamma_{j}}
\max (\|b_{j}\|_{Y^j_1}, s^{-1} \|b_{j}\|_{Y^j_0})\\
= \inf\limits_{\Gamma^{0}_j \bigoplus \Gamma^{1}_j= \Gamma_{j}}
\max (\|b_{j}\|^{q_1}_{l^{p_1}}, s^{-1}\|b_{j}\|^{q_0}_{l^{p_0}})
\\
=K_{\infty}(t,b_{j}, l^{p_1}(\Gamma_j), l^{p_0}(\Gamma_j))^{q_1}
\inf\limits_{\Gamma^{0}_j \bigoplus \Gamma^{1}_j= \Gamma_{j}}
\max ( (\frac{\|b_{j}\|_{l^{p_1}(\Lambda_0)}} {K_{\infty}(t,b_{j}, l^{p_1}(\Gamma_j), l^{p_0}(\Gamma_j))})^{q_1},
(\frac{t \|b_{j}\|_{l^{p_0}(\Lambda_1)}} {K_{\infty}(t,b_{j}, l^{p_1}(\Gamma_j), l^{p_0}(\Gamma_j)))})^{q_0})
.\end{array}$$
Since
$$1=\inf\limits_{\Gamma^{0}_j \bigoplus \Gamma^{1}_j= \Gamma_{j}}
\max ( (\frac{\|b_{j}\|_{l^{p_1}(\Lambda_1)}} {K_{\infty}(t,b_{j}, l^{p_1}(\Gamma_j), l^{p_0}(\Gamma_j))})^{q_1},
(\frac{t \|b_{j}\|_{l^{p_0}(\Lambda_0)}} {K_{\infty}(t,b_{j}, l^{p_1}(\Gamma_j), l^{p_0}(\Gamma_j)))})^{q_0}),$$
we get \eqref{eq:K11.4}.

\end{proof}




We consider then the topological compatibility of intermediate spaces
and obtain $K_{\infty}(t,f, X_0, X_1)$.
The second level of nonlinearity is the commutativity on the sum relative to indices $j$ and infimum in the K functional of Besov power space.
The K-functionals of the power space of the Besov space can be given by a set of K-functionals of the power spaces defined on layer grid.
\begin{lemma} \label{lem:ab}
For $s_0, s_1\in \mathbb{R}$, $0<p_0\neq p_1\leq \infty, 0< q_0\neq q_1<\infty$,
we have
\begin{equation}\label{eq:11.60000}
\begin{array}{l}
K_{\infty}(t,f, X_0, X_1)
\sim \sum\limits_{j\in \mathbb{N}} K_{\infty}(t2^{j\tilde{s}}, b_{j}, Y^j_0, Y^j_1).
\end{array}
\end{equation}
\begin{equation}\label{eq:11.70000}
\begin{array}{l}
K_{\infty}(t,f, X_1, X_0) = t K_{\infty} (t^{-1}, f, X_0, X_1)
\sim t \sum\limits_{j\in \mathbb{N}} K_{\infty}(t^{-1} 2^{j\tilde{s}}, b_{j}, Y^j_0, Y^j_1).
\end{array}
\end{equation}
\end{lemma}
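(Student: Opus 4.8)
The plan is to collapse the infimum defining $K_{\infty}(t,f,X_0,X_1)$ over partitions of the full grid $\Lambda$ into a sum, over the main grid, of infima over partitions of the layer grids $\Gamma_{j}$. The key structural fact making this possible is that the power norms $\|\cdot\|_{X_i}=\|\cdot\|^{q_i}_{\tilde{l}^{s_i,q_i}_{p_i}}$ are, by Definition \ref{def:discretespace} and the assumption $0<q_0,q_1<\infty$, honest \emph{sums over} $j$ of the layer contributions. Concretely, a decomposition $\Lambda=\Lambda_0\bigoplus\Lambda_1$ is exactly the same datum as a family $\{\Gamma_{j}=\Gamma^{0}_{j}\bigoplus\Gamma^{1}_{j}\}_{j\geq 0}$ with $\Gamma^{i}_{j}=\{\gamma:(j,\gamma)\in\Lambda_{i}\}$. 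Inserting $b_{j,\gamma}=2^{j(s_0+\frac{n}{2}-\frac{n}{p_0})}|f_{j,\gamma}|$ and comparing the two Besov scalings (this is where $\tilde{s}=s_1-s_0+\frac{n}{p_0}-\frac{n}{p_1}$ enters), the layer-$j$ term of $\|f\|_{X_0(\Lambda_0)}$ reassembles as $\|b_{j}\|_{Y^{j}_0(\Gamma^{0}_{j})}$ and that of $t\|f\|_{X_1(\Lambda_1)}$ as $t2^{j\tilde{s}}\|b_{j}\|_{Y^{j}_1(\Gamma^{1}_{j})}$, so that, by \eqref{eq:11.1},
\begin{equation*}
K_{\infty}(t,f,X_0,X_1)=\inf_{\{\Gamma^{0}_{j}\bigoplus\Gamma^{1}_{j}=\Gamma_{j}\}_{j\geq 0}}\max\lf(\sum_{j\geq 0}\|b_{j}\|_{Y^{j}_0(\Gamma^{0}_{j})},\ \sum_{j\geq 0}t2^{j\tilde{s}}\|b_{j}\|_{Y^{j}_1(\Gamma^{1}_{j})}\r).
\end{equation*}

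\textbf{The core step: commuting the layer sum past the infimum.} The decisive ingredient, and the reason the section works with $K_{\infty}$ rather than $K$, is the elementary bound: for nonnegative $a_{j}(x_{j}),b_{j}(x_{j})$ depending on \emph{independent} choices $x_{j}$,
\begin{equation*}
\tfrac{1}{2}\sum_{j}\inf_{x_{j}}\max(a_{j}(x_{j}),b_{j}(x_{j}))\ \leq\ \inf_{(x_{j})}\max\lf(\sum_{j}a_{j}(x_{j}),\ \sum_{j}b_{j}(x_{j})\r)\ \leq\ \sum_{j}\inf_{x_{j}}\max(a_{j}(x_{j}),b_{j}(x_{j})).
\end{equation*}
For the right inequality I would choose, layer by layer, a near-optimal $x^{*}_{j}$ with total error $\varepsilon$, and use $\sum_{j}a_{j}(x^{*}_{j})\leq\sum_{j}\max(a_{j}(x^{*}_{j}),b_{j}(x^{*}_{j}))$ (and likewise for $b$) to bound both entries of the outer $\max$ by $\sum_{j}\inf\max+\varepsilon$. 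For the left inequality, for any $(x_{j})$ one has $\max(\sum a,\sum b)\geq\frac{1}{2}\sum_{j}(a_{j}+b_{j})\geq\frac{1}{2}\sum_{j}\max(a_{j},b_{j})\geq\frac{1}{2}\sum_{j}\inf\max$, then take the infimum. Applying this with the ``choice at layer $j$'' being the partition $\Gamma_{j}=\Gamma^{0}_{j}\bigoplus\Gamma^{1}_{j}$, $a_{j}=\|b_{j}\|_{Y^{j}_0(\Gamma^{0}_{j})}$, $b_{j}=t2^{j\tilde{s}}\|b_{j}\|_{Y^{j}_1(\Gamma^{1}_{j})}$, the display above becomes
\begin{equation*}
K_{\infty}(t,f,X_0,X_1)\ \sim\ \sum_{j\geq 0}\inf_{\Gamma^{0}_{j}\bigoplus\Gamma^{1}_{j}=\Gamma_{j}}\max\lf(\|b_{j}\|_{Y^{j}_0(\Gamma^{0}_{j})},\ t2^{j\tilde{s}}\|b_{j}\|_{Y^{j}_1(\Gamma^{1}_{j})}\r)=\sum_{j\geq 0}K_{\infty}(t2^{j\tilde{s}},b_{j},Y^{j}_0,Y^{j}_1),
\end{equation*}
which is \eqref{eq:11.60000}. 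I would emphasize here that the analogous interchange is false for the sum-type $K$ functional; this is precisely why the earlier sections could treat only $p_0=p_1$ or $q_0=q_1$ via cuboid functionals, and why replacing $K$ by $K_{\infty}$ (legitimate by Lemma \ref{le3.1}) is what makes the general-index case tractable.

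\textbf{The reflection identity \eqref{eq:11.70000}.} Since, as remarked before the lemma, $K_{\infty}$ obeys the same formal properties as $K_{\xi}$, the identity $K_{\infty}(t,f,X_1,X_0)=tK_{\infty}(t^{-1},f,X_0,X_1)$ holds (the $K_{\infty}$-analogue of Lemma \ref{lem:CK}): in \eqref{eq:11.1} swap the roles of $\Lambda_0$ and $\Lambda_1$ inside the infimum and factor $t$ out of the $\max$. Substituting $t^{-1}$ for $t$ in \eqref{eq:11.60000} then yields \eqref{eq:11.70000}.

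\textbf{Main obstacle.} The only genuinely structural point is the layer-sum/infimum commutativity of the previous paragraph, together with the fact that it holds only up to the constant $2$ — so \eqref{eq:11.60000} is intrinsically an equivalence $\sim$, not an identity; everything else (the bijection between partitions of $\Lambda$ and families of partitions of the $\Gamma_{j}$, and the exponent bookkeeping producing the weight $2^{j\tilde{s}}$) is routine. I would also flag the harmless edge cases: when $p_1=\infty$ the inner $l^{p_1}$ norm is a supremum, with the argument unchanged, and the hypothesis $q_0,q_1<\infty$ is used exactly to ensure that $\|\cdot\|_{X_i}$ is honestly additive over the layers.
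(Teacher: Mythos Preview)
Your argument is correct and matches the paper's: both identify partitions of $\Lambda$ with families of layer partitions and use the independence of the layer choices to commute $\inf$ with $\sum_{j}$; the paper does this by passing through $\max(a,b)\sim a+b$ (so that $\inf\sum_{j}(a_{j}+b_{j})=\sum_{j}\inf(a_{j}+b_{j})$ holds exactly), while you prove the two-sided $\max$-bound directly, which is the same step. One small correction to your commentary: the interchange is \emph{not} false for the sum-type functional—the paper literally routes through it—so what enables the general case is the passage to the power spaces $X_i,Y^{j}_i$ (turning the outer $l^{q_i}$ norm into an honest sum over $j$), not the replacement of $K$ by $K_{\infty}$.
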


\begin{proof}
The proof of the equation \eqref{eq:11.70000} is similar to which of the equation \eqref{eq:11.60000}.
We prove only the equation \eqref{eq:11.60000}.
For $i=0,1$, let $\Lambda_i$ be defined before equation \eqref{eq:11.1}.
For $i=0,1, j\geq 0$, denote $\Gamma^{i}_{j}= \{\gamma\in \Gamma_{j}, (j,\gamma)\in \Lambda_i\}$.
We have
$$\begin{array}{l}
K_{\infty}(t,f, X_0, X_1)\\
= \inf\limits_{\Lambda_0 \bigoplus \Lambda_1= \Lambda} \max (
\sum\limits_{j\in \mathbb{N}} 2^{j(s_0+\frac{n}{2}-\frac{n}{p_0}) q_0} (\sum\limits_{\gamma\in \Gamma^{0}_{j}} |f_{j,\gamma}|^{p_0} )^{\frac{q_0}{p_0}},
t\sum\limits_{j\in \mathbb{N}} 2^{j(s_1+\frac{n}{2}-\frac{n}{p_1}) q_1} (\sum\limits_{\gamma\in \Gamma^{1}_{j}} |f_{j,\gamma}|^{p_1} )^{\frac{q_1}{p_1}})\\
\sim \inf\limits_{\Lambda_0 \bigoplus \Lambda_1= \Lambda}
\sum\limits_{j\in \mathbb{N}} 2^{j(s_0+\frac{n}{2}-\frac{n}{p_0}) q_0} (\sum\limits_{\gamma\in \Gamma^{0}_{j}} |f_{j,\gamma}|^{p_0} )^{\frac{q_0}{p_0}}
+t\sum\limits_{j\in \mathbb{N}} 2^{j(s_1+\frac{n}{2}-\frac{n}{p_1}) q_1} (\sum\limits_{\gamma\in \Gamma^{1}_{j}} |f_{j,\gamma}|^{p_1} )^{\frac{q_1}{p_1}}\\
\sim
\sum\limits_{j\in \mathbb{N}} \inf\limits_{\Gamma^{0}_j \bigoplus \Gamma^{1}_j= \Gamma_{j} }
\{2^{j(s_0+\frac{n}{2}-\frac{n}{p_0}) q_0} (\sum\limits_{\gamma\in \Gamma^{0}_{j}} |f_{j,\gamma}|^{p_0} )^{\frac{q_0}{p_0}}
+ t 2^{j(s_1+\frac{n}{2}-\frac{n}{p_1}) q_1} (\sum\limits_{\gamma\in \Gamma^{1}_{j}} |f_{j,\gamma}|^{p_1} )^{\frac{q_1}{p_1}}\}\\
\sim
\sum\limits_{j\in \mathbb{N}} \inf\limits_{\Gamma^{0}_j \bigoplus \Gamma^{1}_j= \Gamma_{j} }
\max \{2^{j(s_0+\frac{n}{2}-\frac{n}{p_0}) q_0} (\sum\limits_{\gamma\in \Gamma^{0}_{j}} |f_{j,\gamma}|^{p_0} )^{\frac{q_0}{p_0}},
t 2^{j(s_1+\frac{n}{2}-\frac{n}{p_1}) q_1} (\sum\limits_{\gamma\in \Gamma^{1}_{j}} |f_{j,\gamma}|^{p_1} )^{\frac{q_1}{p_1}}\}\\
\sim
\sum\limits_{j\in \mathbb{N}} \inf\limits_{\Gamma^{0}_j \bigoplus \Gamma^{1}_j= \Gamma_{j} }
\max \{(\sum\limits_{\gamma\in \Gamma^{0}_{j}} |b_{j,\gamma}|^{p_0} )^{\frac{q_0}{p_0}},
t 2^{j\tilde{s}} (\sum\limits_{\gamma\in \Gamma^{1}_{j}} |b_{j,\gamma}|^{p_1} )^{\frac{q_1}{p_1}}\}
\end{array}
$$
Hence we get equation \eqref{eq:11.60000}.

\end{proof}



We describe the third type of nonlinearity in terms of the magnitude of $q_0$ and $q_1$ in two cases.
Thirdly, we consider topological deformation properties of full grid
and get $K_{\infty}(t,f, \tilde{l}^{s_0,q_0}_{p_0}(\Lambda), \tilde{l}^{s_1, q_1}_{p_1}(\Lambda))$.

{\bf Case (i).} For $0< q_0<  q_1< \infty$,
denote
$$s= t^{\frac{1}{q_1}} K_{\infty} (t,f, X_0, X_1)^{\frac{1}{q_0} -\frac{1}{q_1}} .$$
Given $X_0$ and $X_1$.
For fixed function $f$, by monotonicity, there exists a function $F$ that describes
the nonlinearity of $t$ as a function of $s$  such that
\begin{equation} \label{eq:t.1}
t= F(f,X_0,X_1, \frac{1}{q_0}, \frac{1}{q_1}, s).
\end{equation}

{\bf Case (ii).} For $0< q_1<  q_0< \infty$,
denote
$$s^{-1}= t^{\frac{1}{q_0}} K_{\infty} (t,f, X_1, X_0)^{\frac{1}{q_1} -\frac{1}{q_0}} .$$
Similar to the equation \eqref{eq:t.1}, by monotonicity, there exists function $F$ such that
\begin{equation} \label{eq:t.2}
t= F(f,X_1,X_0, \frac{1}{q_1}, \frac{1}{q_0}, s^{-1}).
\end{equation}

Hence the K functional of Besov spaces is transformed into the K functional of Besov power spaces:
\begin{lemma} \label{cor:11.1}
Given $s_0, s_1\in \mathbb{R}$ and $0<p_0\neq p_1\leq \infty$. For $0<s,t<\infty$, we have

{\rm (i)} If $0<q_0<  q_1< \infty$ and $t$ is defined in \eqref{eq:t.1}, then
\begin{equation}\label{eq:K11.1}
K_{\infty}(s,f, \tilde{l}^{s_0,q_0}_{p_0}(\Lambda), \tilde{l}^{s_1, q_1}_{p_1}(\Lambda))= K_{\infty}(t,f, X_0, X_1)^{\frac{1}{q_0}}.
\end{equation}

{\rm (ii)} If $0<q_1<  q_0< \infty$  and $t$ is defined in \eqref{eq:t.2}, then
\begin{equation}\label{eq:K11.2}
\begin{array}{rcl}
K_{\infty}(s,f, \tilde{l}^{s_0,q_0}_{p_0}(\Lambda), \tilde{l}^{s_1, q_1}_{p_1}(\Lambda))
&=& s K_{\infty}(s^{-1},f, \tilde{l}^{s_1, q_1}_{p_1}(\Lambda), \tilde{l}^{s_0,q_0}_{p_0}(\Lambda))\\
&=& s K_{\infty}(t,f, X_1, X_0)^{\frac{1}{q_1}}.
\end{array}
\end{equation}

\end{lemma}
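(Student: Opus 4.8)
The plan is to exploit the same "normalization/scaling" trick used in the proof of Lemma \ref{cor:11.2}, now at the level of the full grid $\Lambda$ rather than the single layer $\Gamma_j$. The starting point is the elementary identity $\|f\|_{X_0}=\|f\|^{q_0}_{\tilde l^{s_0,q_0}_{p_0}(\Lambda)}$ and $\|f\|_{X_1}=\|f\|^{q_1}_{\tilde l^{s_1,q_1}_{p_1}(\Lambda)}$, which rewrites the norms on the power spaces $X_0,X_1$ as powers of the norms on the Besov-type sequence spaces. For part (i), assume $0<q_0<q_1<\infty$. For each admissible splitting $\Lambda=\Lambda_0\bigoplus\Lambda_1$ write $a=\|f\|_{\tilde l^{s_0,q_0}_{p_0}(\Lambda_0)}$ and $b=\|f\|_{\tilde l^{s_1,q_1}_{p_1}(\Lambda_1)}$, so that $K_\infty(s,f,\tilde l^{s_0,q_0}_{p_0}(\Lambda),\tilde l^{s_1,q_1}_{p_1}(\Lambda))=\inf\max(a,sb)$ while $K_\infty(t,f,X_0,X_1)=\inf\max(a^{q_0},t b^{q_1})$. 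One then factors out $M:=K_\infty(t,f,X_0,X_1)^{1/q_0}$ and observes, exactly as in Lemma \ref{cor:11.2}, that with the choice $s=t^{1/q_1}M^{q_0(1/q_0-1/q_1)}=t^{1/q_1}K_\infty(t,f,X_0,X_1)^{1/q_0-1/q_1}$ — which is precisely \eqref{eq:t.1} — the normalized infimum
$$\inf_{\Lambda_0\bigoplus\Lambda_1=\Lambda}\max\Big(\big(\tfrac{a}{M}\big)^{q_0},\big(\tfrac{tb}{M^{q_0/q_1}}\big)^{q_1}\Big)=1,$$
so that pulling $M$ back out gives \eqref{eq:K11.1}. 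The key point making this work is that raising to the powers $q_0$ and $q_1$ respectively turns the two "radii" $a/M$ and $(\text{scaled }b)/M$ into quantities whose max over splittings equals $1$ exactly when $s$ is calibrated by the monotone relation \eqref{eq:t.1}; this is where the definition of $F$ via monotonicity is used.

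For part (ii), assume $0<q_1<q_0<\infty$. Here I would first invoke the commutation property of $K_\infty$ — the analogue of Lemma \ref{lem:CK}, which the excerpt explicitly grants since "$K_\infty$ functional in \eqref{eq:11.1} has the same properties as $K_\xi$" — to write
$$K_\infty(s,f,\tilde l^{s_0,q_0}_{p_0}(\Lambda),\tilde l^{s_1,q_1}_{p_1}(\Lambda))=s\,K_\infty(s^{-1},f,\tilde l^{s_1,q_1}_{p_1}(\Lambda),\tilde l^{s_0,q_0}_{p_0}(\Lambda)),$$
which is the first displayed equality of \eqref{eq:K11.2}. Now the roles of the two indices are swapped, so that $q_1<q_0$ puts us back in the situation of part (i) with $(q_0,q_1,X_0,X_1)$ replaced by $(q_1,q_0,X_1,X_0)$; applying the argument of part (i) with parameter $s^{-1}$ and the calibration $s^{-1}=t^{1/q_0}K_\infty(t,f,X_1,X_0)^{1/q_1-1/q_0}$ from \eqref{eq:t.2} yields $K_\infty(s^{-1},f,\tilde l^{s_1,q_1}_{p_1}(\Lambda),\tilde l^{s_0,q_0}_{p_0}(\Lambda))=K_\infty(t,f,X_1,X_0)^{1/q_1}$, and multiplying by $s$ gives the second equality of \eqref{eq:K11.2}.

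The routine steps — the factoring-out manipulation and the verification that the normalized infimum equals $1$ — are genuinely the same computation as in Lemma \ref{cor:11.2}, just with $\Gamma_j$ replaced by $\Lambda$ and the exponents $q_0,q_1$ replaced by their reciprocals $1/q_0,1/q_1$ in the calibration. The main obstacle I anticipate is not in the algebra but in confirming that the relations \eqref{eq:t.1}–\eqref{eq:t.2} are genuinely well-defined bijections between $s$ and $t$: one must check that $t\mapsto t^{1/q_1}K_\infty(t,f,X_0,X_1)^{1/q_0-1/q_1}$ is continuous and strictly increasing from $(0,\infty)$ onto $(0,\infty)$, using that $K_\infty(t,f,X_0,X_1)$ is nonnegative, nondecreasing and concave in $t$ (a property inherited from the general $K$-functional) together with $1/q_0-1/q_1>0$; only then is the substitution legitimate. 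This monotonicity check, though short, is the logically delicate point, because if it failed the whole reparametrization would collapse.
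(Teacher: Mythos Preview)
Your proposal is correct and follows essentially the same route as the paper: rewrite the $\tilde l$-norms as $1/q_i$-th powers of the $X_i$-norms, factor out $K_\infty(t,f,X_0,X_1)^{1/q_0}$, and verify that the normalized infimum equals $1$ under the calibration \eqref{eq:t.1}; part (ii) is handled, as you do, by the commutation identity for $K_\infty$ followed by the part-(i) argument with the roles swapped. One small slip: in your displayed normalized infimum the second entry should be $t\,b^{q_1}/M^{q_0}$ (equivalently $(t^{1/q_1}b/M^{q_0/q_1})^{q_1}$), not $(tb/M^{q_0/q_1})^{q_1}$, so that it matches the paper's expression $\bigl(t\|f\|_{X_1(\Lambda_1)}/K_\infty(t,f,X_0,X_1)\bigr)^{1/q_1}$ after taking $1/q_1$-th roots; with this correction your ``pull $M$ back out'' step goes through exactly as in the paper.
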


\begin{proof}
(i) For $0<q_0<  q_1< \infty$, by applying \eqref{eq:t.1},
$$\begin{array}{l}K_{\infty}(s,f, \tilde{l}^{s_0,q_0}_{p_0}(\Lambda), \tilde{l}^{s_1, q_1}_{p_1}(\Lambda)) =
\inf\limits_{\Lambda_0 \bigoplus \Lambda_1= \Lambda} \max (\|f\|_{\tilde{l}^{s_0,q_0}_{p_0}(\Lambda_0)} ,  s\|f\|_{\tilde{l}^{s_1, q_1}_{p_1}(\Lambda_1)})\\
= \inf\limits_{\Lambda_0 \bigoplus \Lambda_1= \Lambda} \max (\|f\|^{\frac{1}{q_0}}_{X_{0}(\Lambda_0)} ,
s\|f\|^{\frac{1}{q_1}}_{X_{1}(\Lambda_1)})\\
= K_{\infty}(t,f, X_0, X_1)^{\frac{1}{q_0}} \inf\limits_{\Lambda_0 \bigoplus \Lambda_1= \Lambda} \max (
(\frac{ \|f\| _{X_{0}(\Lambda_0)} } { K_{\infty} (t,f, X_0, X_1)}) ^{ \frac{1}{q_0} } ,
(\frac{t\|f\|_{X_{1}(\Lambda_1)}} { K_{\infty} (t,f, X_0, X_1)} )^{\frac{1}{q_1}} ).
\end{array}$$

Since
$$1=\inf\limits_{\Lambda_0 \bigoplus \Lambda_1= \Lambda}
\max ( (\frac{\|f\|_{X_0(\Lambda_0)}} {K_{\infty}(t,f, X_0, X_1)})^{\frac{1}{q_0}}, (\frac{t \|f\|_{X_1(\Lambda_1)}} {K_{\infty}(t,a, X_0, X_1))})^{\frac{1}{q_1}}),$$
we have \eqref{eq:K11.1}.

(ii) For $0<q_1<  q_0< \infty$, by applying \eqref{eq:t.2},
$$\begin{array}{l}K_{\infty}(s,f, \tilde{l}^{s_0,q_0}_{p_0}(\Lambda), \tilde{l}^{s_1, q_1}_{p_1}(\Lambda)) =
s K_{\infty}(s^{-1},f, \tilde{l}^{s_1, q_1}_{p_1}(\Lambda), \tilde{l}^{s_0,q_0}_{p_0}(\Lambda))\\
= s \inf\limits_{\Lambda_0 \bigoplus \Lambda_1= \Lambda} \max ( \|f\|_{\tilde{l}^{s_1, q_1}_{p_1}(\Lambda_1)},
s^{-1} \|f\|_{\tilde{l}^{s_0,q_0}_{p_0}(\Lambda_0)} )\\
= s \inf\limits_{\Lambda_0 \bigoplus \Lambda_1= \Lambda} \max (
\|f\|^{\frac{1}{q_1}}_{X_{1}(\Lambda_1)},
s^{-1}\|f\|^{\frac{1}{q_0}}_{X_{0}(\Lambda_0)}
)\\
= s K_{\infty}(t,f, X_1, X_0)^{\frac{1}{q_1}} \inf\limits_{\Lambda_0 \bigoplus \Lambda_1= \Lambda} \max (
(\frac{ \|f\| _{X_{1}(\Lambda_0)} } { K_{\infty} (t,f, X_1, X_0)}) ^{ \frac{1}{q_1} } ,
(\frac{t\|f\|_{X_{0}(\Lambda_1)}} { K_{\infty} (t,f, X_1, X_0)} )^{\frac{1}{q_0}} ).
\end{array}$$

Since
$$1=\inf\limits_{\Lambda_0 \bigoplus \Lambda_1= \Lambda}
\max ( (\frac{\|f\|_{X_1(\Lambda_0)}} {K_{\infty}(t,f, X_1, X_0)})^{\frac{1}{q_1}}, (\frac{t \|f\|_{X_0(\Lambda_1)}} {K_{\infty}(t,f, X_1, X_0))})^{\frac{1}{q_0}}),$$
we have \eqref{eq:K11.2}.
\end{proof}


Combining the above three levels of nonlinearity,
by applying Lemmas \ref{lem:CK}, \ref{cor:11.1}, \ref{lem:ab} and \ref{cor:11.2},
we obtain the following wavelet characterization of the K functional:

\begin{theorem} \label{th:11.4}

Given $s_0,s_1\in \mathbb{R}$, $0<p_0\neq p_1\leq \infty, 0<q_0\neq  q_1< \infty$ and $0<t<\infty$.
For $(j,\gamma)\in \Lambda$, let $b_{j,\gamma}= 2^{j(s_0+\frac{n}{2}-\frac{n}{p_0})} |f_{j,\gamma}|$
and $b_{j}= (b_{j,\gamma})_{\gamma\in \Gamma_{j}}$.

{\rm (i)} For $q_0<q_1$, let $s= F(f,X_0,X_1, \frac{1}{q_0}, \frac{1}{q_1}, t)$ be defined in \eqref{eq:t.1}
and let $\tau= F_{j}(b_{j},l^{p_1}(\Gamma_j), l^{p_0}(\Gamma_j), q_1, q_0, s^{-1}2^{-j\tilde{s}})$ be defined in \eqref{eq:t.4}.
Let $K_{\infty}(\tau,b_{j}, l^{p_0}(\Gamma_j), l^{p_1}(\Gamma_j))$ be defined by wavelet coefficients
in Subsection \ref{sec:5.2}.
The wavelet characterization of $K_{\infty}(s,f, B^{s_0,q_0}_{p_0}, B^{s_1, q_1}_{p_1})$ is defined as follows:
$$\begin{array}{rcl}
K_{\infty}(t,f, B^{s_0,q_0}_{p_0}, B^{s_1, q_1}_{p_1})&=&
\{\sum\limits_{j\in \mathbb{N}} s2^{j\tilde{s}} K_{\infty} (\tau, b_{j}, l^{p_1}(\Gamma_j), l^{p_0}(\Gamma_j))^{q_1} \}^{\frac{1}{q_0}}.
\end{array}$$

{\rm (ii)} For $q_0>q_1$, let $s = F(f,X_1,X_0, \frac{1}{q_1}, \frac{1}{q_0}, t^{-1})$ be defined in \eqref{eq:t.2} and
let $\tau=F_{j}(b_{j},l^{p_0}(\Gamma_j), l^{p_1}(\Gamma_j), q_0, q_1, s^{-1} 2^{j\tilde{s}})$ be defined in \eqref{eq:t.3}.
Let $K_{\infty}(\tau,b_{j}, l^{p_0}(\Gamma_j), l^{p_1}(\Gamma_j))$ be defined by wavelet coefficients in Subsection \ref{sec:5.2}.
The wavelet characterization of $K_{\infty}(s,f, B^{s_0,q_0}_{p_0}, B^{s_1, q_1}_{p_1})$ is defined as follows:

$$\begin{array}{rcl}
K_{\infty}(t,f, B^{s_0,q_0}_{p_0}, B^{s_1, q_1}_{p_1})
&=& t\cdot s^{\frac{1}{q_1}} \{\sum\limits_{j\in \mathbb{N}}
[ K_{\infty}(\tau,b_{j}, l^{p_0}(\Gamma_j), l^{p_1}(\Gamma_j))]^{q_0}
\}^{\frac{1}{q_1}}.
\end{array}$$
\end{theorem}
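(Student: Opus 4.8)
The plan is to assemble Theorem \ref{th:11.4} purely by composing the three nonlinearity lemmas already established, together with the commutativity lemma \ref{lem:CK}. The key observation is that nothing new needs to be proved: each of the three ``levels'' has been packaged into a clean statement, and the theorem is just the explicit chaining of these statements. So the whole proof is really a bookkeeping exercise in tracking how the auxiliary parameters $s$, $t$, $\tau$ are defined in terms of one another. First I would recall that by Lemma \ref{le3.1} (and the remark in Section \ref{sec:10} that $K_\infty$ enjoys the same equivalences as $K_\xi$) it suffices to compute $K_\infty(t,f,B^{s_0,q_0}_{p_0},B^{s_1,q_1}_{p_1})$, and by the wavelet isomorphism (Lemmas \ref{lem:dtod}, \ref{lem:cbesov}) this equals $K_\infty(t,f,\tilde l^{s_0,q_0}_{p_0}(\Lambda),\tilde l^{s_1,q_1}_{p_1}(\Lambda))$.

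For case (i), $q_0<q_1$: I would apply Lemma \ref{cor:11.1}(i) to pass from $K_\infty(t,f,\tilde l^{s_0,q_0}_{p_0}(\Lambda),\tilde l^{s_1,q_1}_{p_1}(\Lambda))$ to $K_\infty(s,f,X_0,X_1)^{1/q_0}$ where $s=F(f,X_0,X_1,\tfrac1{q_0},\tfrac1{q_1},t)$ as in \eqref{eq:t.1} (note the roles of $s$ and $t$ are swapped relative to the display in that lemma, which is why the theorem statement writes $s=F(\dots,t)$). Then Lemma \ref{lem:ab}, equation \eqref{eq:11.60000}, converts $K_\infty(s,f,X_0,X_1)$ into $\sum_{j}K_\infty(s2^{j\tilde s},b_j,Y^j_0,Y^j_1)$. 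Since here $q_0<q_1$, each summand falls under Case (ii) of the layer-level lemma, so I invoke Lemma \ref{cor:11.2}(ii): with $\tau=F_j(b_j,l^{p_1}(\Gamma_j),l^{p_0}(\Gamma_j),q_1,q_0,(s2^{j\tilde s})^{-1})$ as in \eqref{eq:t.4}, we get $K_\infty(s2^{j\tilde s},b_j,Y^j_0,Y^j_1)=s2^{j\tilde s}K_\infty(\tau,b_j,l^{p_1}(\Gamma_j),l^{p_0}(\Gamma_j))^{q_1}$. Substituting back through the $(\cdot)^{1/q_0}$ from the first step yields exactly the claimed formula. Case (ii), $q_0>q_1$, is handled symmetrically: I would first use Lemma \ref{lem:CK} to write $K_\infty(t,f,\cdot,\cdot)=tK_\infty(t^{-1},f,\tilde l^{s_1,q_1}_{p_1}(\Lambda),\tilde l^{s_0,q_0}_{p_0}(\Lambda))$, then apply Lemma \ref{cor:11.1}(ii) (giving the factor $s^{1/q_1}$ with $s=F(f,X_1,X_0,\tfrac1{q_1},\tfrac1{q_0},t^{-1})$ from \eqref{eq:t.2}), then Lemma \ref{lem:ab} equation \eqref{eq:11.70000}, and finally Lemma \ref{cor:11.2}(i) with $\tau=F_j(b_j,l^{p_0}(\Gamma_j),l^{p_1}(\Gamma_j),q_0,q_1,s^{-1}2^{j\tilde s})$ from \eqref{eq:t.3}, producing $K_\infty(\tau,b_j,l^{p_0}(\Gamma_j),l^{p_1}(\Gamma_j))^{q_0}$ in each summand.

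The main obstacle — really the only nontrivial point — is the consistent matching of the parameter substitutions: the monotone functions $F$ and $F_j$ are defined implicitly in \eqref{eq:t.1}--\eqref{eq:t.4} and in \eqref{eq:t.3}--\eqref{eq:t.4}, and one must verify that the argument fed into the layer-level lemma is precisely the rescaled threshold $s2^{j\tilde s}$ (resp.\ its inverse) that emerges from the intermediate-space lemma, so that the implicit definition of $\tau$ in the theorem statement is exactly the one needed to apply Lemma \ref{cor:11.2}. This is where the exponent $\tilde s=s_1-s_0+\tfrac{n}{p_0}-\tfrac{n}{p_1}$ and the normalization $b_{j,\gamma}=2^{j(s_0+n/2-n/p_0)}|f_{j,\gamma}|$ must be threaded through carefully; once the single-layer computation of $K_\infty(\tau,b_j,l^{p_0}(\Gamma_j),l^{p_1}(\Gamma_j))$ from Subsection \ref{sec:5.2} (i.e.\ the $K_\infty$ analogue of Lemmas \ref{lem:6.4} and \ref{lem:6.5}, giving the split into $\Gamma^0_j$ and $\Gamma^1_j$) is plugged in, the formula is wavelet-explicit. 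I would close by remarking that all quantities appearing are finite for $f\in B^{s_0,q_0}_{p_0}+B^{s_1,q_1}_{p_1}$, so each $K_\infty$, hence each $F$ and $F_j$, is well-defined, and the chain of equalities is legitimate.
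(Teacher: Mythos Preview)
Your proposal is correct and follows essentially the same approach as the paper's own proof: both arguments chain together the wavelet isomorphism, Lemma~\ref{cor:11.1} (power-space reduction on the full grid), Lemma~\ref{lem:ab} (compatibility across layers), and Lemma~\ref{cor:11.2} (power-space reduction on each layer grid), with Lemma~\ref{lem:CK} invoked where needed to reverse the order of the couple. Your explicit flagging of the $s\leftrightarrow t$ variable swap between the lemma statements and the theorem, and of the requirement that the rescaled threshold $s2^{j\tilde s}$ be fed correctly into $F_j$, is precisely the bookkeeping the paper carries out tacitly.
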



\begin{proof}
{\rm (i)} If $q_0<q_1$, then
$$\begin{array}{rcl}
K_{\infty}(t,f, B^{s_0,q_0}_{p_0}, B^{s_1, q_1}_{p_1})&=& K_{\infty}(t,f, \tilde{l}^{s_0,q_0}_{p_0}(\Lambda), \tilde{l}^{s_1, q_1}_{p_1}(\Lambda))\\
&=& \{\sum\limits_{j\in \mathbb{N}} K_{\infty} (s2^{j\tilde{s}},b_{j}, Y^j_0, Y^j_1) \}^{\frac{1}{q_0}}\\
&=& \{\sum\limits_{j\in \mathbb{N}} s2^{j\tilde{s}} K_{\infty} (s^{-1}2^{-j\tilde{s}},b_{j}, Y^j_1, Y^j_0) \}^{\frac{1}{q_0}}.
\end{array}$$
Further
$$\begin{array}{rcl}
 K_{\infty} (s^{-1}2^{-j\tilde{s}},b_{j}, Y^j_1, Y^j_0)
&=&  K_{\infty} (\tau, b_{j}, l^{p_1}(\Gamma_j), l^{p_0}(\Gamma_j))^{q_1}.
\end{array}$$
Hence we get (i).

{\rm (ii)} If $q_0>q_1$, then
$$\begin{array}{rcl}
K_{\infty}(t,f, B^{s_0,q_0}_{p_0}, B^{s_1, q_1}_{p_1})&=&
K_{\infty}(t,f, \tilde{l}^{s_0,q_0}_{p_0}(\Lambda), \tilde{l}^{s_1, q_1}_{p_1}(\Lambda))\\
&=& t K_{\infty}(t^{-1},f, \tilde{l}^{s_1, q_1}_{p_1}(\Lambda), \tilde{l}^{s_0,q_0}_{p_0}(\Lambda))\\
&=& t K_{\infty} (s, f, X_1, X_0)^{\frac{1}{q_1}}.
\end{array}$$
Further,
$$\begin{array}{rcl}
K_{\infty} (s, f, X_1, X_0)= s K_{\infty} (s^{-1}, f, X_0, X_1)&\sim & s \sum\limits_{j\in \mathbb{N}} K_{\infty}(s^{-1} 2^{j\tilde{s}}, b_{j}, Y^j_0, Y^j_1)
\end{array}$$
and
$$\begin{array}{rcl}
K_{\infty}(s^{-1} 2^{j\tilde{s}}, b_{j}, Y^j_0, Y^j_1) &\sim & K_{\infty}(\tau,b_{j}, l^{p_0}(\Gamma_j), l^{p_1}(\Gamma_j))^{q_0}.
\end{array}$$
We get (ii).

\end{proof}


\begin{remark}
Besoy-Haroske-Triebel \cite{BHT} have considered Peetre's conjecture for partial indices.
They have used  trace theorem, wavelets
and some results in \cite{BCT, YCP} to describe the relative wavelet characterization of
$(\dot{B}_{p_0}^{s_0, q_{0}}, \dot{B}_{p_1}^{s_1, q_{1}})_{\theta, r}$ where
$p_0=q_0$, $p_1=q_1, \alpha\in \mathbb{R}$  and $s_0-s_1= \frac{\alpha}{p_0}-\frac{\alpha}{p_1}$.
With the help of new techniques developed in this paper, {\bf we have solved Peetre's conjecture completely.}
Our main techniques are nonlinearity of functional and nonlinearity of lattice topology.
The nonlinearity of functional includes wavelet functional, cuboid functional and vertex functional.
Topological nonlinearity includes topological compatibility of intermediate spaces
and two nonlinear topologies generated by power spaces.

\end{remark}


\textbf{Acknowledgements.} This project is partially supported by the National Natural Science Foundation of China (Grant No. 12071229).
Authors of this paper would like to thank Professor Hans Triebel for his useful discussion, his valuable suggestions, and for providing us with some references.
Further, the final version of the full text has been discussed in detail in the Harmonic Analysis Group of the University of Chinese Academy of Sciences.
Professor Yang would like to Professor Yan's invitation and his hospitality.
The authors would like to thank all members of the group for their valuable opinions,
especially for some improvements to the proof details of the manuscript from Professor Dunyan Yan and Doctor Boning Di.

\vspace{0.3cm}

\bigskip

\noindent Qixiang Yang

\medskip

\noindent School of Mathematics and Statistics, Wuhan University. \\
Wuhan, 430072, China
\smallskip

\noindent{\it E-mail address}:
\texttt{qxyang@whu.edu.cn}

\bigskip

\noindent Haibo Yang

\medskip

\noindent
Macau Institute of Systems Engineering, \\
Macau University of Science and Technology, Macau, 999078, China.\\
\smallskip
\noindent{\it E-mail address}:
\texttt{yanghb97@qq.com}



\bigskip

\noindent Bin Zou

\medskip

\noindent
School of Mathematics and Statistics,\\
Hubei Key Laboratory of Applied Mathematics,\\
Hubei University, Wuhan, 430062, China
\smallskip

\noindent{\it E-mail address}:
\texttt{zoubin0502@hubu.edu.cn}

\bigskip

\noindent Jianxun He

\medskip

\noindent Department of basic course teaching,\\
Software engineering institute of Guangzhou, \\
Guangzhou, 510990, China

\smallskip

\noindent{\it E-mail address}: \texttt{hejianxun@gzhu.edu.cn}
\bigskip

\end{document}